\theoremstyle{plain}
\newtheorem{theorem}{Theorem}[section]
\newtheorem{proposition}[theorem]{Proposition}
\newtheorem{lemma}[theorem]{Lemma}
\newtheorem{corollary}[theorem]{Corollary}
\newtheorem{remark}[equation]{Remark}
\newtheorem{definition}[theorem]{Definition}
\newtheorem{example}[theorem]{Example}
\newtheorem{conjecture}[theorem]{Conjecture}
\newtheorem{question}[theorem]{Question}
\newcommand{\cal}{\EuScript}
\newcommand{\Spec}{\textup{Spec }}
\newcommand{\Sm}{\cal{S}\textup{m}}
\newcommand{\Map}{\textup{Map}}
\newcommand{\et}{\textup{\'et}}
\newcommand{\Hom}{\textup{Hom}}
\newcommand{\D}{\textup{D}}
\newcommand{\DM}{\textup{DM}}
\newcommand{\Cat}{\cal{C}\textup{at}_{\infty}}
\newcommand{\Sch}{\cal{S}\textup{ch}}
\newcommand{\Var}{\textup{Var}}
\let\lim=\relax
\DeclareMathOperator*{\colim}{colim}
\DeclareMathOperator*{\lim}{lim}
\newcommand{\ord}{\textup{ord}}
\newcommand{\im}{\textup{im}}
\newcommand{\coker}{\textup{coker}}
\newcommand{\vdim}{\textup{vdim}}
\newcommand{\dcat}{D^b\textup{Coh}}
\newcommand{\Num}{\textup{Num}}
\newcommand{\Vect}{\textup{Vec}}
\newcommand{\Chow}{\textup{Chow}}
\renewcommand{\contentsname}{}
\newcommand{\imK}{\textup{im}_K}
\newcommand{\imG}{\textup{im}_G}
\newcommand{\gm}{\textup{gm}}
\newcommand{\conv}{\textup{conv}}
\newcommand{\eff}{\textup{eff}}
\newcommand{\mot}{\textup{mot}}
\newcommand{\Hdg}{\textup{Hdg}}
\begin{document}
\title{Integration of Voevodsky Motives}
\author{Masoud Zargar}
\renewcommand{\contentsname}{}

\begin{abstract}
In this paper, we construct four different theories of integration, two that are for Voevodsky motives, one for mixed $\ell$-adic sheaves, and a fourth theory of integration for rational mixed Hodge structures. We then show that they circumvent some of the complications of classical motivic integration, leading to new arithmetic and geometric results concerning K-equivalent $k$-varieties. For example, in addition to recovering known results regarding K-equivalent smooth projective varieties, we show that K-equivalent smooth projective $\mathbb{F}_q$-varieties have isomorphic rational $\ell$-adic Galois representations (up to semisimplification), and so also the same zeta functions (the equality of zeta functions is true even without projectivity). This is an arithmetic result inaccessible to classical motivic integration. Furthermore, we formulate a natural injectivity conjecture, and show that in combination with our theories of integration, implies that up to direct summing a common Chow motive K-equivalent smooth projective $k$-varieties have the same $\mathbb{Z}[1/p]$-Chow motives ($p$ is the characteristic exponent of $k$). This gives more evidence for a conjecture of Chin-Lung Wang suggesting the equivalence of integral motives of K-equivalent smooth projective varieties. In particular, this is the case if $k$ is such that the slice filtration preserves geometricity, which is conjecturally true for $k$ a number field or finite field, at least if we consider rational coefficients. Furthermore, we connect our theory of integration of rational Voevodsky motives to the existence of motivic $t$-structures for geometric Voevodsky motives; we show that the existence of a motivic t-structure implies that K-equivalent smooth projective varieties have equivalent rational (Chow) motives. We also connect this to a conjecture of Orlov concerning bounded derived categories of coherent sheaves. This makes progress on showing that all cohomology theories should agree for K-equivalent smooth projective varieties (at least rationally and for suitable base fields).
\end{abstract}
\maketitle
\setcounter{tocdepth}{2}
\tableofcontents
\section{Introduction}\label{intro}
Let $k$ be a perfect field and $R$ a commutative ring. Suppose throughout that $k$ has exponential characteristic $p$ invertible in the coefficient ring $R$. For our purposes, $R$ will be $\mathbb{Z}[1/p]$ or $\mathbb{Q}$ most of the time. Furthermore, a variety is said to be Calabi-Yau if it has trivial canonical bundle. In this paper, we take inspiration from classical motivic integration and its wealth of applications to birational geometry to develop two theories of integration for Voevodsky's mixed motives, a theory of integration of mixed $\ell$-adic sheaves, and one for rational mixed Hodge structures. We construct them so that they circumvent some of the obstructions in the path of classical motivic integration to concrete applications in arithmetic and geometry. \\
\\
The primary applications in which we are interested are related to the following general question in birational geometry.
\begin{question}
If $X$ and $Y$ are birational smooth projective Calabi-Yau varieties, what properties do they have in common? How can we distinguish non-isomorphic birational Calabi-Yau varieties?
\end{question}
As we will soon discuss, there are tools in the literature such as classical motivic integration that give results in this direction. If we are concerned with the \textit{module} structure of cohomology theories, the ultimate (conjectural) answer is that birational smooth projective Calabi-Yau varieties have isomorphic integral Chow motives. Indeed, this is the conjecture of Chin-Lung Wang \cite{Wang} (see Conjecture~\ref{Wang2conjecture} below). Via realization functors, this would give us the equivalence of many cohomology theories for such varieties. We try to come as close as possible to answering this aspect of the above question by using our theories of integration. One of the theories of integration is for rational Voevodsky motives, and assumes the existence of a motivic $t$-structure. This theory for rational Voevodsky motives also allows us to relate the existence of motivic $t$-structures to conjectures in non-commutative geometry. Ultimately, we would like to have a fully developed theory of integration for integral Voevodsky motives with all the desired properties.\\
\\
In this paper, we introduce an extension of Voevodsky's geometric effective motives that we call \textit{slice} (effective) motives and denote by $\DM_{\textup{sl}}^{\textup{eff}}(k;R)$. This is the smallest stable $\infty$-category containing geometric effective motives $\DM_{\textup{gm}}^{\textup{eff}}(k;R)$ as well as all their slices (in Voevodsky's slice filtration). Conjecturally, over finite fields and number fields and at least with rational coefficients, the slice of geometric motives are again geometric and so $\DM_{\textup{sl}}^{\textup{eff}}(k;\mathbb{Q})=\DM_{\textup{gm}}^{\textup{eff}}(k;\mathbb{Q})$ for such fields. However, if $k$ is large enough, for example if $k=\mathbb{C}$, it is known that the slice filtration does not preserve geometricity \cite{Ayoubslice}. Nonetheless, we propose the following conjecture.
\begin{conjecture}[Injectivity conjecture $I(k;R)$]\label{conj:inj}
The inclusion $\DM_{\textup{gm}}^{\textup{eff}}(k;R)\hookrightarrow\DM_{\textup{sl}}^{\textup{eff}}(k;R)$ induces an \textup{injective} group homomorphism
\[K_0(\DM_{\textup{gm}}^{\textup{eff}}(k;R))\rightarrow K_0(\DM_{\textup{sl}}^{\textup{eff}}(k;R)).\]
\end{conjecture}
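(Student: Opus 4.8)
This statement is posed as a \emph{conjecture}, so what follows is a proposal for how one might try to establish it, together with an indication of why it seems out of reach in general.

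The organising observation is that $K_0\bigl(\DM^{\eff}_{\textup{sl}}(k;R)\bigr)$ splits along the slice filtration. The slice projectors $s_q$ are exact endofunctors of $\DM^{\eff}(k;R)$, and from the standard relation $f_q f_{\ell}\simeq f_{\max(q,\ell)}$ one deduces $s_q s_j\simeq 0$ for $q\neq j$ and $s_q^2\simeq s_q$; since $\DM^{\eff}_{\textup{sl}}(k;R)$ is generated by slices of geometric motives, each ``pure'' of a single slice weight, each $s_q$ restricts to an exact idempotent endofunctor of $\DM^{\eff}_{\textup{sl}}(k;R)$ whose slice tower is finite on every object. Writing $\cal{C}_q$ for the essential image of $s_q$ on $\DM^{\eff}_{\textup{sl}}(k;R)$, the assignments $[N]\mapsto([s_q N])_q$ and $(x_q)_q\mapsto\sum_q x_q$ are mutually inverse, so
\[
K_0\bigl(\DM^{\eff}_{\textup{sl}}(k;R)\bigr)\;\cong\;\bigoplus_{q\geq 0}K_0(\cal{C}_q).
\]
By the identification of the $0$-th slice category of $\DM^{\eff}(k;R)$ with Voevodsky's category of birational motives (Kahn--Sujatha; see also Huber--Kahn), twisting by $R(q)[2q]$ yields $\cal{C}_q\simeq\DM^{\eff}_{\circ,\gm}(k;R)$, the category of geometric birational motives, for every $q$. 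Thus, up to the harmless question of idempotent completions, the Injectivity conjecture amounts to the assertion that the \emph{total-slice map}
\[
K_0\bigl(\DM^{\eff}_{\gm}(k;R)\bigr)\;\longrightarrow\;\prod_{q\geq 0}K_0\bigl(\DM^{\eff}_{\circ,\gm}(k;R)\bigr),\qquad [M]\mapsto\bigl([s_q M]\bigr)_q,
\]
is injective; informally, that an effective geometric motive is determined, at the level of $K_0$, by its tower of birational slices. I find this reformulation appealing because $K_0$ of birational motives is precisely the target of the ``naive'' motivic measure, so the conjecture sits squarely in the circle of ideas the rest of the paper exploits.

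The most optimistic route is to construct a left inverse, i.e.\ a homomorphism $\bigoplus_q K_0(\cal{C}_q)\to K_0\bigl(\DM^{\eff}_{\gm}(k;R)\bigr)$ retracting the inclusion. Such a retraction exists as soon as the slices of geometric motives are themselves geometric: then $\cal{C}_q\subseteq\DM^{\eff}_{\gm}(k;R)$ and $(x_q)_q\mapsto\sum_q x_q$ does the job. This is exactly why the conjecture is \emph{trivially true} whenever the slice filtration preserves geometricity — conjecturally for $k$ a number field or finite field with rational coefficients, in which case $\DM^{\eff}_{\textup{sl}}=\DM^{\eff}_{\gm}$ outright. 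Over a large field such as $k=\mathbb{C}$, however, slices of geometric motives are not geometric (\cite{Ayoubslice}), no such retraction is available, and one must instead control the relations that these non-geometric slices impose on classes of geometric motives — the crux of the problem.

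There are two further, only partial, routes. First, realizations: the Betti, $\ell$-adic and Hodge realizations are defined on all of $\DM^{\eff}_{\textup{sl}}(k;R)$ (their targets are triangulated categories containing the relevant slices), so they factor the realization of $\DM^{\eff}_{\gm}(k;R)$ through $\DM^{\eff}_{\textup{sl}}(k;R)$, and it would suffice to know that some realization is injective on $K_0\bigl(\DM^{\eff}_{\gm}(k;R)\bigr)$; but this is itself a $K_0$-shadow of conservativity of realizations, hence conditional, though it does show the conjecture is implied by standard expectations. Second, a structural route: $\DM^{\eff}_{\gm}(k;R)$ is a thick subcategory of $\DM^{\eff}_{\textup{sl}}(k;R)$ (summands of geometric motives are geometric), so the localization fiber sequence in algebraic $K$-theory makes the conjecture equivalent to the vanishing of the boundary map $K_1\bigl(\DM^{\eff}_{\textup{sl}}(k;R)/\DM^{\eff}_{\gm}(k;R)\bigr)\to K_0\bigl(\DM^{\eff}_{\gm}(k;R)\bigr)$; alternatively one could try to equip both categories with compatible bounded weight structures (or, conjecturally, motivic $t$-structures), so that each $K_0$ becomes the Grothendieck group of an abelian heart and the problem descends to abelian categories. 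In every guise the main obstacle is the same: measuring how much larger $\DM^{\eff}_{\textup{sl}}(k;R)$ is than $\DM^{\eff}_{\gm}(k;R)$ at the level of $K_0$-relations — equivalently, understanding the total-slice map into $\prod_q K_0\bigl(\DM^{\eff}_{\circ,\gm}(k;R)\bigr)$ — which is why the statement is carried as a hypothesis in the sequel rather than proved.
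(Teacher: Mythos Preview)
The paper does not prove this statement; it is stated as an open conjecture (both in the introduction and again after Proposition~\ref{injectivity}), and is used only as a hypothesis in subsequent results. The paper's commentary is limited to the observation that the map is an isomorphism whenever the slice functors $i_nL_n$ preserve geometricity (conjecturally over finite fields and number fields, at least rationally), and that the question is genuinely open over large fields such as $\mathbb{C}$ by Ayoub's example. Your proposal correctly identifies the status of the statement and goes well beyond the paper in outlining possible strategies; the splitting of $K_0(\DM^{\eff}_{\textup{sl}}(k;R))$ along the slice filtration, the reformulation via the total-slice map, the realization route, and the localization/weight-structure route are all reasonable organising ideas that the paper does not discuss.

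There is one inaccuracy worth flagging. Your identification $\cal{C}_q\simeq\DM^{\eff}_{\circ,\gm}(k;R)$ is too optimistic: the essential image of $s_q$ on $\DM^{\eff}_{\textup{sl}}(k;R)$ is (the $q$-twist of) the thick subcategory of birational motives generated by the zero-slices $s_0M$ of geometric $M$, and these need not be \emph{geometric} birational motives --- that is precisely the content of the failure of slice-preservation of geometricity over $\mathbb{C}$. So the target of your total-slice map should be $\prod_q K_0(\cal{C}_q)$ with $\cal{C}_q$ this a priori larger category, not $\prod_q K_0(\DM^{\eff}_{\circ,\gm}(k;R))$. This does not invalidate the reformulation, but it does mean the target is less concrete than you suggest, and the ``naive motivic measure'' connection is looser. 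Your retraction argument in the geometricity-preserving case is correct and matches the paper's remark; your acknowledgement that no retraction is available over $\mathbb{C}$ is exactly the obstruction the paper alludes to.
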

After setting up the foundations, we use our theories to obtain a number of consequences regarding classical questions on derived cateogies of coherent sheaves, arithmetic, and geometry. The first main application in this paper is the following.
\begin{theorem}\label{maintheoremintro}If $X$ and $Y$ are K-equivalent smooth $k$-varieties, then $[M(X)]=[M(Y)]$ in $K_0(\DM_{\textup{sl}}^{\textup{eff}}(k;\mathbb{Z}[1/p]))$. If the injectivity conjecture $I(k;R)$ is true, then equality holds in $K_0(\DM_{\textup{gm}}^{\textup{eff}}(k;R))$.\end{theorem}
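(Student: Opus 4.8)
The plan is to deduce the statement from the change-of-variables formula for the motivic integral constructed above, evaluated on a common resolution of $X$ and $Y$; the K-equivalence hypothesis enters at exactly one point.

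\textbf{Step 1 (geometric reduction).} By the definition of K-equivalence there is a smooth $k$-variety $Z$ together with proper birational morphisms $f\colon Z\to X$ and $g\colon Z\to Y$ such that $f^{*}K_{X}=g^{*}K_{Y}$. Replacing $Z$ by a further blow-up along smooth centres — which alters $Z$ neither over $X$ nor over $Y$, and preserves the equality of pulled-back canonical divisors — I may assume in addition that the relative canonical divisors
\[
K_{Z/X}=K_{Z}-f^{*}K_{X},\qquad K_{Z/Y}=K_{Z}-g^{*}K_{Y}
\]
are \emph{effective} divisors with simple normal crossings support. The content of the hypothesis is then simply the identity $K_{Z/X}=K_{Z/Y}$; write $E$ for this common divisor. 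In positive characteristic the normal-crossings reductions are to be carried out with the resolution and alteration results available over perfect fields with $\mathbb{Z}[1/p]$-coefficients, exactly as in the construction of the integral; and when $X$ and $Y$ are not proper one runs the same argument with the version of the integral attached to smooth varieties, the one whose value on the trivial divisor is $[M(V)]$ rather than $[M^{c}(V)]$.

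\textbf{Step 2 (apply the integral).} Recall that to a smooth $k$-variety $V$ and an effective normal crossings divisor $D$ on $V$ our theory attaches a motivic integral $\int_{V}\mathbb{L}^{-D}$, with values in a completion $\widehat{K}$ of $K_{0}(\DM_{\textup{sl}}^{\textup{eff}}(k;\mathbb{Z}[1/p]))$ along the slice filtration, satisfying $\int_{V}\mathbb{L}^{0}=[M(V)]$ and the transformation rule $\int_{V}\mathbb{L}^{-D}=\int_{V'}\mathbb{L}^{-h^{*}D-K_{V'/V}}$ for every proper birational morphism $h\colon V'\to V$ of smooth $k$-varieties. Applying the rule first to $f$ and then to $g$, and using Step 1,
\[
[M(X)]=\int_{X}\mathbb{L}^{0}=\int_{Z}\mathbb{L}^{-K_{Z/X}}=\int_{Z}\mathbb{L}^{-E}=\int_{Z}\mathbb{L}^{-K_{Z/Y}}=\int_{Y}\mathbb{L}^{0}=[M(Y)]
\]
in $\widehat{K}$, the only use of the hypothesis being the middle equality $K_{Z/X}=K_{Z/Y}$.

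\textbf{Step 3 (descent, and the conditional refinement).} It remains to transport this identity from $\widehat{K}$ back to $K_{0}(\DM_{\textup{sl}}^{\textup{eff}}(k;\mathbb{Z}[1/p]))$. This is precisely what the passage to slice motives buys: by the properties of the slice filtration on $\DM_{\textup{sl}}^{\textup{eff}}$ established above, the canonical map $K_{0}(\DM_{\textup{sl}}^{\textup{eff}}(k;\mathbb{Z}[1/p]))\to\widehat{K}$ is injective, so $[M(X)]=[M(Y)]$ already holds in $K_{0}(\DM_{\textup{sl}}^{\textup{eff}}(k;\mathbb{Z}[1/p]))$. For an arbitrary ring $R$ in which $p$ is invertible, extension of scalars along $\mathbb{Z}[1/p]\to R$ is symmetric monoidal and sends $M(X)$ to $M(X)$, so this propagates to $K_{0}(\DM_{\textup{sl}}^{\textup{eff}}(k;R))$. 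Since $X$ and $Y$ are smooth, $M(X)$ and $M(Y)$ lie in $\DM_{\textup{gm}}^{\textup{eff}}(k;R)$, so both classes come from $K_{0}(\DM_{\textup{gm}}^{\textup{eff}}(k;R))$; granting the injectivity conjecture $I(k;R)$, this group injects into $K_{0}(\DM_{\textup{sl}}^{\textup{eff}}(k;R))$, and therefore $[M(X)]=[M(Y)]$ in $K_{0}(\DM_{\textup{gm}}^{\textup{eff}}(k;R))$.

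\textbf{The crux.} Once the integration machinery of the previous sections is in place, everything above is formal except two points: the technical reductions folded into Step 1 (the simultaneous normal-crossings model of $E=K_{Z/X}=K_{Z/Y}$, and resolution/alterations in characteristic $p$), and — the genuinely structural point — the injectivity of $K_{0}(\DM_{\textup{sl}}^{\textup{eff}}(k;\mathbb{Z}[1/p]))\to\widehat{K}$ in Step 3, i.e.\ the verification that the local factors of the integral attached to the strata of $E$ are controlled, on the nose, by honest slice motives. The latter is the reason the categories $\DM_{\textup{sl}}^{\textup{eff}}$ are introduced at all, and it is where I expect the real difficulty to lie; the remainder of the argument is a two-line application of the change-of-variables formula.
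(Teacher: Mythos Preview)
Your approach is exactly the paper's: apply the transformation rule to both legs of the K-equivalence roof with $D=\emptyset$, use $K_{Z/X}=K_{Z/Y}$, and then descend via the injectivity result (Proposition~\ref{injectivity}). Two small corrections are worth flagging. First, the simple-normal-crossings reduction in your Step~1 is unnecessary: the paper's transformation rule (Theorem~\ref{changeofvar}) holds for any effective divisor $D$ and does not require an explicit stratification of $E$, so no further blow-ups or alterations are needed and the argument is uniform in characteristic. Second, you overstate the injectivity in Step~3: the paper does \emph{not} prove that $K_0(\DM_{\textup{sl}}^{\textup{eff}}(k;\mathbb{Z}[1/p]))\to\cal{M}(k;\mathbb{Z}[1/p])$ is injective, only that its restriction to the image $\imK$ of $K_0(\DM_{\textup{gm}}^{\textup{eff}})$ is (Proposition~\ref{injectivity}). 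That weaker statement is all you need here, since $M(X)$ and $M(Y)$ are geometric and hence $[M(X)]-[M(Y)]\in\imK$; but your phrasing asserts more than has been established.
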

Recall that two smooth $k$-varieties $X$ and $Y$ are said to be \textit{K-equivalent} if there is a smooth $k$-variety $Z$ along with proper birational morphisms $f:Z\rightarrow X$ and $g:Z\rightarrow Y$ such that $f^*\omega_X\simeq g^*\omega_Y$. Examples of K-equivalent complex varieties are birational smooth projective complex varieties with nef canonical divisors. If $k$ admits resolution of singularities, then birational smooth projective Calabi-Yau $k$-varieties are K-equivalent. \\
\\
As a consequence, we obtain the following stronger result using Bondarko's weight structures \cite{Bondarko2}, \cite{Bondarko}.
\begin{theorem}\label{maintheoremintro2}Suppose conjecture $I(k;R)$ is true. If $X$ and $Y$ are K-equivalent smooth projective $k$-varieties, then $M(X)\oplus P\simeq M(Y)\oplus P$ in $\DM_{\textup{gm}}^{\textup{eff}}(k;R)$ for some Chow motive $P$.
\end{theorem}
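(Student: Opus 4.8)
The plan is to promote the $K_0$-equality supplied by Theorem~\ref{maintheoremintro} to a stable isomorphism of Chow motives by passing through the heart of Bondarko's Chow weight structure. Assuming $I(k;R)$, Theorem~\ref{maintheoremintro} already yields $[M(X)]=[M(Y)]$ in $K_0(\DM_{\gm}^{\eff}(k;R))$, so the remaining task is to convert an identity in the Grothendieck group of a triangulated category into an additive identity inside an additive subcategory, and then to read off what that additive identity means. The feature that makes this possible is that $X$ and $Y$ are now assumed \emph{smooth projective}, so that $M(X)$ and $M(Y)$ lie in the heart of a suitable weight structure (for merely smooth varieties they do not, which is why Theorem~\ref{maintheoremintro} only controls the $K_0$-class).

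First I would recall that $\DM_{\gm}^{\eff}(k;R)$ carries the Chow weight structure $w_{\Chow}$ of \cite{Bondarko2}, \cite{Bondarko}: it is bounded, and — since $\DM_{\gm}^{\eff}(k;R)$ is idempotent complete — its heart is canonically equivalent to the pseudo-abelian category $\Chow^{\eff}(k;R)$ of effective Chow motives with $R$-coefficients, with $M(Z)$ lying in the heart for every smooth projective $k$-variety $Z$. (With $\mathbb{Q}$-coefficients this is Bondarko's construction via de Jong alterations; for $R$ a $\mathbb{Z}[1/p]$-algebra over the perfect field $k$ one uses the corresponding refinement, available because $p$ is invertible in $R$.) In particular $M(X)$ and $M(Y)$, being motives of smooth projective varieties, are objects of the heart.

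Next I would invoke Bondarko's comparison of Grothendieck groups: for a triangulated category equipped with a bounded weight structure, the inclusion of the heart induces an isomorphism $K_0^{\oplus}(\text{heart})\xrightarrow{\ \sim\ }K_0(\text{category})$, where $K_0^{\oplus}$ denotes the split (additive) Grothendieck group. Applied to $\DM_{\gm}^{\eff}(k;R)$ with heart $\Chow^{\eff}(k;R)$, this pulls the equality $[M(X)]=[M(Y)]$ back to the equality $[M(X)]=[M(Y)]$ in $K_0^{\oplus}(\Chow^{\eff}(k;R))$. Finally I would use the elementary description of $K_0^{\oplus}$ of an additive category as the group completion of the commutative monoid of isomorphism classes under $\oplus$, together with the fact that two elements of a commutative monoid become equal in its group completion exactly when they agree after adding a common element; this produces an effective Chow motive $P\in\Chow^{\eff}(k;R)$ with $M(X)\oplus P\simeq M(Y)\oplus P$ in $\Chow^{\eff}(k;R)$, hence in $\DM_{\gm}^{\eff}(k;R)$, which is the assertion (an effective Chow motive is in particular a Chow motive). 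The main obstacle is foundational rather than structural: one must ensure that the Chow weight structure, the identification of its heart with $\Chow^{\eff}(k;R)$, and Bondarko's $K_0$-isomorphism are all genuinely available for the coefficient ring at hand (i.e.\ away from $p$, not merely rationally) and that $\DM_{\gm}^{\eff}(k;R)$ is idempotent complete; granting those inputs, the argument is otherwise formal.
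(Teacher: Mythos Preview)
Your proposal is correct and matches the paper's own proof (Theorem~\ref{mainint2}) essentially verbatim: the paper invokes Theorem~\ref{mainint} for the $K_0$-equality, then cites Bondarko's isomorphism $K_0(\Chow^{\eff}(k;\mathbb{Z}[1/p]))\simeq K_0(\DM_{\gm}^{\eff}(k;\mathbb{Z}[1/p]))$ from \cite{Bondarko2}, \cite{Bondarko} to pull the equality back to the additive $K_0$ of effective Chow motives and read off the stable isomorphism. Your discussion of the Chow weight structure and of the foundational caveats (idempotent completeness, availability over $\mathbb{Z}[1/p]$-coefficients) is more explicit than the paper's, but the argument is the same.
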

Note that (effective) Chow motives sit fully faithfully inside (effective) geometric Voevodsky motives (Corollary 4.2.6 of Voevodsky's~\cite{Voevodsky00}), and so the above is also a statement about effective Chow motives with coefficients in $R$. This gives more evidence for the following conjecture of Chin-Lung Wang.
\begin{conjecture}[Chin-Lung Wang \cite{Wang}]\label{Wang2conjecture}If $X$ and $Y$ are K-equivalent smooth projective complex varieties, then they have isomorphic \textup{integral} Chow motives.
\end{conjecture}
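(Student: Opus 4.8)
The plan is to treat Conjecture~\ref{Wang2conjecture} as the limiting case of Theorem~\ref{maintheoremintro2}. Since $k=\mathbb{C}$ has exponential characteristic $p=1$, the coefficient ring $\mathbb{Z}[1/p]$ is simply $\mathbb{Z}$, so the desired statement is exactly the conclusion of Theorem~\ref{maintheoremintro2} with $R=\mathbb{Z}$, improved in two ways: the injectivity hypothesis must be discharged, and the auxiliary summand $P$ must be removed. Accordingly the argument breaks into three stages. Stage one is already in hand: Theorem~\ref{maintheoremintro}, via the theory of integration of Voevodsky motives, gives $[M(X)]=[M(Y)]$ in $K_0(\DM_{\textup{sl}}^{\eff}(\mathbb{C};\mathbb{Z}))$ for K-equivalent smooth projective $\mathbb{C}$-varieties $X$ and $Y$.

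Stage two is to establish the injectivity conjecture $I(\mathbb{C};\mathbb{Z})$, i.e.\ that $K_0(\DM_{\gm}^{\eff}(\mathbb{C};\mathbb{Z}))\to K_0(\DM_{\textup{sl}}^{\eff}(\mathbb{C};\mathbb{Z}))$ is injective; this is the main obstacle, and by \cite{Ayoubslice} it cannot be bypassed over $\mathbb{C}$, where slices of geometric motives genuinely fail to be geometric. I would attack it by constructing a left inverse on Grothendieck groups: the effective slice functors $s_q$ are triangulated, and although $s_q$ applied to an object of $\DM_{\textup{sl}}^{\eff}$ need not land in $\DM_{\gm}^{\eff}$, one may hope that on classes coming from geometric motives a telescoping identity $[\,\cdot\,]=\sum_q[s_q(\cdot)]$, interpreted in a suitably completed $K_0$, reconstructs the original class and so detects geometricity; an alternative is to build a motivic measure, a ring homomorphism out of $K_0(\DM_{\textup{sl}}^{\eff})$, separating $[M(X)]$ from the classes introduced by adjoining slices and cones. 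The difficulty is intrinsic: $\DM_{\textup{sl}}^{\eff}$ is built by a transfinite process of adjoining slices and cones, and it is not clear that Bondarko's weight structure on $\DM_{\gm}^{\eff}$ extends to it with a bounded heart — which would otherwise compute its $K_0$ — so no routine d\'evissage is available.

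Granting $I(\mathbb{C};\mathbb{Z})$, Theorem~\ref{maintheoremintro2} produces a Chow motive $P$ with $M(X)\oplus P\simeq M(Y)\oplus P$ in $\DM_{\gm}^{\eff}(\mathbb{C};\mathbb{Z})$; this rests on Bondarko's description \cite{Bondarko} of the $K_0$ of a triangulated category with a bounded weight structure in terms of its heart — here effective integral Chow motives — together with the elementary fact that equal classes in the Grothendieck group of an idempotent-complete additive category yield a stable isomorphism. The final step is a second, genuinely serious difficulty: to cancel $P$. The category of integral Chow motives over $\mathbb{C}$ is not known to satisfy $A\oplus P\simeq B\oplus P\Rightarrow A\simeq B$, and may well lack this property, so the step could in principle be an outright obstruction. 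The approach I would take is to retain more than the $K_0$-identity: track the exceptional contributions along the common resolution $X\leftarrow Z\rightarrow Y$ and along a weak factorization of the induced birational correspondence into blow-ups and blow-downs with smooth centers, using the projective-bundle and Gysin formulas to identify each exceptional divisor's contribution, and exploit the crepancy condition $f^*\omega_X\simeq g^*\omega_Y$ to force an explicit matching of the correction motives on the two sides. If this can be organized so that $P$ is pinned to a shape for which cancellation is known — for instance a direct sum of Tate twists of motives of the centers — the summand can be deleted and the integral isomorphism $M(X)\simeq M(Y)$ follows. Whether $P$ admits such control is precisely the open point, which is why this statement appears here as a conjecture, supported but not proved by the constructions of this paper.
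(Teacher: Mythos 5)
This statement is not a theorem of the paper but a conjecture (due to Chin-Lung Wang) which the paper explicitly does \emph{not} prove; it only offers conditional evidence (Theorems~\ref{maintheoremintro} and~\ref{maintheoremintro2} and their corollaries). Your proposal, by your own admission, is a reduction of the conjecture to two open problems rather than a proof, so it cannot be counted as correct. Concretely, the two gaps are real and are exactly the ones the paper flags. First, the injectivity conjecture $I(\mathbb{C};\mathbb{Z})$ is open, and your suggested routes do not close it: over $\mathbb{C}$ the slices of geometric motives are not geometric (Ayoub), so the telescoping identity $[\,\cdot\,]=\sum_q[s_q(\cdot)]$ only makes sense after passing to a completed or slice-level $K_0$, and the injectivity of $K_0(\DM_{\textup{gm}}^{\textup{eff}}(\mathbb{C};\mathbb{Z}))$ into that larger group is precisely the statement to be proved, so the argument is circular; likewise no candidate separating motivic measure is produced. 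Second, the cancellation of $P$ cannot be achieved by any general principle: the paper points out (citing Example 32 of Chernousov--Merkurjev) that $\Chow^{\textup{eff}}(k;\mathbb{Z})$ is \emph{not} a Krull--Schmidt category, so $A\oplus P\simeq B\oplus P$ does not imply $A\simeq B$ integrally, and the paper explicitly notes that this line of argument therefore cannot prove the integral conjecture. Your hope of pinning $P$ to a cancellable shape (sums of Tate twists of motives of blow-up centers) via weak factorization is speculative; crepancy gives an equality of classes, not a canonical identification of the exceptional contributions, and no mechanism is offered that upgrades the $K_0$-identity to an isomorphism.

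The accurate assessment, which your final sentence essentially concedes, is that the proposal reproduces the paper's conditional evidence (equality in $K_0(\DM_{\textup{sl}}^{\textup{eff}})$ unconditionally, and stable isomorphism of Chow motives assuming $I(\mathbb{C};R)$ via Bondarko's weight structures) but leaves both genuinely hard steps --- $I(\mathbb{C};\mathbb{Z})$ and the removal of $P$ without a Krull--Schmidt or $t$-structure input --- untouched; even the paper's strongest conditional result in this direction (Theorem~\ref{firstmot}) only yields the \emph{rational} isomorphism $M(X)_{\mathbb{Q}}\simeq M(Y)_{\mathbb{Q}}$ under a motivic $t$-structure, not the integral statement.
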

We will come back to this conjecture later in the introduction.\\
\\
As a corollary of Theorem~\ref{maintheoremintro2}, we obtain the following.
\begin{corollary}
Suppose $k$ is field of characteristic zero and the injectivity conjecture $I(k;\mathbb{Z})$ is true. Then if $X$ and $Y$ are K-equivalent smooth projective $k$-varieties, $H^*(X^{an};\mathbb{Z})\simeq H^*(Y^{an};\mathbb{Z})$.
\end{corollary}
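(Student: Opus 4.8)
The plan is to deduce this from Theorem~\ref{maintheoremintro2} by pushing the motivic statement through an integral Betti realization and then cancelling the auxiliary motive. Since $k$ has characteristic zero its exponential characteristic is $p=1$, so $\mathbb{Z}[1/p]=\mathbb{Z}$ and the running hypotheses on $(k,R)$ are satisfied with $R=\mathbb{Z}$. Thus, assuming $I(k;\mathbb{Z})$, Theorem~\ref{maintheoremintro2} supplies an effective Chow motive $P$ and an isomorphism $M(X)\oplus P\simeq M(Y)\oplus P$ in $\DM^{\eff}_{\gm}(k;\mathbb{Z})$.

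Next I would fix an embedding $\sigma\colon k\hookrightarrow\mathbb{C}$ (possible in characteristic zero) and invoke the Betti realization $R_\sigma\colon \DM^{\eff}_{\gm}(k;\mathbb{Z})\to D(\mathbb{Z})$: a triangulated — in particular additive — functor that sends a geometric motive to a perfect complex of abelian groups and sends $M(X)$ to a complex computing $H_*(X^{an};\mathbb{Z})$ (equivalently, after dualizing, $H^*(X^{an};\mathbb{Z})$; by the universal coefficient theorem the graded group $H^*(-;\mathbb{Z})$ is determined by $H_*(-;\mathbb{Z})$, so it is immaterial which we use). Applying $R_\sigma$ to the isomorphism above and using that additive functors preserve finite direct sums gives $R_\sigma M(X)\oplus R_\sigma P\simeq R_\sigma M(Y)\oplus R_\sigma P$ in $D(\mathbb{Z})$. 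Passing to cohomology in each degree $n$ — again a finite-direct-sum-preserving operation — yields for every $n$ an isomorphism of finitely generated abelian groups $H^n(X^{an};\mathbb{Z})\oplus A^n\cong H^n(Y^{an};\mathbb{Z})\oplus A^n$, where $A^n:=H^n(R_\sigma P)$ is finitely generated because $P$ is geometric (and zero outside a bounded range).

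Finally I would use cancellation for finitely generated abelian groups: if $G\oplus A\cong H\oplus A$ with $G,H,A$ finitely generated, then comparing free ranks and then torsion subgroups — the latter being finite, hence cancellable by the Krull–Schmidt theorem — forces $G\cong H$. Applying this degreewise gives $H^n(X^{an};\mathbb{Z})\cong H^n(Y^{an};\mathbb{Z})$ for all $n$, that is, $H^*(X^{an};\mathbb{Z})\cong H^*(Y^{an};\mathbb{Z})$, which is the claim.

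I do not expect a genuine obstacle here: the entire substantive content is Theorem~\ref{maintheoremintro2}, and the rest is formal. The only points that demand a little care are (i) that one has at hand an integral Betti realization defined on $\DM^{\eff}_{\gm}(k;\mathbb{Z})$ which is additive and correctly identifies its value on $M(X)$ with the singular (co)homology of the analytification $X^{an}$ — this is standard — and (ii) the elementary fact that finitely generated abelian groups admit cancellation, so that the auxiliary term $A^n$ can be deleted degree by degree.
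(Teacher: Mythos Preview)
Your argument is correct and is essentially the same as the paper's: apply Theorem~\ref{maintheoremintro2} to get $M(X)\oplus P\simeq M(Y)\oplus P$, push through an integral Betti realization, and cancel using the structure theorem for finitely generated abelian groups. The only cosmetic difference is that the paper uses the contravariant realization $B_\sigma\colon \DM_{\gm}^{\eff}(k;\mathbb{Z})^{op}\to D^b(\mathbb{Z})$ directly and then invokes weights at the end to recover the grading, whereas you cancel degreewise from the start; both routes are fine.
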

Furthermore, note that Theorem~\ref{maintheoremintro2} has, as a particular consequence, the equality of Hodge numbers of K-equivalent complex varieties, which is a theorem due to Kontsevich \cite{Kontsevich}. Since the power of the integration of Voevodsky motives relies on the validity of the the injectivity conjecture, we cannot obtain this result unconditionally from our motivic construction. Instead, we specialize the steps in the construction of the theory to rational mixed Hodge structures and develop what we call \textit{mixed Hodge integration}. As we will see, this theory comes equipped with an injective homomorphism. From this, we unconditionally recover Kontsevich's result; we obtain that K-equivalent smooth projective $k$-varieties, $k$ of characteristic zero, have isomorphic rational Hodge structures. We prove the following.
\begin{theorem}If $k$ is a field of characteristic zero, and $X$ and $Y$ are K-equivalent smooth projective $k$-varieties, then they have isomorphic rational Hodge structures.
\end{theorem}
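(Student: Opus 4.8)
The plan is to carry out the argument behind Theorem~\ref{maintheoremintro} on the side of rational mixed Hodge structures, where --- crucially --- the analogue of the injectivity conjecture $I(k;R)$ is a theorem rather than a conjecture, so that the resulting $K_0$-identity can be promoted directly to an honest isomorphism (no Chow summand $P$ as in Theorem~\ref{maintheoremintro2} is needed). First I would reduce to the case $k\subseteq\mathbb{C}$: the varieties $X$ and $Y$, the connecting smooth $k$-variety $Z$, the proper birational morphisms $f\colon Z\to X$ and $g\colon Z\to Y$, and the isomorphism $f^*\omega_X\simeq g^*\omega_Y$ are all defined over a finitely generated subfield $k_0\subseteq k$; one then fixes an embedding $k_0\hookrightarrow\mathbb{C}$, which is all the statement requires since it asserts an isomorphism of the Hodge structures attached to a chosen complex model. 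Smoothness, properness, and K-equivalence survive this base change.

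The core is then \emph{mixed Hodge integration}: one mimics the construction of the motivic volume, replacing Voevodsky motives throughout by objects of $D^b(\mathrm{MHS}_{\mathbb{Q}})$ --- equivalently, one post-composes the motivic volume with the rational Hodge realization $R_{\Hdg}$, which is exact and, on geometric (and hence, one checks, on slice) motives, takes values in $D^b(\mathrm{MHS}_{\mathbb{Q}})$. Feeding in the K-equivalence roof and running exactly the computation behind Theorem~\ref{maintheoremintro} --- weak factorization to resolve $X\dashrightarrow Y$ into a chain of smooth blow-ups and blow-downs, the blow-up decomposition, the change-of-variables formula, and the cancellation of discrepancy terms forced by $f^*\omega_X\simeq g^*\omega_Y$ --- yields the equality of "Hodge volumes"; unwinding, this is the equality of Euler characteristics
\[
\sum_j(-1)^j\bigl[H^j(X(\mathbb{C});\mathbb{Q})\bigr]\;=\;\sum_j(-1)^j\bigl[H^j(Y(\mathbb{C});\mathbb{Q})\bigr]
\qquad\text{in }K_0(\mathrm{MHS}_{\mathbb{Q}}).
\]

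The new feature of the Hodge setting --- the reason $I(k;R)$ is not needed --- is injectivity. The group $K_0(\mathrm{MHS}_{\mathbb{Q}})$ is graded by weight via the exact functor $\gr^W_\bullet$, giving $K_0(\mathrm{MHS}_{\mathbb{Q}})\cong\bigoplus_n K_0(\mathrm{HS}^{\mathrm{pure}}_{\mathbb{Q},n})$, and for $X$ smooth projective $H^j(X(\mathbb{C});\mathbb{Q})$ is pure of weight $j$. So comparing the weight-$n$ component of the displayed identity isolates, for $n=j$, the relation $(-1)^j[H^j(X(\mathbb{C});\mathbb{Q})]=(-1)^j[H^j(Y(\mathbb{C});\mathbb{Q})]$ in $K_0(\mathrm{HS}^{\mathrm{pure}}_{\mathbb{Q},j})$. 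These Hodge structures are polarizable, and the category of polarizable pure $\mathbb{Q}$-Hodge structures of a fixed weight is semisimple (Deligne), so its $K_0$ is free on isomorphism classes of simple objects; hence $[A]=[B]$ implies $A\cong B$. Therefore $H^j(X(\mathbb{C});\mathbb{Q})\cong H^j(Y(\mathbb{C});\mathbb{Q})$ for every $j$, i.e. $X$ and $Y$ have isomorphic rational Hodge structures.

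I expect the hard part to be not this final bookkeeping but the mixed Hodge integration machinery on which it rests: setting up the Hodge analogues of the blow-up decomposition and of the change-of-variables formula for the relevant volume, and invoking weak factorization (Abramovich--Karu--Matsuki--W\l odarczyk, available in characteristic zero) so that the K-equivalence $X\dashrightarrow Y$ can be resolved into elementary birational modifications along smooth centers, at which point the hypothesis $f^*\omega_X\simeq g^*\omega_Y$ is exactly what makes the discrepancy contributions cancel --- the Hodge-theoretic shadow of the arguments of Kontsevich and Batyrev. A subsidiary technical point, needed only if one prefers the route that instead applies $R_{\Hdg}$ to Theorem~\ref{maintheoremintro}, is to verify that $R_{\Hdg}$ is compatible with Voevodsky's slice filtration to the extent that its restriction to $\DM_{\textup{sl}}^{\textup{eff}}(k;\mathbb{Q})$ lands in $D^b(\mathrm{MHS}_{\mathbb{Q}})$ with finite-dimensional cohomology, so that it induces a well-defined homomorphism $K_0(\DM_{\textup{sl}}^{\textup{eff}}(k;\mathbb{Q}))\to K_0(\mathrm{MHS}_{\mathbb{Q}})$; the Hodge-native construction sidesteps this.
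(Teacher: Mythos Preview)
Your overall architecture matches the paper's: run the argument of Theorem~\ref{mainint} on the Hodge side, use the injectivity result (Proposition~\ref{Hdginjectivity}) in place of the injectivity conjecture to land in $K_0(D^b(\mathrm{MHS}_{\mathbb{Q}}))$, and then use weight/semisimplicity to promote the $K_0$-identity to a degreewise isomorphism. Your endgame is in fact more precise than the paper's, which asserts somewhat loosely that $\mathrm{MHS}_{\mathbb{Q}}$ is semisimple; your reduction via $\gr^W_\bullet$ to polarizable pure Hodge structures of a fixed weight is the honest way to say this.

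There is, however, a real misconception about the mechanism. The computation behind Theorem~\ref{mainint} does \emph{not} use weak factorization or any blow-up decomposition, and neither does its Hodge analogue. The transformation rule (Theorem~\ref{changeofvar}, and its Hodge version Theorem~\ref{Hdgchangeofvar}) is proved directly for an arbitrary proper birational morphism of smooth varieties, via jet-scheme geometry: the key input is Proposition~\ref{impprop} (Denef--Loeser), which shows that on the order-$e$ locus for $K_{X'/X}$ the induced map on truncated arc spaces is a piecewise trivial $\mathbb{A}^e$-fibration. One then applies this transformation rule separately to the two legs $f\colon Z\to X$ and $g\colon Z\to Y$ of the K-equivalence roof, obtaining
\[
[M(X)]=\int_{Z_\infty}\mathbf{1}_k(\ord_{K_{Z/X}})[2\ord_{K_{Z/X}}]\,d\mu_Z,\qquad
[M(Y)]=\int_{Z_\infty}\mathbf{1}_k(\ord_{K_{Z/Y}})[2\ord_{K_{Z/Y}}]\,d\mu_Z,
\]
and these coincide because $f^*\omega_X\simeq g^*\omega_Y$ forces $K_{Z/X}=K_{Z/Y}$. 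There is no chain of blow-ups, no AKMW, and no need for characteristic zero at this step (the paper runs the same argument over $\mathbb{F}_q$ for the $\ell$-adic version). Avoiding weak factorization is precisely one of the selling points of the integration approach over Bittner-type arguments, so you should remove that ingredient from your outline; what remains is then exactly the paper's proof.
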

Of course, a special case of this corollary is when $X$ and $Y$ are birational Calabi-Yau complex varieties. Note that this is also a refinement of Batyrev's result on the equality of Betti numbers of birational smooth projective Calabi-Yau complex varieties \cite{Batyrev}.
\begin{remark}
Chenyang Xu and Qizheng Yin have an unpublished preprint in which they prove that K-equivalent smooth projective complex varieties have isomorphic integral singular cohomology groups. Furthermore, Mark McLean has recently claimed a proof that birational Calabi-Yau complex manifolds admit Hamiltonians with isomorphic Hamiltonian Floer cohomology algebras after a certain change of Novikov rings. From this, he deduces that birational Calabi-Yau complex projective manifolds have isomorphic integral singular cohomology groups. See Theorem 1.2 and Corollary 1.3 of~\cite{Mclean}.
\end{remark}
We also have the following arithmetic corollary.
\begin{corollary}
Suppose $k$ is a perfect field and the injectivity conjecture $I(k;\mathbb{Z}[1/p])$ is true. If $X$ and $Y$ are K-equivalent smooth projective $k$-varieties, then $H^*(X_{\overline{k}};\mathbb{Z}_{\ell})^{ss}\simeq H^*(Y_{\overline{k}};\mathbb{Z}_{\ell})^{ss}$ as graded Galois representations (up to semi-simplification).
\end{corollary}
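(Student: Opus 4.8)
The plan is to feed Theorem~\ref{maintheoremintro2} into the $\ell$-adic étale realization functor and then cancel the auxiliary summand. Assuming $I(k;\mathbb{Z}[1/p])$, Theorem~\ref{maintheoremintro2} produces a Chow motive $P$ and an isomorphism $M(X)\oplus P\simeq M(Y)\oplus P$ in $\DM_{\gm}^{\eff}(k;\mathbb{Z}[1/p])$. Fix a prime $\ell\neq p$ and apply the $\ell$-adic realization
\[\rho_\ell\colon \DM_{\gm}^{\eff}(k;\mathbb{Z}[1/p])\longrightarrow \D^b_c(\Spec k;\mathbb{Z}_\ell),\]
a triangulated (hence additive) symmetric monoidal functor which sends $M(Z)$, for $Z$ smooth projective, to $R\Gamma_{\et}(Z_{\overline{k}};\mathbb{Z}_\ell)$ equipped with its continuous $\mathrm{Gal}(\overline{k}/k)$-action (up to the shift, Tate twist, and dualization fixed once and for all by the normalization of $\rho_\ell$, none of which affect the conclusion below). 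Such a realization is available precisely because $\ell$ is not inverted in $\mathbb{Z}[1/p]$.

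Since $\rho_\ell$ is additive we get $\rho_\ell M(X)\oplus \rho_\ell P\simeq \rho_\ell M(Y)\oplus \rho_\ell P$ in $\D^b_c(\Spec k;\mathbb{Z}_\ell)$. Taking $i$-th cohomology objects — which over $\Spec k$ are just finitely generated continuous $\mathbb{Z}_\ell[\mathrm{Gal}(\overline{k}/k)]$-modules — and using that $H^i$ is an additive homological functor, we obtain for every $i$ an isomorphism of Galois modules
\[H^i_{\et}(X_{\overline{k}};\mathbb{Z}_\ell)\oplus H^i(\rho_\ell P)\;\simeq\;H^i_{\et}(Y_{\overline{k}};\mathbb{Z}_\ell)\oplus H^i(\rho_\ell P).\]
It remains to cancel the common summand $H^i(\rho_\ell P)$, and this is where semisimplification is needed: after $-\otimes_{\mathbb{Z}_\ell}\mathbb{Q}_\ell$ we land in the abelian category of finite-dimensional continuous $\mathbb{Q}_\ell$-representations of $\mathrm{Gal}(\overline{k}/k)$, in which the Jordan--H\"older theorem holds, so every representation has a well-defined semisimplification, $(V\oplus W)^{ss}\simeq V^{ss}\oplus W^{ss}$, and the commutative monoid of isomorphism classes of semisimple objects is free on the classes of the simple objects, hence cancellative. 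Applying $(-)^{ss}$ to the displayed isomorphism and cancelling $(H^i(\rho_\ell P)\otimes\mathbb{Q}_\ell)^{ss}$ yields $H^i_{\et}(X_{\overline{k}};\mathbb{Q}_\ell)^{ss}\simeq H^i_{\et}(Y_{\overline{k}};\mathbb{Q}_\ell)^{ss}$ for all $i$; reading "$(-)^{ss}$" on integral cohomology via a $\mathrm{Gal}$-stable composition series of $\mathbb{Z}_\ell$-lattices (equivalently, through the associated $\mathbb{Q}_\ell$-representation) and summing over $i$ gives the claimed isomorphism of graded Galois representations up to semisimplification.

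The only substantive input is Theorem~\ref{maintheoremintro2}, and through it the injectivity conjecture; the remaining arguments are formal once one invokes the standard $\ell$-adic realization of geometric motives together with the compatibility $\rho_\ell M(Z)\simeq R\Gamma_{\et}(Z_{\overline k};\mathbb{Z}_\ell)$. The one point requiring care — and the reason the statement is only asserted up to semisimplification — is that module cancellation genuinely fails for $\mathbb{Z}_\ell[\mathrm{Gal}(\overline k/k)]$-modules, so one must pass to $\mathbb{Q}_\ell$-coefficients and semisimplify before cancelling; moreover, since the realization of the Chow motive $P$ need not be concentrated in a single cohomological degree, the cancellation has to be carried out degree by degree rather than at the level of a class in $K_0$.
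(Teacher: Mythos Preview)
Your proof is correct and follows essentially the same approach the paper has in mind: the paper does not spell out a proof of this corollary (it reappears as Theorem~\ref{ladic}, also unproved), but the argument is clearly meant to mirror the proof of Theorem~\ref{singulartheorem}, namely apply Theorem~\ref{mainint2} to get $M(X)\oplus P\simeq M(Y)\oplus P$, push through the $\ell$-adic realization described at the start of Section~\ref{rationalintegration}, and cancel the common summand. Your extra care in explaining why one must pass to $\mathbb{Q}_\ell$ and semisimplify before cancelling, and in performing the cancellation degree by degree, is a welcome clarification of what the paper leaves implicit.
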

As in mixed Hodge integration, in order to obtain unconditional arithmetic results, we develop a theory of integration for $\ell$-adic sheaves over finite fields, and call it \textit{mixed $\ell$-adic integration}. This will come with an adequate injectivity result that allows us to prove the following weaker, but unconditional, result about ratoinal $\ell$-adic Galois representations.
\begin{theorem}
If $X$ and $Y$ are K-equivalent smooth projective $\mathbb{F}_q$-varieties, then $H^*(X_{\overline{k}};\overline{\mathbb{Q}}_{\ell})^{ss}\simeq H^*(Y_{\overline{k}};\overline{\mathbb{Q}}_{\ell})^{ss}$ as graded Galois representations (up to semi-simplification).
\end{theorem}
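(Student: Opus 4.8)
The plan is to specialize the integration machinery to the $\ell$-adic realization over $\mathbb{F}_q$, exactly as one does for rational mixed Hodge structures, and then to exploit that this realization takes values in a Grothendieck ring whose completion map is \emph{unconditionally} injective. Write $\mathcal{M}_\ell$ for the Grothendieck ring $K_0$ of the category of mixed $\ell$-adic sheaves on $\Spec\mathbb{F}_q$ --- continuous finite-dimensional $\overline{\mathbb{Q}}_\ell$-representations of $\mathrm{Gal}(\overline{\mathbb{F}}_q/\mathbb{F}_q)$ with Frobenius acting through Weil numbers. Compactly supported $\ell$-adic cohomology, $[S]\mapsto\sum_i(-1)^i[H^i_c(S_{\overline{\mathbb{F}}_q};\overline{\mathbb{Q}}_\ell)]$, is a ring homomorphism $K_0(\Var_{\mathbb{F}_q})\to\mathcal{M}_\ell$ sending $[\mathbb{A}^1]$ to the invertible element $\mathbb{L}:=[\overline{\mathbb{Q}}_\ell(-1)]$; it is continuous for the dimension filtration on the source and the weight filtration on $\mathcal{M}_\ell$, and so extends to $\widehat{K_0(\Var_{\mathbb{F}_q})[\mathbb{L}^{-1}]}\to\widehat{\mathcal{M}}_\ell$. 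Composing with the motivic measure produces the $\widehat{\mathcal{M}}_\ell$-valued measure of mixed $\ell$-adic integration.

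First I would extract the basic identity from K-equivalence. Choose a smooth $Z$ with proper birational morphisms $f\colon Z\to X$ and $g\colon Z\to Y$ such that $f^{*}\omega_X\simeq g^{*}\omega_Y$; this forces the effective exceptional divisors $K_{Z/X}$ and $K_{Z/Y}$ to coincide. The change of variables formula in motivic integration, applied with the trivial ideal sheaf, computes both $[X]$ and $[Y]$ --- up to the same power of $\mathbb{L}$, as $X$ and $Y$ share a dimension --- as $\int_{J_\infty Z}\mathbb{L}^{-\ord K_{Z/X}}\,d\mu_Z$, so $[X]=[Y]$ in $\widehat{K_0(\Var_{\mathbb{F}_q})[\mathbb{L}^{-1}]}$; note this uses neither projectivity nor semisimplification, and is what underlies the stated equality of zeta functions. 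Applying the $\ell$-adic realization and using that $X$ and $Y$ are proper, so $H^i_c=H^i$, we obtain
\[
\sum_i(-1)^i\bigl[H^i(X_{\overline{\mathbb{F}}_q};\overline{\mathbb{Q}}_\ell)\bigr]=\sum_i(-1)^i\bigl[H^i(Y_{\overline{\mathbb{F}}_q};\overline{\mathbb{Q}}_\ell)\bigr]\qquad\text{in }\widehat{\mathcal{M}}_\ell.
\]

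Next I would descend this to $\mathcal{M}_\ell$ and split it by cohomological degree. Every mixed $\ell$-adic Galois representation carries a finite weight filtration with pure graded pieces, so $\gr^W$ identifies $\mathcal{M}_\ell$ with $\bigoplus_{w\in\mathbb{Z}}K_0(\text{pure of weight }w)$ while the completion along the weight filtration is the corresponding product; thus $\mathcal{M}_\ell\hookrightarrow\widehat{\mathcal{M}}_\ell$ is injective, and the displayed identity already holds in $\mathcal{M}_\ell$. By Deligne's purity theorem $H^i(X_{\overline{\mathbb{F}}_q};\overline{\mathbb{Q}}_\ell)$ is pure of weight $i$ --- here one uses that $X$ is both smooth and projective --- so comparing weight-$i$ components gives $\bigl[H^i(X_{\overline{\mathbb{F}}_q};\overline{\mathbb{Q}}_\ell)\bigr]=\bigl[H^i(Y_{\overline{\mathbb{F}}_q};\overline{\mathbb{Q}}_\ell)\bigr]$ in the Grothendieck group of the abelian category of pure weight-$i$ continuous $\overline{\mathbb{Q}}_\ell$-representations of $\mathrm{Gal}(\overline{\mathbb{F}}_q/\mathbb{F}_q)$. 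Since that group is free on the isomorphism classes of simple objects, equality of classes is exactly isomorphism of semisimplifications; summing over $i$ yields $H^{*}(X_{\overline{\mathbb{F}}_q};\overline{\mathbb{Q}}_\ell)^{ss}\simeq H^{*}(Y_{\overline{\mathbb{F}}_q};\overline{\mathbb{Q}}_\ell)^{ss}$ as graded Galois representations.

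The main obstacle lies in the first step: one must check that the composite of the motivic measure with $\ell$-adic realization is a genuine $\widehat{\mathcal{M}}_\ell$-valued measure, i.e.\ that the realization is continuous for the filtrations in play --- an element of the $m$-th step of the dimension filtration realizes to a class supported in weights $\le -2m$, which is precisely Weil~II's bound on the weights of compactly supported cohomology --- and that the change of variables formula is preserved under this specialization; this is where most of the foundational work on mixed $\ell$-adic integration sits. What deserves emphasis is the \emph{unconditional} injectivity used in the third step: the analogous statement with $\mathbb{Q}$- or $\mathbb{Z}[1/p]$-coefficients is exactly what forces the conclusion of Theorem~\ref{maintheoremintro} to be conditional on the injectivity conjecture $I(k;R)$, and it is only the existence of the weight filtration on mixed $\ell$-adic sheaves --- a consequence of Weil~II --- that turns it into a theorem here.
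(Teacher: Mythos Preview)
Your proposal is correct and reaches the same conclusion, but the route differs from the paper's. You work with classical motivic integration in $\widehat{K_0(\Var_{\mathbb{F}_q})[\mathbb{L}^{-1}]}$ and then push forward along the $\ell$-adic realization, arguing that this realization is continuous for the dimension/weight filtrations and that the induced map $\mathcal{M}_\ell\hookrightarrow\widehat{\mathcal{M}}_\ell$ is injective because the weight graded splits $\mathcal{M}_\ell$ as a direct sum. The paper instead builds ``mixed $\ell$-adic integration'' intrinsically: it defines the measure $\mu_X^\ell$ directly via $\pi^{A_m}_!\mathbf{1}_{A_m}$ in the derived category of $\iota$-mixed complexes, constructs a categorical completion $\D_m^\wedge$ and a value group $\cal{M}^\ell(k)$, proves the injectivity $K_0(\D_{m,\geq 0})\hookrightarrow\cal{M}^\ell(k)$ (Proposition~\ref{ellinjectivity}) by the same slice-style argument used for Voevodsky motives, and re-runs the transformation rule in that setting. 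Both arguments ultimately pivot on the same mechanism---weight bounds from Weil~II give the continuity/injectivity that is only conjectural in the motivic case---and both finish by invoking purity for smooth projective varieties to separate degrees. Your approach is lighter in that it treats Denef--Loeser integration as a black box and only needs Grothendieck-group-level statements; the paper's approach buys uniformity with its three parallel theories (slice, motivic $t$-structure, Hodge) and avoids relying on the classical theory over $\mathbb{F}_q$, which is less standard in the literature than over characteristic zero.
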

As a particular consequence, two such $X$ and $Y$ have the same zeta functions. This result on zeta functions is also true without the projectivity condition as our proofs show that even without the projectivity condition, the classes of the $\ell$-adic Galois representations in the Grothendieck group of mixed $\ell$-adic sheaves are equal. If $\mathbb{F}_q$ admits resolution of singularities, then two birational Calabi-Yau $\mathbb{F}_q$-varieties have the same rational $\ell$-adic Galois representations (up to semi-simplification), and so also the same zeta functions.\\
\\
Recall the notion of a Krull-Schmidt category.
\begin{definition}[Krull-Schmidt category]\label{KScategory} An $R$-linear additive category $\cal{C}$ is said to be a \textup{Krull-Schmidt} category if every object is a finite direct sum of objects with local endomorphism rings.
\end{definition}
The Krull-Schmidt theorem says that an object in a Krull-Schmidt category has a local endomorphism ring if and only if it is indecomposable. Furthermore, it also says that any object is uniquely, up to permutation, a direct sum of indecomposable objects. Many examples of Krull-Schmidt categories come from abelian categories in which every object has finite length. A concrete example is the category of finitely generated modules over a finite $R$-algebra, where $R$ is a commutative Neotherian local complete ring (e.g. $\mathbb{Z}_{\ell}$). See \cite{AtiyahKS} for a discussion of the Krull-Schmidt theorem. It is not known if the category of effective Chow motives over a field $k$ with $R$-coefficients $\Chow^{\textup{eff}}(k;R)$ is a Krull-Schmidt category, even if $k$ is of characteristic zero and $R=\mathbb{Q}$. It is known, however, that integral Chow motives do not form a Krull-Schmidt category (see Example 32 of \cite{CherMerk} of Chernousov and Merkurejv).\\
\\
Using Theorem~\ref{maintheoremintro2}, we have the following corollary.
\begin{corollary}Suppose $\Chow^{\textup{eff}}(k;R)$ is a Krull-Schmidt category, and suppose the injectivity conjecture $I(k;R)$ is true. Then for $X$ and $Y$ K-equivalent smooth projective $k$-varieties, the Chow motives of $X$ and $Y$ in $\Chow^{\textup{eff}}(k;R)$ are equivalent.
\end{corollary}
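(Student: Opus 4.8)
The plan is to deduce this corollary directly from Theorem~\ref{maintheoremintro2} together with the Krull-Schmidt property. By Theorem~\ref{maintheoremintro2}, the hypothesis $I(k;R)$ gives us an isomorphism $M(X)\oplus P\simeq M(Y)\oplus P$ in $\DM_{\textup{gm}}^{\textup{eff}}(k;R)$ for some Chow motive $P$. Since $X$ and $Y$ are smooth projective, $M(X)$ and $M(Y)$ are themselves Chow motives (via the full embedding $\Chow^{\textup{eff}}(k;R)\hookrightarrow\DM_{\textup{gm}}^{\textup{eff}}(k;R)$ of Corollary 4.2.6 of \cite{Voevodsky00}), and hence the isomorphism $M(X)\oplus P\simeq M(Y)\oplus P$ is an isomorphism in $\Chow^{\textup{eff}}(k;R)$ by full faithfulness. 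So the entire problem is transported into $\Chow^{\textup{eff}}(k;R)$, which we are assuming is Krull-Schmidt.

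Next I would invoke the cancellation property that holds in any Krull-Schmidt category. In such a category, every object decomposes as a finite direct sum of indecomposables with local endomorphism rings, uniquely up to permutation and isomorphism, by the Krull-Schmidt theorem recalled after Definition~\ref{KScategory}. Writing $M(X)$, $M(Y)$, and $P$ each as such a direct sum, the isomorphism $M(X)\oplus P\simeq M(Y)\oplus P$ becomes an identification of two multisets of indecomposable summands: the multiset of summands of $M(X)$ together with those of $P$ equals the multiset of summands of $M(Y)$ together with those of $P$. Cancelling the (finitely many) summands coming from $P$ from both sides — which is legitimate precisely because the decomposition is unique up to permutation — yields that $M(X)$ and $M(Y)$ have the same multiset of indecomposable summands, hence $M(X)\simeq M(Y)$ in $\Chow^{\textup{eff}}(k;R)$.

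I do not expect a serious obstacle here; the argument is essentially a formal consequence of Theorem~\ref{maintheoremintro2} and the standard cancellation statement for Krull-Schmidt categories. The only point requiring a little care is the compatibility of the ambient category structures: one must check that the direct sum $M(X)\oplus P$ formed in $\DM_{\textup{gm}}^{\textup{eff}}(k;R)$ agrees with the one formed in $\Chow^{\textup{eff}}(k;R)$, and that $P$ can indeed be taken to be an \emph{effective} Chow motive so that it lives in $\Chow^{\textup{eff}}(k;R)$ rather than merely in the category of (non-effective) Chow motives. Both are addressed by the fact that the embedding $\Chow^{\textup{eff}}(k;R)\hookrightarrow\DM_{\textup{gm}}^{\textup{eff}}(k;R)$ is additive and fully faithful, and by tracking effectivity through the proof of Theorem~\ref{maintheoremintro2}; once this is noted, the corollary follows immediately.
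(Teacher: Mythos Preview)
Your argument is correct and matches the paper's approach: the paper states this corollary (as Corollary~\ref{Kimuramainint2}) simply as ``a direct consequence of Theorem~\ref{mainint2}'' without further elaboration, and what you have written is precisely the intended unpacking of that phrase---apply Theorem~\ref{maintheoremintro2} to get $M(X)\oplus P\simeq M(Y)\oplus P$ in $\Chow^{\textup{eff}}(k;R)$ and then cancel $P$ using the Krull-Schmidt property. Your care about the additive full faithfulness of the embedding and the effectivity of $P$ is appropriate and handled correctly.
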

As stated above, Example 32 of \cite{CherMerk} shows that $\Chow^{\textup{eff}}(k;\mathbb{Z})$ is not a Krull-Schmidt category, and so this line of argument will not prove (the integral version of) Conjecture~\ref{Wang2conjecture}.\\
\\
On the other hand, if there is a motivic $t$-structure on rational geometric Voevodsky motives (see subsection~\ref{mot} for a discussion of motivic $t$-structures), then $\Chow(k;\mathbb{Q})$ is a Krull-Schmidt category. In fact, we show that if there is a motivic $t$-structure, there is another theory of integration based on the motivic $t$-structure that allows us to remove the assumption on the validity of the injectivity conjecture. See Section~\ref{rationalintegration} for details. Using this theory, we prove the following theorem.
\begin{theorem}\label{firstmot}
Suppose the motivic $t$-structure conjecture is true for $\DM_{\textup{gm}}(k;\mathbb{Q})$. If $X$ and $Y$ are two K-equivalent smooth projective $k$-varieties, then $M(X)_{\mathbb{Q}}\simeq M(Y)_{\mathbb{Q}}$.
\end{theorem}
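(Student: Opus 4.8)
The plan is to deduce Theorem~\ref{firstmot} from three ingredients: the motivic-$t$-structure integration constructed in Section~\ref{rationalintegration}, Bondarko's weight structure on $\DM_{\textup{gm}}(k;\mathbb{Q})$ (\cite{Bondarko2}, \cite{Bondarko}), and the fact recalled in the introduction (and subsection~\ref{mot}) that a motivic $t$-structure forces $\Chow(k;\mathbb{Q})$ to be a Krull--Schmidt category in the sense of Definition~\ref{KScategory}. Note first that since $X$ and $Y$ are smooth projective, $M(X)_{\mathbb{Q}}$ and $M(Y)_{\mathbb{Q}}$ lie in the heart of Bondarko's weight structure, i.e. they are Chow motives with $\mathbb{Q}$-coefficients; thus the statement is really one about the category $\Chow(k;\mathbb{Q})$ sitting fully faithfully inside $\DM_{\textup{gm}}(k;\mathbb{Q})$.

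First I would run the integration theory of Section~\ref{rationalintegration} on the crepant datum of the $K$-equivalence: a smooth $k$-variety $Z$ together with proper birational morphisms $f\colon Z\to X$ and $g\colon Z\to Y$ with $f^{*}\omega_X\simeq g^{*}\omega_Y$. The change-of-variables formula in that theory, combined with the crepancy identity, yields an equality of motivic integrals; and because the motivic $t$-structure makes the heart $\cal{MM}(k;\mathbb{Q})$ a finite-length (in particular Noetherian and Artinian) abelian category, the group $K_0(\DM_{\textup{gm}}(k;\mathbb{Q}))\cong K_0(\cal{MM}(k;\mathbb{Q}))$ is free abelian on the simple objects and the weight/dimension filtration used to define the completion in which the integral lives is separated. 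Hence the completion map out of $K_0(\DM_{\textup{gm}}(k;\mathbb{Q}))$ is injective, and the equality of integrals descends to the honest equality $[M(X)_{\mathbb{Q}}]=[M(Y)_{\mathbb{Q}}]$ in $K_0(\DM_{\textup{gm}}(k;\mathbb{Q}))$. This is the unconditional replacement, available once the motivic $t$-structure is granted, for the use of the injectivity conjecture $I(k;\mathbb{Q})$ in Theorems~\ref{maintheoremintro} and~\ref{maintheoremintro2}.

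Next I would identify the target group via Bondarko's weight structure. For a bounded weight structure on a triangulated category, the natural map from the split Grothendieck group of the heart to the Grothendieck group of the whole category is an isomorphism, so here $K_0^{\oplus}(\Chow(k;\mathbb{Q}))\xrightarrow{\ \sim\ }K_0(\DM_{\textup{gm}}(k;\mathbb{Q}))$. Therefore the previous step reads $[M(X)_{\mathbb{Q}}]=[M(Y)_{\mathbb{Q}}]$ in the \emph{split} Grothendieck group $K_0^{\oplus}(\Chow(k;\mathbb{Q}))$. Since $\Chow(k;\mathbb{Q})$ is Krull--Schmidt, this split Grothendieck group is free abelian on isomorphism classes of indecomposables, so an equality of classes in it is equivalent to an isomorphism of objects; hence $M(X)_{\mathbb{Q}}\simeq M(Y)_{\mathbb{Q}}$, as desired. (Equivalently, one may run the argument exactly as in Theorem~\ref{maintheoremintro2} to get $M(X)_{\mathbb{Q}}\oplus P\simeq M(Y)_{\mathbb{Q}}\oplus P$ for some Chow motive $P$, and then cancel $P$ using that a Krull--Schmidt category has the cancellation property.)

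I expect the main obstacle to be the first step: setting up the $t$-structure-based integration carefully enough that its output is guaranteed to lie in $K_0(\DM_{\textup{gm}}(k;\mathbb{Q}))$ itself rather than only in a completion, which is precisely the point at which the finite-length property of $\cal{MM}(k;\mathbb{Q})$ must be used and which replaces the injectivity conjecture. A secondary but also essential point is the implication ``motivic $t$-structure $\Rightarrow$ $\Chow(k;\mathbb{Q})$ is Krull--Schmidt'': this runs through the semisimplicity of numerical motives $\cal{M}_{\textup{num}}(k;\mathbb{Q})$ (Jannsen) together with the coincidence of homological and numerical equivalence and control of the ideal of numerically trivial correspondences, all of which are governed by the existence of the $t$-structure; I would isolate this as a lemma in subsection~\ref{mot} and then merely invoke it here.
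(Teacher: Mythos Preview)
Your proposal is correct and the integration step matches the paper's argument (Proposition~\ref{motinjectivity} together with the transformation rule, exactly as in the proof of Theorem~\ref{motivicmain}). However, your passage from the equality $[M(X)_{\mathbb{Q}}]=[M(Y)_{\mathbb{Q}}]$ in $K_0(\DM_{\textup{gm}}(k;\mathbb{Q}))$ to the isomorphism $M(X)_{\mathbb{Q}}\simeq M(Y)_{\mathbb{Q}}$ differs from the paper's. You go through Bondarko's weight structure to identify this $K_0$ with the split Grothendieck group of $\Chow(k;\mathbb{Q})$, and then invoke the Krull--Schmidt property of $\Chow(k;\mathbb{Q})$ (which you correctly flag as a lemma to be isolated and proved). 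The paper instead proves a direct cancellation result, Proposition~\ref{propmain}, working entirely inside the $t$-structure: one uses the theorem of the heart to pass to $K_0(\DM_{\textup{gm}}(k;\mathbb{Q})^{\heartsuit})$, the assumed semisimplicity of each $^{\mu}H^aM(Z)_{\mathbb{Q}}$ for $Z$ smooth projective together with Jannsen's theorem to turn the $K_0$ equality into an isomorphism of numerical motives, and then the weight grading (Beilinson, Propositions~1.4 and~1.7 of \cite{Beilinson}) to separate out $^{\mu}H^aM(X)_{\mathbb{Q}}\simeq{}^{\mu}H^aM(Y)_{\mathbb{Q}}$ degree by degree and reassemble. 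Your route has the advantage of being structurally identical to the slice-filtration argument of Theorem~\ref{mainint2} and Corollary~\ref{Kimuramainint2}, making the analogy transparent; the paper's route avoids the detour through the Chow weight structure and the separate Krull--Schmidt lemma by exploiting the $t$-structure and its weight filtration directly, which is arguably cleaner since the motivic $t$-structure is already the standing hypothesis.
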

The existence of such a motivic $t$-structure for rational geometric Voevodsky motives is a central conjecture in Voevodsky's theory of motives, and if it is not true then his theory is inadequate for many envisioned purposes. As particular consequences, this theorem implies that the existence of a suitable motivic $t$-structure on $\DM_{\textup{gm}}(k;\mathbb{Q})$ implies that K-equivalent smooth projective $k$-varieties have isomorphic rational noncommutative motives, rational $\ell$-adic Galois representations, rational Hodge structures (when defined), rational Chow groups, and rational algebraic K-groups.\\
\\
Recall the following standard definition.
\begin{definition}[D-equivalence] Two $k$-varieties $X$ and $Y$ are said to be D-equivalent if their bounded derived categories of coherent sheaves are equivalent as $k$-linear triangulated categories.
\end{definition}
Another consequence of our first integration of Voevodsky motives via the slice filtration (see Section~\ref{construction} for the construction) is that we can prove a theorem pertaining to a conjecture of Orlov \cite{Orlov}. Orlov's conjecture states that D-equivalent smooth projective complex varieties have equivalent rational motives. Indeed, we have the following. 
\begin{theorem}
Suppose conjecture $I(\mathbb{C};R)$ is true. If $X$ and $Y$ are D-equivalent smooth projective complex varieties such that either $\kappa(X)=\dim X$ (general type) or $\kappa(X,-K_X)=\dim X$, then $M(X)\oplus P\simeq M(Y)\oplus P$ in $\DM_{\textup{gm}}^{\textup{eff}}(\mathbb{C};R)$ for some Chow motive $P\in\Chow^{\textup{eff}}(\mathbb{C};R)$. 
\end{theorem}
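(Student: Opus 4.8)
The plan is to reduce the statement to Theorem~\ref{maintheoremintro2}. The mechanism is that, under either of the two positivity hypotheses on $X$, a $\mathbb{C}$-linear exact equivalence between the bounded derived categories of coherent sheaves of two smooth projective varieties forces those varieties to be K-equivalent; this is Kawamata's theorem relating D-equivalence to K-equivalence. Granting it, the conclusion is immediate: one applies Theorem~\ref{maintheoremintro2} over $k=\mathbb{C}$, using the hypothesis that $I(\mathbb{C};R)$ holds. So the genuinely new content of the statement is entirely carried by Theorem~\ref{maintheoremintro2}, and what remains is to recall how D-equivalence yields K-equivalence here.

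First I would invoke Orlov's representability theorem to present the given equivalence $D^b\textup{Coh}(X)\xrightarrow{\ \sim\ }D^b\textup{Coh}(Y)$ as a Fourier--Mukai transform with a kernel $\mathcal{E}\in D^b\textup{Coh}(X\times Y)$. Following Kawamata, one then studies the support $Z$ of a suitable cohomology sheaf of $\mathcal{E}$. Since the equivalence intertwines the Serre functors---on $D^b\textup{Coh}(X)$ and $D^b\textup{Coh}(Y)$ these are $(-)\otimes\omega_X[\dim X]$ and $(-)\otimes\omega_Y[\dim Y]$---and sends point-like objects to point-like objects, one obtains that the two projections $p_X\colon Z\to X$ and $p_Y\colon Z\to Y$ are surjective and generically finite, and that on a common resolution $\widetilde{Z}$ of $Z$ (with induced maps $\widetilde{p}_X,\widetilde{p}_Y$) the pulled-back canonical classes are related by $\widetilde{p}_X^{\,*}K_X+E=\widetilde{p}_Y^{\,*}K_Y+E'$ with $E,E'$ effective and exceptional. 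The hypothesis $\kappa(X)=\dim X$, i.e. bigness of $K_X$ (respectively $\kappa(X,-K_X)=\dim X$, bigness of $-K_X$), is exactly what forces $p_X$---and then symmetrically $p_Y$---to be birational rather than merely generically finite: positivity of the (anti)canonical class bounds the relevant $\Hom$-dimensions and rules out higher-degree or contracted components of $Z$. Hence $Z$ is the closure of the graph of a birational map $X\dashrightarrow Y$, and, after a further resolution if necessary, the relation on canonical classes becomes $\widetilde{p}_X^{\,*}\omega_X\simeq\widetilde{p}_Y^{\,*}\omega_Y$, exhibiting $X$ and $Y$ as K-equivalent in the precise sense recalled after Theorem~\ref{maintheoremintro}.

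With such a K-equivalence between $X$ and $Y$ in hand, I would then directly apply Theorem~\ref{maintheoremintro2} with $k=\mathbb{C}$: since $I(\mathbb{C};R)$ is assumed, $M(X)\oplus P\simeq M(Y)\oplus P$ in $\DM_{\textup{gm}}^{\textup{eff}}(\mathbb{C};R)$ for some Chow motive $P\in\Chow^{\textup{eff}}(\mathbb{C};R)$, which is what we wanted.

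The main obstacle is not located in our own argument. The delicate step---producing a common resolution with matching canonical bundles out of a Fourier--Mukai kernel, with the multiplicity bookkeeping controlled by the (anti)canonical positivity---is precisely the content of Kawamata's D-equivalence/K-equivalence theorem, and reproving it from scratch would be the bulk of any self-contained treatment. On our side the only thing to check is that Kawamata's output is a K-equivalence in exactly the form demanded by Theorem~\ref{maintheoremintro2}, namely a smooth projective $Z$ with proper birational morphisms to $X$ and $Y$ along which $\omega_X$ and $\omega_Y$ pull back to the same line bundle; this is indeed how his theorem is stated.
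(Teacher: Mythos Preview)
Your proposal is correct and takes essentially the same approach as the paper: cite Kawamata's theorem that D-equivalence implies K-equivalence under either positivity hypothesis, then apply Theorem~\ref{maintheoremintro2} (equivalently Theorem~\ref{mainint2}) over $k=\mathbb{C}$ with the injectivity conjecture $I(\mathbb{C};R)$. Your sketch of Kawamata's argument is more detailed than what the paper gives---it simply quotes Part~(2) of Theorem~2.3 of \cite{Kawamata} as a black box---but the logical structure is identical.
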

On the other hand, assuming $R=\mathbb{Q}$, we can obtain the same result by replacing $I(\mathbb{C};\mathbb{Q})$ with the existence of a motivic $t$-structure for rational Voevodsky motives; in fact, we could also assume that $P=0$ when there is a motivic $t$-structure, thus proving that the existence of a motivic $t$-structure implies Orlov's conjecture for Fano and general type complex varieties. Note that if Chow motives with $R$ coefficients form a Krull-Schmidt category, then we can assume that $P=0$. Again, this would follow rationally if there is a motivic $t$-structure.
\begin{remark}
It is known that if $X$ and $Y$ are D-equivalent $k$-varieties with ample or anti-ample canonical bundles, then $X$ and $Y$ are isomorphic \cite{BO2}. When they have ample canonical bundles for example, we are in the setting of varieties of general type. 
\end{remark}
The proof of the above is an easy consequence of our main theorem. Indeed, for smooth projective complex varieties of general type, Kawamata \cite{Kawamata} has proved that D-equivalence implies K-equivalence. See Theorem~\ref{motivicOrlov}. As far as the author knows, this is the first time the existence of motivic $t$-structures or Chow motives forming a Krull-Schmidt category has been connected to this conjecture of Orlov.\\
\\
Before we describe the history behind and applications of classical motivic integration, we summarize here the four different theories of integration that are constructed in this paper.
\begin{enumerate}[(A)]
	\item \textbf{Integration of Voevodsky motives using the slice filtration}: this is based on the slice filtration. The construction itself is unconditional, and comes equipped with a group homomorphism
	\[c_R:K_0(\DM_{\textup{sl}}^{\textup{eff}}(k;R))\rightarrow \cal{M}(k;R)\]
	that is injective on the image of
\[K_0(\DM_{\textup{gm}}^{\textup{eff}}(k;R))\rightarrow K_0(\DM_{\textup{sl}}^{\textup{eff}}(k;R)),\]
where $\cal{M}(k;R)$ is the abelian group in which our integrals will land. The main drawback of this construction is that it is not known if the latter morphism is injective. Though the author conjectures this to be true, not having this prevents us from deducing equality of classes of geometric Voevodsky motives in $K_0(\DM_{\textup{gm}}^{\textup{eff}}(k;R))$.
\item \textbf{Integration of rational Voevodsky motives using a motivic $t$-structure}: this is based on the hypothetical motivic $t$-structure on rational geometric Voevodsky motives. This theory comes equipped with an injective homomorphism
\[c_R^{mot}:K_0(\DM_{\textup{gm}}^{\textup{eff}}(k;\mathbb{Q}))\rightarrow \cal{M}^{\mot}(k;\mathbb{Q}),\]
where $\cal{M}^{\mot}(k;R)$ is the group in which the integrals will land. This theory is based on a filtration of $\DM_{\textup{gm}}^{\textup{eff}}(k;\mathbb{Q})$ that relies on the existence of a motivic $t$-structure, and so, contrary to the previous theory, the construction itself is conditional. Furthermore, it applies to rational motives as we cannot hope for the existence of a motivic $t$-structure on integral geometric Voevodsky motives in general (See Proposition 4.3.8 of \cite{Voevodsky00} for when the base field $k$ is such that there is a conic over $k$ without $k$-rational points). Though conditional, this allows us to demonstrate new connections between the existence of motivic $t$-structures and questions in birational geometry.
\item \textbf{Mixed $\ell$-adic integration}: this is a theory of integration using mixed $\ell$-adic sheaves. It comes equipped with an injective homomorphism
\[K_0(D^b_{m\geq 0})\rightarrow\cal{M}^{\ell}(k).\]
Two drawback are that it works over finite fields, and that it works rationally. On the positive side, this theory is unconditional, and has the advantage that it allows us to unconditionally deduce arithmetic results related to the birational geometry of K-equivalent varieties. Notably, it allows us to show that K-equivalent smooth projective varieties over finite fields have the $\ell$-adic Galois representations up to semisimplication, and consequently also the same zeta functions (even true without projectivity). This is a result inaccessible to classical motivic integration.
\item \textbf{Mixed Hodge integration}: this is a theory of integration for mixed Hodge modules. The construction is unconditional, and allows us to obtain results about mixed Hodge structures when we are given two K-equivalent smooth projective varieties over a field of characteristic zero. This theory also comes equipped with an injective homomorphism
\[K_0(D^b_{\Hdg,\geq 0})\rightarrow\cal{M}^{\Hdg}(k).\]
The advantage of this theory over the motivic theories is that it allows us to obtain unconditional results about mixed Hodge structures.
\end{enumerate}
Let us recall why the idea of motivic integration is important by recalling the history behind the theory. To mathematicians as well as physicists, the classification of Calabi-Yau varieties is important. Batyrev proved that two birationally equivalent smooth projective Calabi-Yau complex varieties have the same Betti numbers \cite{Batyrev}. This result dating to 1996 was used by Beauville to explain the Yau-Zaslow formula counting the number of rational curves on K3 surfaces \cite{Beauville}. Such results are also used to bound Hodge numbers of elliptic Calabi-Yau varieties \cite{DiCerbo}, used to prove the log canonical threshold formula \cite{Mustata1},\cite{Mustata2}. Motivic integration has also been used to prove transfer principles that allow the study of the Fundamental Lemma in the Langlands program \cite{LoeCluck2}, \cite{Hales}.\\
\\
Roughly, Batyrev's proof of the equality of Betti numbers of birational smooth projective Calabi-Yau complex varieties goes as follows. Choose a lift of the two varieties to the maximal compact subring $B$ of an appropriately chosen local field, and suppose its maximal ideal is $\mathfrak{q}$ and its residue field is isomorphic to the finite field $\mathbb{F}_{q}$ of characteristic $p$. Count the number of $\mathbb{F}_{q}$-points on the varieties using $p$-adic integration with respect to canonical measures induced by gauge forms on the two varieties (gauge forms exist because the varieties are assumed to be Calabi-Yau). By showing that the $p$-adic integrals in this setup can be computed on dense open subsets of the varieties, he showed via the transformation rule for Haar integrals that the reduction modulo $\mathfrak{q}$ of the models have the same number of $\mathbb{F}_{q}$-points. By doing the same process with cyclotomic extensions of $B$, he showed that they have the same number of $\mathbb{F}_{q^n}$-points. Concisely, the zeta functions of the reductions modulo $\mathfrak{q}$ are the same. By using the Weil conjectures proved by Deligne \cite{WeilI}, \cite{WeilII}, he concluded that they have the same Betti numbers. The arithmetic nature of this proof for a complex-geometric result suggests the possibly of the existence of a deeper underlying motivic reason for the validity of this theorem.\\
\\
On the other hand, it was conjectured that two such varieties in fact have the same Hodge numbers. This generalizes the result of Batyrev; indeed, given this result, the decomposition theorem in Hodge theory implies the result of Batyrev. On December 7 1995, Kontsevich gave a lecture at Orsay envisioning a theory of motivic integration to prove the stronger statement that two such complex varieties have the same Hodge numbers \cite{Kontsevich}. In fact, he proved the following more general result.
\begin{theorem}[Kontsevich \cite{Kontsevich}, Denef-Loeser \cite{DenLoe}]\label{Konttheorem}If $X$ and $Y$ are K-equivalent smooth projective complex varieties, then they have the same Hodge numbers.\end{theorem}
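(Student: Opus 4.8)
The plan is to reproduce the Kontsevich--Denef--Loeser argument via motivic integration on arc spaces, followed by the Hodge--Deligne realization. Write $K_0(\Var_{\mathbb{C}})$ for the Grothendieck ring of complex varieties, $\mathbb{L}=[\mathbb{A}^1_{\mathbb{C}}]$ for the Lefschetz class, $\mathcal{M}_{\mathbb{C}}=K_0(\Var_{\mathbb{C}})[\mathbb{L}^{-1}]$, and $\widehat{\mathcal{M}}_{\mathbb{C}}$ for the completion of $\mathcal{M}_{\mathbb{C}}$ along the decreasing ``virtual dimension'' filtration. First I would construct the motivic measure $\mu_W$ on the arc space $\mathcal{L}(W)=\lim_m\mathcal{L}_m(W)$ of a smooth $\mathbb{C}$-variety $W$ of pure dimension $d$: a cylinder $\pi_m^{-1}(S)$, with $S\subseteq\mathcal{L}_m(W)$ constructible and $\pi_m\colon\mathcal{L}(W)\to\mathcal{L}_m(W)$ the truncation, is assigned the measure $[S]\,\mathbb{L}^{-d(m+1)}\in\widehat{\mathcal{M}}_{\mathbb{C}}$; the dimension estimates for jet schemes show this is independent of $m$ and extends to a countably additive measure on the algebra of measurable subsets, with $\mu_W(\mathcal{L}(W))=[W]\,\mathbb{L}^{-d}$. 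For a measurable $\alpha\colon\mathcal{L}(W)\to\mathbb{Z}$ one then sets $\int_{\mathcal{L}(W)}\mathbb{L}^{-\alpha}\,d\mu_W=\sum_{n\in\mathbb{Z}}\mu_W(\alpha^{-1}(n))\,\mathbb{L}^{-n}$ whenever this series converges in $\widehat{\mathcal{M}}_{\mathbb{C}}$.

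The heart of the argument is the change of variables formula: for a proper birational morphism $h\colon W'\to W$ of smooth $\mathbb{C}$-varieties, with $\operatorname{Jac}_h\subseteq\mathcal{O}_{W'}$ the Jacobian ideal (the $0$-th Fitting ideal of $\Omega_{W'/W}$, whose vanishing divisor is the relative canonical divisor $K_{W'/W}$), and any measurable $A\subseteq\mathcal{L}(W)$, one has $\mu_W(A)=\int_{\mathcal{L}(h)^{-1}(A)}\mathbb{L}^{-\operatorname{ord}_t\operatorname{Jac}_h}\,d\mu_{W'}$, where $\operatorname{ord}_t$ is the vanishing order of an arc along the divisor. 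I would establish this following Denef--Loeser, by stratifying $\mathcal{L}(W')$ by the value of $\operatorname{ord}_t\operatorname{Jac}_h$ and, via a Hensel-type argument, computing the fibre dimensions of $\mathcal{L}(h)\colon\mathcal{L}(W')\to\mathcal{L}(W)$ on each stratum. Applying this to the common resolution supplied by the $K$-equivalence hypothesis --- proper birational $f\colon Z\to X$ and $g\colon Z\to Y$, with canonical divisors chosen compatibly via the birational map $X\dashrightarrow Y$ so that the hypothesis $f^*\omega_X\simeq g^*\omega_Y$ amounts to $K_{Z/X}=K_{Z/Y}$ --- and taking $A=\mathcal{L}(X)$, resp. $A=\mathcal{L}(Y)$, with $n=\dim X=\dim Y$, gives
\[
[X]\,\mathbb{L}^{-n}=\int_{\mathcal{L}(Z)}\mathbb{L}^{-\operatorname{ord}_t\operatorname{Jac}_f}\,d\mu_Z
\qquad\text{and}\qquad
[Y]\,\mathbb{L}^{-n}=\int_{\mathcal{L}(Z)}\mathbb{L}^{-\operatorname{ord}_t\operatorname{Jac}_g}\,d\mu_Z.
\]
Since $K$-equivalence means precisely that $K_{Z/X}=K_{Z/Y}$, equivalently $\operatorname{Jac}_f=\operatorname{Jac}_g$ as ideal sheaves on $Z$, the two integrals coincide; as $\mathbb{L}$ is invertible in $\mathcal{M}_{\mathbb{C}}\subseteq\widehat{\mathcal{M}}_{\mathbb{C}}$, this yields $[X]=[Y]$ in $\widehat{\mathcal{M}}_{\mathbb{C}}$.

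Finally I would extract the Hodge numbers using the Hodge--Deligne $E$-polynomial. The assignment $[V]\mapsto E(V;u,v)=\sum_{p,q}\big(\sum_i(-1)^i\dim_{\mathbb{C}}\gr_F^p\gr^W_{p+q}H^i_c(V;\mathbb{C})\big)u^p v^q$ is a ring homomorphism $K_0(\Var_{\mathbb{C}})\to\mathbb{Z}[u,v]$ with $E(\mathbb{L})=uv$, so it factors through $\mathcal{M}_{\mathbb{C}}$ and, taking the dimension filtration to the filtration by degree, extends continuously to $\widehat{\mathcal{M}}_{\mathbb{C}}$; applying the extension to $[X]=[Y]$ gives $E(X;u,v)=E(Y;u,v)$ in $\mathbb{Z}[u,v]$. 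For a smooth projective variety $V$ the Hodge structure on $H^i(V;\mathbb{Q})$ is pure of weight $i$, so $E(V;u,v)=\sum_{p,q}(-1)^{p+q}h^{p,q}(V)\,u^p v^q$; comparing coefficients of $u^p v^q$ yields $h^{p,q}(X)=h^{p,q}(Y)$ for all $p,q$, which is the assertion. The step I expect to be the real obstacle is the foundational one: making the motivic measure rigorous (measurability of cylinders, jet-scheme dimension bounds guaranteeing well-definedness and countable additivity in $\widehat{\mathcal{M}}_{\mathbb{C}}$, convergence of the defining series) and, above all, proving the change of variables formula with its fibre-dimension computation for $\mathcal{L}(Z)\to\mathcal{L}(X)$ in terms of $\operatorname{ord}_t\operatorname{Jac}_f$; by contrast the $K$-equivalence hypothesis enters only as the one-line identity $\operatorname{Jac}_f=\operatorname{Jac}_g$. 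As an alternative to the $E$-polynomial step one may run the same construction with $\mathcal{M}_{\mathbb{C}}$ replaced by a completed Grothendieck group of rational mixed Hodge structures carrying an injective ``motivic volume'', which is essentially the mixed Hodge integration developed in Section~\ref{construction}, and which produces the isomorphism of Hodge structures (not just of Hodge numbers) directly.
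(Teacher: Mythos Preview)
Your proposal is correct and reproduces precisely the classical Kontsevich--Denef--Loeser argument that the paper itself summarizes in the introduction immediately following the statement of Theorem~\ref{Konttheorem}: motivic integration on arc spaces valued in $\widehat{\cal{M}}_{\mathbb{C}}$, the change of variables formula for proper birational morphisms, the identity $K_{Z/X}=K_{Z/Y}$ from K-equivalence, and extraction of Hodge numbers via the Deligne--Hodge polynomial.

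The paper's \emph{own} proof of this result, however, is not the classical one but the mixed Hodge integration developed in Section~\ref{Hodge} (not Section~\ref{construction}, which is the slice-filtration theory). There the measure $\mu_X^{\Hdg}$ takes values in a completion $D^{b,\wedge}_{\Hdg}$ built from the bounded derived category of polarizable mixed $\mathbb{Q}$-Hodge structures, and the key point is the injectivity of $K_0(D^b_{\Hdg,\geq 0})\rightarrow\cal{M}^{\Hdg}(k)$ (Proposition~\ref{Hdginjectivity}). Running the same transformation-rule argument you outline, but in this Hodge-theoretic value group, yields $[H^*(X;\mathbb{Q})]=[H^*(Y;\mathbb{Q})]$ in $K_0(D^b(\textup{MHS}_{\mathbb{Q}}))$; semisimplicity of polarizable $\mathbb{Q}$-Hodge structures and purity then give an actual isomorphism of graded $\mathbb{Q}$-Hodge structures, from which equality of Hodge numbers follows. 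The advantage of the paper's route is twofold: it avoids the completion $\widehat{\cal{M}}_{\mathbb{C}}$ (whose relationship to $K_0(\Var_{\mathbb{C}})[\mathbb{L}^{-1}]$ is unclear) by working in a category where the analogue of the injectivity conjecture holds unconditionally, and it yields the stronger conclusion of isomorphic rational Hodge structures rather than merely equal Hodge numbers. You correctly anticipate this alternative in your final sentence.
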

Kontsevich's idea of using motivic integration in the proof of his theorem is based on the following observation. Hodge numbers are encoded in the Deligne-Hodge polynomial. In the case of a smooth projective complex variety $X$, the polynomial is given by
\[E(X)=\sum_{i,j}(-1)^{i+j}h^i(X;\Omega_X^j)u^iv^j\in\mathbb{Z}[u,v].\]
For general smooth complex varieties, this is defined using mixed Hodge structures. The Deligne-Hodge polynomial has the property that it can be viewed as a function on the Grothendieck ring of complex varieties $K_0(\Var_{\mathbb{C}})$. In his lecture in Orsay, Kontsevich envisioned a theory of motivic integration that allows us to prove that the classes of two K-equivalent complex varieties in a completion $\widehat{\cal{M}}_k$ of $K_0(\Var_{\mathbb{C}})[\mathbb{L}^{-1}]$ ($\mathbb{L}:=[\mathbb{A}^1]$ the Lefschetz motive) are equal. The Deligne-Hodge polynomial extends to this completion, from which the result follows.\\
\\
In 1996, a preprint of Denef and Loeser containing such a construction of motivic integration was circulating in the mathematical community. This construction was published in 1999 \cite{DenLoe}. The advantage of this proof is that it not only circumvents the usage of the Weil conjectures proved by Deligne and used by Batyrev, but it gives a stronger result in the sense that two such complex varieties have the same value on any function on the aforementioned completion of the Grothendieck ring of varieties with the Lefschetz motive inverted. We remark that later on Chin-Lung Wang in 2002 \cite{Wang} and Tetsushi Ito in 2004 \cite{Tetsushi} independently proved this result on Hodge numbers by using $p$-adic Hodge theory to refine Batyrev's proof \cite{Tetsushi}.\\
\\
Though classical motivic integration has developed in various directions since its invention, we now discuss some of its defects. In the classical theory of motivic integration, motivic integrals take values in the \textit{completion} $\widehat{\cal{M}}_k$ for suitable $k$ because we want to be able to talk about infinite series and their convergence. Since not all motivic measures factor through this completion, we cannot deduce all the results we would like such a theory to imply. For example, the counting measure $C_q:K_0(\Var_{\mathbb{F}_q})[\mathbb{L}^{-1}]\rightarrow\mathbb{Q}$ does not factor through $\widehat{\cal{M}}_{\mathbb{F}_q}$. Indeed, $q^n/\mathbb{L}^n\rightarrow 0$ in the topology of the completion while $C_q(q^n/\mathbb{L}^n)=1$. This is one reason it is difficult to deduce arithmetic information using classical motivic integration. Therefore, often it is needed to know that the motivic integrals computed take values is a proper subring of $\widehat{\cal{M}}_k$.  Unfortunately, it is unknown if the natural map from $K_0(\Var_k)[\mathbb{L}^{-1}]$ to its completion $\widehat{\cal{M}}_k$ is injective. On the other hand, there have been improvements to classical motivic integration in characteristic zero in this direction; however, the integrals still take values in the localization of $K_0(\Var_k)$ with respect to the classes $\mathbb{L}$ and $\mathbb{L}^n-1$, $n\geq 1$ \cite{LoeCluck}. Such problems have been central to the theories of motivic integration developed using the Grothendieck ring of varieties.\\
\\
The elements of $K_0(\Var_k)$ are called \textit{virtual motives} because this ring encapsulates the idea of cutting and pasting $k$-varieties. On the other hand, it is by now clear that the formalism of mixed motives \`a la Voevodsky is a better framework for thinking about motives because they are $\infty$-\textit{categories} carrying a very rich structure. For example, they are stable $\infty$-categories that come with six functor formalisms. Moreover, there are realization functors to derived categories of $\ell$-adic sheaves, mixed Hodge structures, etc. The fact that Voevodsky motives are categorical as opposed to ring-theoretic roughly means that they are richer in information and flexibility. It is this flexibility that allows us to construct theories of integration for Voevodsky motives that have better properties in some respects. Though the two theories of integration for Voevodsky motives rely on conjectures regarding motives, the integration for mixed $\ell$-adic sheaves and mixed Hodge structures are unconditional. They could be viewed as new tools for birational geometry.\\
\\
It should be pointed out that it is not clear to the author how the integrations in this paper relate to the classical theory beyond both being the same in philosophy. One main goal of motivic integration theories based on $K_0(\Var_k)[\mathbb{L}^{-1}]$ is proving equality of classes in this ring; however, as previously mentioned, to a great extent this is not achieved. Contrary to classical motivic integration, we are able to show that our mixed $\ell$-adic and mixed Hodge integration theories come equipped with the natural analogous injective maps (see Propositions~\ref{ellinjectivity} and ~\ref{Hdginjectivity}).\\
\\
If we want to study concrete structures like $\ell$-adic Galois representations and mixed Hodge structures, having equality in either $K_0(\DM_{\textup{gm}}^{\textup{eff}}(k;R))$ (or $K_0(\DM_{\textup{gm}}(k;R))$) or $K_0(\Var_k)[\mathbb{L}^{-1}]$ is good enough. However, there are also some differences between the classical theory and our theories. Indeed, the natural morphism 
\[K_0(\Var_{\mathbb{C}})[\mathbb{L}^{-1}]\rightarrow K_0(\DM_{\textup{gm}}(\mathbb{C};\mathbb{Q}))\]
induced by sending $X$ to $\pi^X_!\mathbf{1}_X$, where $\pi^X:X\rightarrow\Spec\mathbb{C}$ is the structure morphism of $X$, has a nontrivial kernel. For example, for $g\geq 2$, there are abelian $g$-folds $A$ such that $\textup{End}(A)\simeq\mathbb{Z}$ and $A\not\simeq\widehat{A}:=\textup{Pic}^0(A)$. $[A]-[\widehat{A}]$ is a nonzero element of $K_0(\Var_{\mathbb{C}})[\mathbb{L}^{-1}]$ that maps to zero because $A$ and $\widehat{A}$ are isogenous and so have equivalent rational Chow motives. (For details, see \cite{injcounterexample}.) As a result, equality in $K_0(\Var_{\mathbb{C}})[\mathbb{L}^{-1}]$ is more refined than equality in $K_0(\DM_{\textup{gm}}(\mathbb{C};\mathbb{Q}))$. Therefore, the ultimate goal of proving equality of virtual motives is more ambitious than proving equality in the Grothendieck group of geometric Voevodsky motives. That being said, what we show in this paper is that using Voevodsky motives, mixed $\ell$-adic sheaves, or mixed Hodge structures for our theories of integration allows us to circumvent some of the deficiencies of classical motivic integration that prevent us from obtaining stronger concrete results in geometry and arithmetic. Additionally, we are able to relate previously unrelated conjectures to one another.\\
\\
This paper grew out of an attempt to construct a categorified motivic integration taking values in some refinement of $\DM_{\textup{gm}}(k;R)$ and not just in a Grothendieck-group-like construction associated to a triangulated (or stable $\infty$-)category. This categorified version, though, requires a better understanding of the geometric side of motivic integration, and is an ongoing project. If we manage to develop such a categorified motivic integration, then all the results in this paper would become unconditional. An integral version of such a theory would prove the aforementioned philosophically important Conjecture~\ref{Wang2conjecture} of Chin-Lung Wang. The author noticed a subtlety regarding compact objects in a previous version of this paper that forced the author to introduce slice motives, construct a theory of integration based on motivic $t$-structures, and develop mixed $\ell$-adic integration as well as mixed Hodge integration in order to preserve some of the concrete applications unconditionally. The gap was based on the fact that the slice filtration does not preserve geometric objects for large enough based fields. See Remark~\ref{badslice} and Ayoub's~\cite{Ayoubslice} for details.\\
\\
\textit{Acknowledgment.} I would like to thank Professor Denis-Charles Cisinski for discussions regarding previous versions of Lemma~\ref{fullfaithfulness} and Proposition~\ref{injectivity}, in particular for explaining to me the theorem of Thomason used in Lemma~\ref{fullfaithfulness}. Furthermore, I thank him for his hospitality and generosity while I was at the University of Regensburg in 2017. I would also like to thank Simon Pepin Lehalleur for pointing out a mistake in the applications section of a previous version of this paper, and to thank
Professor Charles Weibel for comments on previous versions of this paper. Furthermore, I would like to thank Professor Mikhail Bondarko for pointing out the connection to his work on weight structures. This project was supported by Princeton University and SFB1085:Higher invariants.
\section{Conventions and Preliminaries}\label{prelim}
Throughout this paper, $k$ is a perfect field with exponential characteristic $p$ and $R$ is a commutative ring in which $p$ is invertible. For us, a Calabi-Yau $k$-variety $X$ is one whose canonical bundle $\omega_X$ is trivial. Henceforth, we will use the language of $\infty$-categories as developed by Lurie in \cite{HTT} and \cite{HA}. In particular, homotopy (co)limits will be called (co)limits. Note that if $\cal{C}_0$ is an ordinary category, we abuse notation and still write $\cal{C}_0$ instead of its (classical) nerve $N(\cal{C}_0)$ which is an $\infty$-category. Classical (co)limits in $\cal{C}_0$ correspond to $\infty$-categorical/homotopy (co)limits in $N(\cal{C}_0)$, and so this convention should not cause confusion when dealing with ordinary categories.\\
\\
In the rest of this section, we recall the definition of Jet schemes and prove some of its properties. Furthermore, we discuss the essentials of the stable $\infty$-categories of mixed motives in the sense of Voevodsky, and discuss their properties that will be important in this paper. Everything in this section is known, and is discussed here only for the convenience of the reader and to fix the notation.\subsection{Jet Schemes} In this subsection, we give the definition and some of the properties of Jet schemes.
\begin{proposition}
Suppose $X$ is a $k$-scheme of finite type. For each $n\geq 0$, there is a $k$-scheme $\cal{J}_n(X)$ of finite type representing the functor
\[Z\mapsto\textup{Mor}_k(Z\times_k\Spec k[\![t]\!]/(t^{n+1}),X).\]
\end{proposition}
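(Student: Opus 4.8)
The plan is to reduce to the case where $X$ is affine by a Zariski gluing argument, and then to construct $\cal{J}_n(X)$ explicitly by expanding the defining equations of $X$ in powers of $t$.

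The key elementary input is that $Z\times_k\Spec k[\![t]\!]/(t^{n+1})\to Z$ is a homeomorphism on underlying topological spaces: the section $\Spec k\hookrightarrow\Spec k[\![t]\!]/(t^{n+1})$ is a nilpotent thickening, so after base change to $Z$ the closed immersion $Z\hookrightarrow Z\times_k\Spec k[\![t]\!]/(t^{n+1})$ is a nilpotent thickening, hence a homeomorphism, and the projection is its inverse. Consequently a $k$-morphism $g\colon Z\times_k\Spec k[\![t]\!]/(t^{n+1})\to X$ factors through an open $U\subseteq X$ exactly when the ``center'' $Z\to Z\times_k\Spec k[\![t]\!]/(t^{n+1})\xrightarrow{g}X$ does. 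From this I get two things. First, the functor $F_X\colon Z\mapsto\textup{Mor}_k(Z\times_k\Spec k[\![t]\!]/(t^{n+1}),X)$ is a Zariski sheaf on $\Sch_k$, since $-\times_k\Spec k[\![t]\!]/(t^{n+1})$ takes a Zariski cover of $Z$ to one of $Z\times_k\Spec k[\![t]\!]/(t^{n+1})$. Second, representability is Zariski-local on $X$: if $X=\bigcup_i U_i$ and each $F_{U_i}$ is represented by some $\cal{J}_n(U_i)$, then $F_{U_i\cap U_j}$ is represented by an open subscheme of $\cal{J}_n(U_i)$ (namely the preimage of $U_i\cap U_j$ under the $t$-truncation $\cal{J}_n(U_i)\to\cal{J}_0(U_i)=U_i$), and these patch to a $k$-scheme representing $F_X$; finiteness of type is preserved because $X$ has a finite affine cover.

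For $X=\Spec A$ affine, I would fix a presentation $A=k[x_1,\dots,x_m]/(f_1,\dots,f_r)$. For an affine test scheme $Z=\Spec B$,
\[F_X(\Spec B)=\textup{Hom}_{k\textup{-alg}}\bigl(A,\,B[t]/(t^{n+1})\bigr),\]
which amounts to a choice of $b_i=\sum_{l=0}^n b_{i,l}t^l\in B[t]/(t^{n+1})$, $1\le i\le m$, with $f_j(b_1,\dots,b_m)=0$ for all $j$. Writing $f_j(b_1,\dots,b_m)=\sum_{l=0}^n F_{j,l}\bigl((b_{i,l'})\bigr)\,t^l$ with $F_{j,l}\in k[x_{i,l'}:1\le i\le m,\,0\le l'\le n]$ an explicit polynomial (the degree-$l$ part of a Hasse--Taylor expansion of $f_j$, whose coefficients lie in $k$), one sets
\[\cal{J}_n(X):=\Spec\,k\bigl[x_{i,l}:1\le i\le m,\,0\le l\le n\bigr]\big/\bigl(F_{j,l}:1\le j\le r,\,0\le l\le n\bigr),\]
a $k$-scheme of finite type. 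By construction there is a natural bijection $F_X(\Spec B)\cong\textup{Mor}_k(\Spec B,\cal{J}_n(X))$ for affine $B$; since both functors are Zariski sheaves on $\Sch_k$ this extends to all test schemes, so $\cal{J}_n(X)$ represents $F_X$. Independence of the chosen presentation is automatic from uniqueness of representing objects, which is what makes the gluing in the previous paragraph canonical.

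The only non-formal point is the topological lemma about $Z\times_k\Spec k[\![t]\!]/(t^{n+1})\to Z$ and its corollary that forming jets commutes with passing to open subschemes; granting that, the rest — the explicit equations, the sheaf property, and the gluing — is routine bookkeeping, and I expect no real obstacle there.
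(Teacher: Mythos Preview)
Your proof is correct and is the standard argument for representability of jet schemes; the paper itself gives no proof but simply cites Greenberg \cite{Green1}, \cite{Green2}. Your explicit affine construction via Taylor-expanding the defining equations, together with the nilpotent-thickening observation that lets you glue over an open cover of $X$, is exactly the approach one finds in most modern expositions, so there is nothing to compare.
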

For a proof, see \cite{Green1} and \cite{Green2} of Greenberg. For $n\geq m$, $k[\![t]\!]/(t^{n+1})\rightarrow k[\![t]\!]/(t^{m+1})$ induces a morphism of $k$-schemes
\[\pi^n_m:\cal{J}_n(X)\rightarrow\cal{J}_m(X).\]
These morphisms are \textit{affine}, and so we have a $k$-scheme
\[\cal{J}_{\infty}(X):=\varprojlim_n\cal{J}_n(X).\]
For brevity, we often write $X_{\infty}$ instead of the cumbersome $\cal{J}_{\infty}(X)$. We shall call this the \textit{Jet scheme} of $X$, and we let
\[\pi^X_n:\cal{J}_{\infty}(X)\rightarrow\cal{J}_n(X)\]
be the natural projection. We know that $\cal{J}_{\infty}(X)$ represents the functor
\[Z\mapsto\textup{Mor}_k(Z\times_k\Spec k[\![t]\!],X),\]
and that $\pi^X_n$ is induced by the ring homomorphism $k[\![t]\!]\rightarrow k[\![t]\!]/(t^{n+1})$.\\
\\
Jet schemes are higher order versions of tangent bundles. In particular, if $X$ is a $k$-variety, then $\cal{J}_1(X)\simeq TX$ is the tangent bundle of $X$.\\
\\
We now recall some known properties of Jet schemes.
\begin{proposition}
Let $X\rightarrow Y$ be an \'etale morphism of $k$-schemes of finite type. The following natural square is a pullback square:
\[\xymatrix{\cal{J}_m(X) \ar@{->}[r] \ar@{->}[d] & \cal{J}_m(Y) \ar@{->}[d] \\ X \ar@{->}[r] & Y.}\]
In other words, $\cal{J}_m(X)\simeq \cal{J}_m(Y)\times_YX$.
\end{proposition}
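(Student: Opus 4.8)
The plan is to verify the assertion on functors of points, using that a $k$-scheme is determined by its restriction to affine test schemes and that the comparison at issue is exactly an instance of the infinitesimal lifting property of \'etale morphisms. For a $k$-scheme $Z$ write $Z_m:=Z\times_k\Spec k[t]/(t^{m+1})$, so that $\cal{J}_m(X)(Z)=\textup{Mor}_k(Z_m,X)$ and likewise for $Y$. The surjection $k[t]/(t^{m+1})\to k$ with $t\mapsto 0$ induces a closed immersion $\iota_Z\colon Z\hookrightarrow Z_m$ whose ideal is generated by the image of $t$ and is therefore nilpotent; in particular $\iota_Z$ is a homeomorphism on underlying spaces. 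The structure maps $\cal{J}_m(X)\to X$ and $\cal{J}_m(Y)\to Y$ are $\pi^m_0$ (note $\cal{J}_0=\id$), which on $Z$-points send $f$ to $f\circ\iota_Z$, while $\cal{J}_m(X)\to\cal{J}_m(Y)$ is postcomposition with the given \'etale morphism $h\colon X\to Y$. Hence the natural comparison map $\cal{J}_m(X)\to\cal{J}_m(Y)\times_Y X$ sends, on $Z$-points, a morphism $f\colon Z_m\to X$ to the pair $(h\circ f,\,f\circ\iota_Z)\in\textup{Mor}_k(Z_m,Y)\times_{\textup{Mor}_k(Z,Y)}\textup{Mor}_k(Z,X)$.

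First I would reduce to showing this comparison map is bijective on $A$-points for every $k$-algebra $A$, i.e. with $Z=\Spec A$ and $Z_m=\Spec A[t]/(t^{m+1})$; this suffices since both sides are $k$-schemes and a natural transformation of their functors of points that is an isomorphism on affines is an isomorphism of schemes. An element of the target is precisely a commutative square
\[
\xymatrix{ Z \ar[r] \ar[d]_{\iota_Z} & X \ar[d]^{h} \\ Z_m \ar[r] & Y, }
\]
and a preimage is a diagonal filler $Z_m\to X$ restricting along $\iota_Z$ to the given top arrow and composing with $h$ to the given bottom arrow. Since $\iota_Z$ is a nilpotent thickening of affine schemes and $h$ is \'etale, hence formally \'etale, such a filler exists and is unique: existence gives surjectivity and uniqueness gives injectivity of the comparison map. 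Therefore $\cal{J}_m(X)(Z)\to(\cal{J}_m(Y)\times_Y X)(Z)$ is bijective for all affine $Z$, the comparison map is an isomorphism of $k$-schemes, and the displayed square is a pullback.

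The only mild subtlety is that formal \'etaleness is customarily phrased against square-zero extensions, whereas $\iota_Z$ is a general nilpotent thickening. This is handled by factoring $\iota_Z$ through the closed subschemes of $Z_m$ cut out by the descending chain $(t)\supseteq(t^2)\supseteq\cdots\supseteq(t^{m+1})=0$, each consecutive step being a square-zero extension, and applying the unique lifting property of $h$ stepwise; the composite filler and its uniqueness follow. Apart from this bookkeeping the argument is purely formal, so I expect no genuine obstacle: the content is just the standard fact that \'etale morphisms are formally \'etale, together with the observation that jet schemes are defined precisely so that their points are morphisms out of the nilpotent thickenings $Z_m$.
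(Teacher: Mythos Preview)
Your proof is correct and follows essentially the same approach as the paper: both verify the isomorphism on functors of points by identifying a $Z$-point of the pullback with a commutative square and then invoking formal \'etaleness of $X\to Y$ to produce a unique diagonal filler. Your version is slightly more careful in reducing to affine $Z$ and in factoring the nilpotent thickening into square-zero steps, but the content is identical.
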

\begin{proof}
We show the isomorphism on the level of the corresponding functor of points. Precisely, we want to show that the two functors
\[\textup{Mor}_k(-,\cal{J}_m(X))\left(\simeq\textup{Mor}_k(-\times_k\Spec k[\![t]\!]/(t^{m+1}),X)\right)\]
and
\begin{eqnarray*}\textup{Mor}_k(-,\cal{J}_m(Y)\times_YX)&\Big(\simeq&\textup{Mor}_k(-,\cal{J}_m(Y))\times_{\textup{Mor}_k(-,Y)}\textup{Mor}_k(-,X)\\ &\simeq& \textup{Mor}_k(-\times_k\Spec k[\![t]\!]/(t^{m+1}),Y)\times_{\textup{Mor}_k(-,Y)}\textup{Mor}_k(-,X)\Big)
\end{eqnarray*}
are equivalent. Let $Z$ be a $k$-scheme and consider the diagram
\[\xymatrix{X \ar@{->}[r] & Y\\ Z \ar@{->}[u]^{p} \ar@{->}[r] & Z\times_k\Spec k[\![t]\!]/(t^{m+1}). \ar@{->}[u]_{\theta} \ar@{.>}[ul]_{\gamma}}\]
Since $X\rightarrow Y$ is \'etale, and so formally \'etale, there is a one-to-one correspondence between $\gamma\in\textup{Mor}_k(Z\times_k\Spec k[\![t]\!]/(t^{m+1}),X)$ as in the diagram above, and morphisms $\theta$ and $p$ making the above diagram commute. The conclusion follows. 
\end{proof}
From this, we obtain the following important corollary.
\begin{corollary}
Let $X$ be a smooth $k$-scheme of finite type of pure dimension $d$. Then $\cal{J}_m(X)$ is an $\mathbb{A}^{dm}$-bundle over $X$. In particular, $\cal{J}_m(X)$ is smooth of pure dimension $d(m+1)$. Consequently, $\pi^{m+1}_m:\cal{J}_{m+1}(X)\rightarrow\cal{J}_m(X)$ is an $\mathbb{A}^d$-bundle.
\end{corollary}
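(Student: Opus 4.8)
\emph{Proof proposal.} The plan is to reduce everything to the case of affine space, combining the étale base change of the preceding proposition with the Zariski-local coordinatization of smooth schemes. The first step is the direct computation that $\cal{J}_m(\mathbb{A}^d)\cong\mathbb{A}^{d(m+1)}$: a $k$-morphism $Z\times_k\Spec k[\![t]\!]/(t^{m+1})\to\mathbb{A}^d$ is the same datum as a $d$-tuple of elements of $\Gamma(Z,\mathcal{O}_Z)[t]/(t^{m+1})$, equivalently a $d(m+1)$-tuple of elements of $\Gamma(Z,\mathcal{O}_Z)$, so $\cal{J}_m(\mathbb{A}^d)$ represents $Z\mapsto\Gamma(Z,\mathcal{O}_Z)^{d(m+1)}$. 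Under this identification the structure morphism $\cal{J}_m(\mathbb{A}^d)\to\mathbb{A}^d$ (that is, $\pi^m_0$) is the linear projection onto the first $d$ coordinates, exhibiting it as the trivial $\mathbb{A}^{dm}$-bundle, and the transition morphism $\pi^{m+1}_m\colon\cal{J}_{m+1}(\mathbb{A}^d)\to\cal{J}_m(\mathbb{A}^d)$ is the linear projection $\mathbb{A}^{d(m+2)}\to\mathbb{A}^{d(m+1)}$ forgetting the top $d$ coordinates, i.e. the trivial $\mathbb{A}^d$-bundle.

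Since $X$ is smooth of pure dimension $d$, it admits a Zariski-open cover $\{U_i\}$ together with étale morphisms $u_i\colon U_i\to\mathbb{A}^d$. Applying the preceding proposition to $u_i$ gives $\cal{J}_m(U_i)\cong\cal{J}_m(\mathbb{A}^d)\times_{\mathbb{A}^d}U_i$, which by the first step is isomorphic over $U_i$ to $U_i\times\mathbb{A}^{dm}$. Applying the same proposition to the étale open immersion $U_i\hookrightarrow X$ gives $\cal{J}_m(U_i)\cong\cal{J}_m(X)\times_XU_i$. Combining the two, the restriction of the structure map $\cal{J}_m(X)\to X$ over each $U_i$ is isomorphic to the projection $U_i\times\mathbb{A}^{dm}\to U_i$; hence $\cal{J}_m(X)\to X$ is a Zariski-locally trivial $\mathbb{A}^{dm}$-bundle. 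Since $X$ is smooth of pure dimension $d$, it follows at once that $\cal{J}_m(X)$ is smooth of pure dimension $d+dm=d(m+1)$.

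For the last assertion I would restrict to the cover $\{U_i\}$ once more: writing $q\colon\cal{J}_m(X)\to X$ for the structure map, the opens $q^{-1}(U_i)$ form a Zariski-open cover of $\cal{J}_m(X)$, and the preimage of $q^{-1}(U_i)$ under $\pi^{m+1}_m$ is $\cal{J}_{m+1}(U_i)$ (again by the preceding proposition applied to $U_i\hookrightarrow X$). Under the identifications $\cal{J}_m(U_i)\cong U_i\times\mathbb{A}^{dm}$ and $\cal{J}_{m+1}(U_i)\cong U_i\times\mathbb{A}^{d(m+1)}$ of the second step, the morphism $\pi^{m+1}_m$ becomes $\id_{U_i}\times\bigl(\mathbb{A}^{d(m+1)}\to\mathbb{A}^{dm}\bigr)$, whose second factor is the linear projection of the first step, a trivial $\mathbb{A}^d$-bundle; hence $\pi^{m+1}_m$ is a Zariski-locally trivial $\mathbb{A}^d$-bundle. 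The only real work here is bookkeeping: one must check that the identification $\cal{J}_m(\mathbb{A}^d)\cong\mathbb{A}^{d(m+1)}$ is compatible with both the structure maps to $\mathbb{A}^d$ and the transition maps $\pi^n_m$, so that the two pullback squares glue consistently. Beyond the preceding proposition and the Zariski-local structure theorem for smooth schemes, no further input is needed, so I do not anticipate a genuine obstacle.
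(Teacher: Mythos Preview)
Your proposal is correct and follows essentially the same approach as the paper: reduce to $X=\mathbb{A}^d$ via the local factorization of smooth morphisms as étale-over-affine-space, invoke the preceding étale base-change proposition, and handle $\mathbb{A}^d$ by the direct functor-of-points computation. The paper's proof is a two-line sketch of exactly this strategy, whereas you have spelled out the bookkeeping for the transition maps and the compatibility of identifications; but the content is the same.
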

\begin{proof}
It suffices to check this locally. By the \'etale invariance above and the fact that every smooth morphism is locally a composition of an \'etale morphism followed by an affine projection, it suffices to check this for $X=\mathbb{A}^d$, which is a simple computation.
\end{proof}
\subsection{Mixed Motives}\label{mixedmotives}
In the mid-eighties, Beilinson and Deligne conjectured the existence of a triangulated category with a $t$-structure whose heart is the conjectural abelian category of mixed motives. Voevodsky constructed triangulated categories with the hope that (with rational coefficients) their full subcategory of constructable objects is the derived category of the hypothetical abelian category of mixed motives. Though this property is, as of today, conjectural, these triangulated categories have had many applications. For example, the Bloch-Kato conjecture about the relation of Galois cohomology to Milnor K-theory was proved by Voevodsky \cite{Voe1},\cite{Voe2},\cite{Voe3},\cite{Voe4},\cite{Voe5}. We now have a theory of motivic cohomology, a spectral sequence from motivic cohomology to algebraic K-theory (analogue of the Atiyah-Hirzebruch spectral sequence for complex K-theory), and a six functor formalism among many other things. These tools give us a flexibility that allows us to study motivic phenomena within a very rich framework. In our case, the theory of Voevodsky motives is indispensable.\\
\\
In this subsection, we briefly recall the definition of the main stable $\infty$-category of Voevodsky motives. We will then discuss a part of the six functor formalism, localization sequences, and purity, all of which will be essential to our work. Our exposition is terse, and so we recommend the reader to look at \cite{CisDeg1} and \cite{CisDeg2} for the basic definitions and theorems.\\
\\
There are many variants of the construction of motives in the spirit of Voevodsky. One way the stable $\infty$-category of Voevodsky motives is constructed is using smooth correspondences. It is defined using the Nisnevich topology, partly because algebraic K-theory satisfies Nisnevich descent and certain motivic cohomology groups give us Chow groups. We say that a morphism $f:X\rightarrow Y$ is \textit{completely decomposed} at $y\in Y$ if there is an $x\in X$ above $y$ such that the residual field extension $k(y)\rightarrow k(x)$ is an isomorphism. A Nisnevich covering of an $S$-scheme $Y$ in this topology is a finite family $\{f_i:U_i\rightarrow Y\}$ of \'etale $S$-morphisms with the property that for every point $y\in Y$, there is a $j$ such that $f_j$ is completely decomposed at $y$. This gives us the site $(\Sm/S)_{Nis}$ of smooth $S$-schemes with the Nisnevich topology. In this setting, a Nisnevich sheaf of $R$-modules \textit{with transfers} is a contravariant functor on the category of correspondences $\textup{Cor}_{S}$ to the category of $R$-modules that is a Nisnevich sheaf once we restrict it to $\Sm/S$. Let us denote this category of Nisnevich sheaves of $R$-modules with transfers by $\textup{Sh}^{\textup{tr}}(S;R)$. Consider the derived $\infty$-category of chain complexes of $R$-modules $\textup{D}(\textup{Sh}^{\textup{tr}}(S;R))$ on the category of Nisnevich sheaves with transfers, and invert the chain complexes of the form
\[\hdots\rightarrow 0\rightarrow h_{X\times\mathbb{A}^1}\rightarrow h_X\rightarrow 0\rightarrow\hdots,\]
where $h_T$ denotes the representable Nisnevich sheaf with transfers associated to $T$. This gives us the stable $\infty$-category $\DM^{\textup{eff}}(S;R)$ of \textit{effective Voevodsky motives}. The object in $\DM^{\textup{eff}}(S;R)$ associated to $h_X$ will be denoted by $M_S(X)$, and will be called the motive of $X$. Voevodsky motives in general are obtained by stabilizing with respect to the Tate twist, which we now describe. The structure morphism $\mathbb{P}^1_S\rightarrow S$ induces the morphism of motives
\[M_S(\mathbb{P}^1_S)\rightarrow M_S(S).\]
Its fiber shifted by $[-2]$ is denoted by $\mathbf{1}_S(1)$ and called the \textit{Tate twist}. Stabilizing with respect to the Tate twist gives us
\[\DM(S;R):=\colim_{n}\left(\DM^{\textup{eff}}(S;R)\xrightarrow{-\otimes\mathbf{1}_S(1)}\DM^{\textup{eff}}(S;R)\xrightarrow{-\otimes\mathbf{1}_S(1)}\hdots\right),\]
where the colimit is taken in the $\infty$-category of presentable $\infty$-categories with morphisms left adjoint functors. This is the stable $\infty$-category of \textit{Voevodsky motives} (without the effectivity condition). The smallest stable subcategory of $\DM^{\textup{eff}}(S;R)$ containing $M_S(X)$ is denoted by $\DM^{\textup{eff}}_{\textup{gm}}(S;R)$ and called the $\infty$-category of \textit{geometric effective} Voevodsky motives. $\DM_{\textup{gm}}(S;R)$, called the $\infty$-category of \textit{geometric} Voevodsky motives, is obtained by inverting $\mathbf{1}_S(1)$ in the category of geometric effective Voevodsky motives $\DM^{\textup{eff}}(S;R)$.\\
\\
In our case, however, we will need to work with schemes of finite type, not just the smooth ones. The construction above does not allow us to do this because of its restriction to \textit{smooth} schemes. Therefore, we may consider another variant of the above construction. Instead of working with smooth correspondences, we may work with the category $\Sch^{cor}/S$ consisting of separated finite type $S$-schemes with morphisms finite $S$-correspondences. Given a Grothendieck topology $\tau$, we may consider the big site $(\Sch/S)_{\tau}$ of finite type $S$-schemes with the $\tau$-topology. Considering presheaves of $R$-modules on $\Sch^{cor}/S$ that restrict to $\tau$-sheaves on $(\Sch/S)_{\tau}$ gives us the category $\underline{\textup{Sh}}^{tr}_{\tau}(S;R)$ of $\tau$-sheaves of $R$-modules with transfers. Taking the derived category, $\mathbb{A}^1$-localizing, and inverting the Tate twist as before gives us the category $\underline{\DM}_{\tau}(S;R)$ of $\tau$-motives. In this general context, any separated finite type $S$-scheme $X$ defines an object $\underline{M}_S(X)\in\underline{\DM}_{\tau}(S;R)$. When we do not use the symbol $\tau$, we mean that $\tau=Nis$. Therefore, $\underline{\DM}(S;R):=\underline{\DM}_{Nis}(S;R)$. We denote by $\DM_{\textup{gm}}^{\textup{eff}}(S;R)$ the smallest stable $\infty$-category containing motives of the form $M_S(X)(n)$, $n\geq 0$ and $X$ smooth $S$-scheme. Its stabilization with respect to the Tate twist will be denoted by $\DM_{\textup{gm}}(S;R)$. Considering the largest localizaing full subcategory of $\underline{\DM}_{\tau}(S;R)$ generated by motives $\underline{M}_S(X)(n)$ for $X$ smooth $S$-schemes and $n\in\mathbb{Z}$ gives us the category $\DM_{\tau}(S;R)$. From now on, we let $S$ be Noetherian. Taking $\tau=cdh$ gives us the categories $\DM_{cdh}(S;R)$ and $\underline{\DM}_{cdh}(S;R)$ that turn out to have some nice properties proved in \cite{CisDeg2}.\\
\\
For all such categories, we have basic functors $f^*,f_*, \otimes,\underline{\textup{Hom}}(-,-)$. For smooth $f:X\rightarrow Y$, we also have a left adjoint $f_{\texttt{\#}}$ to $f^*$. If $X$ is a smooth $S$-scheme with structure morphism $\pi^X:X\rightarrow S$, then $M_S(X)=\pi^X_{\texttt{\#}}\mathbf{1}_X$.\\
\\
The functor that will be of greatest importance to us is $f_!$, which is, up to equivalence, given by $p_*j_{\texttt{\#}}$ for $f=p\circ j$ any factorization of $f$ into an open embedding $j$ followed by a proper morphism $p$ (such a factorization exists by Nagata compactification~\cite{Nagata}).\\
\\
Essential to our work will be localization sequences. By theorem 5.11 of Cisinski and D\'eglise in \cite{CisDeg2}, since by assumption $k$ has characteristic exponent invertible in $R$, for each closed embedding $i:Z\hookrightarrow X$ of $k$-varieties with open complement $j:U\hookrightarrow X$, there is a cofiber sequence
\[j_{\texttt{\#}}j^*\mathbf{1}_X\rightarrow\mathbf{1}_X\rightarrow i_*i^*\mathbf{1}_X\rightarrow j_{\texttt{\#}}j^*\mathbf{1}_X[1]\]
in $\DM_{cdh}(k;R)$. Applying the exact functor $\pi^X_!$ to it, we obtain the cofiber sequence
\[\pi^U_!\mathbf{1}_U\rightarrow\pi^X_!\mathbf{1}_X\rightarrow\pi^Z_!\mathbf{1}_Z\rightarrow \pi^U_!\mathbf{1}_U[1]\]
in $\DM_{cdh}(k;R)$. Note that we are not assuming smoothness, which is an advantage of working with the cdh topology. Note, however, that by Corollary 5.9 of Cisinski and D\'eglise \cite{CisDeg2}, the natural morphism $\DM(X;R)\rightarrow\DM_{cdh}(X;R)$ is an equivalence of symmetric monoidal stable $\infty$-categories if $X$ is a regular $k$-scheme. Furthermore, using Proposition 8.1(c) of Cisinski and D\'eglise \cite{CisDeg2} for the field $k$ we have that $\DM(k;R)\simeq\DM_{cdh}(k;R)=\underline{\DM}_{cdh}(k;R)$, and so we can work in the larger category $\underline{\DM}_{cdh}(k;R)$ and pass to $\DM(k;R)$.\\
\\
Furthermore, we also have purity, a part of which says that if $f:X\rightarrow Y$ is a smooth separated morphism of finite type over $k$ of relative dimension $d$, then $f_!\simeq f_{\texttt{\#}}(-d)[-2d]$ (see Theorem 11.4.5 of \cite{CisDeg1}). We will take advantage of these properties when proving the well-definedness of our motivic measure.\\
\\
Note that we could make our constructions with cdh-motives from the onset, and use the above equivalences between Nisnevich and cdh-motives to conclude our results about Nisnevich motives.
\section{Integration via the slice filtration}\label{construction}
In this section, we define integration in the setting of geometric Voevodsky motives using the slice filtration. In order to do so, we first define the category of \textit{completed} geometric effective motives $\DM^{\textup{eff},\wedge}_{\textup{gm}}(k;R)$. This will be defined in Subsection~\ref{bdefinitions}. We also define the notion of \textit{convergent} motives that will be used in the definition of the group $\cal{M}(k;R)$ in which our integrals will land. We will also define the category of \textit{slice} motives $\DM_{\textup{sl}}^{\textup{eff}}(k;R)$, and show that there is a natural map from $K_0(\DM_{\textup{sl}}^{\textup{eff}}(k;R))$ to $\cal{M}(k;R)$ whose kernel does not contain any nontrivial elements coming from geometric motives. In order to define our integrals, we need a notion of measure; we define a measure on the Jet scheme $X_{\infty}$ taking values in $\DM^{\textup{eff},\wedge}_{\textup{gm}}(k;R)$. We do so by first defining the measure on the so-called \textit{stable subschemes} of $X_{\infty}$, and then we extend this by defining the notions of \textit{good} and \textit{measurable} subsets along with their measures. Given this measure, we define the integration of Voevodsky motives, and specialize to a particular class of functions, called \textit{measurable}, that will be of greatest interest to us. In the final subsection, we prove the transformation rule, a formula describing how an integral changes with respect to proper birational morphisms of smooth schemes. The transformation rule is the device that allows us to deduce results in birational geometry; it is a birational-geometric variant of the change of variables formula in differential geometry.
\subsection{Completed, Convergent, and Slice Motives}\label{bdefinitions}
Integration of Voevodsky motives will deal with the category of effective geometric Voevodsky motives $\DM^{\textup{eff}}_{\textup{gm}}(k;R)$ and will take values in an abelian group $\cal{M}(k;R)$ close to the Grothendieck group of some categorical limit $\DM_{\textup{gm}}^{\textup{eff},\wedge}(k;R)$ of Verdier quotients of $\DM_{\textup{gm}}^{\textup{eff}}(k;R)$ by $\otimes$-ideals. In this subsection, we define this categorical completion, the notion of convergent motives, as well as slice motives. We also discuss their relations to the usual categories of Voevodsky's motives.\\
\\
Note that by Theorem 1.1.4.4 of Lurie's \cite{HA}, the $\infty$-category $\Cat^{Ex}$ of small stable $\infty$-categories with exact functors is closed under small limits. The $\infty$-category of commutative algebra objects in it, denoted by $\Cat^{Ex,\otimes}$, is also closed under small limits. In fact, limits in $\Cat^{Ex,\otimes}$ can be computed in $\Cat^{Ex}$. See Section 3.2.2. of \textit{loc.cit}.
\begin{definition}
Define the $\infty$-category of \textup{completed} effective (geometric) Voevodsky motives with $R$-coefficients as the following limit in the $\infty$-category $\Cat^{Ex,\otimes}$ of small stable symmetric monoidal $\infty$-categories with exact functors:
\[\DM_{(gm)}^{\textup{eff},\wedge}(k;R):=\lim_{n}\DM_{(gm)}^{\textup{eff}}(k;R)/\DM_{(gm)}^{\textup{eff}}(k;R)(n),\]
where $\DM_{(gm)}^{\textup{eff}}(k;R)(n)$ is the full sub-$\infty$-category of $\DM_{(gm)}^{\textup{eff}}(k;R)$ generated by effective (geometric) motives Tate-twisted $n\geq 0$ times. The transition functors $\DM_{(gm)}^{\textup{eff}}(k;R)/\DM_{(gm)}^{\textup{eff}}(k;R)(n+1)\rightarrow\DM_{(gm)}^{\textup{eff}}(k;R)/\DM_{(gm)}^{\textup{eff}}(k;R)(n)$ are the natural localization functors.
\end{definition}
Let us set some notation that we will use soon. Let
\[L_n:\DM^{\textup{eff}}(k;R)\rightarrow\DM^{\textup{eff}}(k;R)/\DM^{\textup{eff}}(k;R)(n)\]
be the natural localization functor. Since $\DM^{\textup{eff}}(k;R)$ is presentable, this localization functor has a fully faithful right adjoint that we denote by 
\[i_n:\DM^{\textup{eff}}(k;R)/\DM^{\textup{eff}}(k;R)(n)\hookrightarrow\DM^{\textup{eff}}(k;R).\] 
By the universal property of limits, there is a natural functor $L_{\infty}:\DM^{\textup{eff}}(k;R)\hookrightarrow\DM^{\textup{eff},\wedge}(k;R)$. We also denote the restriction of $L_{\infty}$ to $\DM_{\textup{gm}}^{\textup{eff}}(k;R)$ by $L_{\infty}$. This abuse of notation should not cause confusion.\\
\\
Let us prove a lemma that will be used in the proof of subsequent lemmas.
\begin{lemma}\label{sufflarge}
Suppose $N\in\DM_{\textup{gm}}^{\textup{eff}}(k;R)$, and $A\in\DM^{\textup{eff}}(k;R)$. Then for $n\gg 1$ (depending on $N$), $A(n)\rightarrow N$ is trivial.
\end{lemma}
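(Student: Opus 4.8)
\emph{Proposed proof.} The plan is to reduce the claim to a \emph{uniform} vanishing statement over the motives of smooth schemes, and then to establish that vanishing by dualizing $N$ in $\DM_{\textup{gm}}(k;R)$ and invoking the vanishing of motivic cohomology in negative weights.

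Since $\DM^{\textup{eff}}(k;R)$ is presentable and compactly generated by the motives $M(Y)$ of smooth $k$-schemes $Y$, every $A\in\DM^{\textup{eff}}(k;R)$ is a filtered colimit of retracts of finite colimits of such $M(Y)$; as $-\otimes\mathbf{1}(n)$ preserves colimits and $\Map_{\DM^{\textup{eff}}(k;R)}(-,N)$ takes colimits to limits, it suffices to produce an integer $n_0=n_0(N)$ with $\Map_{\DM^{\textup{eff}}(k;R)}(M(Y)(j),N)\simeq 0$ for every smooth $k$-scheme $Y$ and every $j\geq n_0$. Since $M(Y)(l)\otimes\mathbf{1}(n)=M(Y)(l+n)$ with $l+n\geq n$, such an $n_0$ handles all the generators $M(Y)(l)$, $l\geq 0$, hence all $A$, and it depends only on $N$.

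To produce $n_0$, I would first pass to $\DM(k;R)$, using that $\DM^{\textup{eff}}(k;R)\hookrightarrow\DM(k;R)$ is fully faithful (the cancellation theorem, which applies since $k$ is perfect and $p$ is invertible in $R$) and that the Tate objects are $\otimes$-invertible in $\DM(k;R)$, to get
\[\Map_{\DM^{\textup{eff}}(k;R)}(M(Y)(j),N)\simeq\Map_{\DM(k;R)}(M(Y)\otimes N^{\vee},\mathbf{1}(-j)),\]
where $N^{\vee}$ is the dual of $N$ in the rigid category $\DM_{\textup{gm}}(k;R)$. Writing $N$ as a retract of a finite iterated cofiber of motives $M(X_i)(m_i)$ with $X_i$ smooth of pure dimension $d_i$ and $m_i\geq 0$, and using Voevodsky's duality $M(X_i)^{\vee}\simeq M^c(X_i)(-d_i)[-2d_i]$ together with the effectivity of the compactly supported motive $M^c(X_i)$, each $M(X_i)(m_i)^{\vee}$ is of the form $E_i\otimes\mathbf{1}(-c_0)$ with $E_i$ effective and $c_0:=\max_i(d_i+m_i)$. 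Since $\DM^{\textup{eff}}(k;R)$ is a thick subcategory of $\DM(k;R)$ and dualization is exact and contravariant, this persists under retracts, shifts and cofibers, so $N^{\vee}\simeq E\otimes\mathbf{1}(-c_0)$ for some effective $E$; hence $M(Y)\otimes N^{\vee}\simeq(M(Y)\otimes E)\otimes\mathbf{1}(-c_0)$ with $M(Y)\otimes E$ effective, and the mapping spectrum above becomes $\Map_{\DM(k;R)}(M(Y)\otimes E,\mathbf{1}(c_0-j))$.

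Finally, effective motives are left-orthogonal to $\mathbf{1}(q)[\ast]$ for $q<0$ (vanishing of motivic cohomology in negative weights), so this spectrum vanishes as soon as $c_0-j<0$; thus $n_0:=c_0+1$ works, and it depends only on $N$. The genuinely non-formal inputs are the cancellation theorem, the rigidity of $\DM_{\textup{gm}}(k;R)$ together with the smooth-scheme duality $M(X)^{\vee}\simeq M^c(X)(-\dim X)[-2\dim X]$, and the negative-weight vanishing of motivic cohomology; the step requiring the most care is the uniform control of the level of effectivity of $N^{\vee}$ in terms of a presentation of $N$, which is exactly where the thickness of $\DM^{\textup{eff}}(k;R)$ inside $\DM(k;R)$ is used.
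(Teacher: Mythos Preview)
Your proof is correct and follows essentially the same route as the paper's: reduce to compact generators, pass to $\DM(k;R)$, invoke duality for geometric motives, and appeal to the vanishing of motivic cohomology in negative weights. The only cosmetic differences are that you dualize $N$ once as a whole (bounding the effectivity defect of $N^{\vee}$ by $c_0=\max_i(d_i+m_i)$) whereas the paper dualizes each generator $M(X_i)(n_i)$ individually, and that you work with smooth generators and the identity $M(X)^{\vee}\simeq M^c(X)(-d)[-2d]$ while the paper restricts to smooth \emph{projective} generators so that $M(X)=M^c(X)$ directly; both variants produce the same bound $n_0=\max_i(d_i+m_i)+1$.
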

\begin{proof}
$\DM^{\textup{eff}}(k;R)$ is compactly generated by motives of the form $M(X)(n)$ for $n\geq 0$ and $X$ smooth \textit{projective} $k$-varieties. Since $N$ is geometric it is generated by \textit{finitely} many such $M(X_i)(n_i)[m_i]$, $X_i$ smooth projective $k$-varieties of dimension $d_i$. Suppose $A$ is a colimit of objects $A_j=M(Y_j)(k_j)[t_j]$, where $Y_j$ are smooth projective $k$-varieties and $k_j\geq 0$. Consider 
\begin{eqnarray*}
&&\Map_{\DM^{\textup{eff}}(k;R)}(A_j(n),N)\\&\simeq&\colim_{i}\Map_{\DM^{\textup{eff}}(k;R)}(M(Y_j)(k_j+n)[t_j],M(X_i)(n_i)[m_i])\\&\simeq& \colim_{i}\Map_{\DM(k;R)}(M(Y_j)(k_j+n-n_i)[t_j-m_i],M(X_i))\\&\simeq^{(1)}& \colim_{i}\Map_{\DM(k;R)}(M(Y_j)(k_j+n-n_i)[t_j-m_i],M^c(X_i))\\&\simeq^{(2)}&\colim_{i}\Map_{\DM(k;R)}(M(Y_j)(k_j+n-n_i)[t_j-m_i],M(X_i)^{\vee}(d_i)[2d_i])\\&\simeq&\colim_{i}\Map_{\DM(k;R)}(M(Y_j\times_kX_i)(k_j+n-n_i)[t_j-m_i],\mathbf{1}_k(d_i)[2d_i]),
\end{eqnarray*}
where $(1)$ follows from the fact that the $X_i$ are smooth \textit{projective}, and $(2)$ follows from $M^c(X)\simeq M(X)^{\vee}(d)[2d]$, where $d$ is the dimension of smooth $k$-variety $X$ and the $\vee$ denotes dualization. The last mapping space is contractible if $k_j+n-n_i>d_i$, that is, when $n>n_i+d_i-k_j$ (this follows from Voevodsky's paper~\cite{VoevodskyEM}). We can take $n\geq\max_i\{n_i+d_i+1\}$, which depends only on $N$ and ranges over finitely many $i$ by the assumption that $N$ is geometric. Consequently, $\Map_{\DM^{\textup{eff}}(k;R)}(A(n),N)\simeq\lim_{j}\Map_{\DM^{\textup{eff}}(k;R)}(A_j(n),N)\simeq 0$ for $n\geq\max_i\{n_i+d_i+1\}$, that is, for $n\gg 1$. The conclusion follows.
\end{proof}
A lemma that will be of importance to us later is the following. It demonstrates the natural expectation that completed effective geometric Voevodsky motives contain usual effective geometric Voevodsky motives as a full subcategory.
\begin{lemma}\label{fullfaithfulness}
The natural functors
\[\DM_{\textup{gm}}^{\textup{eff}}(k;R)\rightarrow\DM_{\textup{gm}}^{\textup{eff},\wedge}(k;R)\]
and
\[\DM_{\textup{gm}}^{\textup{eff},\wedge}(k;R)\rightarrow\DM^{\textup{eff},\wedge}(k;R)\]
are fully faithful.
\end{lemma}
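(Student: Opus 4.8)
The plan is to prove both full faithfulness statements by reducing everything to mapping-space computations in the colimit (limit) defining the completed categories, using Lemma~\ref{sufflarge} as the crucial input. Recall that for a limit of stable $\infty$-categories along localization functors, the mapping spaces in $\DM^{\textup{eff},\wedge}_{(gm)}(k;R) = \lim_n \DM^{\textup{eff}}_{(gm)}(k;R)/\DM^{\textup{eff}}_{(gm)}(k;R)(n)$ are computed as $\Map_{\DM^{\textup{eff},\wedge}}(L_\infty A, L_\infty B) \simeq \lim_n \Map(L_n A, L_n B)$. So for the first functor I would fix $M, N \in \DM^{\textup{eff}}_{\textup{gm}}(k;R)$ and aim to show $\Map_{\DM^{\textup{eff}}}(M,N) \xrightarrow{\sim} \lim_n \Map(L_n M, L_n N)$. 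Using the adjunction $(L_n, i_n)$ and the fact that $i_n$ is fully faithful, $\Map(L_n M, L_n N) \simeq \Map_{\DM^{\textup{eff}}}(M, i_n L_n N)$. The localization fiber sequence presenting $i_n L_n N$ fits $N$ between $i_n L_n N$ and an object built from $\DM^{\textup{eff}}(k;R)(n)$, i.e. there is a cofiber sequence $C_n \to N \to i_n L_n N$ with $C_n \in \DM^{\textup{eff}}(k;R)(n)$ (the colocalization). Thus it suffices to show $\Map_{\DM^{\textup{eff}}}(M, C_n) \to 0$ and $\Map_{\DM^{\textup{eff}}}(M, C_n[1]) \to 0$ as $n \to \infty$, and that the transition maps are compatible so the $\lim^1$ term vanishes.

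The key point is that $C_n$ is a colimit of objects of the form $A(n)$ with $A \in \DM^{\textup{eff}}(k;R)$ (it lies in the localizing subcategory generated by the $n$-fold Tate twists), and $M$ is geometric. By Lemma~\ref{sufflarge}, for $n \gg 1$ depending only on $M$, every map $A(n) \to M$ is trivial — but I need maps out of $M$, not into it. Here I would instead invoke the dual form: since $M$ is geometric hence compact (and, being built from smooth projective generators, strongly dualizable up to twist and shift in $\DM(k;R)$), $\Map_{\DM^{\textup{eff}}}(M, C_n)$ can be rewritten. Actually the cleaner route is: $\Map_{\DM^{\textup{eff}}}(M, i_n L_n N) \simeq \Map(L_n M, L_n N)$, and since $M$ is geometric, $L_n M$ vanishes for... no — rather, one shows directly that the map $\Map(M,N) \to \Map(L_n M, L_n N)$ is an equivalence for $n \gg 1$ by a symmetric version of Lemma~\ref{sufflarge} applied after dualizing $M$ (replacing $\Map(M, C_n)$ with $\Map(\mathbf{1}, M^\vee \otimes C_n)$ and using that $M^\vee \otimes C_n$ is again a colimit of highly-twisted objects since $M^\vee$ is geometric and twisting by $(n)$ on $C_n$ survives the tensor). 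Then the inverse system $n \mapsto \Map(L_n M, L_n N)$ is eventually constant equal to $\Map(M,N)$, so the limit (and any $\lim^1$) is computed immediately and equals $\Map(M,N)$. This gives full faithfulness of the first functor.

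For the second functor $\DM_{\textup{gm}}^{\textup{eff},\wedge}(k;R) \to \DM^{\textup{eff},\wedge}(k;R)$, I would argue levelwise: it is induced by the functors $\DM_{\textup{gm}}^{\textup{eff}}(k;R)/\DM_{\textup{gm}}^{\textup{eff}}(k;R)(n) \to \DM^{\textup{eff}}(k;R)/\DM^{\textup{eff}}(k;R)(n)$, so passing to the limit it suffices to check each of these Verdier quotient comparison functors is fully faithful, and that the limit of fully faithful functors over a compatible system is fully faithful (which holds since mapping spaces in the limit are the limits of mapping spaces). For the levelwise statement, fully faithfulness of $\DM_{\textup{gm}}^{\textup{eff}}(k;R) \hookrightarrow \DM^{\textup{eff}}(k;R)$ is standard (the geometric motives are the compact objects), and one must check the quotient by $\DM_{\textup{gm}}^{\textup{eff}}(k;R)(n)$ embeds fully faithfully into the quotient by $\DM^{\textup{eff}}(k;R)(n)$. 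This is where I expect the real work: it is a statement that a Verdier/Bousfield quotient of a compactly generated category restricted to compact objects agrees with the corresponding quotient of the subcategory of compact objects — this is precisely Thomason's theorem (Thomason–Neeman localization) on idempotent completions of quotients of triangulated categories, which is presumably the ``theorem of Thomason'' acknowledged in the introduction. The main obstacle is thus the careful bookkeeping showing $\DM^{\textup{eff}}(k;R)(n)$ is compactly generated by the twists of geometric generators (so that Thomason applies) and that no idempotent-completion issues obstruct full faithfulness at each finite level; granting that, the limit argument is formal.
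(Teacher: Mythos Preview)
Your treatment of the second functor is correct and matches the paper exactly: one argues levelwise using Thomason--Neeman (Theorem~2.1 of \cite{Neeman}), and limits of fully faithful functors are fully faithful.

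For the first functor, however, there is a genuine gap in your dualization maneuver. You correctly set up the cofiber sequence $C_n \to N \to i_n L_n N$ with $C_n \in \DM^{\textup{eff}}(k;R)(n)$, and you correctly observe that Lemma~\ref{sufflarge} controls maps \emph{into} a geometric object. But your proposed fix---rewriting $\Map(M, C_n)$ as $\Map(\mathbf{1}, M^\vee \otimes C_n)$ and invoking a ``symmetric version'' of the lemma---does not work: there is no vanishing of $\Map(\mathbf{1}, -)$ on highly twisted objects. Indeed $\Map(\mathbf{1}, \mathbf{1}(n)[n]) \simeq K_n^M(k) \otimes R$, which is typically nonzero (cf.\ Remark~\ref{phantom}). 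So for an arbitrary $C \in \DM^{\textup{eff}}(k;R)(n)$ one cannot conclude $\Map(M, C) = 0$, and you are only using that $C_n$ lies in this subcategory.

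The paper's move is simpler and avoids this entirely: apply Lemma~\ref{sufflarge} to $N$ rather than to $M$. Since $N$ is geometric, the boundary map $\coker\varphi \to N[1]$ in the triangle $N \xrightarrow{\varphi} i_n L_n N \to \coker\varphi \to N[1]$ is a map from an object of $\DM^{\textup{eff}}(k;R)(n)$ into a geometric object, hence zero for $n \gg 1$. The triangle therefore splits, $i_n L_n N \simeq N \oplus \coker\varphi$; but then $\Map(\coker\varphi, i_n L_n N) \simeq \Map(L_n \coker\varphi, L_n N) = 0$ forces $\coker\varphi = 0$. Thus $N \to i_n L_n N$ is an \emph{equivalence} for $n \gg 1$, and the tower $\Map(M, i_n L_n N)$ is eventually constant equal to $\Map(M, N)$. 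In other words, your $C_n$ is actually zero for large $n$---not because maps out of $M$ into twisted objects vanish (they do not), but because the geometric $N$ admits no nontrivial coreflection into $\DM^{\textup{eff}}(k;R)(n)$.
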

\begin{proof}
Consider the natural commutative diagram
\[\xymatrix{\DM_{\textup{gm}}^{\textup{eff}}(k;R) \ar@{->}[r] \ar@{->}[d] & \DM_{\textup{gm}}^{\textup{eff},\wedge}(k;R) \ar@{->}[d] \\ \DM^{\textup{eff}}(k;R) \ar@{->}[r] & \DM^{\textup{eff},\wedge}(k;R)}\]
of functors. In order to show that $\DM_{\textup{gm}}^{\textup{eff}}(k;R)\rightarrow\DM_{\textup{gm}}^{\textup{eff},\wedge}(k;R)$ is fully faithful, we show that $\DM_{\textup{gm}}^{\textup{eff},\wedge}(k;R)\rightarrow\DM^{\textup{eff},\wedge}(k;R)$ and $L_{\infty}:\DM_{\textup{gm}}^{\textup{eff}}(k;R)\rightarrow\DM^{\textup{eff}}(k;R)\rightarrow\DM^{\textup{eff},\wedge}(k;R)$ are fully faithful functors.\\
\\
First, let us show that $\DM_{\textup{gm}}^{\textup{eff},\wedge}(k;R)\rightarrow\DM^{\textup{eff},\wedge}(k;R)$ is fully faithful. Since the limit of fully faithful functors is fully faithful, it suffices to show that for each $n\in\mathbb{Z}$, 
\[\DM_{\textup{gm}}^{\textup{eff}}(k;R)/\DM_{\textup{gm}}^{\textup{eff}}(k;R)(n)\rightarrow \DM^{\textup{eff}}(k;R)/\DM^{\textup{eff}}(k;R)(n)\]
is fully faithful. However, this is a consequence of Theorem 2.1 of \cite{Neeman}.\\
\\
We now show that the composition $\DM_{\textup{gm}}^{\textup{eff}}(k;R)\hookrightarrow\DM^{\textup{eff}}(k;R)\xrightarrow{L_{\infty}}\DM^{\textup{eff},\wedge}(k;R)$ is fully faithful. Suppose $M$ and $N$ are geometric motives in $\DM_{\textup{gm}}^{\textup{eff}}(k;R)$. We are to show that
\[\Map_{\DM_{\textup{gm}}^{\textup{eff}}(k;R)}(M,N)\rightarrow\Map_{\DM^{\textup{eff},\wedge}(k;R)}(L_{\infty}M,L_{\infty}N)\]
is a weak equivalence. However, 
\begin{eqnarray*}\Map_{\DM^{\textup{eff},\wedge}(k;R)}(L_{\infty}M,L_{\infty}N) &\simeq& \lim_{n}\Map_{\DM^{\textup{eff}}(k;R)/\DM^{\textup{eff}}(k;R)(n)}(L_nM,L_nN)\\ &\simeq&\lim_{n}\Map_{\DM^{\textup{eff}}(k;R)}(M,i_nL_nN).\end{eqnarray*}
We claim that since $N$ is geometric, for $n\gg 1$, the morphism $N\xrightarrow{\varphi}i_nL_nN$, induced by the unit of the adjunction $(L_n,i_n)$, is an equivalence, from which the result will follow. The cokernel of $\varphi$ lies in $\DM^{\textup{eff}}(k;R)(n)$. By Lemma~\ref{sufflarge}, for $n\gg 1$, all maps $A(n)\rightarrow N$, $A\in\DM^{\textup{eff}}(k;R)$ are trivial. Consider the distinguished triangle
\[N\rightarrow i_nL_nN\rightarrow\coker\varphi\rightarrow N[1].\]
Since $\coker\varphi\in\DM^{\textup{eff}}(k;R)(n)$, lemma~\ref{sufflarge} implies that $\coker\varphi\rightarrow N[1]$ is $0$ for $n\gg 1$. Consequently, the sequence splits, and so $N\oplus\coker\varphi\simeq i_nL_nN$. Since $\Map(\coker\varphi,i_nL_nN)\simeq\Map(L_n\coker\varphi,L_nN)\simeq 0$, $\coker\varphi\rightarrow i_nL_nN$ is $0$. Therefore, $\coker\varphi$ must be $0$. Consequently, $\varphi$ is an equivalence for $n\gg 1$, from which we conclude that the composition
\[\Map_{\DM_{\textup{gm}}^{\textup{eff}}(k;R)}(M,N)\rightarrow\Map_{\DM^{\textup{eff},\wedge}(k;R)}(L_{\infty}M,L_{\infty}N)\simeq\Map_{\DM^{\textup{eff}}(k;R)}(M,N)\]
is the identity morphism, as required.
\end{proof}
\begin{remark}\label{phantom}\textup{
Note that it is not true that $L_{\infty}:\DM^{\textup{eff}}(k;R)\rightarrow\DM^{\textup{eff},\wedge}(k;R)$ is fully faithful, that is, the restriction to geometric motives is necessary. Indeed, Ayoub (lemma 2.4 of \cite{Ayoub2}) constructs a (phantom) motive $F=\textup{hocolim}_{n\in\mathbb{N}}\mathbb{Q}(n)[n]$ in $\DM^{\textup{eff}}(k;\mathbb{Q})$, $k$ a field of infinite transcendence degree over its prime field, that is not equivalent to $0$. Clearly, this maps to $0$ under $L_{\infty}$. His construction is as follows. Note that $H^n(\Spec k;\mathbb{Q}(n))=K_n^M(k)\otimes\mathbb{Q}$, where $K_n^M(k)$ is the $n$-th Milnor K-group of $k$. Let $(a_n)_{n\in\mathbb{N}}$ be a sequence of elements of $k^{\times}$ that are algebraically independent. Let $a_n:\mathbb{Q}(n)[n]\rightarrow\mathbb{Q}(n+1)[n+1]$ be the map corresponding to $a_n\in k^{\times}$ modulo the isomorphism $\Hom(\mathbb{Q}(n)[n],\mathbb{Q}(n+1)[n+1])\simeq \Hom(\mathbb{Q}(0),\mathbb{Q}(1)[1])=H^1(\Spec k;\mathbb{Q}(1))=k^{\times}\otimes\mathbb{Q}$. This gives an $\mathbb{N}$-inductive system $\{\mathbb{Q}(n)[n]\}_{n\in\mathbb{N}}$, using which we obtain the object $F:=\textup{hocolim}_{n\in\mathbb{N}}\mathbb{Q}(n)[n]$. Ayoub shows that $F$ is nonzero by showing that the natural map $\alpha_{\infty}:\mathbb{Q}(0)\rightarrow F$ is nonzero. Indeed, since $\mathbb{Q}(0)$ is compact, $\alpha_{\infty}$ is zero if and only if for some $n$ the natural map $\alpha_n:\mathbb{Q}(0)\rightarrow \mathbb{Q}(n)[n]$ is zero. But this corresponds, under the identification $\Hom(\mathbb{Q}(0),\mathbb{Q}(n)[n])=K_n^M(k)\otimes\mathbb{Q}$, to the symbol $\{a_0,\hdots,a_n\}\in K_n^M(k)\otimes\mathbb{Q}$. This is nonzero as a result of the assumption that the $a_i$ are algebraically independent. As a side remark, it is called a \textit{phantom} motive because its Betti realization is $0$. This example also shows that the conservativity conjecture is false if we do not restrict to geometric motives.}
\end{remark}
Aside from completed motives, we also need \textit{convergent motives} that we now define.
\begin{definition}
An effective \textup{convergent} motive $X$ is an object of $\DM^{\textup{eff}}(k;R)$ with the property that for each $N\gg 1$, $i_NL_NX$ is $i_NL_N$ applied to a geometric effective motive. Let $\DM_{\textup{conv}}^{\textup{eff}}(k;R)$ be the full subcategory of the stable $\infty$-category $\DM^{\textup{eff}}(k;R)$ consisting of effective convergent motives. We call this stable $\infty$-category the $\infty$-category of effective \textit{convergent} motives.
\end{definition}
An example of an effective convergent motive is $M(B\mathbb{G}_m)=\bigoplus_{i=0}^{\infty}\mathbf{1}_k(i)[2i]$. We remark that there is a natural functor $\DM_{\textup{conv}}^{\textup{eff}}(k;R)\rightarrow\DM^{\textup{eff},\wedge}(k;R)$ with essential image in $\DM_{\textup{gm}}^{\textup{eff},\wedge}(k;R)$. Note that the phantom motive $F$ in remark~\ref{phantom} is convergent and effective. Furthermore, its image under $\DM_{\textup{conv}}^{\textup{eff}}(k;R)\rightarrow\DM^{\textup{eff},\wedge}(k;R)$ is $0$, and so this functor is not fully faithful. On the other hand, $\oplus_{\mathbb{N}}\mathbf{1}_k$ is effective but not convergent.
\begin{remark}\label{badslice}\textup{
Though over large base fields, say $\mathbb{C}$, the functors $i_nL_n$ do not preserve geometricity, it is not known if they do preserve geometricity if we work over finite fields or number fields \cite{Ayoubslice}. Conjecturally, this is the case at least if we work over finite fields and with rational coefficients because such mixed motives are expected to be mixed Tate motives.}
\end{remark}
Another definition that will be useful for us is the following.
\begin{definition}[Slice motives] Let $\DM_{\textup{sl}}^{\textup{eff}}(k;R)$ be the smallest stable $\infty$-subcategory of $\DM^{\textup{eff}}(k;R)$ containing all motives of the form $i_nL_nX$ for $n\geq 0$ and geometric motives $X$. We call such motives (effective) \textup{slice} motives because they are the slices of geometric effective motives.
\end{definition}
We define here the notion of virtual dimension; it will recur throughout this paper.
\begin{definition}
The virtual dimension of an object $M$ of $\DM(k;R)$, denoted by $\vdim\ M$, is the largest $n\in\mathbb{Z}\cup\{\pm\infty\}$ such that $M\in\DM^{\textup{eff}}(k;R)(n)$.
\end{definition}
\begin{remark} \textup{Note that $\vdim\ 0=+\infty$. Also, for $F$ the phantom motive in remark~\ref{phantom}, we have $\vdim\ F=\infty$, while $F\not\simeq 0$. The value $-\infty$ can also be attained: $\vdim\bigoplus_{n=-\infty}^{\infty}\mathbf{1}_k(n)[2n]=-\infty$.}
\end{remark}
\subsection{$\cal{M}(k;R)$ and injectivity from $K_0(\DM_{\textup{gm}}^{\textup{eff}}(k;R))$}
In this subsection, we define the abelian group $\cal{M}(k;R)$ in which our integrals will land. We also show that the image of the map
\[K_0(\DM_{\textup{gm}}^{\textup{eff}}(k;R))\rightarrow K_0(\DM_{\textup{sl}}^{\textup{eff}}(k;R))\]
induced by inclusion naturally injects into this group. We denote this image by $\imK$. First, let us fix some notation.\\
\\
Suppose $\cal{A}$ is a stable $\infty$-category. Denote by $F(\cal{A})$ the free abelian group on equivalence classes $[X]$ of objects $X$ of $\cal{A}$. Denote by $E(\cal{A})$ the subgroup of $F(\cal{A})$ generated by elements of the form $[X\oplus Y]-[X]-[Y]$. Let $T(\cal{A})$ be the larger subgroup of $F(\cal{A})$ generated by elements of the form $[A]-[B]+[C]$, where
\[A\rightarrow B\rightarrow C\rightarrow A[1]\]
is a cofiber sequence in $\cal{A}$. Define $G(\cal{A}):=F(\cal{A})/E(\cal{A})$ and $K_0(\cal{A}):=F(\cal{A})/T(\cal{A})$. Clearly, there is a surjection $G(\cal{A})\rightarrow K_0(\cal{A})$. Let $I(\cal{A})$ be its kernel.\\
\\
Denote by $T_{\infty}(\DM_{\textup{conv}}^{\textup{eff}}(k;R))$ the image in $F(\DM^{\textup{eff},\wedge}(k;R))$ of the smallest subgroup of $F(\DM_{\textup{conv}}^{\textup{eff}}(k;R))$ containing $T(\DM_{\textup{conv}}^{\textup{eff}}(k;R))$ and, for $W$ effective convergent, elements of the form
\[[W]\]
if $[i_nL_nW]=0$ in $K_0(\DM_{\textup{sl}}^{\textup{eff}}(k;R))$ for $n\gg 1$. We call this latter kind of relations \textit{infinitesimal} relations because as far as geometry is concerned, such terms should make no contributions. Define the abelian groups
\[\cal{M}(k;R):=F(\DM^{\textup{eff},\wedge}(k;R))/T_{\infty}(\DM_{\textup{conv}}^{\textup{eff}}(k;R))\]
and
\[\cal{M}_0(k;R):=F(\DM^{\textup{eff},\wedge}(k;R))/E(\DM_{\textup{conv}}^{\textup{eff}}(k;R)),\]
where we are abusing notation and viewing $E(\DM_{\textup{conv}}^{\textup{eff}}(k;R))$ and $T_{\infty}(\DM_{\textup{conv}}^{\textup{eff}}(k;R))$ as its image in $F(\DM^{\textup{eff},\wedge}(k;R))$. There is a natural group homomorphism
\[c_R:K_0(\DM_{\textup{sl}}^{\textup{eff}}(k;R))\rightarrow\cal{M}(k;R),\]
as well as a natural group homomorphism
\[G(\DM_{\textup{sl}}^{\textup{eff}}(k;R))\rightarrow\cal{M}_0(k;R).\]
Our integrals will take values in $\cal{M}(k;R)$. Consequently, if we want to prove results about classes of geometric motives in $K_0(\DM_{\textup{gm}}^{\textup{eff}}(k;R))$ using integration, it will be useful to at least know the following injectivity result.
\begin{proposition}\label{injectivity}
The natural homomorphism 
\[c_R|_{\imK}:\imK\rightarrow \cal{M}(k;R)\]
is injective.
\end{proposition}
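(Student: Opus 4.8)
The plan is to unwind the definitions of $\cal{M}(k;R)$ and $\imK$ and exhibit a left inverse to $c_R|_{\imK}$ at the level of Grothendieck groups, using the full faithfulness result of Lemma~\ref{fullfaithfulness} together with the fact that on geometric motives the slicing functors $i_nL_n$ stabilize. Recall that $\cal{M}(k;R) = F(\DM^{\textup{eff},\wedge}(k;R))/T_{\infty}(\DM_{\textup{conv}}^{\textup{eff}}(k;R))$, and that $c_R$ factors through $K_0(\DM_{\textup{sl}}^{\textup{eff}}(k;R))$. Since an element of $\imK$ is the image of a class $[M]-[M']$ with $M,M'\in\DM_{\textup{gm}}^{\textup{eff}}(k;R)$, and since by Lemma~\ref{sufflarge} (as used in the proof of Lemma~\ref{fullfaithfulness}) for $n\gg 1$ the unit map $N\to i_nL_nN$ is an equivalence for any \emph{geometric} $N$, the slice motive attached to a geometric motive is just the geometric motive itself in the stable range. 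So the composite $K_0(\DM_{\textup{gm}}^{\textup{eff}}(k;R))\to K_0(\DM_{\textup{sl}}^{\textup{eff}}(k;R))\to \cal{M}(k;R)$ is, after identifying along these equivalences, induced by the fully faithful embedding $L_\infty:\DM_{\textup{gm}}^{\textup{eff}}(k;R)\hookrightarrow\DM^{\textup{eff},\wedge}(k;R)$.

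First I would construct a retraction homomorphism $r:\cal{M}(k;R)\to K_0(\DM_{\textup{gm}}^{\textup{eff}}(k;R))$, or more precisely a homomorphism from a subgroup of $\cal{M}(k;R)$ containing $\im(c_R|_{\imK})$. On generators $[W]$ with $W$ effective convergent, send $[W]\mapsto [i_NL_N(W)]^{\gm}\in K_0(\DM_{\textup{gm}}^{\textup{eff}}(k;R))$ for $N\gg1$, where by definition of convergence $i_NL_N(W)$ is $i_NL_N$ applied to a genuine geometric motive $W_N$, and we set $[W_N]^{\gm}$ to be its class; one checks via the stabilization (Lemma~\ref{sufflarge}) that for $N'\geq N\gg1$ the geometric motives $W_N$ and $W_{N'}$ have the same class in $K_0(\DM_{\textup{gm}}^{\textup{eff}}(k;R))$, so $r$ is well defined on convergent generators. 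The content is that $r$ kills the defining relations of $\cal{M}(k;R)$: it kills $T(\DM_{\textup{conv}}^{\textup{eff}}(k;R))$ because $i_NL_N$ is exact (it is a right adjoint between stable categories preserving the localization triangles), hence sends cofiber sequences of convergent motives to cofiber sequences; and it kills the infinitesimal relations $[W]$ with $[i_nL_nW]=0$ in $K_0(\DM_{\textup{sl}}^{\textup{eff}}(k;R))$ for $n\gg1$ precisely because the full faithfulness of $\DM_{\textup{gm}}^{\textup{eff}}(k;R)\hookrightarrow\DM_{\textup{sl}}^{\textup{eff}}(k;R)$ — which follows from Lemma~\ref{fullfaithfulness}, since the slice category sits between $\DM_{\textup{gm}}^{\textup{eff}}$ and $\DM^{\textup{eff}}$ and the stable range identifies them — implies that the class $[W_n]^{\gm}$ vanishes in $K_0$ of the geometric category as soon as its image in $K_0(\DM_{\textup{sl}}^{\textup{eff}}(k;R))$ does.

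Granting that $r$ is a well-defined homomorphism on the subgroup generated by the images of convergent motives, it remains to check that $r\circ (c_R|_{\imK}) = \id$. This is immediate on the nose: for a geometric motive $M$, the class $[L_\infty M]\in\cal{M}(k;R)$ is sent by $r$ to $[i_NL_N M]$'s geometric representative, which by the equivalence $M\xrightarrow{\sim} i_NL_N M$ for $N\gg1$ is just $[M]$. Hence $c_R|_{\imK}$ is split injective, which is the claim.

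The main obstacle is the well-definedness of $r$ — specifically, verifying that it descends through $T_\infty(\DM_{\textup{conv}}^{\textup{eff}}(k;R))$ rather than merely through $T(\DM_{\textup{conv}}^{\textup{eff}}(k;R))$. The subtle point is that $T_\infty$ is the image in $F(\DM^{\textup{eff},\wedge}(k;R))$ of a subgroup of $F(\DM_{\textup{conv}}^{\textup{eff}}(k;R))$, so one must be careful that two convergent motives with the same image in the completed category (e.g. a genuine motive and that motive plus a phantom like Ayoub's $F$ of Remark~\ref{phantom}) are assigned the same value by $r$; here one uses that $i_NL_N$ of a phantom/convergent motive mapping to $0$ in the completion is itself $0$ in the stable range, so its geometric representative has class $0$. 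Handling this compatibility, and confirming that the ``for $n\gg1$'' clauses in the definitions of convergence and of the infinitesimal relations can be taken uniformly enough for the argument, is where the care is needed; the rest is formal manipulation of Grothendieck groups.
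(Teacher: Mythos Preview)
Your proposal has a genuine gap: the retraction $r$ you sketch would, if it existed, land in $K_0(\DM_{\textup{gm}}^{\textup{eff}}(k;R))$ and split the composite $K_0(\DM_{\textup{gm}}^{\textup{eff}}(k;R))\to K_0(\DM_{\textup{sl}}^{\textup{eff}}(k;R))\to\cal{M}(k;R)$. That would prove the Injectivity Conjecture $I(k;R)$ itself, which the paper poses as open immediately after this proposition. The specific error is the sentence ``the full faithfulness of $\DM_{\textup{gm}}^{\textup{eff}}(k;R)\hookrightarrow\DM_{\textup{sl}}^{\textup{eff}}(k;R)$ \ldots\ implies that the class $[W_n]^{\gm}$ vanishes in $K_0$ of the geometric category as soon as its image in $K_0(\DM_{\textup{sl}}^{\textup{eff}}(k;R))$ does.'' Full faithfulness of an exact functor between stable $\infty$-categories does \emph{not} in general imply injectivity on $K_0$; that implication is precisely what Conjecture $I(k;R)$ asserts. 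There is also a prior well-definedness problem: a convergent $W$ only guarantees that $i_NL_NW$ agrees with $i_NL_N$ of \emph{some} geometric $W_N$, and two such choices differ by something whose $N$th slice vanishes --- there is no reason their classes in $K_0(\DM_{\textup{gm}}^{\textup{eff}}(k;R))$ coincide, nor that the classes stabilize in $N$.

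The paper's argument avoids this by never leaving the slice category. It does not build a retraction; instead it works with the intermediate group $\cal{M}_0(k;R)$ (quotient by direct-sum relations only), first proving $\imG\to\cal{M}_0(k;R)$ injective, and then showing $\imG\cap J=\imG\cap I(\DM_{\textup{sl}}^{\textup{eff}}(k;R))$ where $J=\ker(\cal{M}_0\to\cal{M})$. The key move is: given a relation witnessed by convergent motives $A,B,C,U,W$, apply the exact functor $i_NL_N$ for $N\gg1$. This uses two facts you correctly identified --- that $X\to i_NL_NX$ is an equivalence for geometric $X$ and $N$ large, and that $i_NL_N$ is exact --- but the crucial point is that $i_NL_N$ of a convergent motive is, \emph{by the definition of $\DM_{\textup{sl}}^{\textup{eff}}(k;R)$}, a slice motive. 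So the relation descends to $K_0(\DM_{\textup{sl}}^{\textup{eff}}(k;R))$, landing the argument in $\imK$ rather than in $K_0(\DM_{\textup{gm}}^{\textup{eff}}(k;R))$. Your instinct to use $i_NL_N$ and the stabilization on geometric objects is right; the fix is to target $K_0(\DM_{\textup{sl}}^{\textup{eff}}(k;R))$ rather than $K_0(\DM_{\textup{gm}}^{\textup{eff}}(k;R))$.
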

\begin{proof}
Prior to showing that $c_R|_{\imK}$ is injective, we show that 
\[\imG\hookrightarrow G(\DM_{\textup{sl}}^{\textup{eff}}(k;R))\rightarrow \cal{M}_0(k;R),\]
where $\imG$ is the image of
\[G(\DM_{\textup{gm}}^{\textup{eff}}(k;R))\rightarrow G(\DM_{\textup{sl}}^{\textup{eff}}(k;R))\]
is injective. Every object of $\imG$ is of the form $[X]$ for some $X\in\DM_{\textup{gm}}^{\textup{eff}}(k;R)$. Suppose $[X]=0$ in $\cal{M}_0(k;R)$. Then there is an effective convergent motive $Z$ such that $X\oplus Z\simeq Z$ in $\DM^{\textup{eff},\wedge}(k;R)$. Since $X$ is geometric, the proof of Lemma~\ref{fullfaithfulness} implies that for $N\gg 1$, $X\rightarrow i_NL_NX$ is an equivalence. Furthermore, since $Z$ is convergent, we may choose $N$ so that it additionally satisfies $i_NL_NZ\in\DM_{\textup{sl}}^{\textup{eff}}(k;R)$. Applying the exact functor $i_NL_N$ to both sides, we obtain $X\oplus i_NL_NZ\simeq i_NL_NZ$ as objects in $\DM_{\textup{sl}}^{\textup{eff}}(k;R)$. This implies that $[X]=0$ in $G(\DM_{\textup{sl}}^{\textup{eff}}(k;R))$. This argument establishes the injectivity of
\[\imG\rightarrow \cal{M}_0(k;R).\]
We now show that $\imK\rightarrow \cal{M}(k;R)$ is injective. Note that there is a natural quotient map $\cal{M}_0(k;R)\rightarrow\cal{M}(k;R)$. Denote its kernel by $J$. The injectivity of $\imK\rightarrow \cal{M}(k;R)$ is equivalent to showing that $\imG\cap J=\imG\cap I(\DM_{\textup{sl}}^{\textup{eff}}(k;R))$. The inclusion $\imG\cap I(\DM_{\textup{sl}}^{\textup{eff}}(k;R))\subseteq \imG\cap J$ is clear. Now suppose $[X]$ is in $\imG\cap J$. Then $[X]=[A]+[C]-[B]+[W]$ in $\cal{M}_0(k;R)$ for $A\rightarrow B\rightarrow C\rightarrow A[1]$ a cofiber sequence with $A,B,C$ (in the image of) effective convergent motives in $\DM^{\textup{eff},\wedge}(k;R)$, and $W$ a convergent motive such that $[W]$ is an infinitesimal relation. Therefore, there is an effective convergent motive $U$ such that 
\[X\oplus B\oplus U\oplus\simeq A\oplus C\oplus U\oplus W.\] 
As in the previous argument, we may apply $i_NL_N$ for $N\gg 1$ such that $X\rightarrow i_NL_NX$ is an equivalence and $i_NL_NA$, $i_NL_NB$, $i_NL_NC$, $i_NL_NU$, $i_NL_NW$ are slice motives. The exactness of $i_NL_N$ gives us the equivalence
\begingroup
    \fontsize{9.5pt}{11pt}\selectfont 
\[X\oplus i_NL_NB\oplus i_NL_NU\simeq i_NL_NA\oplus i_NL_NC\oplus i_NL_NU\oplus i_NL_NW\]
\endgroup
in $\DM_{\textup{sl}}^{\textup{eff}}(k;R)$. Since $i_NL_NA\rightarrow i_NL_NB\rightarrow i_NL_NC\rightarrow i_NL_NA[1]$ is a cofiber sequence in $\DM_{\textup{sl}}^{\textup{eff}}(k;R)$, it follows that 
\[[X]=[i_NL_NA]+[i_NL_NC]-[i_NL_NB]+[i_NL_NW],\]
is an element of $\im_G\cap I(\DM_{\textup{sl}}^{\textup{eff}}(k;R))$, as required.
\end{proof}
We end this subsection by making the following important conjecture.
\begin{conjecture}[Injectivity conjecture $I(k;R)$] The natural group homomorphism
\[K_0(\DM_{\textup{gm}}^{\textup{eff}}(k;R))\rightarrow K_0(\DM_{\textup{sl}}^{\textup{eff}}(k;R))\]
is injective.
\end{conjecture}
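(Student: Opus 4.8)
The plan is to recast the injectivity of $K_0(\DM_{\textup{gm}}^{\textup{eff}}(k;R))\to K_0(\DM_{\textup{sl}}^{\textup{eff}}(k;R))$ in terms of the slice-graded, and then to isolate the one genuinely hard point. First I would note that every object of $\DM_{\textup{sl}}^{\textup{eff}}(k;R)$ has only finitely many nonzero slices: this holds for the generators $i_nL_nX$, which are concentrated in slices $0,\dots,n-1$, and it is preserved by shifts, finite colimits, and retracts, since each slice functor $s_i$ is exact. Hence the thick subcategories $\mathcal{B}_N\subseteq\DM_{\textup{sl}}^{\textup{eff}}(k;R)$ of objects with slices in degrees $0,\dots,N-1$ exhaust $\DM_{\textup{sl}}^{\textup{eff}}(k;R)$, and for each $N$ the defining triangle $f_{N-1}M\to M\to i_{N-1}L_{N-1}M$ of the slice filtration, together with the exact section $s_{N-1}$, exhibits $K_0(\mathcal{B}_N)\cong K_0(\mathcal{S}_{N-1})\oplus K_0(\mathcal{B}_{N-1})$, where $\mathcal{S}_n$ is the thick subcategory of the $n$-th slice category generated by the $n$-th slices of geometric effective motives. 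Iterating and passing to the filtered colimit over $N$ gives a canonical isomorphism $K_0(\DM_{\textup{sl}}^{\textup{eff}}(k;R))\cong\bigoplus_{n\geq 0}K_0(\mathcal{S}_n)$, under which our map (well-defined since geometric effective motives have bounded slice filtration, so $\DM_{\textup{gm}}^{\textup{eff}}(k;R)\subseteq\DM_{\textup{sl}}^{\textup{eff}}(k;R)$) becomes $[M]\mapsto([s_nM])_{n\geq 0}$.

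Next I would invoke Bondarko's Chow weight structure \cite{Bondarko}, which gives $K_0(\DM_{\textup{gm}}^{\textup{eff}}(k;R))\cong K_0(\Chow^{\textup{eff}}(k;R))$. An element of the kernel is then $[P]-[Q]$ with $P,Q$ effective Chow motives satisfying $[s_nP]=[s_nQ]$ in $K_0(\mathcal{S}_n)$ for all $n$, and the goal is to deduce $[P]=[Q]$ in $K_0(\Chow^{\textup{eff}}(k;R))$. Here I would use the recursive shape of the slice tower: for effective $M$ one has $f_1M\simeq M_1(1)$ with $M_1$ effective and $s_nM\simeq s_{n-1}M_1$ for $n\geq 1$, so that $([s_nM])_{n}$ is the sequence of zeroth (birational) slices $([s_0M_j])_{j\geq 0}$ of the successive $1$-effective covers $M_0=M$, $M_{j+1}(1)=f_1M_j$. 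The strategy would then be an induction on the length of the slice filtration: use $s_0$---together with a cancellation input---to pin down the birational part of $[P]-[Q]$, then twist down and repeat.

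The hard part will be exactly this inductive step, and I do not expect it to yield to purely formal arguments. Two obstructions stand in the way. First, for large base fields (for example $k=\mathbb{C}$) the covers $f_1M$ and the slices $s_nM$ of a geometric motive need not be geometric (see Remark~\ref{badslice} and \cite{Ayoubslice}), so the slice categories $\mathcal{S}_n$ to which we have reduced are genuinely larger than $\DM_{\textup{gm}}^{\textup{eff}}(k;R)$ and we have no control over $K_0(\mathcal{S}_n)$ beyond its generators. Second, even granting good behaviour of the slices, passing from the statement that $[s_nP]=[s_nQ]$ for all $n$ to the statement that $[P]=[Q]$ is a stable-cancellation problem: it is immediate if $\Chow^{\textup{eff}}(k;R)$ is a Krull-Schmidt category, but this is open, and it is false over $\mathbb{Z}$. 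Accordingly I would expect an unconditional proof only over base fields for which the slice filtration conjecturally preserves geometricity (number fields and finite fields, with rational coefficients), where the whole reduction collapses to a statement inside $\DM_{\textup{gm}}^{\textup{eff}}(k;\mathbb{Q})$; there one can hope to finish using the motivic $t$-structure---which makes $\Chow(k;\mathbb{Q})$ Krull-Schmidt---together with conservativity of the slice-graded on objects, upgraded to faithfulness on $K_0$ by a finite-length argument in the heart. In full generality, however, I regard $I(k;R)$ as being about as deep as the assertion that Chow-weight $K_0$ is detected by the slice-graded, and the above should be viewed as a reduction rather than a proof.
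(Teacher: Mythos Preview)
The statement is a \emph{conjecture} in the paper; the paper offers no proof and explicitly leaves $I(k;R)$ open. Your proposal is consistent with this---you yourself conclude that it ``should be viewed as a reduction rather than a proof''---so there is no competing argument to compare against.

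It is still worth recording how your reduction sits relative to the paper's discussion. The paper's only remarks on the conjecture are that the map is an isomorphism whenever the slice functors $i_nL_n$ preserve geometricity, as is expected over finite fields and number fields with $R=\mathbb{Q}$; that over large fields such as $\mathbb{C}$ geometricity fails (Remark~\ref{badslice}, \cite{Ayoubslice}) but injectivity on $K_0$ may still hold; and that the $\ell$-adic and Hodge analogues (Propositions~\ref{ellinjectivity} and~\ref{Hdginjectivity}) go through precisely because the corresponding truncations stay in the constructible range. Your slice-graded decomposition $K_0(\DM_{\textup{sl}}^{\textup{eff}}(k;R))\cong\bigoplus_n K_0(\mathcal{S}_n)$ sharpens this picture by recasting $I(k;R)$ as the assertion that $[M]\mapsto([s_nM])_n$ is faithful on $K_0(\Chow^{\textup{eff}}(k;R))$, and you correctly isolate the same two obstructions the paper points to: non-geometricity of slices over large fields, and the absence of a Krull--Schmidt property for $\Chow^{\textup{eff}}(k;R)$. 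Note, though, that your suggested endgame via the motivic $t$-structure in the good cases is not a proof of $I(k;\mathbb{Q})$ so much as the route the paper takes in Section~\ref{rationalintegration} to \emph{bypass} the conjecture altogether; once one assumes a motivic $t$-structure, the paper builds a separate integration theory with its own injectivity (Proposition~\ref{motinjectivity}) rather than deducing $I(k;\mathbb{Q})$.
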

We remark that this conjecture (in combination with Proposition~\ref{injectivity}) would imply that the natural group homomorphism
\[K_0(\DM_{\textup{gm}}^{\textup{eff}}(k;R))\rightarrow \cal{M}(k;R)\]
is injective. Hence, we would have that whenever we prove the equality of two classes of geometric effective motives in $\cal{M}(k;R)$ using integration, the classes will also be equal in $K_0(\DM_{\textup{gm}}^{\textup{eff}}(k;R))$. Clearly, the morphism in the injectivity conjecture is an isomorphism whenever we are in a setting in which the slice functors $i_nL_n$ preserve geometric objects. If the conjectural description of mixed motives over $k=\mathbb{F}_q$ and with $R=\mathbb{Q}$ is true, then the conjecture $I(\mathbb{F}_q;\mathbb{Q})$ is true. It is also believed that the slice filtration preserves geometricity if $k$ is a number field and at least if $R=\mathbb{Q}$. In any case, even if the preservation of geometricity may be false for $k$ and $R$, as in the case $k=\mathbb{C}$ and $R=\mathbb{Q}$, it still makes sense to ask if the morphism on Grothendieck groups is \textit{injective}.\\
\\
The injectivity of the analogous result for virtual motives in unknown and is a source of complications in classical motivic integration. Indeed, in classical motivic integration, integrals take values in a completion $\widehat{\cal{M}}_k$ of $K_0(\Var_k)[\mathbb{L}^{-1}]$, and it is not known if the completion map $K_0(\Var_k)[\mathbb{L}^{-1}]\rightarrow\widehat{\cal{M}}_k$ is injective. Therefore, classical motivic integration has its complications when we try to show equality of classes in $K_0(\Var_k)[\mathbb{L}^{-1}]$. In our case, integrals take values in $\cal{M}(k;R)$, and so the above proposition will help us deduce results about classes of geometric motives viewed inside $K_0(\DM_{\textup{sl}}^{\textup{eff}}(k;R))$. If we also have the injectivity conjectured above, then we would have the analogue of this conjecture for virtual motives. Let me point out that I believe that at least for $R=\mathbb{Q}$, the natural morphism 
\[K_0(\DM_{\textup{gm}}^{\textup{eff}}(k;R))\rightarrow K_0(\DM_{\textup{gm}}^{\textup{eff},\wedge}(k;R))\]
is injective. In fact, I believe the stronger result that the composition
\[K_0(\DM_{\textup{gm}}^{\textup{eff}}(k;R))\rightarrow K_0(\DM_{\textup{gm}}^{\textup{eff},\wedge}(k;R))\rightarrow\varprojlim_n K_0(\DM_{\textup{gm}}^{\textup{eff}}(k;R)/\DM_{\textup{gm}}^{\textup{eff}}(k;R)(n))\]
is injective. One main reason for developing these various forms of integration is to either have an injective analogue of such a group homomorphism or to connect such an injectivity to existing conjectures on motives. For example, as we will see, in the case of rational Voevodsky motives, we can construct a theory of integration with such an injective homomorphism if we assume the existence of a motivic $t$-structure. In the $\ell$-adic and Hodge settings, such complications do not arise and we have the optimal kind of injectivities desired. This is because the analogues of the slice functors $i_nL_n$ preserve constructibility. Therefore, if we want to prove arithmetic results or results about mixed Hodge structures (with rational coefficients), it may suffice to use our mixed $\ell$-adic or mixed Hodge integrations.
\subsection{Motivic measure and measurable subsets}
In this section, we define the motivic measure on the Jet scheme $X_{\infty}$ of a smooth $k$-variety $X$. We first define \textit{stable} subschemes of the Jet scheme and define a measure on such subschemes. We will then define \textit{good} and \textit{measurable} subsets of the Jet scheme and extend our measure to such subsets.
\begin{definition}
Let $X$ be a smooth $k$-scheme. A subscheme $A\subseteq X_{\infty}$ is said be \textup{stable} if there is an $m\in\mathbb{N}$ such that $A_m:=\pi_m(A)$ is a locally closed subscheme of $\cal{J}_m(X)$ and $A=\pi_m^{-1}(A_m)$. We shall say that such an $A$ is \textup{stable at least at the $m$th level}.
\end{definition}
We first define our motivic measure on stable subschemes. In order to do so, we need the following lemma.
\begin{lemma}\label{stabilizationlemma}
Suppose $X$ is a smooth $k$-variety of dimension $d$. If $A$ is a subscheme of $X_{\infty}$ that is stable at least at the $N$th level, then for every $m\geq N$,
\[\pi^{A_{m+1}}_!\mathbf{1}_{A_{m+1}}((m+1)d)[2(m+1)d]\simeq \pi^{A_m}_!\mathbf{1}_{A_m}(md)[2md]\]
in $\DM_{\textup{gm}}(k;R)$.
\end{lemma}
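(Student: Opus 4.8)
The plan is to reduce the lemma to the single geometric input recorded in the corollary above — that for $X$ smooth of pure dimension $d$ the truncation $\pi^{m+1}_m:\cal{J}_{m+1}(X)\to\cal{J}_m(X)$ is an $\mathbb{A}^d$-bundle — combined with purity and $\mathbb{A}^1$-homotopy invariance in $\DM_{cdh}(-;R)$ as recalled in Section~\ref{mixedmotives}. In particular everything is done over the base field $k$, and the whole point is that $g_!$ of the unit on an affine bundle computes an honest Tate twist and shift.

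First I would unpack stability. Since $A$ is stable at least at the $N$th level, $A=\pi_N^{-1}(A_N)$ with $A_N\subseteq\cal{J}_N(X)$ locally closed. For every $m\geq N$ this forces $A_m=(\pi^m_N)^{-1}(A_N)$, which is locally closed in $\cal{J}_m(X)$ and hence a separated finite type $k$-scheme, and moreover $A_{m+1}=(\pi^{m+1}_m)^{-1}(A_m)$. Therefore the restriction $g:=\pi^{m+1}_m|_{A_{m+1}}:A_{m+1}\to A_m$ is the base change of the $\mathbb{A}^d$-bundle $\pi^{m+1}_m$ along $A_m\hookrightarrow\cal{J}_m(X)$; consequently $g$ is again an $\mathbb{A}^d$-bundle, in particular a smooth separated morphism of relative dimension $d$ (it is affine, being a base change of an affine morphism, hence separated), and it is Nisnevich-locally trivial, as the proof of that corollary (reduction to $X=\mathbb{A}^d$ via \'etale invariance) shows.

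Next I would run the computation over the base $A_m$ in $\DM_{cdh}$. Using $\pi^{A_{m+1}}=\pi^{A_m}\circ g$, functoriality of $(-)_!$, and $\mathbf{1}_{A_{m+1}}=g^*\mathbf{1}_{A_m}$,
\[\pi^{A_{m+1}}_!\mathbf{1}_{A_{m+1}}\simeq\pi^{A_m}_!\,g_!\,g^*\mathbf{1}_{A_m}.\]
By the purity isomorphism $g_!\simeq g_{\#}(-d)[-2d]$ for a smooth separated morphism of relative dimension $d$, this is $\pi^{A_m}_!\big(g_{\#}g^*\mathbf{1}_{A_m}\big)(-d)[-2d]$; and by $\mathbb{A}^1$-homotopy invariance applied to the affine bundle $g$ one has $g_{\#}g^*\mathbf{1}_{A_m}\simeq\mathbf{1}_{A_m}$. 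Hence $\pi^{A_{m+1}}_!\mathbf{1}_{A_{m+1}}\simeq\pi^{A_m}_!\mathbf{1}_{A_m}(-d)[-2d]$, and tensoring both sides with the invertible object $\mathbf{1}_k\big((m+1)d\big)\big[2(m+1)d\big]$ yields
\[\pi^{A_{m+1}}_!\mathbf{1}_{A_{m+1}}\big((m+1)d\big)\big[2(m+1)d\big]\simeq\pi^{A_m}_!\mathbf{1}_{A_m}(md)[2md].\]
Both sides lie in $\DM_{\textup{gm}}(k;R)$: for any separated finite type $k$-scheme $Z$ one has $\pi^Z_!\mathbf{1}_Z\in\DM_{\textup{gm}}(k;R)$, by Noetherian induction on $Z$ using the localization cofiber sequences recalled in Section~\ref{mixedmotives} to pass to smooth dense open strata (where one applies purity again) and the fact that motives do not see nilpotents. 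Passing through the equivalence $\DM_{cdh}(k;R)\simeq\DM(k;R)$ of Section~\ref{mixedmotives}, the displayed equivalence holds in $\DM_{\textup{gm}}(k;R)$, as claimed.

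The step requiring the most care, and the one I would flag, is that $A_m$ is in general singular, so the argument must be carried out over an arbitrary finite type base — which is precisely why it is essential to work in the cdh-variant $\DM_{cdh}(-;R)$, where the purity isomorphism $f_!\simeq f_{\#}(-d)[-2d]$ and homotopy invariance are available over such bases — and one must genuinely use that $g$ is a (Nisnevich-)locally trivial $\mathbb{A}^d$-bundle rather than merely a smooth affine morphism with affine-space fibres, so that homotopy invariance can be invoked in the relative form $g_{\#}g^*\simeq\mathrm{id}$.
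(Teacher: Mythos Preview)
Your proof is correct and follows essentially the same approach as the paper: both use that $A_{m+1}\to A_m$ is an $\mathbb{A}^d$-bundle, apply purity $g_!\simeq g_{\#}(-d)[-2d]$, and then $\mathbb{A}^1$-homotopy invariance to identify $g_{\#}\mathbf{1}_{A_{m+1}}\simeq\mathbf{1}_{A_m}$. The only difference is in justifying geometricity: the paper cites a constructibility result (Corollary 4.2.12 of Cisinski--D\'eglise) directly, whereas you give a Noetherian-induction argument via localization sequences and smooth strata, which amounts to the same thing but is more self-contained.
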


\begin{proof}
First note that both objects are constructible motives in $\DM(k;R)$ by Corollary 4.2.12 of \cite{CisDeg1}, and so they are geometric. Note that by the smoothness of $X$, for every $m\geq N$
\[A_{m+1}\xrightarrow{\pi^{m+1}_m}A_m\]
is an $\mathbb{A}^d$-bundle. By purity, we know that $\pi^{m+1}_{m,!}\mathbf{1}_{A_{m+1}}\simeq \pi^{m+1}_{m,\#}\mathbf{1}_{A_{m+1}}(-d)[-2d]$. Therefore, we obtain
\begin{eqnarray*}\pi^{A_{m+1}}_!\mathbf{1}_{A_{m+1}}((m+1)d)[2(m+1)d] &\simeq & \pi^{A_m}_!\pi^{m+1}_{m,!}\mathbf{1}_{A_{m+1}}((m+1)d)[2(m+1)d]\\ &\simeq& \pi^{A_m}_!\pi^{m+1}_{m,\#}\mathbf{1}_{A_{m+1}}(md)[2md]\\ &\simeq& \pi^{A_m}_!\mathbf{1}_{A_m}(md)[2md],
\end{eqnarray*}
where the last equivalence follows from $\mathbb{A}^1$-homotopy invariance and the fact that $\pi^{m+1}_m$ is an $\mathbb{A}^d$-bundle implying that $\pi^{m+1}_{m,\#}\mathbf{1}_{A_{m+1}}\simeq\mathbf{1}_{A_m}$. The conclusion follows.
\end{proof}
Therefore, on such stable subschemes, we can define a measure $\mu_X$ as follows.
\begin{definition}\label{volumemeasure}
For $A$ a stable subscheme of $X_{\infty}$, we define its \textup{volume} by
\[\mu_X(A):= \pi^{A_m}_!\mathbf{1}_{A_m}((m+1)d)[2(m+1)d]\in\DM_{\textup{gm}}^{\textup{eff}}(k;R)\]
for sufficiently large $m$.
\end{definition}
By Lemma~\ref{stabilizationlemma}, this is independent of $m$ for sufficiently large $m$. Furthermore, it is also geometric. For effectivity, we used the fact that $\pi^Y_!\mathbf{1}_Y\in\DM^{\textup{eff}}(k;R)(-d_Y)$, where $d_Y:=\dim Y$. We show this in the following lemma.
\begin{lemma}\label{virtdim}
Suppose $X$ is a $k$-variety of dimension $d$. Then $\vdim\ \pi^X_!\mathbf{1}_X=-d$.
\end{lemma}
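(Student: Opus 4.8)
The plan is to induct on $d=\dim X$ by means of the localization sequences recalled above, the point being to reduce everything to the case where $X$ is smooth. A few formal preliminaries: working inside $\DM(k;R)\simeq\DM_{cdh}(k;R)$, the cancellation theorem makes $\DM^{\textup{eff}}(k;R)\hookrightarrow\DM(k;R)$ fully faithful, so I regard each $\DM^{\textup{eff}}(k;R)(n)$ (for $n\in\mathbb{Z}$, defined by twisting by $(n)$) as a full stable subcategory of $\DM(k;R)$ that is closed under retracts and satisfies $\DM^{\textup{eff}}(k;R)(n)\subseteq\DM^{\textup{eff}}(k;R)(n-1)$; and $\vdim$ satisfies $\vdim\,(M(j)[i])=\vdim\,M+j$ and $\vdim\bigoplus_i M_i=\min_i\vdim\,M_i$. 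Finally, applying the localization cofiber sequence to the closed immersion $X_{\textup{red}}\hookrightarrow X$, whose open complement is empty, gives $\pi^X_!\mathbf{1}_X\simeq\pi^{X_{\textup{red}}}_!\mathbf{1}_{X_{\textup{red}}}$, so I may and do assume $X$ reduced.

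First I would do the smooth case: for $Y$ a nonempty smooth $k$-variety of pure dimension $e$, purity gives $\pi^Y_!\mathbf{1}_Y\simeq\pi^Y_{\texttt{\#}}\mathbf{1}_Y(-e)[-2e]=M(Y)(-e)[-2e]$, so $\vdim\,\pi^Y_!\mathbf{1}_Y=\vdim\,M(Y)-e$ and it suffices to show $\vdim\,M(Y)=0$. The inequality $\vdim\,M(Y)\ge 0$ is immediate; for the reverse I would use that $\Hom_{\DM(k;R)}(M(Y),\mathbf{1}_k)=H^{0,0}_{\textup{mot}}(Y;R)=R^{\pi_0(Y)}\neq 0$, whereas every object of $\DM^{\textup{eff}}(k;R)(1)$ admits only the zero map to $\mathbf{1}_k$: writing such an object as $N(1)$ with $N$ effective, $\Hom_{\DM}(N(1),\mathbf{1}_k[m])\cong\Hom_{\DM}(N,\mathbf{1}_k(-1)[m])$, which vanishes because $\mathbf{1}_k(-1)$ is right orthogonal to $\DM^{\textup{eff}}(k;R)$ — on the generators $M(W)$, $W$ smooth, this is the vanishing $H^{m,-1}_{\textup{mot}}(W;R)=0$ of motivic cohomology in negative weight, which propagates to the whole localizing subcategory. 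Hence $M(Y)\notin\DM^{\textup{eff}}(k;R)(1)$, so $\vdim\,M(Y)=0$ and $\vdim\,\pi^Y_!\mathbf{1}_Y=-e$.

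For the inductive step I would take $X$ reduced of dimension $d$, let $U=X^{\textup{sm}}$ be its smooth locus (dense and open since $k$ is perfect and $X$ is reduced) and $Z=X\smallsetminus U$ the closed complement. Every $d$-dimensional component of $X$ meets $U$ (its generic point is regular, hence $k$-smooth), so $\dim Z\le d-1$ while $U$ has a component of dimension $d$; writing $U=\bigsqcup_i U_i$ for its finitely many connected (hence integral, hence pure-dimensional) components, of dimensions $e_i\le d$ with $\max_i e_i=d$, the smooth case gives $\pi^U_!\mathbf{1}_U=\bigoplus_i\pi^{U_i}_!\mathbf{1}_{U_i}$ and $\vdim\,\pi^U_!\mathbf{1}_U=\min_i(-e_i)=-d$, so $\pi^U_!\mathbf{1}_U\in\DM^{\textup{eff}}(k;R)(-d)\smallsetminus\DM^{\textup{eff}}(k;R)(-d+1)$, while the inductive hypothesis gives $\vdim\,\pi^Z_!\mathbf{1}_Z=-\dim Z\ge -d+1$ (read as $+\infty$ if $Z=\varnothing$). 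Feeding these into the localization triangle
\[\pi^U_!\mathbf{1}_U\longrightarrow\pi^X_!\mathbf{1}_X\longrightarrow\pi^Z_!\mathbf{1}_Z\longrightarrow\pi^U_!\mathbf{1}_U[1],\]
stability of $\DM^{\textup{eff}}(k;R)(-d)$ together with $\DM^{\textup{eff}}(k;R)(-d+1)\subseteq\DM^{\textup{eff}}(k;R)(-d)$ forces $\pi^X_!\mathbf{1}_X\in\DM^{\textup{eff}}(k;R)(-d)$, i.e. $\vdim\ge -d$; and if $\vdim\ge -d+1$ then, $\pi^U_!\mathbf{1}_U$ being the fiber of $\pi^X_!\mathbf{1}_X\to\pi^Z_!\mathbf{1}_Z$ and $\DM^{\textup{eff}}(k;R)(-d+1)$ being closed under fibers, $\pi^U_!\mathbf{1}_U$ would lie in $\DM^{\textup{eff}}(k;R)(-d+1)$, a contradiction. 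Hence $\vdim\,\pi^X_!\mathbf{1}_X=-d$.

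The one genuinely non-formal ingredient is the identity $\vdim\,M(Y)=0$ for $Y$ nonempty smooth — essentially the nondegeneracy of the slice filtration on motives of smooth varieties, which I have reduced above to the vanishing of motivic cohomology in negative weights; I expect that to be the only place where real content enters, with the remainder being routine manipulation of localization triangles and of the effective-twist subcategories $\DM^{\textup{eff}}(k;R)(n)$.
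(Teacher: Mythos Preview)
Your proof is correct and follows the same architecture as the paper: handle the smooth case via purity, then induct on $\dim X$ using the localization triangle for the decomposition into the smooth locus and its (lower-dimensional) complement. The only substantive difference is in the proof that $\vdim\,M(Y)=0$ for smooth nonempty $Y$: the paper base-changes along a finite extension $L|k$ to acquire an $L$-rational point, then splits $\mathbf{1}_L$ off $M(Y_L)$ as a retract to force $\vdim\,M(Y_L)=0$ and hence $\vdim\,M(Y)=0$; you instead observe directly that $\Hom_{\DM}(M(Y),\mathbf{1}_k)=H^{0,0}_{\mathrm{mot}}(Y;R)\neq 0$ while $\Hom_{\DM}(-,\mathbf{1}_k)$ vanishes on $\DM^{\textup{eff}}(k;R)(1)$ by the vanishing of motivic cohomology in negative weight. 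Your route is a bit more direct (no base change needed) and makes explicit exactly which vanishing is doing the work; the paper's route is perhaps more geometric in flavor. Either way the argument is short and the inductive step is identical.
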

\begin{proof}First assume that $X$ is a smooth $k$-variety. By purity, $\pi^X_!\mathbf{1}_X\simeq M(X)(-d)[-2d]$, and so $\vdim\ \pi^X_!\mathbf{1}_X\geq -d$. If $\vdim\pi^X_!\mathbf{1}_X>-d$, then $\vdim\ M(X)\geq 1$, that is $M(X)\in\DM_{\textup{gm}}^{\textup{eff}}(k;R)(1)$. From this we obtain $M(X_L)\in\DM_{\textup{gm}}^{\textup{eff}}(L;R)(1)$ for any finite extension $L|k$. Take a point $\Spec L\rightarrow X_L$ for some large enough $L$. From this, we see that $\mathbf{1}_L$ splits off $M(X_L)$ as a direct summand. As a result, $M(X_L)$ has virtual dimension $0$. Consequently, $\vdim\ \pi^X_!\mathbf{1}_X=-d$.\\
\\
We now prove the lemma when $X$ need not be smooth. We do so by inducting on the dimension of $X$. The lemma is true if $X$ is of dimension $0$. Suppose the lemma is true for dimensions $<d$. Let $\textup{Sing}X$ be the singular locus of $X$. Then $\dim\textup{Sing}X<d$, and so by the inductive hypothesis, $\vdim\ \pi^{\textup{Sing}X}_!\mathbf{1}_{\textup{Sing}X}=-\dim\textup{Sing}X>-d$. By the smoothness of $X\setminus\textup{Sing}X$, we have $\vdim\ \pi^{X\setminus\textup{Sing}X}_!\mathbf{1}_{X\setminus\textup{Sing}X}=-d$. Consider the localization cofiber sequence
\[\pi^{X\setminus\textup{Sing}X}_!\mathbf{1}_{X\setminus\textup{Sing}X}\rightarrow\pi^X_!\mathbf{1}_X\rightarrow\pi^{\textup{Sing}X}_!\mathbf{1}_{\textup{Sing}X}\rightarrow \pi^{X\setminus\textup{Sing}X}_!\mathbf{1}_{X\setminus\textup{Sing}X}[1]\]
in $\underline{\DM}_{cdh}(k;R)$. Twisting it $d$ times, we obtain the cofiber sequence
\[\pi^{X\setminus\textup{Sing}X}_!\mathbf{1}_{X\setminus\textup{Sing}X}(d)\rightarrow\pi^X_!\mathbf{1}_X(d)\rightarrow\pi^{\textup{Sing}X}_!\mathbf{1}_{\textup{Sing}X}(d)\rightarrow \pi^{X\setminus\textup{Sing}X}_!\mathbf{1}_{X\setminus\textup{Sing}X}(d)[1]\]
in $\underline{\DM}_{cdh}(k;R)$. In the above cofiber sequence, $\pi^X_!\mathbf{1}_X(d)$ is an extension of two effective motives, and so is itself an effective cdh-motive. Consequently, $\vdim\ \pi^X_!\mathbf{1}_X\geq -d$. Assume to the contrary that $\vdim\ \pi^X_!\mathbf{1}_X>-d$. By the inductive hypothesis, $\pi^{\textup{Sing}X}_!\mathbf{1}_X(d)\in\DM^{\textup{eff}}_{cdh}(k;R)(1)$. Then we have $\vdim\pi^{X\setminus\textup{Sing}X}_!\mathbf{1}_{X\setminus\textup{Sing}X}>-d$ from the localization sequence above, a contradiction. As a result, $\vdim\ \pi^X_!\mathbf{1}_X=-d$, as required. Note that we are using Corollary 4.0.14 of the published version of Kelly's thesis \cite{Kelly} saying that (the right adjoint of) the canonical functor
\[\DM^{\textup{eff}}(k;R)\rightarrow\underline{\DM}^{\textup{eff}}_{\textup{cdh}}(k;R)\]
is an equivalence of categories for any perfect field $k$ of characteristic exponent $p\in R^{\times}$.
\end{proof}
Later, when we want to extend $\mu_X$ to a larger collection of subsets of $X_{\infty}$ called measurable subsets, the following corollary will be used in proving that our measure on measurable subsets is well-defined (up to equivalence).
\begin{corollary}\label{virtdimbound}
Suppose $A\subseteq \bigcup_{i=1}^NA_i\subseteq X_{\infty}$, where $A$ and the $A_i$ are stable subschemes. Then
\[\vdim\mu_X(A)\geq\min_{1\leq i\leq N}\vdim\mu_X(A_i).\]
\end{corollary}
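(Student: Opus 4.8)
The plan is to reduce the statement to an elementary inequality between the dimensions of the truncated images $A_m,(A_i)_m\subseteq\cal{J}_m(X)$, after first extending Lemma~\ref{virtdim} from varieties to arbitrary finite type $k$-schemes. First I would choose a single level $m$ at which $A$ and all of the finitely many $A_i$ are simultaneously stable; then Lemma~\ref{stabilizationlemma} and Definition~\ref{volumemeasure} give $\mu_X(A)\simeq\pi^{A_m}_!\mathbf{1}_{A_m}((m+1)d)[2(m+1)d]$ and likewise $\mu_X(A_i)\simeq\pi^{(A_i)_m}_!\mathbf{1}_{(A_i)_m}((m+1)d)[2(m+1)d]$ for each $i$, where $A_m=\pi_m(A)$ and $(A_i)_m=\pi_m(A_i)$ are locally closed in $\cal{J}_m(X)$. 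Since $X$ is smooth, the projection $\pi_m\colon X_\infty\to\cal{J}_m(X)$ is surjective, so the hypothesis $A\subseteq\bigcup_i A_i$ immediately yields the containment $A_m\subseteq\bigcup_i(A_i)_m$ of subsets of $\cal{J}_m(X)$.

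Next I would record two formal facts about virtual dimension. First, each $\DM^{\textup{eff}}(k;R)(n)$ is a stable subcategory of $\DM(k;R)$ closed under extensions, so for any cofiber sequence $A'\to B'\to C'\to A'[1]$ one has $\vdim B'\geq\min(\vdim A',\vdim C')$; iterating, if $M$ lies in the thick subcategory generated by $M_1,\dots,M_r$ then $\vdim M\geq\min_j\vdim M_j$. Second, since $M\mapsto M(1)$ restricts to equivalences $\DM^{\textup{eff}}(k;R)(j)\xrightarrow{\sim}\DM^{\textup{eff}}(k;R)(j+1)$ for all $j\in\mathbb{Z}$ and shifts preserve membership in any stable subcategory, one has $\vdim(M(n)[2n])=\vdim M+n$.

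The main input is the following mild strengthening of Lemma~\ref{virtdim}: for every separated finite type $k$-scheme $Y$ one has $\vdim\pi^Y_!\mathbf{1}_Y=-\dim Y$. I would prove this by the method of Lemma~\ref{virtdim}. One may replace $Y$ by $Y_{\mathrm{red}}$, since (working cdh-locally and invoking the equivalence with cdh-motives used in Lemma~\ref{virtdim}) $\pi^Y_!\mathbf{1}_Y$ only depends on $Y_{\mathrm{red}}$. For the bound $\vdim\pi^Y_!\mathbf{1}_Y\geq-\dim Y$, stratify $Y$ into finitely many smooth locally closed equidimensional subschemes $V_\alpha$ and feed the iterated localization cofiber sequences, together with the smooth case of Lemma~\ref{virtdim}, into the first fact above, giving $\vdim\pi^Y_!\mathbf{1}_Y\geq\min_\alpha(-\dim V_\alpha)=-\dim Y$. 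For the reverse inequality I would induct on $d=\dim Y$: pick a dense open $U\subseteq Y$ that is smooth and purely $d$-dimensional, so $Z:=Y\setminus U$ is closed with $\dim Z<d$; assuming for contradiction that $\pi^Y_!\mathbf{1}_Y(d)\in\DM^{\textup{eff}}(k;R)(1)$ and using the inductive hypothesis to see $\pi^Z_!\mathbf{1}_Z(d)\in\DM^{\textup{eff}}(k;R)(1)$, the localization triangle $\pi^U_!\mathbf{1}_U(d)\to\pi^Y_!\mathbf{1}_Y(d)\to\pi^Z_!\mathbf{1}_Z(d)\to\pi^U_!\mathbf{1}_U(d)[1]$ and closure under extensions force $\pi^U_!\mathbf{1}_U(d)\in\DM^{\textup{eff}}(k;R)(1)$, contradicting $\vdim\pi^U_!\mathbf{1}_U=-d$ from the smooth case.

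Granting the strengthened lemma, the corollary follows immediately. Combining it with the second formal fact, $\vdim\mu_X(A)=(m+1)d-\dim A_m$ and $\vdim\mu_X(A_i)=(m+1)d-\dim(A_i)_m$, while $A_m\subseteq\bigcup_i(A_i)_m$ forces $\dim A_m\leq\max_i\dim(A_i)_m$. Therefore
\[\vdim\mu_X(A)=(m+1)d-\dim A_m\geq(m+1)d-\max_i\dim(A_i)_m=\min_i\vdim\mu_X(A_i).\]
Essentially all the content lies in the strengthening of Lemma~\ref{virtdim} to reducible and non-reduced schemes, and the delicate point there is simply that removing a dense smooth open of top dimension leaves a complement of strictly smaller dimension, which is exactly what makes the induction close; everything afterward is formal manipulation with the slice filtration.
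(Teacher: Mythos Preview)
Your proposal is correct and follows essentially the same approach as the paper, which simply cites Lemma~\ref{virtdim} together with the elementary fact that $\dim A_m\leq\max_i\dim(A_i)_m$ when $A_m\subseteq\bigcup_i(A_i)_m$. You have been more careful than the paper in explicitly extending Lemma~\ref{virtdim} from $k$-varieties to arbitrary separated finite type $k$-schemes (since the truncations $A_m$ need not be irreducible or reduced), a detail the paper's one-line proof glosses over.
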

\begin{proof}
This is an easy consequence of Lemma~\ref{virtdim}, and the fact that if $X\subseteq\bigcup_{i=1}^NX_i$ is a covering of a subscheme by a finite collection of subschemes, then $\dim X\leq\max_{1\leq i\leq N}\dim X_i$.
\end{proof}
As a sanity check, we have additivity of $\mu_X$ in the following weak sense.
\begin{lemma}\label{djunion}
$\mu_X$ is additive on finite disjoint unions of stable subschemes of $X_{\infty}$. More precisely, if the stable subscheme $A=\sqcup_{i=1}^kA_i$ is a finite disjoint union of stable subschemes of $X_{\infty}$, then
\[\mu_X(A)\simeq \bigoplus_{i=1}^k\mu_X(A_i).\]
\end{lemma}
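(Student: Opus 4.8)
The plan is to reduce the statement to the simplest case $k=2$ and then argue by induction, exactly as in the classical setting. So suppose first that $A = A_1 \sqcup A_2$ with $A_1, A_2$ stable subschemes of $X_\infty$. By definition of stability, there is an integer $N$ so that all three of $A, A_1, A_2$ are stable at least at level $N$; enlarging $N$ if necessary, I may assume $N$ is large enough that the measures $\mu_X(A), \mu_X(A_1), \mu_X(A_2)$ are all computed at level $N$ via Definition~\ref{volumemeasure} (this uses Lemma~\ref{stabilizationlemma} to guarantee stabilization). The key point is that the projection $\pi_N$ respects the disjoint union: since $A = \pi_N^{-1}(A_N)$, $A_1 = \pi_N^{-1}((A_1)_N)$, $A_2 = \pi_N^{-1}((A_2)_N)$ and $A = A_1 \sqcup A_2$, the locally closed subschemes $(A_1)_N$ and $(A_2)_N$ of $\cal{J}_N(X)$ are disjoint with union $A_N$.

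The next step is to observe that $\pi^{A_N}_! \mathbf{1}_{A_N}$ decomposes as a direct sum according to the decomposition $A_N = (A_1)_N \sqcup (A_2)_N$. Write $u_i \colon (A_i)_N \hookrightarrow A_N$ for the two open-and-closed immersions. Because each $u_i$ is an open immersion with closed complement given by the other piece, the localization cofiber sequence for, say, the closed immersion $(A_2)_N \hookrightarrow A_N$ with open complement $(A_1)_N$ takes the form
\[
\pi^{(A_1)_N}_!\mathbf{1}_{(A_1)_N} \to \pi^{A_N}_!\mathbf{1}_{A_N} \to \pi^{(A_2)_N}_!\mathbf{1}_{(A_2)_N} \to \pi^{(A_1)_N}_!\mathbf{1}_{(A_1)_N}[1],
\]
and since $(A_2)_N$ is also \emph{open} in $A_N$, the first map is split by the corresponding restriction map (equivalently, $u_{1,!} u_1^* \mathbf{1}_{A_N} \oplus u_{2,!}u_2^*\mathbf{1}_{A_N} \xrightarrow{\sim} \mathbf{1}_{A_N}$ for a decomposition into clopen pieces). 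Hence the sequence splits and $\pi^{A_N}_!\mathbf{1}_{A_N} \simeq \pi^{(A_1)_N}_!\mathbf{1}_{(A_1)_N} \oplus \pi^{(A_2)_N}_!\mathbf{1}_{(A_2)_N}$. Twisting and shifting by $((N+1)d)[2(N+1)d]$ — which is an exact autoequivalence and so commutes with direct sums — gives $\mu_X(A) \simeq \mu_X(A_1) \oplus \mu_X(A_2)$.

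Finally, I would run the induction on $k$: for $A = \bigsqcup_{i=1}^k A_i$, set $B = \bigsqcup_{i=1}^{k-1} A_i$, note that $B$ is again stable (it is $\pi_N^{-1}$ of the locally closed subscheme $\bigsqcup_{i<k}(A_i)_N$), apply the $k=2$ case to $A = B \sqcup A_k$ to get $\mu_X(A) \simeq \mu_X(B) \oplus \mu_X(A_k)$, and then invoke the inductive hypothesis for $\mu_X(B) \simeq \bigoplus_{i=1}^{k-1}\mu_X(A_i)$. The only genuinely delicate point is the splitting in the second step — one must make sure that for a decomposition of a scheme into two clopen subschemes the pushforward-with-compact-support really does split as a direct sum; this is a standard fact (the two pieces are simultaneously open and closed, so both the localization triangle and its "complementary" triangle exist and force the splitting), but it is the step where the hypothesis that we have a genuine \emph{disjoint union} of subschemes, rather than merely a decomposition up to lower-dimensional overlap, is essential. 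Everything else is bookkeeping with the autoequivalence $(\ )((N+1)d)[2(N+1)d]$ and with Lemma~\ref{stabilizationlemma}.
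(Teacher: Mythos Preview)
Your proof is correct and follows essentially the same approach as the paper: choose a common level $N$ at which all the $A_i$ (and hence $A$) are stable, observe that $A_N=\sqcup_i A_{i,N}$, and use that $\pi^{A_N}_!\mathbf{1}_{A_N}$ decomposes as the direct sum of the $\pi^{A_{i,N}}_!\mathbf{1}_{A_{i,N}}$. The paper simply asserts this last decomposition in one line (and does all $k$ at once rather than inducting), whereas you spell out the $k=2$ case via the split localization triangle for clopen pieces; this extra justification is fine but not a different argument.
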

\begin{proof}
By the stability of the $A_i$, there is an $N$ such that every $A_i$ stabilizes from the $N$th level onward. Then it is easy to see that $A_N=\sqcup_{i=1}^kA_{i,N}$, and so
\begin{eqnarray*}\mu_X(A) &\simeq& \pi^{A_N}_!\mathbf{1}_{A_N}((N+1)d)[2(N+1)d]\\ &\simeq& \bigoplus_{i=1}^k\pi^{A_{i,N}}_!\mathbf{1}_{A_{i,N}}((N+1)d)[2(N+1)d]\\ &\simeq&
\bigoplus_{i=1}^k\mu_X(A_i).
\end{eqnarray*}
\end{proof}
Another lemma that will be important in the proof of the well-definedness of the measure $\mu_X$ once we extend it to measurable subsets, to be defined later, is the following \textit{compactness} result.
\begin{lemma}[Compactness lemma]\label{compactness}
Suppose $D$ and $D_n,\ n\in\mathbb{N}$, are stable subschemes of $X_{\infty}$ such that $D\subseteq\bigcup_{n\in\mathbb{N}}D_n$ and the virtual dimensions of $\mu_X(D_n)$ go to infinity. Then $D$ is contained in the union of finitely many of the $D_n$.
\end{lemma}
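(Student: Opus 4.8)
The statement is the motivic incarnation of the classical compactness lemma of motivic integration (Denef--Loeser \cite{DenLoe}, Batyrev \cite{Batyrev}); the plan is to adapt that proof, the one new point being that the classical hypothesis ``the measures tend to zero'' is here encoded in virtual dimensions via Lemma~\ref{virtdim}. First some harmless reductions: replacing each $D_n$ by $D_n\cap D$ (still a stable subscheme; $\vdim\mu_X(D_n\cap D)\ge\vdim\mu_X(D_n)$ by Corollary~\ref{virtdimbound}, so still tending to infinity; and a finite subcover of the new family yields one of the old), we may assume $D_n\subseteq D$ for all $n$. Suppose for contradiction that no finite subfamily of the $D_n$ covers $D$. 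Then for every $r$ the subset $\Phi_r:=D\setminus(D_1\cup\cdots\cup D_r)$ is non-empty; it is a stable subscheme (the complement of a stable subscheme is stable, and a finite intersection of stable subschemes is stable); the $\Phi_r$ form a decreasing chain; and $\bigcap_{r}\Phi_r=D\setminus\bigcup_{n}D_n=\varnothing$. The whole point will be to contradict this last equality by exhibiting a point of $\bigcap_r\Phi_r$.

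Next, convert the hypothesis into a statement about dimensions. If $A\subseteq X_\infty$ is stable at level $m$, then by Definition~\ref{volumemeasure} and Lemma~\ref{virtdim} (applied to the locally closed subscheme $\pi_m(A)$ of $\cal{J}_m(X)$) one has $\vdim\mu_X(A)=\vdim\bigl(\pi^{\pi_m(A)}_!\mathbf{1}_{\pi_m(A)}\bigr)+(m+1)d=(m+1)d-\dim\pi_m(A)=\textup{codim}\bigl(\pi_m(A),\cal{J}_m(X)\bigr)$, using that $\cal{J}_m(X)$ has pure dimension $(m+1)d$; moreover this codimension is independent of the chosen stable level $m$. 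Thus ``$\vdim\mu_X(D_n)\to\infty$'' says precisely that the images $\pi_m(D_n)$ have codimension tending to infinity in the jet schemes; in particular, at any fixed level $m$, all but finitely many $D_n$ satisfy $\dim\pi_m(D_n)<\dim\pi_m(D)$.

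Now the core of the argument, following the classical template. By the corollary on jet schemes the transition maps $\pi^{j+1}_j\colon\cal{J}_{j+1}(X)\to\cal{J}_j(X)$ are surjective $\mathbb{A}^d$-bundles, so the images $\pi_j(\Phi_r)$ are locally closed and compatible under the tower, and passing from $\Phi_{r-1}$ to $\Phi_r$ discards only $\Phi_{r-1}\cap D_r\subseteq D_r$, whose image at a sufficiently high level $j$ has codimension at least $\vdim\mu_X(D_r)$. Hence, for $r$ large relative to $j$, the set $\pi_j(\Phi_r)$ differs from $\pi_j(D)$ only inside a closed subset of strictly smaller dimension; since $\cal{J}_j(X)$ is Noetherian, the decreasing chain of closures $\overline{\pi_j(\Phi_r)}$, $r\ge1$, stabilizes to a non-empty closed set $Z_j$, and the dimension bookkeeping shows that the $Z_j$ assemble into a cofiltered system under the $\pi^{j+1}_j$ with \emph{dominant} transition maps. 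A compatible chain of generic points of (components of) the $Z_j$ then defines a point $\gamma\in\varprojlim_j Z_j\subseteq X_\infty$ lying in $\Phi_r$ for every $r$, contradicting $\bigcap_r\Phi_r=\varnothing$. Alternatively, one may run this as a finite induction: at each round remove the finitely many $D_n$ whose virtual dimension lies below the current threshold; by the above codimension computation this strictly drops the dimension of the relevant jet-scheme image, so after finitely many rounds the residual stable subscheme is empty, i.e.\ already covered by finitely many of the $D_n$.

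The step I expect to be the main obstacle is precisely the non-emptiness of $\bigcap_r\Phi_r$ (equivalently of the inverse limit $\varprojlim_j Z_j$): a cofiltered limit of non-empty finite-type schemes along arbitrary transition morphisms can be empty, so one genuinely needs the hypothesis $\vdim\mu_X(D_n)\to\infty$ — it is exactly what guarantees that, beyond every fixed level, only a piece of strictly smaller dimension is ever removed, which is what forces the closures $\overline{\pi_j(\Phi_r)}$ to stabilize and the induced maps $Z_{j+1}\to Z_j$ to be dominant. The remaining ingredients — the $\mathbb{A}^d$-bundle structure of the jet tower and the identity $\vdim\mu_X(A)=\textup{codim}(\pi_m(A),\cal{J}_m(X))$ coming from Lemma~\ref{virtdim} — are routine.
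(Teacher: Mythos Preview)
Your approach is the classical compactness argument of Looijenga/Denef--Loeser, and so is the paper's; the difference is only in packaging. The paper proceeds more directly: it fixes a level $n$ at which $D$ is stable, chooses $k$ so that $\vdim\mu_X(D_i)>(n+2)d$ for all $i>k$, picks a point $x_{n+1}\in\pi_{n+1}(D\setminus\bigcup_{i\le k}D_i)$, observes that the full fibre $\pi_{n+1}^{-1}(x_{n+1})\subset D$ is again not finitely coverable by the $D_i$ (the $D_i$ with $i\le k$ miss it by choice of $x_{n+1}$, and those with $i>k$ have too small dimension to cover it), and then iterates to build a compatible sequence $(x_m)_{m>n}$ determining a point $x\in D$; since $x\in D_j$ for some $j$ and $D_j$ is stable at some level $m$, one gets $\pi_m^{-1}(x_m)\subseteq D_j$, contradicting non-finite-coverability. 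Your variant via Noetherian stabilisation of the closures $\overline{\pi_j(\Phi_r)}$ and a compatible chain of generic points works too, but the paper's fibre-by-fibre construction is more elementary and sidesteps precisely the step you flagged as the main obstacle (producing a point in the inverse limit). One small slip in your write-up: the complement of a locally closed subscheme need not be locally closed, only constructible, so your $\Phi_r$ is in general not a single stable subscheme but a finite disjoint union thereof; this is harmless for the argument.
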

\begin{proof}
The proof is the same as that of Lemma 2.3 of \cite{Looijenga} but made scheme-theoretic. Suppose $D=\pi_n^{-1}\pi_n(D)$, where $\pi_n(D)$ is a locally closed $k$-subscheme of $\cal{J}_n(X)$. Such an $n$ exists because $D$ is assumed to be a stable subscheme. Assume to the contrary that $D$ cannot be covered by a finite subcollection. Since $\lim_{i\rightarrow\infty}\vdim\mu_X(D_i)=\infty$, there is a $k\in\mathbb{N}$ such that $\vdim \mu_X(D_i)>(n+2)d$ for every $i>k$. By assumption, $D\setminus\cup_{i\leq k}D_i\neq\emptyset$, and so we may choose $x_{n+1}\in\pi_{n+1}\left(D\setminus\cup_{i\leq k}D_i\right)$. Then the (scheme-theoretic) fiber $\pi_{n+1}^{-1}(x_{n+1})\subset D$. This set is not covered by finitely many of the $D_i$. Indeed, $\pi_{n+1}^{-1}(x_{n+1})$ is not covered by the $D_i$ when $i\leq k$. Furthermore, $\pi_{n+1}^{-1}(x_{n+1})\cap D_i$ has positive codimension in $D_i$ for $i>k$.\\
\\
Consequently, we can inductively construct a sequence $(x_m)_{m>n}$ such that for every $m>n$ $x_m\in\cal{J}_m(X)$, $x_{m+1}$ is above $x_m$, and $\pi_m^{-1}(x_m)$ is not coverable by finitely many of the $D_i$. This determines an element $x\in\cal{J}_{\infty}(X)$ such that $\pi_n(x)\in \pi_n(D)$. Consequently, $x\in D$ because $D$ is stable at least at level $n$. Since $D$ is covered by the $D_i$, there is a $j$ such that $x\in D_j$. $D_j$ is stable, and so is stable at least at some level $m>n$. This implies that $\pi_m^{-1}(x_m)\subseteq D_j$, contradicting the fact that $\pi_m^{-1}(x_m)$ is not finitely coverable by the $D_i$.
\end{proof}
The subsets of $X_{\infty}$ that will show up in the integration of Voevodsky motives will not necessarily be stable subschemes; they will come from subsets of $X_{\infty}$, called measurable subsets, that can be approximated by stable subschemes in the following sense.
\begin{definition}
A subset $C\subseteq X_{\infty}$ is said to be \textup{good} if there is a monotonic sequence (the inclusions are locally closed embeddings of $k$-schemes) of stable subschemes
\[C_0\supseteq\hdots\supseteq C_n\supseteq C_{n+1}\supseteq\hdots\ (\text{or }C_0\subseteq\hdots\subseteq C_n\subseteq C_{n+1}\subseteq\hdots)\]
of $X_{\infty}$ containing (resp. contained in) $C$, and stable $C_{n,i}$, $i,n\in\mathbb{N}$ such that for every $n$
\[C_n\setminus C\subseteq\bigcup_{i\in\mathbb{N}}C_{n,i}\ \left(\text{resp. }C\setminus C_n\subseteq\bigcup_{i\in\mathbb{N}}C_{n,i}\right)\]
such that for every $n,i$, 
\[n\leq\vdim\mu_X(C_{n,i})\]
and
\[\vdim\mu_X(C_{n,i})\xrightarrow{i\rightarrow\infty}\infty.\]
We then define the \textup{volume} of $C$ as the object
\[\mu_X(C):=(\mu_X(C_n))_n\in\DM_{\textup{gm}}^{\textup{eff},\wedge}(k;R).\]
We call a pair $(C,\cal{S})$ consisting of a subset $C\subseteq X_{\infty}$ and a \textup{finite} decomposition $\cal{S}$ ($C=\sqcup_{i\in\cal{S}}C_i$, $C_i\subseteq X_{\infty}$) \textup{measurable} if each $C_i$ is a good subset. We then let
\[\mu_X(C,\cal{S}):=\bigoplus_{i\in\cal{S}}\mu_X(C_i).\]
We view a good subset $C$ without a prescribed decomposition as a measurable subset with the trivial decomposition $\cal{S}=\{C\}$.
\end{definition}
Just as in classical measure theory, the question arises as to whether this measure is well-defined (up to equivalence).
\begin{proposition}\label{welldefinedmeasure}
This measure $\mu_X$ is well-defined up to equivalence, that is, for a fixed measurable subset $(C,\cal{S})$, any two sets of data in the above definition give rise to volumes that are equivalent in $\DM_{\textup{gm}}^{\textup{eff},\wedge}(k;R)$.
\end{proposition}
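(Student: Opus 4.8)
The plan is to reduce to the case of a single good subset and then to compare volumes levelwise in the defining tower of Verdier quotients, using the Compactness Lemma to replace the infinite covers appearing in the definition of a good subset by finite ones. Since $\mu_X(C,\cal{S})=\bigoplus_{i\in\cal{S}}\mu_X(C_i)$ and the decomposition $\cal{S}$ is part of the datum of $(C,\cal{S})$, it suffices to treat a single good subset $C\subseteq X_\infty$: I must show that $\mu_X(C)=(\mu_X(C_n))_n\in\DM_{\textup{gm}}^{\textup{eff},\wedge}(k;R)$ does not depend on the chosen monotone sequence $(C_n)$ of stable approximations nor on the auxiliary stable subschemes $(C_{n,i})$. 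Recall that $\DM_{\textup{gm}}^{\textup{eff},\wedge}(k;R)=\lim_N\DM_{\textup{gm}}^{\textup{eff}}(k;R)/\DM_{\textup{gm}}^{\textup{eff}}(k;R)(N)$ with transition functors the localizations $L_N$; the image of $(\mu_X(C_n))_n$ in the $N$-th quotient is $L_N\mu_X(C_n)$ for any $n\geq N$, so it is enough to show that for any two choices of data the objects $L_N\mu_X(C_n)$ (for large $n$) that they produce are coherently equivalent for every $N$.

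\emph{The core estimate.} Let $A$ and $B$ be stable subschemes of $X_\infty$, stable at a common level $m$, and suppose $A\setminus B\subseteq\bigcup_iZ_i$ with $Z_i$ stable, $\vdim\ \mu_X(Z_i)\geq N$ for all $i$, and $\vdim\ \mu_X(Z_i)\to\infty$ as $i\to\infty$. Then $\vdim\ \mu_X(A\setminus B)\geq N$, where $\mu_X(A\setminus B)$ is defined by writing $A\setminus B=\pi_m^{-1}(A_m\setminus B_m)$, noting $A_m\setminus B_m$ is a constructible subscheme of $\cal{J}_m(X)$ and hence a finite disjoint union of locally closed subschemes $D_{j,m}$, so that $A\setminus B=\bigsqcup_jD_j$ with $D_j:=\pi_m^{-1}(D_{j,m})$ stable and $\mu_X(A\setminus B):=\bigoplus_j\mu_X(D_j)$ (well-defined by Lemma~\ref{djunion} and Lemma~\ref{stabilizationlemma}). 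Each $D_j\subseteq\bigcup_iZ_i$, so the Compactness Lemma (Lemma~\ref{compactness}) produces a finite subcover, and Corollary~\ref{virtdimbound} then gives $\vdim\ \mu_X(D_j)\geq N$; hence $\mu_X(A\setminus B)\in\DM^{\textup{eff}}(k;R)(N)$.

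\emph{Comparison.} Fix competing data $(C_n,C_{n,i})$ and $(C'_n,C'_{n,i})$ for $C$, fix $N$, and take $n,m\geq N$. At a common level of stability choose a finite partition of $\cal{J}_m(X)$ into locally closed subschemes refining both $(C_n)_m$ and $(C'_m)_m$; pulling back gives a finite partition of $C_n\cup C'_m$ into stable subschemes, to which Lemma~\ref{djunion} applies and yields
\[\mu_X(C_n)\oplus\mu_X(C'_m\setminus C_n)\simeq\mu_X(C'_m)\oplus\mu_X(C_n\setminus C'_m).\]
Whichever of the increasing/decreasing alternatives each sequence realizes, the inclusions $C_n\subseteq C\subseteq C'_m$ (or their pure-case analogues) give $C'_m\setminus C_n\subseteq(C\setminus C_n)\cup(C'_m\setminus C)\subseteq(\bigcup_iC_{n,i})\cup(\bigcup_iC'_{m,i})$, and symmetrically for $C_n\setminus C'_m$; since $N\leq n\leq\vdim\ \mu_X(C_{n,i})$ and $\vdim\ \mu_X(C_{n,i})\to\infty$ (and likewise with primes), the core estimate applies and puts $\mu_X(C'_m\setminus C_n)$ and $\mu_X(C_n\setminus C'_m)$ in $\DM^{\textup{eff}}(k;R)(N)$. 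Applying the exact localization $L_N$, which kills $\DM^{\textup{eff}}(k;R)(N)$ and preserves finite direct sums, both error terms die and cancellation gives $L_N\mu_X(C_n)\simeq L_N\mu_X(C'_m)$. Taking the two data equal and $m=n\pm1$ also shows $L_N\mu_X(C_{n+1})\simeq L_N\mu_X(C_n)$ for $n\geq N$, so each system genuinely defines an object of the limit and the comparison above is between their images at every level $N$.

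\emph{The main obstacle.} Equivalence in the homotopy limit $\lim_N\DM_{\textup{gm}}^{\textup{eff}}(k;R)/\DM_{\textup{gm}}^{\textup{eff}}(k;R)(N)$ is not merely a levelwise statement: the relevant mapping space is $\lim_N\Map(L_N\mu_X(C_n),L_N\mu_X(C'_m))$, so one must assemble the levelwise equivalences into a compatible system, i.e. check that the obstruction living in the derived inverse limit of the homotopy groups of this tower of mapping spaces vanishes. This is the step I expect to require the most care; it should follow because all the chosen equivalences are induced by the same comparison of constructible partitions and become the identity after descending one level (the difference motives lying in $\DM^{\textup{eff}}(k;R)(N)$), so the tower of mapping spaces is Mittag--Leffler. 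The remaining work is bookkeeping: ensuring that every set formed along the way — unions, intersections, and set-theoretic differences of the $C_n$, $C'_m$, $C_{n,i}$, $C'_{m,i}$ — is, after passing to a high enough truncation level, a finite disjoint union of stable subschemes, so that Lemmas~\ref{stabilizationlemma}, \ref{djunion}, \ref{compactness} and Corollary~\ref{virtdimbound} all apply; this is automatic since constructible subsets of the Noetherian scheme $\cal{J}_m(X)$ form a Boolean algebra.
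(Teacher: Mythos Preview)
Your core estimate is fine and matches the paper's use of the Compactness Lemma plus Corollary~\ref{virtdimbound}. The gap is the displayed identity
\[\mu_X(C_n)\oplus\mu_X(C'_m\setminus C_n)\simeq\mu_X(C'_m)\oplus\mu_X(C_n\setminus C'_m).\]
Lemma~\ref{djunion} applies only to a scheme-theoretic disjoint union (a coproduct of schemes), not to a locally closed stratification: for a stable $A$ written as $U\sqcup Z$ with $U$ open and $Z$ closed in $A_m$, one has the localization cofiber sequence $\mu_X(U)\to\mu_X(A)\to\mu_X(Z)$, not a direct sum. The paper records exactly this at the end of the measurable-subsets subsection: $\mu_X(C)$ and $\bigoplus_i\mu_X(C_i)$ agree only ``up to a sequence of extensions''. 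So your partition argument produces at best an equality in $K_0$, not an equivalence in the quotient category, and the subsequent ``cancellation'' after applying $L_N$ is unjustified.

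The paper instead reduces (by intersecting or uniting) to $C_n\subseteq D_n$, factors this as an open embedding $C_n\hookrightarrow K_n$ followed by a closed embedding $K_n\hookrightarrow D_n$, and uses the two localization triangles to obtain honest \emph{morphisms} $\mu_X(C_n)\to\mu_X(K_n)\leftarrow\mu_X(D_n)$ whose cofibers lie in $\DM_{\textup{gm}}^{\textup{eff}}(k;R)(n)$ by your core estimate; these morphisms therefore become equivalences after $L_n$. Because the equivalences are now induced by actual maps coming from the geometry, the ladder of compatibility diagrams can be filled levelwise, which is how the paper handles the coherence issue you flag as ``the main obstacle'' --- your Mittag--Leffler sketch does not resolve it, since without specified morphisms there is nothing to take an inverse limit of.
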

\begin{proof}
Since the decomposition is fixed, we may assume without loss of generality that our measurable subset $(C,\cal{S})$ is a good subset with the trivial decomposition. Suppose $(C_n)_n$ and $(D_n)_n$ are two monotonic sequences as in the definition above. We are to show that
\[(\mu_X(C_n))_n\simeq(\mu_X(D_n))_n\]
in $\DM_{\textup{gm}}^{\textup{eff},\wedge}(k;R)$.\\
\\
Suppose first that both are decreasing sequences. Note that $(C_n\cap D_n)_n$ is also a decreasing sequence satisfying the properties in the definition above. Indeed, $(C_n\cap D_n)\setminus C\subseteq \bigcup_{i\in\mathbb{N}}C_{n,i}$. Therefore, we may assume without loss of generality that for each $n$, $C_n\subseteq D_n$. If both sequences are increasing, then a similar argument shows that we may assume without loss of generality that $C_n\subseteq D_n$. If $(C_n)_n$ is increasing and $(D_n)_n$ is decreasing, it is then clear that $C_n\subseteq C\subseteq D_n$. Therefore, in all three cases, we may assume without loss of generality that $C_n\subseteq D_n$.\\
\\
It suffices to show that there are equivalences $\mu_X(C_n)\simeq\mu_X(D_n)$ in $\DM_{\textup{gm}}^{\textup{eff}}(k;R)/\DM_{\textup{gm}}^{\textup{eff}}(k;R)(n)$ that are compatible with each other as $n$ varies. This can be done by first writing $C_n\hookrightarrow D_n$ as the composition of an open embedding $C_n\hookrightarrow K_n$ followed by a closed embedding $K_n\hookrightarrow D_n$, where $K_n$ is the inverse image under $\pi_m$ of a locally closed $k$-subvariety of $\cal{J}_m(X)$ for some $m$. Then, we may use the localization sequences. Indeed, in $\DM_{\textup{gm}}^{\textup{eff}}(k;R)$ we have cofiber sequences
\[\mu_X(C_n)\rightarrow\mu_X(K_n)\rightarrow\mu_X(K_n\setminus C_n)\rightarrow\mu_X(C_n)[1]\]
\[\mu_X(D_n\setminus K_n)\rightarrow\mu_X(D_n)\rightarrow\mu_X(K_n)\rightarrow\mu_X(D_n\setminus K_n)[1]\]
from localization sequences. Here, we are abusing notation and identifying the stable subsets as locally closed subschemes of $\cal{J}_N(X)$ for some $N\gg 1$. Here, we can assume that they are cofiber sequences in $\DM(k;R)$ because we have the cofiber sequences in $\underline{\DM}_{\textup{cdh}}(k;R)\simeq\DM(k;R)$ (see subsection~\ref{mixedmotives}). Furthermore, $\DM_{\textup{gm}}^{\textup{eff}}(k;R)$ is a triangulated full subcategory of $\DM(k;R)$ and the objects in the above cofiber sequences are in $\DM_{\textup{gm}}^{\textup{eff}}(k;R)$ by the discussion immedaitely after Definition~\ref{volumemeasure}.\\
\\
Note that $K_n\setminus C_n\subseteq D_n\setminus C_n$. If $(C_n)$ and $(D_n)$ are both increasing, then $D_n\setminus C_n\subseteq C\setminus C_n\subseteq\bigcup_{i\in\mathbb{N}}C_{n,i}$. $K_n\setminus C_n$ is a stable subscheme of $X_{\infty}$, and so Lemma~\ref{compactness} implies that there are finitely many $C_{n,i}$ such that
\[K_n\setminus C_n\subseteq\bigcup_{i=1}^NC_{n,i}.\]
Corollary~\ref{virtdimbound} implies that the virtual dimension of $\mu_X(K_n\setminus C_n)$ is at least the minimum of the virtual dimensions of the $\mu_X(C_{n,i})$ which is at least $n$. Therefore, $\mu_X(K_n\setminus C_n)\in\DM_{\textup{gm}}^{\textup{eff}}(k;R)(n)$. As a result,
\[\mu_X(C_n)\rightarrow\mu_X(K_n)\]
is an equivalence in $\DM_{\textup{gm}}^{\textup{eff}}(k;R)/\DM_{\textup{gm}}^{\textup{eff}}(k;R)(n)$. Similarly, using $D_n\setminus K_n\subseteq C\setminus K_n\subseteq C\setminus C_n$, we deduce that
\[\mu_X(D_n)\rightarrow\mu_X(K_n)\]
is also an equivalence in $\DM_{\textup{gm}}^{\textup{eff}}(k;R)/\DM_{\textup{gm}}^{\textup{eff}}(k;R)(n)$. Consequently, we have equivalences
\[\xymatrix{\mu_X(C_n)\ar@{->}[d]^[@!-90]{\sim}\\ \mu_X(K_n)\\ \mu_X(D_n)\ar@{->}[u]_[@!-90]{\sim}\\
}\]
in $\DM_{\textup{gm}}^{\textup{eff}}(k;R)/\DM_{\textup{gm}}^{\textup{eff}}(k;R)(n)$. We can similarly construct morphisms
\[\mu_X(C_n)\rightarrow\mu_X(C_{n+1})\]
that are equivalences in $\DM_{\textup{gm}}^{\textup{eff}}(k;R)/\DM_{\textup{gm}}^{\textup{eff}}(k;R)(n)$. As are result, we have a diagram
\[\xymatrix{\mu_X(C_n)\ar@{->}[d]^[@!-90]{\sim} \ar@{->}[r]^{\sim} & \mu_X(C_{n+1})\ar@{->}[d]^[@!-90]{\sim}\\ \mu_X(K_n) & \mu_X(K_{n+1})\\ \mu_X(D_n)\ar@{->}[u]_[@!-90]{\sim} \ar@{.>}[r] & \mu_X(D_{n+1}) \ar@{->}[u]_[@!-90]{\sim}
}\]
in $\DM_{\textup{gm}}^{\textup{eff}}(k;R)/\DM_{\textup{gm}}^{\textup{eff}}(k;R)(n)$. We can fill in the dotted arrows (with necessarily an equivalence in $\DM_{\textup{gm}}^{\textup{eff}}(k;R)/\DM_{\textup{gm}}^{\textup{eff}}(k;R)(n)$) so that the diagram commutes. We have shown that if $(C_n)$ and $(D_n)$ are both increasing, then we can find equivalences $\mu_X(C_n)\xrightarrow{\sim}\mu_X(D_n)$ in $\DM_{\textup{gm}}^{\textup{eff}}(k;R)/\DM_{\textup{gm}}^{\textup{eff}}(k;R)(n)$ that are compatible as $n$ varies. Equivalently, we have established the existence of an equivalence $(\mu_X(C_n))_n\simeq(\mu_X(D_n))_n$ in $\DM_{\textup{gm}}^{\textup{eff},\wedge}(k;R)$, as required. The other two cases can be easily dealt with via variants of this proof.
\end{proof}
Some examples of measurable subsets of $X_{\infty}$ are stable subschemes, subschemes of the form $\cal{J}_{\infty}(Y)$ for any $k$-subvariety $Y\subseteq X$, and pairs of the form $(\pi_n^{-1}Z,\pi_n^{-1}\cal{S})$, where $(Z,\cal{S})$ is a constructable subset of $\cal{J}_n(X)$ for some $n$ with a prescribed finite decomposition $\cal{S}$ into locally closed subschemes. Here, if $\cal{S}=\{S_1,\hdots,S_k\}$, then $\pi_n^{-1}\cal{S}:=\{\pi_n^{-1}S_1,\hdots,\pi_n^{-1}S_k\}$. Another example of a measurable subset of $X_{\infty}$ is the \textit{scheme-theoretic} disjoint union of countably many stable subschemes whose virtual dimensions go to $\infty$ and such that the union of the the first $N$ objects, $N$ any integer, is also a stable subscheme; see lemma~\ref{measdisj}. In the following lemma, we compute the measure of $\cal{J}_{\infty}(Y)$ when $Y$ is a locally closed $k$-subvariety of $X$ of dimension less than that of $X$.

\begin{lemma}\label{measurezero}
Suppose $Y$ is a subvariety of a smooth $k$-scheme $X$ of strictly positive codimension. Then $\cal{J}_{\infty}(Y)$ is good (and so measurable) with $\mu_X(\cal{J}_{\infty}(Y))\simeq 0$ in $\DM_{\textup{gm}}^{\textup{eff},\wedge}(k;R)$.
\end{lemma}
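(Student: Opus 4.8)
The plan is to realize $\cal{J}_\infty(Y)$ as a good subset of $X_\infty$ by approximating it from above by the stable subschemes $C_n:=(\pi^X_n)^{-1}(\cal{J}_n(Y))$, to show that the virtual dimensions $\vdim\mu_X(C_n)$ are forced to $+\infty$, and to conclude that the resulting compatible family is the zero object of the limit $\DM_{\textup{gm}}^{\textup{eff},\wedge}(k;R)$. I would first reduce to the case that $Y$ is closed and reduced; the general locally closed case follows by passing to the closure and peeling off the lower-dimensional difference, which is handled by induction on dimension.

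For the set-up: since the transition maps $\cal{J}_{m+1}(X)\to\cal{J}_m(X)$ are surjective affine bundles, each $\pi^X_n$ is surjective, so $\pi_n(C_n)=\cal{J}_n(Y)$, which is a (locally) closed subscheme of $\cal{J}_n(X)$; hence $C_n=(\pi^X_n)^{-1}(\pi_n(C_n))$ is a stable subscheme, and the $C_n$ form a decreasing chain. Their intersection is exactly $\cal{J}_\infty(Y)$, because an arc $\Spec k[\![t]\!]\to X$ all of whose truncations factor through $Y$ carries the ideal of $Y$ into $\bigcap_n t^{n+1}k[\![t]\!]=0$ and therefore factors through the reduced $Y$.

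The crucial input is the estimate $\operatorname{codim}(\cal{J}_n(Y),\cal{J}_n(X))\to\infty$ as $n\to\infty$. Writing $d=\dim X$ and taking $m\geq n$, the scheme $(C_n)_m=(\pi^m_n)^{-1}(\cal{J}_n(Y))$ is an affine bundle over $\cal{J}_n(Y)$, so Lemma~\ref{virtdim} gives $\vdim\mu_X(C_n)=(m+1)d-\dim(C_n)_m=(n+1)d-\dim\cal{J}_n(Y)=\operatorname{codim}(\cal{J}_n(Y),\cal{J}_n(X))$; thus it suffices to prove this codimension is unbounded. For $Y$ smooth of dimension $d'<d$ this is immediate, as $\dim\cal{J}_n(Y)=(n+1)d'$ and the codimension is $(n+1)(d-d')\geq n+1$. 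In general I would proceed by Noetherian induction on $\dim Y$: stratify $Y=Y^{\textup{sm}}\sqcup Z$ with $Y^{\textup{sm}}$ the dense open smooth locus and $\dim Z<\dim Y$; the jets in $\cal{J}_n(Y)$ with base point in $Y^{\textup{sm}}$ form $\cal{J}_n(Y^{\textup{sm}})$ (since the smooth locus is open), of codimension $(n+1)(d-\dim Y)$, while those based in $Z$ are controlled using the rank stratification of the Jacobian ideal of $Y$ together with the inductive hypothesis. \textbf{This bound on the jets of a singular $Y$ based in its singular locus --- equivalently, the classical fact that a proper subvariety of a smooth variety has arc-space measure zero, cf. \cite{Looijenga}, \cite{DenLoe} --- is the one genuinely nontrivial step; for smooth $Y$ the lemma is essentially a one-line computation.}

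Finally I would package the data. Reindex by a strictly increasing $\phi$ with $\operatorname{codim}(\cal{J}_{\phi(n)}(Y),\cal{J}_{\phi(n)}(X))\geq n$, and set $C_n':=C_{\phi(n)}$ and $C_{n,i}':=C_{n+i}'\setminus C_{n+i+1}'$. One checks that each $C_{n,i}'$ is a stable subscheme (it is $(\pi^X)^{-1}$ of the locally closed set $(\pi^{\phi(n+i+1)}_{\phi(n+i)})^{-1}\cal{J}_{\phi(n+i)}(Y)\setminus\cal{J}_{\phi(n+i+1)}(Y)$), that the inclusion $C_{n,i}'\subseteq C_{n+i}'$ forces $\vdim\mu_X(C_{n,i}')\geq\vdim\mu_X(C_{n+i}')\geq n+i\geq n$ with $\vdim\mu_X(C_{n,i}')\to\infty$ as $i\to\infty$, and that $C_n'\setminus\cal{J}_\infty(Y)=\bigsqcup_{i\geq0}C_{n,i}'$. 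Hence $\cal{J}_\infty(Y)$ is good, and by Proposition~\ref{welldefinedmeasure} its volume is $\mu_X(\cal{J}_\infty(Y))=(\mu_X(C_n'))_n$. Each $\mu_X(C_n')$ is geometric and effective (Definition~\ref{volumemeasure}) with $\vdim\geq n$, so it is annihilated by the localization functor $L_n$; since $L_n$ restricted to geometric motives factors through $\DM_{\textup{gm}}^{\textup{eff}}(k;R)/\DM_{\textup{gm}}^{\textup{eff}}(k;R)(n)$ (Lemma~\ref{fullfaithfulness}), the image of $\mu_X(C_n')$ in that quotient vanishes. Therefore the compatible family $(\mu_X(C_n'))_n$ is the zero object of the limit, i.e. $\mu_X(\cal{J}_\infty(Y))\simeq 0$ in $\DM_{\textup{gm}}^{\textup{eff},\wedge}(k;R)$.
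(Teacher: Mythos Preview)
Your overall strategy --- approximate $\cal{J}_\infty(Y)$ from above by the stable subschemes $C_n=(\pi^X_n)^{-1}\cal{J}_n(Y)$, show $\vdim\mu_X(C_n)\to\infty$, and conclude the compatible family is zero --- is exactly the paper's. The substantive difference is how the codimension bound is obtained. The paper does not attempt an induction on $\dim Y$; instead it invokes Greenberg's theorem \cite{Green3} (there exists $e$ with $\pi_{\lfloor m/e\rfloor}\cal{J}_\infty(Y)=\pi^m_{\lfloor m/e\rfloor}\cal{J}_m(Y)$ for $m\gg0$) together with the Denef--Loeser bound $\dim\pi_n\cal{J}_\infty(Y)\le(n+1)\dim Y$, and from these extracts the explicit estimate $\vdim\mu_X(\pi_{ne}^{-1}\cal{J}_{ne}(Y))\ge(n+1)c$ along the subsequence $m=ne$. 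Your Noetherian induction, by contrast, is underspecified precisely where you flag it: the inductive hypothesis concerns $\cal{J}_n(Z)$, but jets of $Y$ based in $Z$ need not factor through $Z$, so the hypothesis does not apply directly, and the phrase ``rank stratification of the Jacobian ideal together with the inductive hypothesis'' does not by itself make clear how the two combine to give the bound. You are right to identify this as the nontrivial step and to cite the literature, but note that the standard arguments there (and the paper's own) go through Greenberg's theorem rather than induction on $\dim Y$. On the other hand, your final paragraph is more careful than the paper's about actually exhibiting the $C_{n,i}'$ witnessing the ``good'' condition; the paper leaves that verification implicit.
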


\begin{proof}
Note that 
\[\cal{J}_{\infty}(Y)=\bigcap_{n=0}^{\infty}\pi_n^{-1}\cal{J}_n(Y).\]
which is the intersection of the following decreasing sequence
\[\pi_0^{-1}\cal{J}_0(Y)\supseteq\pi_1^{-1}\cal{J}_1(Y)\supseteq\hdots\]
of stable subschemes of $\cal{J}_{\infty}(X)$ containing $\cal{J}_{\infty}(Y)$.\\
\\
By the main result of \cite{Green3}, there is a positive integer $e$ such that for all $m$ sufficiently large,
\[\pi_{\lfloor m/e\rfloor}\cal{J}_{\infty}(Y)=\pi^m_{\lfloor m/e\rfloor}\cal{J}_m(Y).\]
Choose the decreasing sequence
\[\pi_0^{-1}\cal{J}_0(Y)\supseteq\pi_{e}^{-1}\cal{J}_e(Y)\supseteq\pi_{2e}^{-1}\cal{J}_{2e}(Y)\supseteq\pi_{3e}^{-1}\cal{J}_{3e}(Y)\supseteq\hdots\]
We show that for each $n$, $\mu_X(\pi_{ne}^{-1}\cal{J}_{ne}(Y))\in\DM_{\textup{gm}}^{\textup{eff}}(k;R)(nc)$, where $c$ is the codimension of $Y$ in $X$. We follow a well-known argument in classical motivic integration. By lemma 4.3 of \cite{DenLoe}, $\dim \pi_n(\cal{J}_{\infty}(Y))\leq (n+1)\dim Y$.
As a result, for $m\gg 0$
\begin{eqnarray*}
\dim\cal{J}_m(Y)&\leq&\dim\pi^m_{\lfloor m/e\rfloor}\cal{J}_m(Y)+(m-\lfloor m/e\rfloor)\dim X\\&=& \dim\pi_{\lfloor m/e\rfloor}\cal{J}_{\infty}(Y)+(m-\lfloor m/e\rfloor)\dim X\\ &\leq& (\lfloor m/e\rfloor+1)\dim Y+(m-\lfloor m/e\rfloor)\dim X\\&=&(m+1)\dim X-(\lfloor m/e\rfloor+1)(\dim X-\dim Y)
\end{eqnarray*}
Using this inequality, for $n\gg 0$
\begin{eqnarray*}
\vdim\mu_X(\pi_{ne}^{-1}\cal{J}_{ne}(Y))&=&\vdim\pi^{\cal{J}_{ne}(Y)}_!\mathbf{1}_{\cal{J}_{ne}(Y)}((ne+1)\dim X)[2(ne+1)\dim X]\\ &=& (ne+1)\dim X-\dim\cal{J}_{ne}(Y)\\&\geq& (ne+1)\dim X-(ne+1)\dim X+(n+1)c\\&=&(n+1)c.
\end{eqnarray*} 
Consequently, $\mu_X(\pi_n^{-1}\cal{J}_n(Y))\in\DM_{\textup{gm}}^{\textup{eff}}(k;R)(nc)$. Since by assumption $c\geq 1$, $\mu_X(\cal{J}_{\infty}(Y))\simeq 0$ in $\DM_{\textup{gm}}^{\textup{eff},\wedge}(k;R)$.
\end{proof}

\begin{lemma}\label{measdisj}
Suppose $C=\bigsqcup_{n=0}^{\infty}C_n$ as subschemes of $X_{\infty}$, where $C_n$ are stable subschemes of $X_{\infty}$ with $\vdim\mu_X(C_n)\rightarrow\infty$. Then $C$ is measurable and
\[\mu_X(C)\simeq\bigoplus_{n=0}^{\infty}\mu_X(C_n).\]
\end{lemma}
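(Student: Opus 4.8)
The plan is to recognise $C$ as a \emph{good} subset of $X_\infty$ with the trivial decomposition, read off its volume from the partial unions, and then match the answer with an appropriate meaning for the infinite coproduct $\bigoplus_{n\ge 0}\mu_X(C_n)$ inside $\DM_{\textup{gm}}^{\textup{eff},\wedge}(k;R)$. Throughout, write $D_N:=\bigsqcup_{n=0}^N C_n$; as is built into the notion of a scheme‑theoretic disjoint union of stable subschemes used here (cf.\ the discussion preceding the lemma) each $D_N$ is again a stable subscheme of $X_\infty$, and by Lemma~\ref{djunion} we have $\mu_X(D_N)\simeq\bigoplus_{n=0}^N\mu_X(C_n)$ in $\DM_{\textup{gm}}^{\textup{eff}}(k;R)$.

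\textbf{Step 1: $C$ is good.} Since $\vdim\mu_X(C_n)\to\infty$, for each $N$ the set $\{m:\vdim\mu_X(C_j)\ge N\text{ for all }j\ge m\}$ is a nonempty upward‑closed subset of $\mathbb{N}$; let $\phi(N)$ be its minimum, so $\phi$ is nondecreasing and $\vdim\mu_X(C_j)\ge N$ whenever $j\ge\phi(N)$. I would then take the increasing sequence of stable subschemes $D'_N:=D_{\phi(N)}\subseteq C$ together with the stable subschemes $C_{N,i}:=C_{\phi(N)+i}$ for $i\ge 1$. One has $C\setminus D'_N=\bigsqcup_{j>\phi(N)}C_j\subseteq\bigcup_{i\ge 1}C_{N,i}$, and by the choice of $\phi$, $N\le\vdim\mu_X(C_{N,i})$ for all $i$, while $\vdim\mu_X(C_{N,i})\to\infty$ as $i\to\infty$ by hypothesis. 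This is precisely the data in the definition of a good subset, so $C$ is good; regarded with its trivial decomposition it is measurable, and $\mu_X(C)=(\mu_X(D'_N))_N\in\DM_{\textup{gm}}^{\textup{eff},\wedge}(k;R)$.

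\textbf{Step 2: the infinite coproduct is convergent.} Form $S:=\bigoplus_{n=0}^\infty\mu_X(C_n)$ in the presentable category $\DM^{\textup{eff}}(k;R)$. I claim $S$ is an effective convergent motive. Fix $M$; since $L_M$ is a left adjoint it preserves coproducts, so $L_MS\simeq\bigoplus_n L_M\mu_X(C_n)$. If $\vdim\mu_X(C_n)\ge M$ then $\mu_X(C_n)\in\DM^{\textup{eff}}(k;R)(M)$ and $L_M\mu_X(C_n)=0$; since $\vdim\mu_X(C_n)\to\infty$, the set $J_M:=\{n:\vdim\mu_X(C_n)<M\}$ is finite, whence $L_MS\simeq L_MG_M$ with $G_M:=\bigoplus_{n\in J_M}\mu_X(C_n)$ a finite direct sum of geometric motives, hence geometric. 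Thus $i_ML_MS\simeq i_ML_MG_M$ for every $M$, so $S\in\DM_{\textup{conv}}^{\textup{eff}}(k;R)$, and its image under $\DM_{\textup{conv}}^{\textup{eff}}(k;R)\to\DM^{\textup{eff},\wedge}(k;R)$ lies in $\DM_{\textup{gm}}^{\textup{eff},\wedge}(k;R)$; this image is, by definition, $\bigoplus_{n=0}^\infty\mu_X(C_n)$.

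\textbf{Step 3: comparison in each Verdier quotient.} Because $\DM_{\textup{gm}}^{\textup{eff},\wedge}(k;R)=\lim_M\DM_{\textup{gm}}^{\textup{eff}}(k;R)/\DM_{\textup{gm}}^{\textup{eff}}(k;R)(M)$, it suffices to identify the images of $\mu_X(C)$ and of $S$ in each quotient, compatibly in $M$. By Step 2 the image of $S$ in the $M$‑th quotient is $L_MS\simeq L_MG_M$. For $\mu_X(C)$, Proposition~\ref{welldefinedmeasure} gives that its image in the $M$‑th quotient is $L_M\mu_X(D'_N)$ for all sufficiently large $N$; choosing $N$ large enough that $J_M\subseteq\{0,\dots,\phi(N)\}$ and applying Lemma~\ref{djunion} yields $\mu_X(D'_N)\simeq G_M\oplus\bigoplus_{\,n\le\phi(N),\ n\notin J_M}\mu_X(C_n)$, whose second summand lies in $\DM^{\textup{eff}}(k;R)(M)$ (each $\mu_X(C_n)$ appearing there has $\vdim\ge M$). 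Hence $L_M\mu_X(D'_N)\simeq L_MG_M$, so $\mu_X(C)$ and $S$ have the same image in the $M$‑th quotient, and compatibility over $M$ holds since all these identifications are induced by the functors $L_M$ from equivalences in $\DM_{\textup{gm}}^{\textup{eff}}(k;R)$. Therefore $\mu_X(C)\simeq\bigoplus_{n=0}^\infty\mu_X(C_n)$ in $\DM_{\textup{gm}}^{\textup{eff},\wedge}(k;R)$.

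The index bookkeeping with $\phi$ in Step~1 is routine. The point I expect to need the most care is Step~2 together with the matching in Step~3: one must verify that the \emph{infinite} coproduct is genuinely an object of the completed geometric category, i.e.\ that it is convergent (which is exactly what the growth condition $\vdim\mu_X(C_n)\to\infty$ buys), and then be careful about which finite truncation $\mu_X(D'_N)$ represents $\mu_X(C)$ in a given Verdier quotient, since the thresholds $\phi(N)$ and the finite sets $J_M$ must be interleaved correctly.
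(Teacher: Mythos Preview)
Your proof is correct and follows the same approach as the paper: exhibit $C$ as good via the increasing sequence of finite partial unions and invoke Lemma~\ref{djunion}. The paper's own proof is a one-liner that simply names this sequence and cites that lemma, leaving implicit the verification that the definition of ``good'' is satisfied (in particular the condition $n\le\vdim\mu_X(C_{n,i})$, which is exactly what your reindexing by $\phi$ handles) and the meaning of the infinite coproduct in $\DM_{\textup{gm}}^{\textup{eff},\wedge}(k;R)$; your Steps~2 and~3 spell out those points carefully.
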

\begin{proof}
Choose the sequence $C_0\subseteq C_0\sqcup C_1\subseteq\hdots\subseteq C_0\sqcup C_1\sqcup\hdots\sqcup C_n\subseteq\hdots\subseteq C$
of stable subschemes of $X_{\infty}$, and use lemma~\ref{djunion}.
\end{proof}
We end this subsection by making the observation that if $C$ is a stable subscheme with a finite (locally closed) decomposition $\cal{S}$ into stable subschemes $C_i$ given by $C=\sqcup_{i\in\cal{S}}C_i$, then $\mu_X(C)$ is given by a sequence of extensions of $\mu_X(C_i)$, while $\mu_X(C,\cal{S})=\bigoplus_i\mu_X(C_i)$ is given by the trivial extensions. Therefore, \textit{up to a sequence of extensions}, $\mu_X(C)$ and $\mu_X(C,\cal{S})$ are the same.
\subsection{Measurable functions, integrals, and computations}
After the previous preparatory sections, we give here the definition of measurable functions and define our integrals. We also do a simple computation that will be important to us in our applications.
\begin{definition}\label{measurablefunction}
A function $F:X_{\infty}\rightarrow\mathbb{N}_{\geq 0}\cup\left\{\infty\right\}$ is \textup{measurable} if for each $s\in\mathbb{N}_{\geq 0}$, $F^{-1}(s)$ is measurable, and $F^{-1}(\infty)$ has measure $0$.
\end{definition}
An important example of a measurable function for us is $\ord_Y$, where $i:Y\hookrightarrow X$ is a closed $k$-subvariety of $X$ of positive codimension. This function is defined as follows. The associated ideal sheaf $\cal{I}_{Y/X}$ fits into a short exact sequence
\[0\rightarrow\cal{I}_{Y/X}\rightarrow\cal{O}_X\rightarrow i_*\cal{O}_Y\rightarrow 0.\]
Given $\gamma\in\cal{J}_{\infty}(X)$, we may view it as a morphism $\gamma:\Spec k(\gamma)[[t]]\rightarrow X$ or $\gamma^*:\cal{O}_X\rightarrow k(\gamma)[[t]]$, where $k(\gamma)$ is the residue field of $\gamma\in\cal{J}_{\infty}(X)$. Then $\ord_Y(\gamma)$ is defined as the largest number $e\in\mathbb{N}_{\geq 0}\cup\{\infty\}$ such that the composition
\[\cal{O}_X\xrightarrow{\gamma^*}k(\gamma)[[t]]\rightarrow k(\gamma)[[t]]/(t^{e})\]
sends $\cal{I}_{Y/X}$ to zero. Note that by definition, the order function depends only on the isomorphism class of the ideal sheaf $\cal{I}_{Y/X}$. The fact that $\ord_Y^{-1}(\infty)$ has measure zero when $Y$ has strictly positive codimension in $X$ immediately follows from Lemma~\ref{measurezero}.
\begin{definition}Given a measurable function $F:\cal{J}_{\infty}(X)\rightarrow\mathbb{N}_{\geq 0}\cup\left\{\infty\right\}$, define the motivic integral
\[\int_{X_{\infty}}\mathbf{1}_k(F)[2F]d\mu_X:=\left[\bigoplus_{s=0}^{\infty}\mu_X(F^{-1}(s))(s)[2s]\right]\]
as an element of $\cal{M}(k;R)$.
\end{definition}
The reason it makes sense for (the image of the convergent motive) $\bigoplus_{s=0}^{\infty}\mu_X(F^{-1}(s))(s)[2s]$ to be viewed as an object of $\DM_{\textup{gm}}^{\textup{eff},\wedge}(k;R)$ is that $\mu_X(F^{-1}(s))(s)[2s]\in\DM_{\textup{gm}}^{\textup{eff},\wedge}(k;R)(s)$. Note that $\DM_{\textup{gm}}^{\textup{eff},\wedge}(k;R)$ is not closed under arbitrary small colimits just as $\DM_{\textup{gm}}^{\textup{eff}}(k;R)$ is not. Also, note that $\mathbf{1}_k(F)[2F]$ is the analogue of functions of the form $\mathbb{L}^{-F}$ in classical motivic integration, and that the classical motivic integral of $\mathbb{L}^{-F}$ is defined by
\[\int_{X_{\infty}}\mathbb{L}^{-F}d\mu_X:=\sum_{s=0}^{\infty}\mu_X(F^{-1}(s))\mathbb{L}^{-s}\]
as an element of $\widehat{\cal{M}}_k$, where $\mu_X$ here is the measure in classical motivic integration. In classical motivic integration, one reason we must complete $K_0(\Var_k)[\mathbb{L}^{-1}]$ is so that we can talk about infinite sums as above. In our case, we do not complete, but work categorically so that we have a notion of infinite direct sums. This infinite direct sum cannot be taken in a presentable category like $\DM^{\textup{eff}}(k;R)$ because passing to $K_0$ would give us a value group that is zero; this is the reason we consider completed Voevodsky motives. Finally, note that both definitions of integration above are completely analogous to the way Lebesgue integration is defined in real analysis.\\
\\
Of great importance to us in this paper is a formula that will allow us to understand how integrals change via resolution of singularities. In this section, we shall prove that if $f:X\rightarrow Y$ is a proper birational morphism of smooth $k$-varieties with $K_{X/Y}$ its relative canonical divisor, and $D\subseteq Y$ is an effective divisor, then we have an analogue of the transformation rule in classical motivic integration 
\[\int_{Y_{\infty}}\mathbf{1}_k(\ord_D)[2\ord_D]d\mu_Y=\int_{X_{\infty}}\mathbf{1}_k(\ord_{f^{-1}D+K_{X/Y}})[2\ord_{f^{-1}D+K_{X/Y}}]d\mu_X.\]
This is the analogue of the transformation rule in classical motivic integration:
\[\int_{Y_{\infty}}\mathbb{L}^{-\ord_D}d\mu_Y=\int_{X_{\infty}}\mathbb{L}^{-\ord_{f^{-1}D+K_{X/Y}}}d\mu_X.\]
Prior to proving the transformation rule in our setting in the next subsection, we do a computation and compare it to its analogue in classical motivic integration. This computation, though simple, will be used in our applications.
\begin{example}\label{emptyex}
Set $X$ to be a smooth $k$-variety and $F=\ord_{\emptyset}$. Then
\[\int_{X_{\infty}}\mathbf{1}_k(\ord_{\emptyset})[2\ord_{\emptyset}]d\mu_X=[M(X)].\]
Indeed, $\ord_{\emptyset}^{-1}(s)=0$ for $s>0$ and $\ord_{\emptyset}^{-1}(0)=X_{\infty}$. Therefore,
\[\int_{X_{\infty}}\mathbf{1}_k(\ord_{\emptyset})[2\ord_{\emptyset}]d\mu_X=\left[\bigoplus_{s=0}^{\infty}\mu_X(\ord_{\emptyset}^{-1}(s))(s)[2s]\right]=[\mu_X(X_{\infty})]=[M(X)]\]
in $\cal{M}(k;R)$. This is analogous to the calculation in classical motivic integration that
\[\int_{X_{\infty}}\mathbb{L}^{-\ord_{\emptyset}}d\mu_X=[X]\]
in $\widehat{\cal{M}}_k$.
\end{example}
\subsection{The transformation rule}
The aim of this subsection is to prove the promised transformation rule. Recall that we are assuming that $k$ has characteristic exponent invertible in the coefficient ring $R$.
\begin{theorem}\label{changeofvar}
Suppose $f:X\rightarrow Y$ is a proper birational morphism of smooth $k$-varieties with $K_{X/Y}$ its relative canonical divisor, and let $D\subset Y$ be an effective divisor. Then
\[\int_{Y_{\infty}}\mathbf{1}_k(\ord_D)[2\ord_D]d\mu_Y=\int_{X_{\infty}}\mathbf{1}_k(\ord_{f^{-1}D+K_{X/Y}})[2\ord_{f^{-1}D+K_{X/Y}}]d\mu_X.\]
\end{theorem}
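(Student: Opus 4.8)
The plan is to adapt the Denef--Loeser change of variables formula from classical motivic integration, replacing the bookkeeping with Lefschetz classes by equivalences of the motives $\pi^{(-)}_!\mathbf{1}$ coming from purity and $\mathbb{A}^1$-homotopy invariance. Write $D'=f^{-1}D$ for the total transform and let $f_\infty:X_\infty\to Y_\infty$ be the morphism induced on Jet schemes. Since $\gamma^*(f^*\cal{I}_D)=(f_\infty\gamma)^*\cal{I}_D$ for every arc $\gamma$, we have $\ord_{D'}(\gamma)=\ord_D(f_\infty\gamma)$, and since $f^{-1}D$ and $K_{X/Y}$ are effective divisors and $\ord$ is a valuation, $\ord_{f^{-1}D+K_{X/Y}}=\ord_{D'}+\ord_{K_{X/Y}}$. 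For $a,e\geq 0$ put $W_{a,e}:=\ord_{D'}^{-1}(a)\cap\ord_{K_{X/Y}}^{-1}(e)$, a stable subscheme of $X_\infty$ (the conditions $\ord\geq n$ are cut out on $n$-jets); then $(\ord_{f^{-1}D+K_{X/Y}})^{-1}(N)=\bigsqcup_{a+e=N}W_{a,e}$ and $f_\infty(W_{a,e})\subseteq\ord_D^{-1}(a)$. Unwinding the definitions and regrouping the summands of the integral over $X_\infty$ by $N=a+e$ (a finite regrouping on each level, valid for classes in $\cal{M}(k;R)$ via the localization cofiber sequences, which the exact twists $(N)[2N]$ preserve), the theorem reduces to proving
\[\Big[\,\bigoplus_{a\geq 0}\mu_Y\big(\ord_D^{-1}(a)\big)(a)[2a]\,\Big]=\Big[\,\bigoplus_{a,e\geq 0}\mu_X(W_{a,e})\,(a+e)[2(a+e)]\,\Big]\]
in $\cal{M}(k;R)$.

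\emph{The fibration lemma and its motivic form.} The geometric input is the Denef--Loeser fibration lemma for the morphism $f$ between smooth $k$-varieties of equal dimension $d$ (Lemma~3.4 of \cite{DenLoe}; see also \cite{Looijenga}): for every $e\geq 0$ and every sufficiently large $m$, the stratum $\ord_{K_{X/Y}}^{-1}(e)\subseteq X_\infty$, the sets $W_{a,e}$, and their images under $f_\infty$ are stable at level $m$, and $f_m:\pi_m(W_{a,e})\to\pi_m(f_\infty W_{a,e})$ is a piecewise trivial fibration with fibre $\mathbb{A}^e$; this concerns only Jet schemes of smooth varieties, hence holds over $k$ of any characteristic. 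For a trivial bundle $g:T\times\mathbb{A}^e\to T$ of $k$-varieties, purity (Theorem~11.4.5 of \cite{CisDeg1}) gives $g_!\mathbf{1}\simeq g_{\texttt{\#}}\mathbf{1}(-e)[-2e]$ and $\mathbb{A}^1$-invariance gives $g_{\texttt{\#}}\mathbf{1}\simeq\mathbf{1}_T$, so $\pi^{T\times\mathbb{A}^e}_!\mathbf{1}\simeq(\pi^T_!\mathbf{1})(-e)[-2e]$. Running the localization cofiber sequences of a stratification of $\pi_m(f_\infty W_{a,e})$ trivialising $f_m$ simultaneously on source and target (twisting the source sequences by $(-e)[-2e]$) gives $\pi^{\pi_m(W_{a,e})}_!\mathbf{1}\simeq\pi^{\pi_m(f_\infty W_{a,e})}_!\mathbf{1}(-e)[-2e]$, and multiplying by the normalising twist $((m+1)d)[2(m+1)d]$ of Definition~\ref{volumemeasure} yields
\[\mu_X(W_{a,e})\ \simeq\ \mu_Y\big(f_\infty W_{a,e}\big)(-e)[-2e]\]
in $\DM_{\textup{gm}}^{\textup{eff}}(k;R)$, compatibly in $m$ by Lemma~\ref{stabilizationlemma}.

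\emph{Assembling.} Let $U\subseteq Y$ be the dense open over which $f$ is an isomorphism. By the valuative criterion of properness, an arc of $Y$ whose generic point meets $U$ lifts uniquely to $X$, so the arcs not in $\im(f_\infty)$ lie in $\cal{J}_\infty(Y\setminus U)$, which has measure $0$ by Lemma~\ref{measurezero}, as does $\ord_{K_{X/Y}}^{-1}(\infty)$ since $K_{X/Y}$ has positive-codimension support. Hence $\{f_\infty W_{a,e}\}_{e\geq 0}$ partitions $\ord_D^{-1}(a)$ up to measure $0$, and — since by the previous step $\vdim\,\mu_Y(f_\infty W_{a,e})=\vdim\,\mu_X(W_{a,e})(e)[2e]$ grows with $e$, so the relevant countable direct sums converge in $\DM_{\textup{gm}}^{\textup{eff},\wedge}(k;R)$ — Lemmas~\ref{djunion} and~\ref{measdisj} (after refining the $f_\infty W_{a,e}$ to a scheme-theoretic coproduct and discarding the measure-zero locus) give $\mu_Y(\ord_D^{-1}(a))\simeq\bigoplus_{e\geq 0}\mu_Y(f_\infty W_{a,e})$. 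Rearranging the displayed identity of the first step, the key identity reads $\mu_Y(f_\infty W_{a,e})\simeq\mu_X(W_{a,e})(e)[2e]$, so that $\bigoplus_a\mu_Y(\ord_D^{-1}(a))(a)[2a]\simeq\bigoplus_{a,e}\mu_X(W_{a,e})(e)[2e](a)[2a]=\bigoplus_{a,e}\mu_X(W_{a,e})(a+e)[2(a+e)]$ as objects of $\DM_{\textup{gm}}^{\textup{eff},\wedge}(k;R)$. Passing to classes in $\cal{M}(k;R)$ proves Theorem~\ref{changeofvar}.

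\emph{Main obstacle.} The substantive points are the fibration lemma of the second step — which must be available for smooth $X,Y$ in arbitrary characteristic with stability bounds explicit enough that the measures stabilise, and whose ``piecewise trivial $\mathbb{A}^e$-fibration'' must be upgraded to an equivalence of $\pi_!$-motives compatibly across jet levels — and, more delicately, the comparison between the object $\bigoplus_N\mu_X((\ord_{f^{-1}D+K_{X/Y}})^{-1}(N))(N)[2N]$ defining the integral on $X_\infty$ and the regrouped sum $\bigoplus_{a,e}\mu_X(W_{a,e})(a+e)[2(a+e)]$: the level sets are decomposed only into locally closed pieces, so $\mu_X$ of a level set is an iterated extension, not a direct sum, of the $\mu_X(W_{a,e})$, and the identification is legitimate only after passing to classes in $\cal{M}(k;R)$ (using the localization relations, exactly as in the paper's closing remark that $\mu_X(C)$ and $\mu_X(C,\cal{S})$ agree ``up to a sequence of extensions''). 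Throughout one must check that all countable direct sums that arise genuinely lie in $\DM_{\textup{gm}}^{\textup{eff},\wedge}(k;R)$ — this follows from the virtual-dimension growth of the strata via Lemmas~\ref{virtdim} and~\ref{measurezero} — and that the measure-zero loci (non-lifting arcs and the loci where the order functions are infinite) contribute nothing, which rests on the well-definedness of $\mu_X$ from Proposition~\ref{welldefinedmeasure}.
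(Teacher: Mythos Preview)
Your approach is essentially the same as the paper's: stratify by the pair $(\ord_{D'},\ord_{K_{X/Y}})$, invoke the Denef--Loeser fibration lemma, convert the $\mathbb{A}^e$-fibration into a Tate twist via purity and $\mathbb{A}^1$-invariance, and reassemble. The geometric inputs and overall architecture match.

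There is one overreach worth flagging. In your ``fibration lemma and its motivic form'' step you assert a genuine categorical equivalence
\[
\mu_X(W_{a,e})\ \simeq\ \mu_Y\big(f_\infty W_{a,e}\big)(-e)[-2e]
\]
in $\DM_{\textup{gm}}^{\textup{eff}}(k;R)$, obtained by ``running the localization cofiber sequences \dots\ simultaneously on source and target''. For a merely \emph{piecewise} trivial $\mathbb{A}^e$-fibration this does not follow: having isomorphisms on the outer terms of two localization triangles does not force an isomorphism of the middle terms unless the boundary maps are compatible, and the (non-canonical) trivialisations over the strata give no control of that boundary map. The map $f_m$ is not known to be smooth or Zariski-locally trivial, so one cannot produce a global map $f_{m!}\mathbf{1}\to\mathbf{1}(-e)[-2e]$ to which the five lemma would apply. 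The paper sidesteps this entirely by carrying the decompositions $\cal{S}_{e,k}$ through the argument and working with $\mu_X(C,\cal{S})=\bigoplus_i\mu_X(C_i)$: on each piece the bundle is \emph{trivial}, so the equivalence $\mu_{X'}(f_\infty^{-1}(C))(e)[2e]\simeq\mu_X(C)$ is honest, and the direct-sum structure makes the assembly immediate. Since the integral lives in $\cal{M}(k;R)$ anyway --- a point you correctly emphasise in your obstacle paragraph for the \emph{other} identification --- the fix is simply to claim the fibration equivalence only after decomposition (equivalently, only as an identity of classes), exactly as the paper does. With that adjustment your proof is complete and coincides with the paper's.
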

As we will shortly see, this is an analogue of the change of variables formula in calculus. In the next section, this formula will be crucial to our applications regarding K-equivalent varieties.\\
\\
Prior to proving the transformation rule above, we recall the first fundamental exact sequence for K\"ahler differentials and related topics.\\
\\
Given a morphism of smooth $k$-schemes $f:X'\rightarrow X$, we have an $\cal{O}_{X'}$-linear morphism $f^*\Omega^1_X\xrightarrow{df}\Omega^1_{X'}$ (of locally free sheaves on $X'$) that fits into the exact sequence
\[f^*\Omega^1_X\xrightarrow{df}\Omega^1_{X'}\rightarrow\Omega^1_{X'/X}\rightarrow 0\]
of locally free sheaves. Suppose now that $f$ is \textit{birational} and let $d=\dim X=\dim X'$ as before. Since $f$ is birational, $X$ and $Y$ have the same function fields. The stalk of $\Omega^1_{X'/X}$ at the generic point is isomorphic to $\Omega^1_{K(X')/K(X)}=0$. Therefore, taking stalks at the generic point for $f^*\Omega^1_X\xrightarrow{df}\Omega^1_{X'}$ gives a surjection of finite-dimensional vector spaces of the same dimension ($X$ and $X'$ are birational), and so an isomorphism. Therefore, when $f$ is birational, we obtain the short exact sequence
\[0\rightarrow f^*\Omega^1_X\xrightarrow{df}\Omega^1_{X'}\rightarrow\Omega^1_{X'/X}\rightarrow 0.\]
Taking the $d$th exterior power of $df$, we obtain the exact sequence
\[0\rightarrow f^*\omega_X\rightarrow\omega_{X'}\]
of line bundles, where $\omega_X=\Omega_X^d$ and $\omega_{X'}=\Omega_{X'}^d$ denote the canonical bundles of $X$ and $X'$ (over $k$). Tensoring by $\omega_{X'}^{-1}$, we obtain the exact sequence
\[0\rightarrow f^*\omega_X\otimes\omega_{X'}^{-1}\rightarrow \cal{O}_{X'}\]
of line bundles. Therefore, we may view $f^*\omega_X\otimes\omega_{X'}^{-1}$ as a locally principal ideal of $\cal{O}_{X'}$, which we denote by $J_{X'/X}$. Let $K_{X'/X}$ be the Cartier divisor given locally by the vanishing of $J_{X'/X}$. This is called the relative canonical divisor of $f$. Note that $J_{X'/X}$ is locally given by the vanishing of $\det df$.\\
\\
Suppose now that $L|k$ is a field extension. Let $\gamma:\Spec L[[t]]\rightarrow X'$ be an $L$-point of $\cal{J}_{\infty}(X')$ with $\ord_{K_{X'/X}}(\gamma)=e$. This is equivalent to $\gamma^*J_{X'/X}=(t^e)\subseteq L[[t]]$. Since $J_{X'/X}$ is locally defined by the vanishing of $\det df$, this is equivalent to $\gamma^*(\det df)=(t^e)$, or equivalently, $\det (\gamma^*(df))=(t^e)$.\\
\\
The pillar on which the proof of the transformation rule rests is the following proposition due to Denef and Loeser. From context, it will be clear what $\pi_m$ and $\pi^n_m$ mean; we abuse notation in the following lemma.
\begin{proposition}\label{impprop}(Lemma 3.4 of \cite{DenLoe})
Suppose $f:X'\rightarrow X$ is a proper birational morphism of smooth $k$-varieties. Let $C_e':=\ord_{K_{X'/X}}^{-1}(e)$, where $K_{X'/X}$ is the relative canonical divisor of $f$. Let $C_e:=f_{\infty}C_e'$. Let $L|k$ be a field extension. Let $\gamma\in C_e'(L)$ be an $L$-point, that is, a map $\gamma^*:\cal{O}_{X'}\rightarrow L[[t]]$ such that $\gamma^*(J_{X'/X})=(t^e)$. Then for $m\geq 2e$ one has:
\begin{enumerate}[(a)]
	\item For all $\xi\in\cal{J}_{\infty}(X)$ such that $\pi_m(\xi)=f_m(\pi_m(\gamma))$, there is $\gamma'\in\cal{J}_{\infty}(X')$ such that $f_{\infty}(\gamma')=\xi$ and $\pi_{m-e}(\gamma')=\pi_{m-e}(\gamma)$. In particular, the fiber of $f_m$ over $f_m(\gamma_m)$ lies in the fiber of $\pi^m_{m-e}$ over $\pi_{m-e}(\gamma)$.
	\item $\pi_m(C_e')$ is a union of fibers of $f_m$.
	\item The map $f_m:\pi_m(C_e')\rightarrow\pi_m(C_e)$ is a piecewise trivial $\mathbb{A}^e$-fibration.
\end{enumerate}
\end{proposition}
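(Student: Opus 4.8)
Since the statement is recorded as Lemma~3.4 of \cite{DenLoe}, the plan is to reconstruct the Denef--Loeser argument, which turns the slogan ``a proper birational morphism of smooth varieties is an isomorphism away from its Jacobian locus'' into a quantitative fact about arc spaces, the order $e$ of the relative canonical divisor measuring the ``loss of precision'' when lifting arcs. First I would reduce to a local computation: all three assertions are local on $X$ for the Zariski (in fact \'etale) topology, so I would choose local coordinates near $f(\gamma(0))\in X$ and near $\gamma(0)\in X'$, in which $f$ is given by a $d$-tuple of functions with $d\times d$ Jacobian matrix $df$ whose determinant generates $J_{X'/X}$ locally. The hypothesis $\gamma\in C_e'(L)$ then says exactly that $\gamma^*(\det df)$ has $t$-order $e$, so by Cramer's rule $\gamma^*(df)$ is invertible over $L((t))$ with an inverse whose entries have pole order at most $e$.

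The heart of the matter is part (a), which I would prove by a Newton/Hensel iteration. Given $\xi$ with $\pi_m(\xi)=f_m(\pi_m(\gamma))$ and $m\ge 2e$, I take $\gamma$ itself as a first approximation: because $\xi\equiv f\circ\gamma\bmod t^{m+1}$ it already agrees with $f\circ\gamma$ to order $m$. The inductive step is that replacing a candidate lift by a correction of $t$-order $r\ge m-e+1$ changes its image under $f$ by a term whose leading part is $\gamma^*(df)$ applied to the correction; dividing by $t^e$ this operation is invertible, so the discrepancy with $\xi$ can be killed one order at a time, improving agreement with $\gamma$ from modulo $t^{r}$ to modulo $t^{r+1}$ while preserving $f\circ(\text{lift})\equiv\xi$ to the correct order. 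Iterating gives a $t$-adically convergent power-series solution, and properness of $f$ is what guarantees that this actually solves the arc-lifting problem on $X'$, producing an honest arc $\gamma'$ with $f_\infty(\gamma')=\xi$ and $\pi_{m-e}(\gamma')=\pi_{m-e}(\gamma)$; the remaining clause of (a) is then immediate.

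Granting (a), I would deduce (b) and (c) fairly mechanically. For (b): if $\eta=\pi_m(\gamma)\in\pi_m(C_e')$ and $f_m(\eta')=f_m(\eta)$, the last clause of (a) forces $\pi^m_{m-e}(\eta')=\pi_{m-e}(\gamma)$; lifting $\eta'$ to an arc on the smooth $X'$ (truncations are surjective there) yields an arc agreeing with $\gamma$ modulo $t^{m-e+1}$, and since $m-e\ge e$ its Jacobian still has $t$-order $e$ (that order being read off a jet of level $\le e$), so the lift lies in $C_e'$ and $\eta'\in\pi_m(C_e')$. For (c): part (a) already exhibits every fiber of $f_m\colon\pi_m(C_e')\to\pi_m(C_e)$ as contained in a fiber of $\pi^m_{m-e}$, hence as an affine space, and a length count for the cokernel of $\gamma^*(df)$ on truncated jets pins its dimension to $e$; to upgrade fiberwise triviality to \emph{piecewise} triviality I would stratify $\pi_m(C_e)$ according to the elementary divisors of $\gamma^*(df)$, a constructible condition, and check that the Newton construction trivializes the $\mathbb{A}^e$-fibration over each stratum.

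The step I expect to be the main obstacle is the quantitative lifting in (a): one has to track the \emph{exact} order $e$ of the Jacobian through the iteration, so that the loss is precisely $e$ rather than merely bounded, and correctly invoke properness to pass from a formal solution to an arc on $X'$. Once (a) is in hand, (b) is bookkeeping, and the only real subtlety left in (c) is the constructibility of the elementary-divisor stratification required for piecewise, rather than merely fiberwise, triviality.
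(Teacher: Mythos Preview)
The paper does not give its own proof of this proposition: it simply records it as Lemma~3.4 of Denef--Loeser \cite{DenLoe} and uses it as a black box in the proof of the transformation rule. Your proposal is a faithful reconstruction of the Denef--Loeser argument---local coordinates, Newton/Hensel iteration controlled by the $t$-order of $\det df$, then bookkeeping for (b) and an elementary-divisor stratification for (c)---so there is nothing to compare against in the paper itself.

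One minor remark: in part (a) you invoke properness of $f$ to pass from the formal iteration to an honest arc on $X'$, but that step does not actually need properness. The Newton iteration is carried out in local coordinates on $X'$, and the $t$-adic limit it produces is already a $k(\gamma)[[t]]$-point of the affine chart, hence a genuine arc on $X'$. Properness enters the transformation rule elsewhere (to make $f_\infty$ a bijection off a measure-zero set), not in this lemma.
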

For the proof of the transformation rule, we also need the following two lemmas.
\begin{lemma}\label{bij}
If $f:X'\rightarrow X$ is a proper birational morphism of smooth $k$-varieties, then away from a measure zero closed subscheme, $f_{\infty}:\cal{J}_{\infty}(X')\rightarrow\cal{J}_{\infty}(X)$ is bijective.
\end{lemma}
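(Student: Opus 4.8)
\emph{Proof proposal.} The plan is to pin down the precise locus on which $f_\infty$ can fail to be bijective, identify it with the arc space of a positive‑codimension subvariety, and then quote Lemma~\ref{measurezero}. First I would let $U\subseteq X$ be the largest open subscheme over which $f$ is an isomorphism, and set $Z:=X\setminus U$ with its reduced structure and $Z':=f^{-1}(Z)$. Because $f$ is birational and $X,X'$ are smooth (hence normal and, component by component, irreducible), $U$ is dense in $X$ and $f^{-1}(U)$ is dense in $X'$; thus $Z$ and $Z'$ are closed subvarieties of strictly positive codimension in $X$ and $X'$. By Lemma~\ref{measurezero} (applied to each, over the smooth ambient $X$ resp.\ $X'$), the subschemes $\cal{J}_\infty(Z)\subseteq X_\infty$ and $\cal{J}_\infty(Z')\subseteq X'_\infty$ are good and of measure $0$.

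The heart of the argument is the claim that $f_\infty$ restricts to a bijection $X'_\infty\setminus\cal{J}_\infty(Z')\xrightarrow{\ \sim\ }X_\infty\setminus\cal{J}_\infty(Z)$, which I would verify on $L$-points for an arbitrary field extension $L\mid k$. An $L$-arc $\gamma:\Spec L[\![t]\!]\to X$ lies outside $\cal{J}_\infty(Z)$ exactly when its generic point $\Spec L(\!(t)\!)$ maps into $U$: if the generic point mapped into the closed set $Z$, so would its closure, which is all of $\Spec L[\![t]\!]$. Given such a $\gamma$, composing its restriction to the generic point with the isomorphism $f^{-1}(U)\xrightarrow{\sim}U$ yields a unique morphism $\Spec L(\!(t)\!)\to f^{-1}(U)\subseteq X'$; since $f$ is proper and $L[\![t]\!]$ is a (complete) discrete valuation ring with fraction field $L(\!(t)\!)$, the valuative criterion of properness extends this, uniquely, to an arc $\tilde\gamma:\Spec L[\![t]\!]\to X'$ with $f\circ\tilde\gamma=\gamma$. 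Its generic point lands in $f^{-1}(U)=X'\setminus Z'$, so $\tilde\gamma\notin\cal{J}_\infty(Z')$; conversely any $L$-arc of $X'$ whose generic point avoids $Z'$ pushes forward to one of $X$ whose generic point avoids $Z$. This gives surjectivity. For injectivity, two arcs $\tilde\gamma_1,\tilde\gamma_2\in X'_\infty\setminus\cal{J}_\infty(Z')$ with the same image $\gamma$ both factor, on the generic point, through $f^{-1}(U)\xrightarrow{\sim}U$ over $\gamma|_{\Spec L(\!(t)\!)}$, hence agree there, and therefore agree on all of $\Spec L[\![t]\!]$ since $f$, being proper, is separated. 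Running this uniformly over all $L$ yields the claimed bijection of point sets, and the lemma follows since its complement $\cal{J}_\infty(Z')$ (resp.\ $\cal{J}_\infty(Z)$) has measure $0$.

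I do not anticipate a genuine obstacle; the two points that require a little care are (i) working with the \emph{generic} point of an arc rather than its closed point, so that the complement of $\cal{J}_\infty(Z)$ is described correctly — note that the naive subset $\cal{J}_\infty(f^{-1}U)$ of arcs lying \emph{entirely} in $f^{-1}(U)$ is strictly smaller, yet those extra arcs still lift — and (ii) checking that $\cal{J}_\infty(Z)$ really is a measurable measure‑zero subscheme of $X_\infty$, which is exactly Lemma~\ref{measurezero} together with the fact that each $\cal{J}_n$ sends closed immersions to closed immersions. Everything else is a direct application of the valuative criteria for the proper morphism $f$.
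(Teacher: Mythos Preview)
Your proposal is correct and follows essentially the same route as the paper: both isolate the exceptional locus, observe that an arc avoids its arc space precisely when the \emph{generic} point of $\Spec L[\![t]\!]$ lands in the open set where $f$ is an isomorphism, and then apply the valuative criterion for properness to obtain existence and uniqueness of the lift. Your write-up is in fact a bit more careful than the paper's (explicitly separating surjectivity from injectivity via separatedness, and explicitly invoking Lemma~\ref{measurezero} for the measure-zero claim), but the underlying argument is identical.
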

\begin{proof}
The proof is more or less that given in classical motivic integration. Let $Z\subset X'$ be a proper closed $k$-subvariety on whose complement $f$ is an isomorphism. Given $\gamma\in\cal{J}_{\infty}(X)$, we denote by $k(\gamma)$ its residue field. Then $\gamma$ can be viewed as a morphism $\gamma:\Spec k(\gamma)[[t]]\rightarrow X$. We show that any $\gamma$ which does not lie entirely in $f(Z)$ uniquely lifts to an arc in $X'$. We apply the valuative criterion for properness to show this. Consider the following diagram.
\[\xymatrix{\Spec(k(\gamma)[[t]]_{(0)}) \ar@{.>}[r] \ar@{->}[d] \ar@{->}[rrd] & X' \ar@{->}[d] & X'\setminus Z \ar@{_{(}->}[l] \ar@{->}[d]^{\simeq} \\ \Spec(k(\gamma)[[t]]) \ar@{->}[r]^{\gamma} \ar@{.>}[ur] & X & X\setminus f(Z). \ar@{_{(}->}[l]}\]
By assumption, the generic point $\Spec(k(\gamma)[[t]]_{(0)})$ is in $X\setminus f(Z)$. Since $f$ is an isomorphism from $X'\setminus Z\rightarrow X\setminus f(Z)$, this generic point lifts to $X'$ uniquely (the upper left horizontal dashed arrow). $f$ is proper, and so the valuative criterion for properness yields the unique existence of the dotted arrow $\Spec(k(\gamma)[[t]])\rightarrow X'$. Consequently, the morphism
\[f_{\infty}:\cal{J}_{\infty}(X')\setminus\cal{J}_{\infty}(Z)\rightarrow \cal{J}_{\infty}(X)\setminus\cal{J}_{\infty}(f(Z)),\]
of $k$-schemes is bijective.
\end{proof}
\begin{lemma}
If $f:X'\rightarrow X$ is a proper birational morphism of smooth $k$-varieties, then for every $m$, $f_m:\cal{J}_m(X')\rightarrow\cal{J}_m(X)$ is surjective. 
\end{lemma}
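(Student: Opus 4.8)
The plan is to verify surjectivity of $f_m\colon\cal{J}_m(X')\to\cal{J}_m(X)$ on field-valued points. As $f_m$ is a morphism of $k$-schemes of finite type, it is surjective as soon as the following holds: for every field $L\supseteq k$ and every $L$-point $\gamma_m\colon\Spec L[t]/(t^{m+1})\to X$ of $\cal{J}_m(X)$, there are a field extension $L'\supseteq L$ and a lift of $\gamma_m\otimes_L L'$ to an $L'$-point of $\cal{J}_m(X')$. Let $U\subseteq X$ be the dense open over which $f$ restricts to an isomorphism $f^{-1}(U)\xrightarrow{\ \sim\ }U$, and let $B\subseteq X$ be its reduced complement, so that $\dim B\le d-1$ where $d:=\dim X$. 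The case $d=0$ is immediate, and one reduces to $X$ (hence $X'$) irreducible by treating irreducible components separately, so assume $X$ irreducible.

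First I would use that $X$ is smooth: then $\cal{J}_{n+1}(X)\to\cal{J}_n(X)$ is an $\mathbb{A}^d$-bundle, hence surjective on points over any field, and since $\cal{J}_\infty(X)=\varprojlim_n\cal{J}_n(X)$ every jet of $X$ over a field extends to an $\infty$-arc over that field, with the arcs extending a given jet surjecting onto the higher-level jets extending it. The proof then reduces to the claim that \emph{there are a field extension $L'\supseteq L$ and an $\infty$-arc $\tilde\gamma\in\cal{J}_\infty(X)(L')$ extending $\gamma_m\otimes_L L'$ whose generic point $\Spec L'((t))\to X$ lands in $U$.} Indeed, granting this, the generic point lifts uniquely along the isomorphism $f^{-1}(U)\xrightarrow{\sim}U$, and then — exactly as in the proof of Lemma~\ref{bij} — the valuative criterion of properness for the proper morphism $f$, applied with the discrete valuation ring $L'[[t]]$ of fraction field $L'((t))$, gives a unique $\tilde\gamma'\in\cal{J}_\infty(X')(L')$ with $f\circ\tilde\gamma'=\tilde\gamma$; truncating $\tilde\gamma'$ at level $m$ produces an $L'$-point of $\cal{J}_m(X')$ mapping to $\gamma_m\otimes_L L'$.

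Proving this claim is the real obstacle: one must rule out that $\gamma_m$ is trapped inside the exceptional locus to all orders. Note that an $\infty$-arc $\Spec F[[t]]\to X$ has generic point in $U$ precisely when it does not factor through the reduced scheme $B$, since $F[[t]]$ is a domain. Suppose, for contradiction, that every $\infty$-arc extending $\gamma_m$, over every field extension of $L$, factors through $B$. Base-change to $\overline L$, let $\overline{\gamma_m}$ be the resulting closed point of $\cal{J}_m(X_{\overline L})$, and set $n:=d(m+1)$. By smoothness, every closed point of the fibre $(\pi^n_m)^{-1}(\overline{\gamma_m})\subseteq\cal{J}_n(X_{\overline L})$ extends to an $\infty$-arc of $X_{\overline L}$, which by hypothesis factors through $B_{\overline L}$; hence that point lies in $\pi_n\!\big(\cal{J}_\infty(B_{\overline L})\big)$, and so in its closure. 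Since $\cal{J}_n(X_{\overline L})\to\cal{J}_m(X_{\overline L})$ is a composition of $\mathbb{A}^d$-bundles, the fibre $(\pi^n_m)^{-1}(\overline{\gamma_m})$ is an irreducible variety of dimension $(n-m)d$; the closed set $\overline{\pi_n(\cal{J}_\infty(B_{\overline L}))}$ contains all of its closed points, hence contains it, and together with Lemma 4.3 of \cite{DenLoe} and $\dim B\le d-1$ this gives
\[(n-m)d=\dim(\pi^n_m)^{-1}(\overline{\gamma_m})\le\dim\pi_n\!\big(\cal{J}_\infty(B_{\overline L})\big)\le(n+1)(d-1).\]
But $(n-m)d-(n+1)(d-1)=1$ for $n=d(m+1)$, a contradiction. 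Hence some $\infty$-arc extending $\gamma_m$ over $\overline L$ escapes $B$, which proves the claim, and with it the lemma.
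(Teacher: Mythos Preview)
Your proof is correct and follows the same two-step strategy as the paper: show that above any $m$-jet there is an $\infty$-arc not entirely contained in the exceptional locus, then lift that arc to $X'$ via the valuative criterion for properness (exactly the content of Lemma~\ref{bij}). The difference lies only in how the first step is justified. The paper argues that the fibre $\pi_m^{-1}(\gamma)$ cannot lie inside $\cal{J}_\infty(Z)$ because the latter ``has measure zero'' (Lemma~\ref{measurezero}), implicitly using that the fibre itself has nonzero measure. You instead carry out the underlying dimension count directly: the fibre of $\pi^n_m$ over a point has dimension $(n-m)d$, while $\pi_n(\cal{J}_\infty(B))$ has dimension at most $(n+1)(d-1)$ by Lemma~4.3 of \cite{DenLoe}, and for $n=d(m+1)$ the former strictly exceeds the latter. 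This is precisely the estimate that drives the paper's proof of Lemma~\ref{measurezero}, so the two arguments are equivalent in content; your version is more self-contained in that it avoids invoking the motivic-measure framework and handles non-closed points cleanly via the base change to~$\overline L$.
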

\begin{proof}
Suppose $Z\subset X$ is a proper closed $k$-subvariety such that $f:X'\setminus f^{-1}(Z)\rightarrow X\setminus Z$ is an isomorphism of schemes. From the previous Lemma~\ref{bij}, we know that if $\gamma\notin\cal{J}_{\infty}(Z)$, then it lies in the image of $f_{\infty}$. If $\gamma\in\cal{J}_m(X)$, then $\pi^{-1}_m(\gamma)$ cannot be contained entirely in $\cal{J}_{\infty}(Z)$ because the latter has measure zero. As a result, there is an element in $\pi^{-1}_m(\gamma)$ that is in the image of $f_{\infty}$. In particular, there is an element in $\cal{J}_m(X')$ that maps to $\gamma\in\cal{J}_m(X)$, and so $f_m$ is surjective.
\end{proof}

\begin{proof}[Proof of the transformation rule] Using the above proposition and lemmas, we are now ready to deduce the transformation rule. Let $C_{\leq e}'=\ord_{K_{X'/X}}^{-1}([0,e])$ and let $C_e'=\ord_{K_{X'/X}}^{-1}(e)$. Note that $\cal{J}_{\infty}(K_{X'/X})$ has measure $0$ by Lemma~\ref{measurezero}. Up to (removing) a measure zero closed subscheme, therefore,
\[C_{\leq 0}'\subseteq C_{\leq 1}'\subseteq\hdots\subseteq\cal{J}_{\infty}(X')\] is a filtration of $\cal{J}_{\infty}(X')$ by locally closed subschemes. We can refine the filtration into even smaller locally closed subschemes according to the order of contact along $f^{-1}D$: set
\[C_{\leq e,\geq k}':=C_{\leq e}'\cap\ord_{f^{-1}D}^{-1}([k,\infty])\text{ and }C_{\leq e,\geq k}:=f_{\infty}(C_{\leq e,\geq k}').\]
We can similarly define $C'_{e,\geq k}$, $C_{e,\geq k}$, $C'_{e,k}$, and $C_{e,k}$. We have the filtration (up to measure zero) of $\ord_D^{-1}([k,\infty])$:
\[(C_{\leq 0,\geq k},\cal{S}_{\leq 0,\geq k})\subseteq (C_{\leq  1,\geq k},\cal{S}_{\leq 1,\geq k})\subseteq\hdots\subseteq (C_{\leq n,\geq k},\cal{S}_{\leq n,\geq k})\subseteq\hdots\subseteq\ord_D^{-1}([k,\infty]),\]
where the $\cal{S}_{\leq e,\geq k}$, as $e\geq 0$ varies, form an increasing sequence of finite decompositions of the constructable subsets $C_{\leq e,\geq k}$ into stable subschemes. By the \textit{infinitesimal relation}, we can write
\[\left[\bigoplus_{k=0}^{\infty}\mu_X(\ord_D^{-1}([k,\infty]))(k)[2k]\right]=\left[\colim_e\bigoplus_k\mu_X(C_{\leq e,\geq k},\cal{S}_{\leq e,\geq k})(k)[2k]\right].\]
Using Proposition~\ref{impprop}, we know that $C_{e,k}$ are constructable subsets of $\cal{J}_{\infty}(X)$. Note that $f_{\infty}:C_{e,k}'\rightarrow C_{e,k}$ is a piecewise trivial $\mathbb{A}^e$-bundle by Proposition~\ref{impprop}. Since $C_{e,k}$ is constructable, choose a finite decomposition into stable subschemes, say $\cal{S}=\{C_i\}_i$, such that atop each $C_i$, $f_{\infty}:C_{e,k}'\rightarrow C_{e,k}$ is a trivial $\mathbb{A}^e$-bundle. Furthermore, $\cal{S}':=\{f_{\infty}^{-1}(C_i)\}_i$ is a decomposition of $C_{e,k}'$ into stable subschemes. We may assume without loss of generality that above each object of $\cal{S}_{e,k}$, $f_{\infty}$ is an $\mathbb{A}^e$-bundle.\\
\\
In $\cal{M}(k;R)$, we have
\begin{eqnarray*}\left[\bigoplus_k\mu_X(\ord_D^{-1}(k))(k)[2k]\right]&=&\left[\colim_e\bigoplus_k\mu_X(C_{\leq e,\geq k},\cal{S}_{\leq e,\geq k})(k)[2k]\right]\\&-&\left[\colim_e\bigoplus_k\mu_X(C_{\leq e,\geq k+1},\cal{S}_{\leq e,\geq k+1})(k)[2k]\right]\\&=&\left[\bigoplus_k\bigoplus_e\mu_X(C_{e,k},\cal{S}_{e,k})(k)[2k]\right].
\end{eqnarray*}
By assumption, atop each object of $\cal{S}_{e,k}$, $f_{\infty}$ is an $\mathbb{A}^e$-bundle. Consequently,
\[\mu_{X'}(C_{e,k}',f_{\infty}^{-1}\cal{S}_{e,k})(e)[2e]=\bigoplus_{C\in\cal{S}_{e,k}}\mu_{X'}(f_{\infty}^{-1}(C))(e)[2e]\simeq \bigoplus_{C\in\cal{S}_{e,k}}\mu_X(C)=\mu_X(C_{e,k},\cal{S}_{e,k}).\] 
From this, we obtain
\begin{eqnarray*}
\int_{X_{\infty}}\mathbf{1}_k(\ord_D)[2\ord_D]d\mu_X &=& \left[\bigoplus_k\mu_X(\ord_D^{-1}(k))(k)[2k]\right]\\ &=& \left[\bigoplus_k\left(\bigoplus_e\mu_X(C_{e,k},\cal{S}_{e,k})\right)(k)[2k]\right]\\ &=& \left[\bigoplus_{k,e}\mu_{X'}(C_{e,k}',f_{\infty}^{-1}(\cal{S}_{e,k}))(k+e)[2(k+e)]\right]\\ &=& \left[\bigoplus_t\left(\bigoplus_{k+e=t}\mu_{X'}(C_{e,k}',f_{\infty}^{-1}(\cal{S}_{e,k}))\right)(t)[2t]\right]\\ &=& \left[\bigoplus_t\mu_{X'}(\ord_{f^{-1}D+K_{X'/X}}^{-1}(t))(t)[2t]\right]\\ &=& \int_{X'_{\infty}}\mathbf{1}_k(\ord_{f^{-1}D+K_{X'/X}})[2\ord_{f^{-1}D+K_{X'/X}}]d\mu_{X'},
\end{eqnarray*}
as required.
\end{proof}
\section{Mixed $\ell$-adic integration}
Since we want to prove unconditional arithmetic theorems, we construct in this section the mixed $\ell$-adic variant of the integration of Voevodsky motives. In order to do so, we describe the essential changes that should be made to the construction of the integration of Voevodsky motives. In particular, we describe here the replacements of completed and convergent motives and indicate where in the previous construction we obtain better results upon this specialization. Throughout this section, $k=\mathbb{F}_q$.
\subsection{Completed and convergent mixed $\ell$-adic complexes, $\cal{M}^{\ell}(k)$, and injectivity}
In order to define mixed $\ell$-adic integration, let us take for each separated finite type $k$-scheme $X$, the full subcategory 
\[\D_m(X;\overline{\mathbb{Q}}_{\ell}):=\{\cal{F}^{\bullet}\in\D^b_c(X;\overline{\mathbb{Q}}_{\ell})|\cal{F}^{\bullet}\textup{ is $\iota$-mixed}\},\]
where $\iota:\overline{\mathbb{Q}}_{\ell}\simeq\mathbb{C}$ is a (non-canonical) isomorphism. When $X$ is the spectrum of $k$ itself, we denote this category of $\iota$-mixed complexes simply by $\D_m$. $\D_{m,\geq n}$ is the smallest stable $\infty$-subcategory of $\D_m$ containing $\iota$-mixed complexes $\cal{F}^{\bullet}$ such that for each $i\in\mathbb{Z}$, $\cal{H}^i(\cal{F}^{\bullet})$ has weight $\leq -n$. 
Note that $\D_{m,\geq n}$ is closed under tensoring with $\overline{\mathbb{Q}}_{\ell}(1)$, and that $\D_{m,\geq n}$ is a $\otimes$-ideal of $\D_{m,\geq 0}$. Consider now completed mixed $\ell$-adic complexes defined as follows.
\begin{definition}[Completed mixed $\ell$-adic complexes]Define \textup{completed mixed $\ell$-adic complexes} (or \textup{completed mixed complexes} for brevity) as the limit
\[\D^{\wedge}_m:=\lim_n\D_{m,\geq 0}/D_{m,\geq n}\]
of symmetric monoidal stable $\infty$-categories, where the transition functors 
\[\D_{m,\geq 0}/\D_{m,\geq n+1}\rightarrow \D_{m,\geq 0}/\D_{m,\geq n}\]
are defined in the natural way.
\end{definition}
This is the analogue of completed effective Voevodsky motives. As in the case of motivic integration, we have functors $L^{\ell}_n:\D_{m,\geq 0}\rightarrow \D_{m,\geq 0}/\D_{m,\geq n}$ for each $n$. By the universal property of limits, we obtain a functor
\[L^{\ell}_{\infty}:\D_{m,\geq 0}\rightarrow \D^{\wedge}_m.\]
The added advantage of working with $\ell$-adic sheaves instead of working with Voevodsky motives is that the functors $L^{\ell}_n$ have fully faithful right adjoints
\[i^{\ell}_n:\D_{m,\geq 0}/\D_{m,\geq n}\hookrightarrow \D_{m,\geq 0}.\]
In particular, the analogue of slice motives is precisely just $\D_{m,\geq 0}$ itself and nothing more. We also have an analogue of convergent motives.
\begin{definition}[Convergent mixed $\ell$-adic complexes]
Let $\D_{\conv,\geq 0}$ be the stable $\infty$-category of $\ell$-adic complexes $\cal{F}^{\bullet}\in\D(k;\overline{\mathbb{Q}}_{\ell})$ such that each $\cal{H}^i(\cal{F}^{\bullet})$ is an $\iota$-mixed $\ell$-adic sheaf with the additional property that for each integer $n\geq 0$, $i_n^{\ell}L_n^{\ell}\cal{F}^{\bullet}\in\D_{m,\geq 0}$.
\end{definition}
We can then make the following definition.
\begin{definition}Define for each prime $\ell\neq\textup{char } k$ the group $\cal{M}^{\ell}(k)$ to be $F(\D^{\wedge}_m)$ modulo the subgroup $T^{\ell}_{\infty}(k)$ generated by relations of the form $[A]-[B]+[C]$ whenever
\[A\rightarrow B\rightarrow C\rightarrow A[1]\]
is a distinguished triangle of constructable complexes in $\D_{m,\geq 0}$ as well as relations of the form $[W]$ whenever $W$ is a convergent complex such that $[i^{\ell}_nL^{\ell}_nW]=0$ in $K_0(D_{m,\geq 0})$ for each $n$.
\end{definition}
This is the analogue of our definition of $\cal{M}(k;R)$ in the case of integration of Voevodsky motives via the slice filtration. The analogue of slice motives is just $\D_{m,\geq 0}$ itself as all $i^{\ell}_nL^{\ell}_n$ of $\iota$-mixed $\ell$-adic complexes with cohomology sheaves of weights at most $0$ are again $\iota$-mixed $\ell$-adic complexes with cohomology sheaves of weight at most $0$. Consequently, we have a natural group homomorphism
\[K_0(D_{m,\geq 0})\rightarrow \cal{M}^{\ell}(k)\]
that can be shown to be \textit{injective} in a way similar to the way the injectivity result for slice motives (Proposition~\ref{injectivity}) was proved. Indeed, we have the following.
\begin{proposition}\label{ellinjectivity}
The natural group homomorphism
\[K_0(D_{m,\geq 0})\rightarrow \cal{M}^{\ell}(k)\]
is injective.
\end{proposition}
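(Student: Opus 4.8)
The plan is to factor the map in question through an inverse limit of the $K_0$'s of the truncated categories and then show that map is injective; this is an alternative to transcribing the two--step argument of Proposition~\ref{injectivity}, and it uses in an essential way that $k=\mathbb{F}_q$. Write $c^{\ell}\colon K_0(\D_{m,\geq 0})\to\cal{M}^{\ell}(k)$ for the homomorphism under consideration, induced by $L^{\ell}_{\infty}$. For each $n$ the localization $L^{\ell}_n$ induces $(L^{\ell}_n)_{\ast}\colon K_0(\D_{m,\geq 0})\to K_0(\D_{m,\geq 0}/\D_{m,\geq n})$, and I would first observe that $c^{\ell}$ factors through the natural map $K_0(\D_{m,\geq 0})\to\varprojlim_n K_0(\D_{m,\geq 0}/\D_{m,\geq n})$; equivalently, that $c^{\ell}$ followed by the projection from $\cal{M}^{\ell}(k)$ to each $K_0(\D_{m,\geq 0}/\D_{m,\geq n})$ is $(L^{\ell}_n)_{\ast}$. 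For this one must check that $T^{\ell}_{\infty}(k)$ maps to zero in each $K_0(\D_{m,\geq 0}/\D_{m,\geq n})$: the triangle relations among constructibles die because $K_0$ of a stable category kills all distinguished triangles, and an infinitesimal relation $[W]$ dies because its image in $K_0(\D_{m,\geq 0}/\D_{m,\geq n})$ equals $(L^{\ell}_n)_{\ast}[i^{\ell}_nL^{\ell}_nW]$, which vanishes since $[i^{\ell}_nL^{\ell}_nW]=0$ in $K_0(\D_{m,\geq 0})$ by the very definition of an infinitesimal relation.

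It then suffices to prove that the natural map $K_0(\D_{m,\geq 0})\to\varprojlim_n K_0(\D_{m,\geq 0}/\D_{m,\geq n})$ is injective, i.e.\ that $\bigcap_n\ker(L^{\ell}_n)_{\ast}=0$. Here I would exploit the fact that over a finite field the whole situation decomposes by weight. Indeed $\D_m=\D^b_c(\Spec\mathbb{F}_q;\overline{\mathbb{Q}}_{\ell})_{\iota\textup{-mix}}$ is the bounded derived category of the abelian category of finite-dimensional $\iota$-mixed continuous $\overline{\mathbb{Q}}_{\ell}$-representations of $\widehat{\mathbb{Z}}=\textup{Gal}(\overline{\mathbb{F}}_q/\mathbb{F}_q)$, which has $\ell$-cohomological dimension one, so every object of $\D_m$ is the direct sum of shifts of its cohomology representations; by Deligne's Weil~II each such representation carries a finite weight filtration with pure graded pieces; and for two such representations with disjoint sets of Frobenius eigenvalues every $\Ext$ between them vanishes, since $\mathrm{Frob}-1$ then acts invertibly on the internal hom. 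Hence these weight filtrations split and $\Map$ between complexes of disjoint weight ranges is zero, so $\D_m$ is the product of its full subcategories $\D_m^{w}$ of complexes pure of weight $w$, $\D_{m,\geq 0}=\prod_{w\leq 0}\D_m^{w}$, and $\D_{m,\geq n}=\prod_{w\leq -n}\D_m^{w}$. Consequently $K_0(\D_{m,\geq 0})=\bigoplus_{w\leq 0}K_0(\D_m^{w})$, the functor $L^{\ell}_n$ is, up to equivalence, the projection away from the blocks with $w\leq -n$ (which is fully faithful on $\D_m^{w}$ for $w>-n$, as $\D_m^{w}$ is then bi-orthogonal to $\D_{m,\geq n}$), so $\ker(L^{\ell}_n)_{\ast}=\bigoplus_{w\leq -n}K_0(\D_m^{w})$; the natural map is thus the inclusion $\bigoplus_{w\leq 0}K_0(\D_m^{w})\hookrightarrow\prod_{w\leq 0}K_0(\D_m^{w})$, which is injective because a finitely supported family killed in every finite quotient is zero.

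I expect the only genuine work to be the weight-theoretic setup of the second paragraph --- the block decomposition $\D_m\simeq\prod_w\D_m^{w}$ over $\mathbb{F}_q$, the identification $\D_{m,\geq n}=\prod_{w\leq -n}\D_m^{w}$, and the verification that $(L^{\ell}_n)_{\ast}$ is the evident projection --- which is entirely standard (Weil~II together with the vanishing of Galois $\Ext$'s across weights, equivalently the rigidity of weights over finite fields) but must be stated with care; everything else is formal. Alternatively, one may transcribe the two--step proof of Proposition~\ref{injectivity} almost verbatim, replacing both $\DM_{\textup{gm}}^{\textup{eff}}(k;R)$ and $\DM_{\textup{sl}}^{\textup{eff}}(k;R)$ by $\D_{m,\geq 0}$ --- legitimate precisely because $i^{\ell}_nL^{\ell}_n$ preserves constructibility, so there is no ``geometric versus slice'' gap to close --- and using that a constructible complex is unchanged by $i^{\ell}_NL^{\ell}_N$ once $N\gg 0$ (it lies entirely in the weight range kept by that truncation); the route above is shorter because over a finite field these localizations are themselves governed by a weight grading.
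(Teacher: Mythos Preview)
Your argument is correct. The factorization of $c^{\ell}$ through $\varprojlim_n K_0(\D_{m,\geq 0}/\D_{m,\geq n})$ is valid for the reasons you give, and the weight block decomposition $\D_m\simeq\bigoplus_w\D_m^{w}$ over $\Spec\mathbb{F}_q$ is standard: the abelian category of finite-dimensional continuous $\overline{\mathbb{Q}}_{\ell}$-representations of $\widehat{\mathbb{Z}}$ is hereditary (so every bounded complex splits as shifts of its cohomology), and the vanishing of $\Hom$ and $\Ext^1$ across weights follows exactly as you indicate from the invertibility of $\mathrm{Frob}-1$ on an internal $\Hom$ of nonzero weight. This yields $K_0(\D_{m,\geq 0})=\bigoplus_{w\leq 0}K_0(\D_m^w)$ injecting into $\prod_{w\leq 0}K_0(\D_m^w)=\varprojlim_n K_0(\D_{m,\geq 0}/\D_{m,\geq n})$.

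This is, however, a genuinely different route from the paper's. The paper simply invokes the proof of Proposition~\ref{injectivity} verbatim: one works inside $\cal{M}_0^{\ell}(k)$ and $\cal{M}^{\ell}(k)$, applies $i_N^{\ell}L_N^{\ell}$ for $N\gg 0$ to any relation witnessing $[X]=0$, and uses that a bounded constructible complex is eventually fixed by these truncations to pull the relation back into $K_0(\D_{m,\geq 0})$. The paper's argument is purely formal---it needs only that $i_n^{\ell}L_n^{\ell}$ preserves $\D_{m,\geq 0}$ and that bounded objects are eventually fixed---and this is exactly why the identical sentence proves Propositions~\ref{motinjectivity} and~\ref{Hdginjectivity} as well. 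Your argument instead exploits the rigidity of weights specific to $\mathbb{F}_q$ to produce an explicit left inverse (the projection to the weight-graded $K_0$), which is conceptually cleaner here and makes the injectivity visible as $\bigoplus\hookrightarrow\prod$, but requires re-establishing the analogous weight decomposition separately in the Hodge and motivic-$t$-structure settings. You correctly note in your final paragraph that the paper's transcription route is the alternative; that is in fact what the paper does.
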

\begin{proof}A result that went into the analogous Proposition~\ref{injectivity} for Voevodsky motives was that for geometric motives $X$, $X\rightarrow i_nL_n X$ is an equivalence for $n\gg 1$. The analogous result for bounded constructable $\ell$-adic complexes and the functors $i_n^{\ell}$ and $L_n^{\ell}$ is clear. The other delicate thing about
Proposition~\ref{injectivity} was that we showed that the map was injective on $\imK$ which was the image of $K_0(\DM_{\gm}^{\eff}(k;R))$ in the Grothendieck group of slice motives. In the case of mixed $\ell$-adic complexes, the slices $i_n^{\ell}L_n^{\ell}$ do not take us out of mixed $\ell$-adic complexes. Consequently, this kind of subtlety does not occur here. Therefore, we may proceed as in the proof of Proposition~\ref{injectivity} to prove this proposition.  
\end{proof}
\begin{remark}
\textup{Note that in contrast to the case of the integration of Voevodsky motives for which we had the injectivity conjecture, we have the above Proposition~\ref{ellinjectivity}. After defining $\ell$-adic integration and proving its transformation rule for proper birational morphisms of smooth schemes, we will be able to use Proposition~\ref{ellinjectivity} to unconditionally deduce equality of classes in $K_0(D_{m,\geq 0})$.}
\end{remark}
\subsection{Mixed $\ell$-adic measure and measurable subsets}
Note that mixed $\ell$-adic sheaves also have a six functor formalism with all the good properties such as localization sequences and duality. Consequently, we may define the analogue $\mu_X^{\ell}$ of the measure $\mu_X$. Throughout this section, our six functors will be in in the derived category of $\ell$-adic sheaves or its full stable $\infty$-subcategories of bounded constructible or mixed $\ell$-adic sheaves.
\begin{lemma}\label{ladicstabilizationlemma}Suppose $X$ is a smooth $k$-variety of dimension $d$, and let $A$ be a subscheme of the Jet scheme $X_{\infty}$ that is stable at least at the $N$th level. Then for every $m\geq N$, we have
\[\pi^{A_{m+1}}_!\mathbf{1}_{A_{m+1}}((m+1)d)[2(m+1)d]\simeq \pi^{A_m}_!\mathbf{1}_{A_m}(md)[2md]\]
in $\D_{m,\geq 0}(k;\overline{\mathbb{Q}}_{\ell})$.
\end{lemma}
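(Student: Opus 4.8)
The plan is to run the argument of Lemma~\ref{stabilizationlemma} verbatim, replacing the motivic six-functor inputs by their $\ell$-adic counterparts; all the required $\ell$-adic facts (preservation of $\iota$-mixedness, relative purity for smooth morphisms, Verdier duality, base change for $f_!$) are classical, so nothing new happens at the level of the argument, only at the level of citations. First I would check that both sides are legitimate objects of $\D_{m,\geq 0}$. Since $X$ is of finite type over $k=\mathbb{F}_q$, every jet scheme $\cal{J}_m(X)$ is of finite type, hence so is the locally closed subscheme $A_m=\pi_m(A)\subseteq\cal{J}_m(X)$, so $\pi^{A_m}_!\mathbf{1}_{A_m}\in\D^b_c(\Spec k;\overline{\mathbb{Q}}_{\ell})$; by Deligne's Weil~II the functor $f_!$ preserves $\iota$-mixedness and does not raise weights, so applied to the weight-$0$ complex $\mathbf{1}_{A_m}$ it yields an $\iota$-mixed complex of weights $\le 0$. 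Tate twisting by $md$ only lowers weights (by $2md$) and shifting does not change the weight of an individual cohomology sheaf, so both $\pi^{A_{m+1}}_!\mathbf{1}_{A_{m+1}}((m+1)d)[2(m+1)d]$ and $\pi^{A_m}_!\mathbf{1}_{A_m}(md)[2md]$ are $\iota$-mixed of weights $\le 0$, i.e. lie in $\D_{m,\geq 0}$.

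The geometric heart is identical to the motivic case. Since $X$ is smooth of pure dimension $d$, the truncation morphism $\pi^{m+1}_m\colon\cal{J}_{m+1}(X)\to\cal{J}_m(X)$ is a Zariski-locally trivial $\mathbb{A}^d$-bundle (the corollary on jet schemes of smooth varieties in Section~\ref{prelim}); as $A$ is stable at level $N\le m$ we have $A_{m+1}=(\pi^{m+1}_m)^{-1}(A_m)$, so the restriction $\pi^{m+1}_m\colon A_{m+1}\to A_m$ is again an $\mathbb{A}^d$-bundle. Factoring $\pi^{A_{m+1}}=\pi^{A_m}\circ\pi^{m+1}_m$ gives $\pi^{A_{m+1}}_!\simeq\pi^{A_m}_!\circ\pi^{m+1}_{m,!}$, so it suffices to identify $\pi^{m+1}_{m,!}\mathbf{1}_{A_{m+1}}$. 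For the trivial projection $p\colon\mathbb{A}^d_S\to S$ one has $p_!\,\overline{\mathbb{Q}}_{\ell}\simeq\overline{\mathbb{Q}}_{\ell}(-d)[-2d]$, which is the compactly supported cohomology of affine $d$-space ($H^{2d}_c(\mathbb{A}^d)=\overline{\mathbb{Q}}_{\ell}(-d)$ and $0$ in other degrees), obtainable either as the Poincar\'e dual of the homotopy invariance $H^{\bullet}(\mathbb{A}^d)=\overline{\mathbb{Q}}_{\ell}$, or from relative purity $p^!\simeq p^*(d)[2d]$ combined with $p_*\mathbf{1}\simeq\mathbf{1}$ and Verdier duality. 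Since $\pi^{m+1}_{m,!}$ commutes with base change, the same formula holds for the locally trivial bundle $\pi^{m+1}_m$ over $A_m$, i.e. $\pi^{m+1}_{m,!}\mathbf{1}_{A_{m+1}}\simeq\mathbf{1}_{A_m}(-d)[-2d]$. Substituting and absorbing the twist and shift yields
\[
\pi^{A_{m+1}}_!\mathbf{1}_{A_{m+1}}((m+1)d)[2(m+1)d]\simeq\pi^{A_m}_!\bigl(\mathbf{1}_{A_m}(-d)[-2d]\bigr)((m+1)d)[2(m+1)d]\simeq\pi^{A_m}_!\mathbf{1}_{A_m}(md)[2md],
\]
which is the assertion.

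I expect no genuine obstacle here; the one subtlety worth flagging is that $A_m$ is only \emph{locally closed} in $\cal{J}_m(X)$, so it need not be smooth or proper over $k$, and therefore absolute purity or duality statements for $A_m$ itself are unavailable. This is harmless, because only the \emph{relative} purity statement for the smooth morphism $\pi^{m+1}_m$ is used, and the base-change argument upgrading $p_!\overline{\mathbb{Q}}_{\ell}=\overline{\mathbb{Q}}_{\ell}(-d)[-2d]$ from the trivial bundle to a Zariski-locally trivial $\mathbb{A}^d$-bundle is valid over an arbitrary finite type base. If one wishes to avoid even local triviality, one can instead compactify $\mathbb{A}^d_{A_m}\hookrightarrow\mathbb{P}^d_{A_m}$ and combine proper base change with the localization triangle for the complement to compute $\pi^{m+1}_{m,!}\mathbf{1}$ directly.
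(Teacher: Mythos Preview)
Your proof is correct and follows essentially the same approach as the paper's own proof, which simply says ``exactly as Lemma~\ref{stabilizationlemma}'' together with the Weil conjectures for the constructibility and weight checks. You have spelled out the $\ell$-adic translation in more detail---using $p_!\overline{\mathbb{Q}}_{\ell}\simeq\overline{\mathbb{Q}}_{\ell}(-d)[-2d]$ and base change in place of the motivic $f_\#$/homotopy-invariance formulation---which is the natural way to run the argument in this setting since there is no $f_\#$ for $\ell$-adic sheaves, but the underlying structure (factor through the $\mathbb{A}^d$-bundle $\pi^{m+1}_m$, compute its $!$-pushforward, absorb the twist) is identical.
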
 
\begin{proof}The proof is exactly as that of Lemma~\ref{stabilizationlemma} for Voevodsky motives. Constructability follows because the proper pushforward functors preserve constructability. The fact that for each $m\geq N$ and each $i\in\mathbb{Z}$, $\cal{H}^i(\pi^{A_m}_!\mathbf{1}_{A_m}(md))$ has weight $\leq 0$ follows from the Weil conjectures: $\cal{H}^i(\pi^{A_m}_!\mathbf{1}_{A_m})$ has weight at most $\min\{i,2md\}\leq 2md$.  
\end{proof}
Using Lemma~\ref{ladicstabilizationlemma}, we define our Mixed $\ell$-adic measure on stable subschemes as follows.
\begin{definition}[$\ell$-adically measurable functions]For $A$ a stable subscheme of $X_{\infty}$, we define its \textup{mixed $\ell$-adic volume} by
\[\mu_X^{\ell}(A):=\pi^{A_m}_!\mathbf{1}_{A_m}((m+1)d)[2(m+1)d]\in\D_{m,\geq 0}.\]
\end{definition}
In the sequel, it will be important to have the following notion, the $\ell$-adic analogue of the virtual dimension of motives.
\begin{definition}[$\ell$-adic virtual dimension]\label{ladicvirtdimdef} The \textup{$\ell$-adic virtual dimension} of a complex $\cal{F}^{\bullet}$ of mixed $\ell$-adic sheaves, not necessarily bounded, is defined as
\[\vdim^{\ell}\cal{F}^{\bullet}:=-\max\left\{\frac{1}{2}w(\cal{H}^i(\cal{F}^{\bullet}))|i\in\mathbb{Z}\right\},\]
where $w(-)$ denotes the weight function for mixed $\ell$-adic sheaves.
\end{definition}
As in Lemma~\ref{virtdim}, we can show the following lemma.
\begin{lemma}\label{ladicvirtdim}
Suppose $X$ is a $k$-variety of dimension $d$. Then $\vdim^{\ell}\pi^X_!\mathbf{1}_X=-d$.
\end{lemma}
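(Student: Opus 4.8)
The plan is to run the two-step argument of Lemma~\ref{virtdim}, with ``effectivity and virtual dimension'' replaced by ``weights of Frobenius eigenvalues,'' the dictionary being supplied by the Weil conjectures. Here $\pi^X_!\mathbf{1}_X$ is the complex of mixed Galois modules computing compactly supported $\ell$-adic cohomology, so that $\cal{H}^i(\pi^X_!\mathbf{1}_X)=H^i_c(X_{\overline{k}};\overline{\mathbb{Q}}_{\ell})$. Unwinding Definition~\ref{ladicvirtdimdef}, the assertion $\vdim^{\ell}\pi^X_!\mathbf{1}_X=-d$ is then equivalent to the conjunction of (i) every $H^i_c(X_{\overline{k}};\overline{\mathbb{Q}}_{\ell})$ has all its weights $\leq 2d$, and (ii) some $H^i_c(X_{\overline{k}};\overline{\mathbb{Q}}_{\ell})$ is nonzero and carries a Frobenius eigenvalue of weight exactly $2d$. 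Part (i) is the analogue of ``$\pi^X_!\mathbf{1}_X(d)$ is effective'' and is immediate from Deligne's Weil~II~\cite{WeilII}: for any $k$-variety $Y$ with $\dim Y\leq d$, $H^i_c(Y_{\overline{k}};\overline{\mathbb{Q}}_{\ell})$ is mixed of weights $\leq i$ and vanishes for $i>2d$, so all its weights are $\leq 2d$; applied to $Y=X$ this gives $\vdim^{\ell}\pi^X_!\mathbf{1}_X\geq -d$.

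For (ii) I would first handle the smooth case: if $X$ is smooth of pure dimension $d$, Poincar\'e duality identifies $H^{2d}_c(X_{\overline{k}};\overline{\mathbb{Q}}_{\ell})$ with $H^0(X_{\overline{k}};\overline{\mathbb{Q}}_{\ell})^{\vee}(-d)$, which is nonzero (it contains $\overline{\mathbb{Q}}_{\ell}(-d)$) and pure of weight $2d$. For a general reduced $X$ of dimension $d$, the smooth locus $U:=X_{\mathrm{sm}}$ is dense open of dimension $d$ (using $k$ perfect, exactly as in Lemma~\ref{virtdim}), with closed complement $Z:=X\setminus U$ of dimension $<d$; since $U$ is smooth it is the disjoint union of its connected components, so discarding those of dimension $<d$ we may write $U=U_1\sqcup U_2$ with $U_1$ smooth of pure dimension $d$ and $\dim U_2<d$, whence $H^{2d}_c(U_{\overline{k}};\overline{\mathbb{Q}}_{\ell})=H^{2d}_c(U_{1,\overline{k}};\overline{\mathbb{Q}}_{\ell})$ is nonzero and pure of weight $2d$ by the smooth case. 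Feeding the open--closed decomposition into the localization triangle $\pi^U_!\mathbf{1}_U\rightarrow\pi^X_!\mathbf{1}_X\rightarrow\pi^Z_!\mathbf{1}_Z\rightarrow\pi^U_!\mathbf{1}_U[1]$ (all terms mixed) and reading off cohomological degree $2d$ gives
\[H^{2d-1}_c(Z_{\overline{k}})\rightarrow H^{2d}_c(U_{\overline{k}})\rightarrow H^{2d}_c(X_{\overline{k}})\rightarrow H^{2d}_c(Z_{\overline{k}})=0,\]
the last term vanishing since $2\dim Z<2d$. As $H^{2d-1}_c(Z_{\overline{k}})$ has weights $\leq 2d-1$ (Weil~II applied to $Z$) while $H^{2d}_c(U_{\overline{k}})$ is pure of weight $2d$, the left-hand map is zero: its image is at once a quotient of the source and a subobject of the target, hence has weights both $\leq 2d-1$ and $\geq 2d$, so vanishes. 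Therefore $H^{2d}_c(X_{\overline{k}};\overline{\mathbb{Q}}_{\ell})\cong H^{2d}_c(U_{\overline{k}};\overline{\mathbb{Q}}_{\ell})$ is nonzero and pure of weight $2d$, which is (ii); together with (i) this yields $\vdim^{\ell}\pi^X_!\mathbf{1}_X=-d$.

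The only non-formal point, and the step I would flag as the main obstacle, is the weight bookkeeping in (ii): one needs the localization triangle with genuinely mixed terms (which holds, since the six-functor formalism on bounded constructible complexes over a finite field preserves $\iota$-mixedness) together with strictness of morphisms for the weight filtration to kill the connecting map into the pure object $H^{2d}_c(U_{\overline{k}})$. One can sidestep this entirely by instead citing the standard description of top compactly supported cohomology: for any $k$-variety $X$ of dimension $d$ one has $H^{2d}_c(X_{\overline{k}};\overline{\mathbb{Q}}_{\ell})\cong\overline{\mathbb{Q}}_{\ell}(-d)^{\oplus c}$ with $c$ the number of $d$-dimensional irreducible components of $X_{\overline{k}}$, which is visibly nonzero and pure of weight $2d$. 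A minor caveat, exactly as in Lemma~\ref{virtdim}, is that ``variety'' must be taken to mean reduced, so that $X_{\mathrm{sm}}$ is dense.
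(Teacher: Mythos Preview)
Your proof is correct and takes essentially the same approach as the paper: both use Weil~II for the upper bound $\vdim^{\ell}\pi^X_!\mathbf{1}_X\geq -d$, Poincar\'e duality for the smooth case, and the localization sequence for the decomposition $X=(X\setminus\textup{Sing }X)\cup\textup{Sing }X$ to handle the general case. The only cosmetic difference is that the paper phrases the non-smooth step as an induction on $\dim X$ (mimicking Lemma~\ref{virtdim} by contradiction), whereas you argue directly in cohomological degree $2d$ via weight strictness; your alternative citation of $H^{2d}_c(X_{\overline{k}};\overline{\mathbb{Q}}_{\ell})\cong\overline{\mathbb{Q}}_{\ell}(-d)^{\oplus c}$ is an equally clean shortcut.
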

\begin{proof}
By definition,
\[\vdim^{\ell}\pi^X_!\mathbf{1}_X=-\max\left\{\frac{1}{2}w(H^i(\pi^X_!\mathbf{1}_X))|i\in\mathbb{Z}\right\}.\]
Note that $H^i(\pi^X_!\mathbf{1}_X)=H^i_{c,\et}(X_{\overline{k}};\overline{\mathbb{Q}}_{\ell})$ as a $G_k$-module. By the Weil conjectures, the latter has weight at most $i$ and so $H^{2d}_{c,\et}(X_{\overline{k}};\overline{\mathbb{Q}}_{\ell})$ has weight at most $2d$. For $i>2d$, the cohomologies vanish. If $X$ is smooth, then Poincar\'e duality implies that it has weight exactly $2d$, from which the conclusion for smooth $k$-varieties $X$ follows. In order to deal with non-smooth $k$-varieties $X$, we do induction on the dimension of $X$ using the localization sequence and the decomposition of $X$ into $X\setminus\textup{Sing }X$ and $\textup{Sing }X$. This induction is done analogously to the way the induction in Lemma~\ref{virtdim} was done, and so we do not repeat the argument here.
\end{proof}
\begin{corollary}
Suppose $A\subseteq\bigcup_{i=1}^NA_i\subseteq X_{\infty}$, where $A$ and the $A_i$ are stable subschemes of $X_{\infty}$. Then 
\[\vdim^{\ell}\mu^{\ell}_X(A)\geq\min_{1\leq i\leq N}\vdim^{\ell}\mu^{\ell}_X(A_i).\]
\end{corollary}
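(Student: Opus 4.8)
The plan is to mimic exactly the proof of Corollary~\ref{virtdimbound} in the motivic setting: translate the $\ell$-adic virtual dimensions of the volumes into honest dimensions of jet-truncations via Lemma~\ref{ladicvirtdim}, and then invoke the elementary fact that a finite cover of schemes $A_m\subseteq\bigcup_i A_{i,m}$ satisfies $\dim A_m\le\max_i\dim A_{i,m}$.

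First I would fix an integer $m$ large enough that $A$ and each of the finitely many $A_i$ is stable at (at least) the $m$-th level; such an $m$ exists by taking the maximum of the individual stabilization levels. Writing $A_m=\pi_m(A)$ and $A_{i,m}=\pi_m(A_i)$, the hypothesis $A\subseteq\bigcup_{i=1}^N A_i$ together with $A=\pi_m^{-1}(A_m)$ and $A_i=\pi_m^{-1}(A_{i,m})$ forces the inclusion $A_m\subseteq\bigcup_{i=1}^N A_{i,m}$ inside $\cal{J}_m(X)$.

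Next I would record the behaviour of $\vdim^{\ell}$ under the twist and shift occurring in the definition of $\mu_X^{\ell}$. By Lemma~\ref{ladicvirtdim} we have $\vdim^{\ell}\pi^{A_m}_!\mathbf{1}_{A_m}=-\dim A_m$; tensoring with $\overline{\mathbb{Q}}_{\ell}(1)$ lowers all weights by $2$ and hence raises $\vdim^{\ell}$ by $1$, while the cohomological shift $[2(m+1)d]$ merely reindexes the cohomology sheaves and so leaves the set of weights, and therefore $\vdim^{\ell}$, unchanged. Consequently
\[
\vdim^{\ell}\mu_X^{\ell}(A)=(m+1)d-\dim A_m,
\qquad
\vdim^{\ell}\mu_X^{\ell}(A_i)=(m+1)d-\dim A_{i,m},
\]
these values being independent of the chosen $m$ by Lemma~\ref{ladicstabilizationlemma} (and with the convention $\vdim^{\ell}(0)=+\infty$ taking care of any empty $A_i$, which only enlarges the right-hand side below). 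Combining this with $\dim A_m\le\max_{1\le i\le N}\dim A_{i,m}$ yields
\[
\vdim^{\ell}\mu_X^{\ell}(A)=(m+1)d-\dim A_m\ge (m+1)d-\max_{1\le i\le N}\dim A_{i,m}=\min_{1\le i\le N}\vdim^{\ell}\mu_X^{\ell}(A_i),
\]
which is the assertion. I do not expect a genuine obstacle here: the only two points requiring a sentence of care are the weight bookkeeping under Tate twists and shifts (handled above) and the passage to a common stabilization level (immediate, the index set being finite); the rest is formal once Lemma~\ref{ladicvirtdim} is in hand.
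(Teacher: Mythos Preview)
Your proof is correct and follows exactly the approach the paper intends: reduce to Lemma~\ref{ladicvirtdim} and the elementary dimension inequality for finite covers, after passing to a common stabilization level. The paper's own proof simply refers back to Corollary~\ref{virtdimbound}, which is proved in one line from Lemma~\ref{virtdim} and the same dimension fact; your version just makes the twist/shift bookkeeping explicit.
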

\begin{proof}
This is the analogue of Corollary~\ref{virtdimbound}, and its proof is exactly the same once we have Lemma~\ref{ladicvirtdim}, the analogue of Lemma~\ref{virtdim}.
\end{proof}
We also have the following analogue of the motivic compactness lemma~\ref{compactness}.
\begin{lemma}[$\ell$-adic compactness lemma]\label{ladiccompactness}
Suppose $D$ and $D_n,\ n\in\mathbb{N}$, are stable subschemes of $X_{\infty}$ such that $D\subseteq\bigcup_{n\in\mathbb{N}}D_n$ and $\vdim^{\ell}\mu^{\ell}_X(D_n)\rightarrow\infty$ as $n\rightarrow\infty$. Then $D$ is contained in the union of finitely many of the $D_n$.
\end{lemma}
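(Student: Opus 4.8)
The statement to prove is the $\ell$-adic compactness lemma (Lemma~\ref{ladiccompactness}), which is the exact $\ell$-adic analogue of the motivic compactness Lemma~\ref{compactness}. The proof strategy mirrors that of Lemma~\ref{compactness} almost verbatim, since the only ingredient that changes is the measure-theoretic quantity controlling the ``size'' of a stable subscheme: in the motivic case this was $\vdim\ \mu_X(D_n)$, and here it is the $\ell$-adic virtual dimension $\vdim^{\ell}\mu^{\ell}_X(D_n)$ introduced in Definition~\ref{ladicvirtdimdef}. By Lemma~\ref{ladicvirtdim} and its corollary, this invariant satisfies the same bookkeeping properties as $\vdim\ \mu_X$: it computes to $(m+1)d - \dim A_m$ on a stable subscheme $A$ (stable at level $m$, with $X$ of dimension $d$), and it behaves subadditively under finite covers. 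These are precisely the properties used in the proof of Lemma~\ref{compactness}, so the argument transposes directly.

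\textbf{Key steps.} First I would fix the stabilization level: since $D$ is a stable subscheme, choose $n$ with $D = \pi_n^{-1}\pi_n(D)$, where $\pi_n(D)$ is a locally closed $k$-subscheme of $\mathcal{J}_n(X)$. Arguing by contradiction, assume $D$ is not covered by any finite subcollection of the $D_i$. Since $\vdim^{\ell}\mu^{\ell}_X(D_i) \to \infty$, pick $k \in \mathbb{N}$ with $\vdim^{\ell}\mu^{\ell}_X(D_i) > (n+2)d$ for all $i > k$; translating via Lemma~\ref{ladicvirtdim} this means each such $D_i$, when written as $\pi_{m_i}^{-1}\pi_{m_i}(D_i)$, has $\pi_{m_i}(D_i)$ of dimension strictly less than $(m_i+1)d - (n+2)d$, so its image in $\mathcal{J}_{n+1}(X)$ is a proper closed subset of each fiber of $\pi^{m_i}_{n+1}$ it meets — i.e. $\pi_{n+1}^{-1}(x_{n+1}) \cap D_i$ has positive codimension in $\pi_{n+1}^{-1}(x_{n+1})$ viewed inside $D_i$. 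Then, exactly as in Lemma~\ref{compactness}, choose $x_{n+1} \in \pi_{n+1}(D \setminus \bigcup_{i\le k} D_i)$ and inductively build a compatible sequence $(x_m)_{m>n}$ with $x_{m+1}$ above $x_m$ and $\pi_m^{-1}(x_m)$ not coverable by finitely many $D_i$. This determines a point $x \in \mathcal{J}_\infty(X)$ with $\pi_n(x) \in \pi_n(D)$, hence $x \in D$ (by stability of $D$ at level $n$), so $x \in D_j$ for some $j$; but $D_j$ is stable at some level $m > n$, forcing $\pi_m^{-1}(x_m) \subseteq D_j$, contradicting the construction.

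\textbf{The main obstacle.} There is essentially no serious obstacle: the combinatorial core of the argument — constructing an arc through shrinking non-finitely-coverable fibers — is identical to the motivic case and to Lemma 2.3 of \cite{Looijenga}. The only thing requiring care is the translation between the growth condition $\vdim^{\ell}\mu^{\ell}_X(D_i) \to \infty$ and the geometric statement that the $\pi_m(D_i)$ have high codimension in the relevant jet-space fibers; this is exactly what Lemma~\ref{ladicvirtdim} (the $\ell$-adic analogue of Lemma~\ref{virtdim}) provides, since it identifies $\vdim^{\ell}\pi^Y_!\mathbf{1}_Y = -\dim Y$. Given that, the proof is a direct transcription:

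\begin{proof}
The proof is identical to that of the motivic compactness Lemma~\ref{compactness}, with $\vdim\ \mu_X$ replaced throughout by $\vdim^{\ell}\mu^{\ell}_X$. The only input needed is that on a stable subscheme $A$ of $X_\infty$, stable at level $m$, one has $\vdim^{\ell}\mu^{\ell}_X(A) = (m+1)d - \dim A_m$, which follows from Lemma~\ref{ladicvirtdim} exactly as the corresponding formula in the motivic case follows from Lemma~\ref{virtdim}. With this, the verbatim argument of Lemma~\ref{compactness} — choosing a stabilization level $n$ for $D$, assuming for contradiction that no finite subcollection of the $D_i$ covers $D$, using $\vdim^{\ell}\mu^{\ell}_X(D_i)\to\infty$ to make $\pi_m^{-1}(x_m)\cap D_i$ of positive codimension in $D_i$ for large $i$, inductively constructing a compatible tower $(x_m)_{m>n}$ whose fibers are not finitely coverable, and deriving the limiting arc $x\in D$ that nonetheless lies in some stable $D_j$ — goes through without change and yields the contradiction. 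Hence $D$ is covered by finitely many of the $D_n$.
\end{proof}
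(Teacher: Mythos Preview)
Your proposal is correct and follows exactly the paper's approach: the paper's own proof simply states ``The proof is exactly as in that of Lemma~\ref{compactness} for Voevodsky motives,'' and your write-up spells out precisely that transcription, correctly identifying Lemma~\ref{ladicvirtdim} as the $\ell$-adic substitute for Lemma~\ref{virtdim}. If anything, you have supplied more detail than the paper does.
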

\begin{proof}
The proof is exactly as in that of Lemma~\ref{compactness} for Voevodsky motives.
\end{proof}
As in the integration of Voevodsky motives, the subsets that will show up in mixed $\ell$-adic integration will not necessarily be stable subschemes. However, they will be approximable by stable subschemes in the following sense.
\begin{definition}\label{ladicmeasurable}
A subset $C\subseteq X_{\infty}$ is said to be \textup{$\ell$-adically good} if there is a monotonic sequence (the inclusions are locally closed embeddings of $k$-schemes) of stable subschemes
\[C_0\supseteq\hdots\supseteq C_n\supseteq C_{n+1}\supseteq\hdots\ (\text{or }C_0\subseteq\hdots\subseteq C_n\subseteq C_{n+1}\subseteq\hdots)\]
of $X_{\infty}$ containing (resp. contained in) $C$, and stable $C_{n,i}$, $i,n\in\mathbb{N}$ such that for every $n$
\[C_n\setminus C\subseteq\bigcup_{i\in\mathbb{N}}C_{n,i}\ \left(\text{resp. }C\setminus C_n\subseteq\bigcup_{i\in\mathbb{N}}C_{n,i}\right)\]
such that for every $n,i$, 
\[n\leq\vdim^{\ell}\mu^{\ell}_X(C_{n,i})\]
and
\[\vdim^{\ell}\mu^{\ell}_X(C_{n,i})\xrightarrow{i\rightarrow\infty}\infty.\]
We then define the \textup{mixed $\ell$-adic volume} of $C$ as the object
\[\mu^{\ell}_X(C):=(\mu^{\ell}_X(C_n))_n\in\D^{\wedge}_m.\]
We call a pair $(C,\cal{S})$ consisting of a subset $C\subseteq X_{\infty}$ and a \textup{finite} decomposition $\cal{S}$ ($C=\sqcup_{i\in\cal{S}}C_i$, $C_i\subseteq X_{\infty}$) \textup{$\ell$-adically measurable} if each $C_i$ is an $\ell$-adically good subset. We then let
\[\mu^{\ell}_X(C,\cal{S}):=\bigoplus_{i\in\cal{S}}\mu^{\ell}_X(C_i).\]
We view an $\ell$-adically good subset $C$ without a prescribed decomposition as an $\ell$-adically measurable subset with the trivial decomposition $\cal{S}=\{C\}$.
\end{definition}
As in Proposition~\ref{welldefinedmeasure} for the motivic measure (of Voevodsky motives), we can show that the $\ell$-adic measure defined in Definition~\ref{ladicmeasurable} is well-defined. We do not give the proof here as all the steps are the same.
\subsection{$\ell$-adically measurable functions and mixed $\ell$-adic integration}
We now work toward constructing a theory of integration for mixed $\ell$-adic sheaves that takes a smooth $k$-scheme $X$ along with an effective divisor $D\subset X$ to an element of $\cal{M}^{\ell}(k)$. For this, we make the following definition which is the analogue of Definition~\ref{measurablefunction}.
\begin{definition}[$\ell$-adically measurable functions]\label{ladicmeasurablefunction}
A function $F:X_{\infty}\rightarrow\mathbb{N}_{\geq 0}\cup\left\{\infty\right\}$ is \textup{$\ell$-adically measurable} if for each $s\in\mathbb{N}_{\geq 0}$, $F^{-1}(s)$ is $\ell$-adically measurable, and $F^{-1}(\infty)$ has $\ell$-adic measure $0$.
\end{definition}
\begin{definition}[Mixed $\ell$-adic integration]
For $\ell$-adically measurable functions $F:X_{\infty}\rightarrow\mathbb{N}_{\geq 0}\cup\{\infty\}$, we define its mixed $\ell$-adic integration 
\[\int_{X_{\infty}}\overline{\mathbb{Q}}_{\ell}(F)[2F]d\mu^{\ell}_X:=\left[\oplus_{k=0}^{\infty}\mu^{\ell}_X(F^{-1}(k))(k)[2k]\right]\in\cal{M}^{\ell}(k).\]
\end{definition}
As before, we can prove the following transformation rule for proper birational morphisms of smooth $k$-schemes.
\begin{theorem}[Mixed $\ell$-adic transformation rule]\label{ladicchangeofvar}
Suppose $f:X\rightarrow Y$ is a proper birational morphism of smooth $k$-varieties with $K_{X/Y}$ its relative canonical divisor, and let $D\subset Y$ be an effective divisor. Then
\[\int_{Y_{\infty}}\overline{\mathbb{Q}}_{\ell}(\ord_D)[2\ord_D]d\mu^{\ell}_Y=\int_{X_{\infty}}\overline{\mathbb{Q}}_{\ell}(\ord_{f^{-1}D+K_{X/Y}})[2\ord_{f^{-1}D+K_{X/Y}}]d\mu^{\ell}_X.\]
\end{theorem}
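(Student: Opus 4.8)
The plan is to transport, essentially line by line, the proof of the motivic transformation rule (Theorem~\ref{changeofvar}) to the $\ell$-adic setting, the point being that all of the genuinely geometric input used there concerns only the Jet schemes $\cal{J}_{\infty}$ and the induced maps $f_m, f_{\infty}$, and so transfers unchanged. Specifically, I would reuse: Proposition~\ref{impprop} (the Denef--Loeser lemma describing $f_m\colon\pi_m(C_e')\to\pi_m(C_e)$ as a piecewise trivial $\mathbb{A}^e$-fibration, for $m\geq 2e$); Lemma~\ref{bij} (generic bijectivity of $f_{\infty}$ off a measure-zero locus, via the valuative criterion for properness); and the surjectivity of each $f_m$ on finite jets (the lemma immediately following Lemma~\ref{bij}). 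On the $\ell$-adic side, the inputs needed are exactly the formal structures the motivic proof exploited, all already available here: the six-functor formalism on $\D^b_c(-;\overline{\mathbb{Q}}_{\ell})$ with its localization (recollement) triangles for a closed immersion with open complement — directly in $\D^b_c$, with no auxiliary $cdh$-comparison required — the measure $\mu^{\ell}_X$ and its well-definedness (the analogue of Proposition~\ref{welldefinedmeasure}), the stabilization statement Lemma~\ref{ladicstabilizationlemma}, the $\ell$-adic compactness Lemma~\ref{ladiccompactness}, and the infinitesimal relations built into $\cal{M}^{\ell}(k)$.

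First I would record the $\ell$-adic analogue of Lemma~\ref{measurezero}: $\cal{J}_{\infty}(K_{X/Y})$ has $\ell$-adic measure $0$, proved exactly as in the motivic case from the Denef--Loeser dimension estimate together with Lemma~\ref{ladicvirtdim}, the Weil-conjecture weight bound $w(\cal{H}^i\pi^Z_!\mathbf{1}_Z)\leq i$ playing the role of the motivic virtual-dimension computation. Then, working modulo this measure-zero locus, set $C'_{\leq e}=\ord_{K_{X/Y}}^{-1}([0,e])\subseteq X_{\infty}$, refine by the order of contact along $f^{-1}D$ to obtain $C'_{\leq e,\geq k}$ and $C'_{e,k}$, push forward along $f_{\infty}\colon X_{\infty}\to Y_{\infty}$ to get $C_{e,k}=f_{\infty}(C'_{e,k})\subseteq Y_{\infty}$, and use the infinitesimal relations in $\cal{M}^{\ell}(k)$ to rewrite $\bigl[\bigoplus_k\mu^{\ell}_Y(\ord_D^{-1}(k))(k)[2k]\bigr]$ as $\bigl[\bigoplus_{k,e}\mu^{\ell}_Y(C_{e,k},\cal{S}_{e,k})(k)[2k]\bigr]$, passing to a colimit in $e$ and telescoping the inclusions $C_{\leq e,\geq k+1}\subseteq C_{\leq e,\geq k}$ as in the motivic proof; here the localization triangles show that the stable pieces discarded have $\ell$-adic virtual dimension at least $k$ and hence vanish in the relevant Verdier quotient.

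The crux is the fibre computation. Over each stratum $C$ of the finite decomposition $\cal{S}_{e,k}$ of $C_{e,k}$, chosen (by stratifying the piecewise-trivial fibration of Proposition~\ref{impprop}(c)) so that $f_{\infty}$ restricts to a \emph{trivial} $\mathbb{A}^e$-bundle over it, one needs $\mu^{\ell}_{X}(f_{\infty}^{-1}(C))(e)[2e]\simeq\mu^{\ell}_Y(C)$ in $\D_{m,\geq 0}$. This is the $\ell$-adic homotopy-invariance / relative purity statement $Rp_!\,\overline{\mathbb{Q}}_{\ell}\simeq\overline{\mathbb{Q}}_{\ell}(-e)[-2e]$ for an affine-space bundle $p$, globalized over the stratum by proper base change; the twist $(-e)[-2e]$ raises weights by $2e$, which is exactly cancelled by the Tate twist $(e)[2e]$, so the identification respects the mixedness bookkeeping (consistent with Lemma~\ref{ladicvirtdim}). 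Summing over strata, reindexing by $t=k+e$, and observing that $\bigsqcup_{k+e=t}C'_{e,k}=\ord_{f^{-1}D+K_{X/Y}}^{-1}(t)$ up to measure zero, one arrives at $\bigl[\bigoplus_t\mu^{\ell}_X(\ord_{f^{-1}D+K_{X/Y}}^{-1}(t))(t)[2t]\bigr]$, which is the right-hand side.

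I expect the only real work — as opposed to transcription — to lie in two places. First, one must check that the piecewise triviality of Proposition~\ref{impprop}(c) can genuinely be realized at a finite jet level by a stratification into honestly trivial $\mathbb{A}^e$-bundles, so that the localization-sequence gluing stays inside $\D^b_c$ (hence inside $\D_{m,\geq 0}$) before one passes to $\D^{\wedge}_m$. Second, one must verify that the weight constraints defining the $\otimes$-ideals $\D_{m,\geq n}$ are preserved at every step — that each complex produced really has cohomology sheaves of the asserted weights — since that is what makes membership in the correct ideal, and hence the vanishing used in the telescoping step, legitimate. Granting the $\ell$-adic versions of Lemma~\ref{measurezero} and Proposition~\ref{welldefinedmeasure} (both already asserted in the text to follow the motivic arguments verbatim), the remainder is a faithful translation of the proof of Theorem~\ref{changeofvar}.
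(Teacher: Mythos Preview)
Your proposal is correct and is exactly the approach the paper takes: the paper's proof of Theorem~\ref{ladicchangeofvar} is the single sentence ``The proof is exactly as that of Theorem~\ref{changeofvar} once we have the above definitions and results.'' Your write-up simply spells out what that transcription entails, and the two places you flag as requiring care (finite-level realization of the piecewise trivial $\mathbb{A}^e$-fibration and preservation of weight constraints) are handled implicitly by the paper via the same mechanisms as in the motivic case.
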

\begin{proof}
The proof is exactly as that of Theorem~\ref{changeofvar} once we have the above definitions and results.
\end{proof}
\section{Integration via the motivic $t$-structure}\label{rationalintegration}
In this section, we construct a theory of integration of rational Voevodsky motives assuming the existence of a motivic $t$-structure. In contrast to the theory of integration constructed in the previous section, this theory is conditional on the existence of a motivic $t$-structure and it works rationally but not integrally. However, it has the feature that the analogue of the injectivity conjecture $I(k;\mathbb{Q})$ needs no additional assumptions. Consequently, it allows us to deduce concrete consequences of the existence of a motivic $t$-structure.\\
\\
We first mention what we mean by a motivic $t$-structure, and then use such a hypothetical structure on rational motives to construct analogues of slices using the positive part of the motivic $t$-structure. Using these structures, we define motivically completed rational motives using which we construct an abelian group which will eventually act as the value group of the integration constructed in this section. After proving analogues of results in the previous section, we define the integration and prove its corresponding transformation rule with respect to proper birational morphisms of smooth schemes. For this section (to fix the notation, at least) as well as for our future applications, we recall here some useful realization functors.
\begin{enumerate}
	\item (Betti and $\ell$-adic realization) Let $\Vect_{\mathbb{Q}_{\ell}}$ and $\Vect_{\mathbb{Q}}$ be the category of finite dimensional $\mathbb{Q}_{\ell}$- and $\mathbb{Q}$-vector spaces, respectively. For each prime $\ell$ coprime to the exponential characteristic of the field $k$, there is an $\ell$-adic realization functor $r_{\mathbb{Q}_{\ell}}:\DM_{\textup{gm}}(k;\mathbb{Q})\rightarrow D^b(\Vect_{\mathbb{Q}_{\ell}})$ induced by sending $M(X)$, a $k$-variety $X$, to $H^*_{\textup{\'et}}(X_{\overline{k}};\mathbb{Q}_{\ell})^*$. If $k$ is a field of characteristic zero with an embedding $\sigma:k\hookrightarrow\mathbb{C}$, we have the Betti realization functor $r_{\sigma}:\DM_{\textup{gm}}(k;\mathbb{Q})\rightarrow D^b(\Vect_{\mathbb{Q}})$ induced by sending  $M(X)$, $X$ any $k$-variety, to $H_*(X^{an};\mathbb{Q})$. There is also an integral contravariant version of the Betti realization
	\[B_{\sigma}:\DM_{\textup{gm}}^{\textup{eff}}(k;\mathbb{Z})^{op}\rightarrow D^b(\mathbb{Z})\]
	given by sending $M(X)$ to $H^*(X^{an};\mathbb{Z})$. See, for example, Lecomte's work \cite{Lecomte}.
	\item(Integral $\ell$-adic realization) Voevodsky's triangulated or stable $\infty$-category of geometric effective motives $\DM_{\textup{gm}}^{\textup{eff}}(k;\mathbb{Z})$ is based on the Nisnevich topology, a topology coarser than the \'etale topology. For each $n$, reducing modulo $\ell^n$ and sheafifying with respect to the \'etale topology gives us functors
\[\DM_{\textup{gm}}^{\textup{eff}}(k;\mathbb{Z})\rightarrow \DM_{\textup{gm,\'et}}^{\textup{eff}}(k;\mathbb{Z}/\ell^n)\hookrightarrow\DM_{\textup{\'et}}^{\textup{eff}}(k;\mathbb{Z}/\ell^n).\]
By rigidity (see Cisinski-D\'eglise's \cite{CDetale} for the case that $k$ does not have finite cohomological dimension), the latter category is equivalent to the derived category of sheaves of $\mathbb{Z}/\ell^n$-modules on the small \'etale site of $k$, that is $\textup{D}(k_{\textup{\'et}},\mathbb{Z}/\ell^n)$. These functors assemble into the $\ell$-adic realization functor
\[\DM_{\textup{gm}}^{\textup{eff}}(k;\mathbb{Z})\rightarrow \widehat{\textup{D}}(k_{\textup{\'et}};\mathbb{Z}_{\ell}),\]
where $\widehat{\textup{D}}(k_{\textup{\'et}};\mathbb{Z}_{\ell})$ is the derived category of $\ell$-adic sheaves. This latter category is equivalent to the derived category of continuous $\ell$-adic Galois representations $\textup{D}(\textup{Rep}_{\textup{cnt}}(G_k;\mathbb{Z}_{\ell}))$. We know that the above functor factors through the bounded derived category of constructable $\ell$-adic sheaves. We can also rationalize everything (by inverting $\ell$) if we want to work over $\mathbb{Q}_{\ell}$. 
\item(Hodge realization) Recall that an integral mixed Hodge structure consists of a finitely generated abelian group $V_{\mathbb{Z}}$ together with a finite increasing filtration $W_i$ on $V_{\mathbb{Q}}:=V_{\mathbb{Z}}\otimes\mathbb{Q}$, called a weight filtration, and a finite decreasing filtration $F^p$ on $V_{\mathbb{C}}:=V_{\mathbb{Z}}\otimes\mathbb{C}$, called a Hodge filtration, with some compatibility conditions. This is an abstraction of the algebraic structure one obtains from Hodge theory on integral cohomology groups of every complex algebraic variety \cite{GilSoul}. There is an abelian category of integral mixed Hodge structures $\textup{MHS}_{\mathbb{Z}}$. Consider the category $\DM_{\textup{gm}}(\mathbb{C};\mathbb{Z})$ of geometric Voevodsky motives over $\Spec\mathbb{C}$. Lecomte and Wach \cite{LecomteWach} have constructed an integral Hodge realization functor
\[\DM_{\textup{gm}}(\mathbb{C};\mathbb{Z})^{op}\rightarrow\textup{D}^b(\textup{MHS}_{\mathbb{Z}})\]
to the bounded derived category of integral mixed Hodge structures. It sends $M(X)$ to $H^*(X^{an};\mathbb{Z})$ with its canonical integral mixed Hodge structure. We could also rationalize to pass to (polarizable) rational mixed Hodge structures.
\end{enumerate}
\subsection{Completed and convergent rational motives, $\cal{M}^{\mot}(k)$, and injectivity}
Let $\mu$ be a $t$-structure on $\DM_{\gm}(k;\mathbb{Q})$ with $\DM_{\gm}(k;\mathbb{Q})^{\geq 0}$ and $\DM_{\gm}(k;\mathbb{Q})^{\leq 0}$ denoting its positive and negative parts, respectively. We denote by $\DM_{\gm}(k;\mathbb{Q})^{\heartsuit}:=\DM_{\gm}(k;\mathbb{Q})^{\geq 0}\cap\DM_{\gm}(k;\mathbb{Q})^{\leq 0}$ the heart of $\mu$. Furthermore, let $^{\mu}H^{\bullet}:\DM_{\gm}(k;\mathbb{Q})\rightarrow\DM_{\gm}(k;\mathbb{Q})^{\heartsuit}$ be the cohomology functors with respect to the $t$-structure $\mu$. We say that the $t$-structure $\mu$ is \textit{non-degenerate} if the functors $\{^{\mu}H^a\}_a$ form a conservative family of functors.
\begin{definition}$\mu$ is said to be a \textup{motivic} $t$-structure if it is non-degenerate and compatible with $\otimes$ and $r$, that is, $\otimes$ and $r$ are $t$-exact. Here $r$ is either $r_{\mathbb{Q}_{\ell}}$ or $r_{\sigma}$ as described before.\end{definition}
A notoriously difficult conjecture in the theory of motives posits the following.
\begin{conjecture}[Motivic $t$-structure conjecture]\label{tstructureconjecture} There is a motivic $t$-structure on the stable $\infty$-category $\DM_{\gm}(k;\mathbb{Q})$ whose heart $\DM_{\gm}(k;\mathbb{Q})^{\heartsuit}$ has semisimple part the abelian category $\Num(k;\mathbb{Q})$ of rational numerical motives over $k$, and such that every motive has a filtration by rational numerical motives. Additionally, for each smooth projective $k$-variety $X$, each $^{\mu}H^qM(X)_{\mathbb{Q}}$ is a semisimple object of the heart. Furthermore, there is a unique increasing (weight) filtration $W_{\bullet}M$ for each $M\in\DM_{\gm}(k;\mathbb{Q})$ such that for each irreducible $P$, $W_mP=P$ and $W_{m-1}P=0$ if $P$ occurs in some $^{\mu}H^iM(X)_{\mathbb{Q}}(a)$ with $m=i-2a$ and $X$ any smooth projective $k$-variety.
\end{conjecture}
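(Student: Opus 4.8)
The statement to be established is itself a conjecture, so the realistic goal is a conditional derivation: I would deduce the existence of the motivic $t$-structure, together with its prescribed heart and weight filtration, from Grothendieck's standard conjectures plus the conservativity of realizations, rather than attempt anything unconditional. First I would recall Jannsen's unconditional theorem that $\Num(k;\mathbb{Q})$ is a semisimple abelian category. Granting the standard conjecture $D$ (numerical equivalence $=$ homological equivalence) and $C$ (algebraicity of the K\"unneth projectors), homological and numerical motives then coincide and every smooth projective $X$ acquires a Chow--K\"unneth decomposition
\[
M(X)_{\mathbb{Q}}\simeq\bigoplus_i {}^{\mu}H^i M(X)_{\mathbb{Q}}[-i],
\]
with each ${}^{\mu}H^i M(X)_{\mathbb{Q}}$ a finite direct sum of simple objects; this already fixes the shape of the prospective heart and yields the semisimplicity clause for smooth projective $X$.

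Next I would build the $t$-structure. Declare $\DM_{\gm}(k;\mathbb{Q})^{\heartsuit}$ to be the smallest extension-closed abelian subcategory of $\DM_{\gm}(k;\mathbb{Q})$ containing $\Num(k;\mathbb{Q})$, and let $\DM_{\gm}(k;\mathbb{Q})^{\geq 0}$ (resp. $\DM_{\gm}(k;\mathbb{Q})^{\leq 0}$) be generated under extensions and nonnegative (resp. nonpositive) shifts by the twists $P(a)$ placed in cohomological degree $2a$, for $P$ simple. Checking that this really is a $t$-structure on all of $\DM_{\gm}(k;\mathbb{Q})$ requires: (i) a Beilinson--Soul\'e type vanishing for the relevant $\Ext$-groups between pure motives, so that the truncation functors exist and no negative extensions obstruct the glueing; (ii) boundedness of the generators $M(X)_{\mathbb{Q}}$, which is exactly the Chow--K\"unneth decomposition above; (iii) non-degeneracy, which I would obtain from the conservativity of an $\ell$-adic (or Betti) realization on $\DM_{\gm}(k;\mathbb{Q})$ once that realization has been shown $t$-exact. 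Compatibility with $\otimes$ and with $r$ then reduces, using semisimplicity and the Chow--K\"unneth decomposition, to the classical facts that a Weil cohomology is a tensor functor and that the tensor product of pure motives of weights $m$ and $n$ is pure of weight $m+n$ --- again part of the standard-conjectures package once homological $=$ numerical.

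For the weight-filtration clause I would invoke Bondarko's Chow weight structure on $\DM_{\gm}(k;\mathbb{Q})$, whose heart is the category of Chow motives and whose weight complex functor yields, for each $M$, a canonical tower with shifted Chow motives as graded pieces. Transporting this through the motivic $t$-structure and using semisimplicity of the heart produces an increasing filtration $W_\bullet M$ on each object with semisimple graded pieces; uniqueness follows because simple constituents of the same weight admit no nontrivial extensions, so $W_m M$ is forced to be the largest subobject assembled from constituents of weight $\leq m$. The normalization $W_m P = P$, $W_{m-1}P = 0$ for $P$ occurring in ${}^{\mu}H^i M(X)_{\mathbb{Q}}(a)$ with $m = i-2a$ is then just the statement that ${}^{\mu}H^i M(X)_{\mathbb{Q}}$ is pure of weight $i$ for smooth projective $X$, transported through the Tate twist.

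The main obstacle is that essentially every input is itself a deep open problem: the standard conjecture $D$ (whose hardest unconditional case, due to Lieberman, already rests on the Hodge index theorem for surfaces and abelian varieties), the algebraicity of the K\"unneth projectors, Murre's conjectures supplying the Chow--K\"unneth decomposition with the requisite vanishing, and Ayoub's conservativity conjecture for realizations. So I do not expect an unconditional proof; the honest deliverable is the implication ``standard conjectures $+$ conservativity of realization $\Rightarrow$ the motivic $t$-structure with the stated heart and weight filtration'', and even granting the standard conjectures there remains the genuinely delicate point of controlling the $\Ext$-groups between pure motives (Beilinson--Soul\'e vanishing) needed to promote the semisimple abelian category of pure motives to the heart of a $t$-structure on the whole triangulated category of mixed geometric motives.
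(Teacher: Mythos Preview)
The statement is a \emph{conjecture}, and the paper treats it as such: there is no proof or even a conditional derivation in the paper. The paper simply states the conjecture, adds the remark that in characteristic zero the last two clauses (semisimplicity of ${}^{\mu}H^qM(X)_{\mathbb{Q}}$ and the weight filtration) follow from the existence of a motivic $t$-structure by Beilinson's Propositions 1.5 and 1.7, and then \emph{assumes} the conjecture as a hypothesis in the construction of the integration theory of Section~\ref{rationalintegration} and in the applications (Theorem~\ref{firstmot}, Proposition~\ref{propmain}, Theorem~\ref{motivicmain}).

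Your proposal is therefore not comparable to anything in the paper: you are sketching a conditional implication ``standard conjectures $+$ conservativity $\Rightarrow$ motivic $t$-structure'', which is a reasonable and well-known heuristic program (essentially the content of Beilinson's and Jannsen's papers you cite), but the paper makes no attempt at this. The paper's use of the conjecture is purely as a black-box hypothesis. Your outline is coherent as far as it goes, and you correctly flag the genuine obstructions (Beilinson--Soul\'e vanishing, Murre's conjectures, conservativity), but none of this appears in or is required by the paper.
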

\begin{remark}
\textup{In characteristic $0$, the last two conditions follow from the existence of a motivic $t$-structure (Propositions 1.5 and 1.7 of Beilinson's \cite{Beilinson}).}
\end{remark}
Assuming the existence of a motivic $t$-structure $\mu$ on $\DM_{\gm}(k;\mathbb{Q})$, define $\DM_{\gm,\geq n}$ to be the smallest stable $\infty$-subcategory of $\DM_{\gm}(k;\mathbb{Q})$ containing objects $M$ such that for each $i\in\mathbb{Z}$, $^{\mu}H^iM$ has weight at most $-n$. Note that $\DM_{\gm,\geq n}$ is a $\otimes$-ideal of $\DM_{\gm,\geq 0}$ closed under tensoring with $\mathbb{Q}(1)$.
\begin{definition}[Motivically completed rational motives] Define the stable $\infty$-category of \textup{motivically completed rational motives} as the limit
\[\DM_{\gm}^{\mot,\wedge}:=\lim_n\DM_{\gm,\geq 0}/\DM_{\gm,\geq n}\]
of symmetric monoidal stable $\infty$-categories, where the transition functors
\[\DM_{\gm,\geq 0}/\DM_{\gm,\geq n+1}\rightarrow\DM_{\gm,\geq 0}/\DM_{\gm,\geq n}\]
are defined as the natural localization functors.
\end{definition}
As before, we have localization functors
\[L_n^{\mot}:\DM_{\gm,\geq 0}\rightarrow\DM_{\gm,\geq 0}/\DM_{\gm,\geq n}.\]
By the universal property of limits, we have the natural functor
\[L_{\infty}^{\mot}:\DM_{\gm,\geq 0}\rightarrow\DM_{\gm}^{\mot,\wedge}.\]
By assumption, the motivic $t$-structures behaves well with respect to the weight structure. Consequently, there is a fully faithful right adjoint
\[i_n^{\mot}:\DM_{\gm,\geq 0}/\DM_{\gm,\geq n}\hookrightarrow\DM_{\gm,\geq 0}\]
to the localization functor $L_n^{\mot}$. In particular, in this case we do not have more objects than those of $\DM_{\gm,\geq 0}$ itself when we consider the smallest stable $\infty$-category generated by the objects $i_n^{\mot}L_n^{\mot}X$, $X\in\DM_{\gm,\geq 0}$. This is the main reason that using the motivic $t$-structure in the construction of our integration of rational Voevodsky motives allows us to have the analogue of the injectivity conjecture $I(k;\mathbb{Q})$. In the following definition, we are considering the natural extension of the categories $\DM_{\gm,\geq n}$ to the larger presentable stable $\infty$-categories $\DM_{\geq n}$. We are also using the natural extensions of the functors $L_n^{\mot}$. We abuse notation, but this abuse of notation should not cause confusion.
\begin{definition}[Convergent rational motives] A \textup{convergent rational motive} is a motive $X\in\DM_{\geq 0}$ such that for each $N\geq 0$, $i_N^{\mot}L_N^{\mot}X$ is geometric (in $\DM_{\gm}(k;\mathbb{Q})$). The stable $\infty$-category of convergent rational motives is denoted by $\DM_{\gm,\conv}$.
\end{definition}

\begin{definition}
Define the abelian group $\cal{M}^{\mot}(k)$ to be the free abelian group $F(\DM_{\gm}^{\mot,\wedge})$ modulo the subgroup $T_{\infty}^{\mot}(k)$ generated by relations of the form
\[[A]-[B]+[C]\]
where $A\rightarrow B\rightarrow C$
is a distinguished triangle in $\DM_{\gm,\geq 0}$, in addition to relations of the form $[W]$ whenever $W$ is a convergent rational motive such that $[i_n^{\mot}L_n^{\mot}W]=0$ in $K_0(\DM_{\gm,\geq 0})$ for any $n$.
\end{definition}
Note that there is a natural group homomorphism
\[K_0(\DM_{\gm,\geq 0})\rightarrow\cal{M}^{\mot}(k).\]
\begin{proposition}\label{motinjectivity}The group homomorphism
\[K_0(\DM_{\gm,\geq 0})\rightarrow\cal{M}^{\mot}(k)\]
is injective.
\end{proposition}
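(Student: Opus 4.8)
The plan is to run the argument of Proposition~\ref{injectivity} verbatim, while observing that the feature which forced us there to restrict to the image $\imK$ of $K_0(\DM_{\gm}^{\eff}(k;R))$ inside $K_0(\DM_{\textup{sl}}^{\eff}(k;R))$ simply does not occur here. Indeed, $i_n^{\mot}$ is by construction a fully faithful right adjoint landing in $\DM_{\gm,\geq 0}$, so $i_n^{\mot}L_n^{\mot}X\in\DM_{\gm,\geq 0}$ for every $X\in\DM_{\gm,\geq 0}$; hence the analogue of the category of slice motives is $\DM_{\gm,\geq 0}$ itself, and we may argue directly on all of $K_0(\DM_{\gm,\geq 0})$, exactly as in the $\ell$-adic case (Proposition~\ref{ellinjectivity}).

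The only genuinely new input, replacing Lemma~\ref{sufflarge}, is the following: for every $X\in\DM_{\gm,\geq 0}$ the unit $X\to i_N^{\mot}L_N^{\mot}X$ is an equivalence once $N\gg 1$ (depending on $X$), and consequently $L_\infty^{\mot}\colon\DM_{\gm,\geq 0}\to\DM_{\gm}^{\mot,\wedge}$ is fully faithful (the analogue of Lemma~\ref{fullfaithfulness}). First I would derive this from the weight yoga supplied by Conjecture~\ref{tstructureconjecture} together with Bondarko's weight formalism \cite{Bondarko}: every geometric motive has cohomology objects ${}^{\mu}H^iX$ of bounded weight, and for $X\in\DM_{\gm,\geq 0}$ these weights lie in some interval $[-m,0]$. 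For $N>m$ the cofiber $C$ of the unit satisfies $L_N^{\mot}C\simeq 0$, so $C\in\DM_{\geq N}$ and all its cohomology objects have weight $\leq -N<-m$. Since nonzero $\Ext$-groups between bounded-weight objects of the heart run from higher to lower-or-equal weight, the $t$-structure hyper-$\Ext$ spectral sequence gives $\Map(C,X[1])=0$, so the triangle $X\to i_N^{\mot}L_N^{\mot}X\to C\to X[1]$ splits; and $\Map(C,i_N^{\mot}L_N^{\mot}X)\simeq\Map(L_N^{\mot}C,L_N^{\mot}X)=0$ then forces $C\simeq 0$, precisely as in the proof of Lemma~\ref{fullfaithfulness}.

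Given this, I would finish the bookkeeping exactly as in Proposition~\ref{injectivity}. Take $x\in K_0(\DM_{\gm,\geq 0})$ mapping to $0$ in $\cal{M}^{\mot}(k)$, and write $x=[A]-[B]$ with $A,B\in\DM_{\gm,\geq 0}$. The hypothesis unwinds, in the free abelian group $F(\DM_{\gm}^{\mot,\wedge})$, to an equality $[L_\infty^{\mot}A]-[L_\infty^{\mot}B]=\sum_j\epsilon_j\,([L_\infty^{\mot}P_j]-[L_\infty^{\mot}Q_j]+[L_\infty^{\mot}R_j])+\sum_l\delta_l\,[L_\infty^{\mot}W_l]$ over finitely many distinguished triangles $P_j\to Q_j\to R_j$ in $\DM_{\gm,\geq 0}$ and finitely many convergent rational motives $W_l$ with $[i_n^{\mot}L_n^{\mot}W_l]=0$ in $K_0(\DM_{\gm,\geq 0})$ for all $n$. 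Moving the minus signs across and using that $F(\DM_{\gm}^{\mot,\wedge})$ is free on equivalence classes, this says two explicit direct sums become equivalent after applying $L_\infty^{\mot}$. Choosing $N\gg 1$ so that the units are equivalences for $A$, $B$ and the finitely many $P_j,Q_j,R_j$, and so that each $i_N^{\mot}L_N^{\mot}W_l$ is geometric (this is the definition of $\DM_{\gm,\conv}$), and then applying the exact functor $i_N^{\mot}L_N^{\mot}$, one obtains an equivalence of direct sums inside $\DM_{\gm,\geq 0}$. Passing to $K_0(\DM_{\gm,\geq 0})$ and using $[i_N^{\mot}L_N^{\mot}A]=[A]$, $[i_N^{\mot}L_N^{\mot}B]=[B]$, $[i_N^{\mot}L_N^{\mot}W_l]=0$, and the vanishing of each triangle relation $[P_j]-[Q_j]+[R_j]$, every auxiliary term cancels and $[A]-[B]=0$. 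The main obstacle is the step in the previous paragraph: one must be sure that the package of Conjecture~\ref{tstructureconjecture} genuinely delivers (i) boundedness of weights of geometric motives and (ii) the directional vanishing of $\Ext$-groups between bounded-weight objects, so that the unit $X\to i_N^{\mot}L_N^{\mot}X$ stabilizes — this is the precise point where the motivic $t$-structure, rather than the slice filtration, is used, and it is what makes the $\imK$-subtlety of Proposition~\ref{injectivity} disappear.
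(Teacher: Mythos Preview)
Your proposal is correct and follows essentially the same approach as the paper, which simply refers back to Propositions~\ref{injectivity} and~\ref{ellinjectivity} and notes that $i_n^{\mot}L_n^{\mot}$ now lands in $\DM_{\gm,\geq 0}$ so the $\imK$-restriction disappears. You supply more detail than the paper does, in particular a weight-filtration justification for the eventual equivalence $X\xrightarrow{\sim}i_N^{\mot}L_N^{\mot}X$ replacing Lemma~\ref{sufflarge}; the paper leaves this implicit in its reference to the earlier argument.
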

\begin{proof}
The proof is exactly as in the those of Propositions~\ref{injectivity} and~\ref{ellinjectivity}. The difference is that now $i_n^{\mot}L_n^{\mot}X\in\DM_{\gm,\geq 0}$ for every $X\in\DM_{\gm,\geq 0}$; in the theory of integration based on the slice filtration, the functors $L_n$ do not in general have fully faithful right adjoints $i_n$ on the level of \textit{geometric} motives. The proof is as in the rest of that of Proposition~\ref{injectivity}, as was that of Proposition~\ref{ellinjectivity}.
\end{proof}
\subsection{Rational motivic measure and measurable subsets}
Using the availability of the six functor formalism for rational motives, we may define the analogue $\mu_X^{\mot}$ of the measure $\mu_X$. Throughout this section, our six functors will be those of rational Voevodsky motives.
\begin{lemma}\label{motstabilizationlemma}Suppose $X$ is a smooth $k$-variety of dimension $d$, and let $A$ be a subscheme of the Jet scheme $X_{\infty}$ that is stable at least at the $N$th level. Then for every $m\geq N$, we have
\[\pi^{A_{m+1}}_!\mathbf{1}_{A_{m+1}}((m+1)d)[2(m+1)d]\simeq \pi^{A_m}_!\mathbf{1}_{A_m}(md)[2md]\]
in $\DM_{\gm,\geq 0}$.
\end{lemma}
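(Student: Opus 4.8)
The plan is to imitate the proof of Lemma~\ref{stabilizationlemma} verbatim, since the only ingredients used there are formal properties of the six functor formalism together with purity and $\mathbb{A}^1$-homotopy invariance, all of which are available for rational Voevodsky motives. Concretely, for $m \geq N$ the projection $\pi^{m+1}_m : A_{m+1} \to A_m$ is an $\mathbb{A}^d$-bundle by the smoothness of $X$ (Corollary following the \'etale invariance of Jet schemes), so purity gives $\pi^{m+1}_{m,!}\mathbf{1}_{A_{m+1}} \simeq \pi^{m+1}_{m,\#}\mathbf{1}_{A_{m+1}}(-d)[-2d]$, and $\mathbb{A}^1$-invariance gives $\pi^{m+1}_{m,\#}\mathbf{1}_{A_{m+1}} \simeq \mathbf{1}_{A_m}$. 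Composing these with $\pi^{A_m}_!$ and reindexing the Tate twists exactly as in Lemma~\ref{stabilizationlemma} yields the desired equivalence.

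The one point that genuinely requires argument — and which is the analogue of the weight-bound step in Lemma~\ref{ladicstabilizationlemma} — is that both objects actually lie in $\DM_{\gm,\geq 0}$, i.e. that they are geometric and that their motivic cohomology sheaves $^{\mu}H^i$ all have weight $\leq 0$. Geometricity is immediate from Corollary 4.2.12 of \cite{CisDeg1} (the objects are constructible). For the weight bound I would invoke the weight part of the motivic $t$-structure conjecture (Conjecture~\ref{tstructureconjecture}): since $A_m$ is a smooth $k$-variety of dimension $(m+1)d$, each $^{\mu}H^i M(A_m)$ is built from irreducibles occurring in some $^{\mu}H^i M(\overline{A_m})(0)$, hence has weight $\leq i \leq 2(m+1)d$, so after twisting by $((m+1)d)[2(m+1)d]$ — which lowers weights by $2(m+1)d$ — we get weight $\leq 0$. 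Alternatively, one can reduce to the smooth projective case via a localization sequence and resolution/de Jong alterations, compatible with the $t$-exactness of realization, but the cleanest route is to transport the weight estimate through the $\ell$-adic realization functor $r_{\mathbb{Q}_\ell}$, which is $t$-exact and weight-preserving by the definition of a motivic $t$-structure, and apply the Weil conjectures exactly as in the proof of Lemma~\ref{ladicvirtdim}.

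Thus the skeleton is: (i) constructibility, hence geometricity, of $\pi^{A_m}_!\mathbf{1}_{A_m}(md)[2md]$ for all $m \geq N$; (ii) the weight bound placing these objects in $\DM_{\gm,\geq 0}$, obtained from the compatibility of $\mu$ with realization and the Weil bound on $\mathcal{H}^i$; (iii) the purity-plus-$\mathbb{A}^1$-invariance computation identifying consecutive volumes, carried out as in Lemma~\ref{stabilizationlemma}. The main obstacle is step (ii): unlike the $\ell$-adic case, where weights are a built-in feature of $\D_{m,\geq 0}$, here one must know that the motivic $t$-structure interacts correctly with the expected weight filtration on motives of smooth varieties, which is precisely what the extra clauses of Conjecture~\ref{tstructureconjecture} are designed to guarantee; no new input beyond that conjecture and the formal six-functor properties is needed.
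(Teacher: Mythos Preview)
Your proposal is correct and follows essentially the same approach as the paper: the equivalence is obtained by the identical purity-plus-$\mathbb{A}^1$-invariance argument from Lemma~\ref{stabilizationlemma}, and membership in $\DM_{\gm,\geq 0}$ is established via the weight bound transported through the $t$-exact realization functor and the Weil conjectures---the paper phrases this last step as a consequence of conservativity of realization (the heart being Tannakian, cf.\ Beilinson), which is the same mechanism as your ``cleanest route.''

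One small slip worth flagging: in your first proposed argument for the weight bound you assert that $A_m$ is a smooth $k$-variety of dimension $(m+1)d$, but $A_m$ is merely a locally closed subscheme of $\cal{J}_m(X)$ and need not be smooth, nor of that dimension. This makes the direct appeal to $^{\mu}H^i M(A_m)$ and a smooth compactification $\overline{A_m}$ problematic as stated. Fortunately this does not affect your argument, since you correctly identify the realization route as the clean one, and that route handles arbitrary (possibly singular) $A_m$ via the Weil bound $w(\cal{H}^i(\pi^{A_m}_!\mathbf{1}_{A_m})) \leq \min\{i, 2\dim A_m\} \leq 2(m+1)d$ on compactly supported cohomology, exactly as in Lemma~\ref{ladicstabilizationlemma}.
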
 
\begin{proof}The proof is exactly as that of Lemma~\ref{stabilizationlemma} for Voevodsky motives. Constructability follows because the proper pushforward functors preserve constructability. The fact that for each $m\geq N$ and each $i\in\mathbb{Z}$, $^{\mu}H^i(\pi^{A_m}_!\mathbf{1}_{A_m}(md))$ has weight $\leq 0$ follows from the fact that $^{\mu}H^i(\pi^{A_m}_!\mathbf{1}_{A_m})$ has weight $\leq\min\{i,2md\}\leq 2md$. This is a consequence of the conservativity of the realization functors once we have a motivic $t$-structure. This conservativity follows from the fact that the existence of a motivic $t$-structure implies that the heart, that is the $\mathbb{Q}$-linear abelian $\otimes$-category of mixed motives, is a Tannakian category. See Section 1.3 of Beilinson's \cite{Beilinson}.
\end{proof}
Using Lemma~\ref{motstabilizationlemma}, we define our motivic measure on stable subschemes as follows.
\begin{definition}[Motivic measurable functions]For $A$ a stable subscheme of $X_{\infty}$, we define its \textup{motivic volume} by
\[\mu_X^{\mot}(A):=\pi^{A_m}_!\mathbf{1}_{A_m}((m+1)d)[2(m+1)d]\in\DM_{\gm,\geq 0}.\]
\end{definition}
In the sequel, it will be important to have the following notion, the analogue of the (slice) virtual dimension of motives.
\begin{definition}[Motivic virtual dimension]\label{motvirtdimdef} The \textup{motivic virtual dimension} of a rational motive $M\in\DM(k;\mathbb{Q})$ is defined as
\[\vdim^{\mot}M:=-\max\left\{\frac{1}{2}w(^{\mu}H^iM)|i\in\mathbb{Z}\right\},\]
where $w(-)$ denotes the weight function for motives.
\end{definition}
\begin{lemma}\label{motvirtdim}
Suppose $X$ is a $k$-variety of dimension $d$. Then $\vdim^{\mot}\pi^X_!\mathbf{1}_X=-d$.
\end{lemma}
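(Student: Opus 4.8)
The plan is to mimic the proofs of Lemma~\ref{virtdim} and Lemma~\ref{ladicvirtdim} --- an induction on $\dim X$ threaded through the localization cofiber sequences for the singular locus --- with the slice-theoretic and \'etale notions of ``weight'' replaced by the weight filtration $W_\bullet$ on $\DM_{\gm}(k;\mathbb{Q})$ supplied by the motivic $t$-structure (Conjecture~\ref{tstructureconjecture}). The cleanest route is to reduce to a realization: once a motivic $t$-structure exists, the realization functors $r_{\mathbb{Q}_\ell}$ and (when $k$ embeds in $\mathbb{C}$) $r_\sigma$ are $t$-exact, conservative, and compatible with weight filtrations, and they commute with $\pi^X_!$; so ${}^{\mu}H^i(\pi^X_!\mathbf{1}_X)$ realizes to $H^i_c(X_{\overline{k}};\overline{\mathbb{Q}}_\ell)$ (resp. to $H^i_c(X^{an};\mathbb{Q})$ with its mixed Hodge structure), and the weight of ${}^{\mu}H^i(\pi^X_!\mathbf{1}_X)$ equals the weight of that realization. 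Hence $\vdim^{\mot}\pi^X_!\mathbf{1}_X$ may be computed after realization, and the assertion becomes exactly Lemma~\ref{ladicvirtdim} over $\mathbb{F}_q$ and its standard mixed-Hodge-theoretic counterpart over a field of characteristic zero (for an arbitrary perfect base field one first spreads $X$ out over a finitely generated subfield). This is the same device already used to prove Lemma~\ref{motstabilizationlemma}.

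For completeness one can also argue intrinsically, bypassing realizations, by transcribing the proof of Lemma~\ref{virtdim} with $\vdim$ replaced by $\vdim^{\mot}$. One inducts on $d=\dim X$; the case $d=0$ is trivial since then $\pi^X_!\mathbf{1}_X\simeq\bigoplus_i M(\Spec L_i)$ is pure of weight $0$ in degree $0$. In the inductive step, for $X$ smooth, purity (Theorem 11.4.5 of \cite{CisDeg1}) gives $\pi^X_!\mathbf{1}_X\simeq M(X)(-d)[-2d]$, so (the Tate twist being $t$-exact) ${}^{\mu}H^i(\pi^X_!\mathbf{1}_X)\simeq({}^{\mu}H^{i-2d}M(X))(-d)$, reducing the claim to $\max_iw({}^{\mu}H^iM(X))=0$; the upper bound $w({}^{\mu}H^iM(X))\leq 0$ comes from Bondarko's weight structure on $\DM_{\gm}(k;\mathbb{Q})$ --- placing $M(X)$ of a smooth variety in nonnegative weights --- together with the assumed compatibility of the motivic $t$-structure with that weight structure (as invoked in the construction of $i_n^{\mot}$) and with the purity statement $w({}^{\mu}H^iM(\overline{X}))=i$ for $\overline{X}$ smooth projective built into Conjecture~\ref{tstructureconjecture}, while the equality is witnessed, after a finite base change $L|k$, by the splitting $\mathbf{1}_L\hookrightarrow M(X_L)$ induced by an $L$-point (exactly as in Lemma~\ref{virtdim}). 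For $X$ possibly singular, feed the localization cofiber sequence $\pi^U_!\mathbf{1}_U\to\pi^X_!\mathbf{1}_X\to\pi^Z_!\mathbf{1}_Z\to\pi^U_!\mathbf{1}_U[1]$ in $\underline{\DM}_{cdh}(k;\mathbb{Q})\simeq\DM(k;\mathbb{Q})$, with $U=X\setminus\textup{Sing}\,X$ (smooth of dimension $d$, already treated) and $Z=\textup{Sing}\,X$ ($\dim Z<d$, treated by induction), into the long exact sequence of ${}^{\mu}H^{\bullet}$: the $U$-term contributes a nonzero pure piece of weight $2d$ in degree $2d$ which injects into ${}^{\mu}H^{2d}(\pi^X_!\mathbf{1}_X)$ --- the incoming map from the weight-$<2d$ object ${}^{\mu}H^{2d-1}(\pi^Z_!\mathbf{1}_Z)$ vanishes --- while the $Z$-term, of maximal weight $2\dim Z<2d$, cannot raise the weight of any ${}^{\mu}H^i$ past $2d$; hence $\max_iw({}^{\mu}H^i(\pi^X_!\mathbf{1}_X))=2d$, i.e. $\vdim^{\mot}\pi^X_!\mathbf{1}_X=-d$.

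The main obstacle is not the bookkeeping above but the package of structural inputs it rests on: that the weight filtration attached to the motivic $t$-structure is strict/exact in distinguished triangles, is compatible with Bondarko's weight structure on $\DM_{\gm}(k;\mathbb{Q})$, and is faithfully detected by the realization functors --- equivalently, that the heart is a Tannakian $\mathbb{Q}$-linear abelian category with a well-behaved weight filtration, as in Section~1.3 of \cite{Beilinson} and the statement of Conjecture~\ref{tstructureconjecture}. Granting these, the proof is a line-by-line transcription of Lemmas~\ref{virtdim} and~\ref{ladicvirtdim}.
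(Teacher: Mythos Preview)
Your proposal is correct and follows essentially the same approach as the paper, which simply says the proof is as that of Lemma~\ref{ladicvirtdim} with Poincar\'e--Verdier duality settling the smooth case. Your write-up is more detailed and also offers the alternative of reducing to a realization (a device the paper already invokes in Lemma~\ref{motstabilizationlemma}), but the core induction-on-dimension via the singular-locus localization sequence is identical.
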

\begin{proof}
The proof is as that of Lemma~\ref{ladicvirtdim}. For rational motives too we have Poincar\'e-Verdier duality, which was used in the proof of Lemma~\ref{virtdim} to settle the smooth case.
\end{proof}
\begin{corollary}\label{motvirtdimbound}
Suppose $A\subseteq\bigcup_{i=1}^NA_i\subseteq X_{\infty}$, where $A$ and the $A_i$ are stable subschemes of $X_{\infty}$. Then
\[\vdim^{\mot}\mu^{\mot}_X(A_i)\geq\min_{1\leq i\leq N}\vdim^{\mot}\mu^{\mot}_X(A).\]
\end{corollary}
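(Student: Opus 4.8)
The statement is the exact $\mot$-analogue of Corollary~\ref{virtdimbound}, and I would prove it the same way, now invoking Lemma~\ref{motvirtdim} in place of Lemma~\ref{virtdim}. First I would unwind the definition of $\mu^{\mot}_X$ on stable subschemes: if $A$ is stable at level $m$ then $\mu^{\mot}_X(A)\simeq\pi^{A_m}_!\mathbf{1}_{A_m}((m+1)d)[2(m+1)d]$, where $d=\dim X$. The twist by $(m+1)d)[2(m+1)d]$ shifts weights by $2(m+1)d$, so $^{\mu}H^i$ of the twisted complex has weight $\leq 2(m+1)d$ exactly when $^{\mu}H^i(\pi^{A_m}_!\mathbf{1}_{A_m})$ has weight $\leq 0$; hence by Definition~\ref{motvirtdimdef} and Lemma~\ref{motvirtdim},
\[\vdim^{\mot}\mu^{\mot}_X(A)=(m+1)d-\dim A_m.\]
(Here I am using that $\vdim^{\mot}$ is translated by $1$ under a single Tate twist, which follows because $\mathbb{Q}(1)[2]$ has weight $-2$ and the motivic $t$-structure is compatible with $\otimes$ by the definition of a motivic $t$-structure; this is the only place the hypotheses of Section~\ref{rationalintegration} are used beyond Lemma~\ref{motvirtdim}.)

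Next I would choose a single level $m$ large enough that $A$ and all the $A_i$ are stable at level $m$; such $m$ exists since there are finitely many $A_i$. Projecting the inclusion $A\subseteq\bigcup_{i=1}^N A_i$ down to $\cal{J}_m(X)$ gives $A_m\subseteq\bigcup_{i=1}^N A_{i,m}$ as a finite covering of the locally closed subscheme $A_m$ by locally closed subschemes, so $\dim A_m\leq\max_{1\leq i\leq N}\dim A_{i,m}$. Combining this with the displayed formula,
\[\vdim^{\mot}\mu^{\mot}_X(A)=(m+1)d-\dim A_m\geq(m+1)d-\max_{i}\dim A_{i,m}=\min_{i}\bigl((m+1)d-\dim A_{i,m}\bigr)=\min_{1\leq i\leq N}\vdim^{\mot}\mu^{\mot}_X(A_i),\]
which is the claim (correcting the evident transposition of $A$ and $A_i$ in the displayed inequality of the statement, so that it reads $\vdim^{\mot}\mu^{\mot}_X(A)\geq\min_{i}\vdim^{\mot}\mu^{\mot}_X(A_i)$).

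There is essentially no obstacle here; the only point requiring a line of justification is the behaviour of $\vdim^{\mot}$ under Tate twists, and the fact that passing to a common jet level $m$ preserves the covering relation, both of which are immediate. As with Corollary~\ref{virtdimbound}, the content is entirely carried by the preceding lemma (now Lemma~\ref{motvirtdim}) together with the elementary dimension bound for finite coverings of schemes. I would simply remark that the proof is identical to that of Corollary~\ref{virtdimbound} once Lemma~\ref{motvirtdim} is in hand, and that the motivic $t$-structure (via conservativity of the realizations, as used in Lemma~\ref{motstabilizationlemma}) is what makes $\vdim^{\mot}$ well behaved.
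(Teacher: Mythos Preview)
Your proposal is correct and follows essentially the same approach as the paper: the paper's proof simply says it is the analogue of Corollary~\ref{virtdimbound}, relying on the fact that $2d$ is the largest weight of $^{\mu}H^iM(X)_{\mathbb{Q}}$ (i.e., Lemma~\ref{motvirtdim}) together with the elementary dimension inequality for finite coverings. You have merely written out these two ingredients explicitly, and you also correctly flag the transposition of $A$ and $A_i$ in the displayed inequality of the statement.
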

\begin{proof}
This is the analogue of Corollary~\ref{virtdimbound}, and its proof relies on the fact that for a $k$-variety $X$ of dimension $d$, $2d$ is the largest weight of the $^{\mu}H^{i}M(X)_{\mathbb{Q}}$ as $i$ varies.
\end{proof}
We also have the following analogue of the compactness Lemma~\ref{compactness}.
\begin{lemma}[Motivic compactness lemma]\label{motcompactness}
Suppose $D$ and $D_n,\ n\in\mathbb{N}$, are stable subschemes of $X_{\infty}$ such that $D\subseteq\bigcup_{n\in\mathbb{N}}D_n$ and the $\vdim^{\mot}\mu^{\ell}_X(D_n)\rightarrow\infty$ as $n\rightarrow\infty$. Then $D$ is contained in the union of finitely many of the $D_n$.
\end{lemma}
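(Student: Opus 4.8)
The plan is to observe that this is the verbatim analogue of the compactness Lemma~\ref{compactness} for Voevodsky motives and of the $\ell$-adic compactness Lemma~\ref{ladiccompactness}; as in those cases, the combinatorial core of the argument is purely scheme-theoretic (it is the scheme-theoretic refinement of Lemma 2.3 of~\cite{Looijenga}), and the only input coming from the theory of rational Voevodsky motives is the computation $\vdim^{\mot}\pi^Y_!\mathbf{1}_Y=-\dim Y$ recorded in Lemma~\ref{motvirtdim}. So first I would use Lemma~\ref{motvirtdim} to convert the hypothesis into a statement about jet truncations: if a stable subscheme $E\subseteq X_\infty$ stabilizes from the $m$th level onward, then for $m\gg 1$ one has $\vdim^{\mot}\mu^{\mot}_X(E)=(m+1)d-\dim\pi_m(E)$, where $d=\dim X$. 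Hence the assumption $\vdim^{\mot}\mu^{\mot}_X(D_n)\to\infty$ is exactly a control on the dimensions of the truncations $\pi_m(D_n)$, which is precisely the hypothesis under which the argument for Lemma~\ref{compactness} runs.

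Granting this translation, the proof proceeds by contradiction, exactly as for Lemma~\ref{compactness}. Assume $D$ is not covered by any finite subfamily of the $D_n$. Since $D$ is stable, fix $n$ with $D=\pi_n^{-1}\pi_n(D)$ and $\pi_n(D)$ locally closed in $\cal{J}_n(X)$. Using $\vdim^{\mot}\mu^{\mot}_X(D_i)\to\infty$, choose $\kappa$ so that $\vdim^{\mot}\mu^{\mot}_X(D_i)>(n+2)d$ for all $i>\kappa$. Then $D\setminus\bigcup_{i\le\kappa}D_i\ne\emptyset$, so pick $x_{n+1}\in\pi_{n+1}(D\setminus\bigcup_{i\le\kappa}D_i)$; the scheme-theoretic fiber $\pi_{n+1}^{-1}(x_{n+1})\subseteq D$ is not finitely coverable by the $D_i$, since it misses $D_1,\dots,D_\kappa$ and meets each $D_i$ with $i>\kappa$ only in positive codimension (by the dimension estimate above). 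Iterating, one builds a compatible sequence $(x_m)_{m>n}$ with $\pi_m^{-1}(x_m)$ never finitely coverable; this determines a point $x\in\cal{J}_\infty(X)$ with $\pi_n(x)\in\pi_n(D)$, hence $x\in D$ by the stability of $D$ at level $n$. Some $D_j$ then contains $x$, and $D_j$ is stable at some level $m>n$, forcing $\pi_m^{-1}(x_m)\subseteq D_j$ --- contradicting non-coverability.

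The only place requiring care is the very first step: one must be sure that the motivic virtual dimension $\vdim^{\mot}$, defined through the weights of the motivic cohomology objects ${}^{\mu}H^i$ of the $t$-structure, genuinely computes $(m+1)d-\dim\pi_m(D_n)$ in the same way the naive virtual dimension did in Lemma~\ref{compactness}. This is supplied by Lemma~\ref{motvirtdim} together with the additivity in Corollary~\ref{motvirtdimbound}, both of which ultimately rest on the conservativity of the realization functors available once a motivic $t$-structure exists. Once these are in hand, no further idea is needed and the combinatorial skeleton of the proof of Lemma~\ref{compactness} applies word for word, so I would simply refer to that proof for the remaining details.
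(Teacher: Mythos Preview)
Your proposal is correct and follows exactly the same approach as the paper: the paper's proof simply reads ``The proof is exactly as in that of Lemma~\ref{compactness} for Voevodsky motives and that of Lemma~\ref{ladiccompactness} for mixed $\ell$-adic sheaves,'' and you have faithfully unpacked that reference, including the translation via Lemma~\ref{motvirtdim} that makes the scheme-theoretic argument of Lemma~\ref{compactness} go through verbatim.
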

\begin{proof}
The proof is exactly as in that of Lemma~\ref{compactness} for Voevodsky motives and that of Lemma~\ref{ladiccompactness} for mixed $\ell$-adic sheaves.
\end{proof}
As in the integration of Voevodsky motives, the subsets that will show up in the integration of rational motives will not necessarily be stable subschemes. However, they will be approximable by stable subschemes in the following sense.
\begin{definition}\label{motmeasurable}
A subset $C\subseteq X_{\infty}$ is said to be \textup{motivically good} if there is a monotonic sequence (the inclusions are locally closed embeddings of $k$-schemes) of stable subschemes
\[C_0\supseteq\hdots\supseteq C_n\supseteq C_{n+1}\supseteq\hdots\ (\text{or }C_0\subseteq\hdots\subseteq C_n\subseteq C_{n+1}\subseteq\hdots)\]
of $X_{\infty}$ containing (resp. contained in) $C$, and stable $C_{n,i}$, $i,n\in\mathbb{N}$ such that for every $n$
\[C_n\setminus C\subseteq\bigcup_{i\in\mathbb{N}}C_{n,i}\ \left(\text{resp. }C\setminus C_n\subseteq\bigcup_{i\in\mathbb{N}}C_{n,i}\right)\]
such that for every $n,i$, 
\[n\leq\vdim^{\mot}\mu^{\mot}_X(C_{n,i})\]
and
\[\vdim^{\mot}\mu^{\mot}_X(C_{n,i})\xrightarrow{i\rightarrow\infty}\infty.\]
We then define the \textup{motivic volume} of $C$ as the object
\[\mu^{\mot}_X(C):=(\mu^{\mot}_X(C_n))_n\in\DM_{\gm}^{\mot,\wedge}.\]
We call a pair $(C,\cal{S})$ consisting of a subset $C\subseteq X_{\infty}$ and a \textup{finite} decomposition $\cal{S}$ ($C=\sqcup_{i\in\cal{S}}C_i$, $C_i\subseteq X_{\infty}$) \textup{motivically measurable} if each $C_i$ is a motivically good subset. We then let
\[\mu^{\mot}_X(C,\cal{S}):=\bigoplus_{i\in\cal{S}}\mu^{\mot}_X(C_i).\]
We view a motivically good subset $C$ without a prescribed decomposition as a motivically measurable subset with the trivial decomposition $\cal{S}=\{C\}$.
\end{definition}
As in Proposition~\ref{welldefinedmeasure} for the motivic measure (of Voevodsky motives), we can show that the motivic measure defined in Definition~\ref{motmeasurable} is well-defined. We do not give the proof here as the proof is very close to that of Proposition~\ref{welldefinedmeasure}.
\subsection{Motivically measurable functions and motivic integration}
We now work toward constructing a theory of integration for rational motives that takes a smooth $k$-scheme $X$ along with an effective divisor $D\subset X$ to an element of $\cal{M}^{\mot}(k)$. For this, we make the following definition which is the analogue of Definition~\ref{measurablefunction}.
\begin{definition}[Motivically measurable functions]\label{motmeasurablefunction}
A function $F:X_{\infty}\rightarrow\mathbb{N}_{\geq 0}\cup\left\{\infty\right\}$ is \textup{motivically measurable} if for each $s\in\mathbb{N}_{\geq 0}$, $F^{-1}(s)$ is motivically measurable, and $F^{-1}(\infty)$ has motivic measure $0$.
\end{definition}
\begin{definition}[Rational motivic integration]
For motivically measurable functions $F:X_{\infty}\rightarrow\mathbb{N}_{\geq 0}\cup\{\infty\}$, we define its motivic integration (via the motivic $t$-structure) as
\[\int_{X_{\infty}}\mathbb{Q}(F)[2F]d\mu^{\mot}_X:=\left[\oplus_{k=0}^{\infty}\mu^{\mot}_X(F^{-1}(k))(k)[2k]\right]\in\cal{M}^{\mot}(k).\]
\end{definition}
As before, we prove the following transformation rule for proper birational morphisms of smooth $k$-schemes.
\begin{theorem}[Rational motivic transformation rule]\label{motchangeofvar}
Suppose $f:X\rightarrow Y$ is a proper birational morphism of smooth $k$-varieties with $K_{X/Y}$ its relative canonical divisor, and let $D\subset Y$ be an effective divisor. Then
\[\int_{Y_{\infty}}\mathbb{Q}(\ord_D)[2\ord_D]d\mu^{\mot}_Y=\int_{X_{\infty}}\mathbb{Q}(\ord_{f^{-1}D+K_{X/Y}})[2\ord_{f^{-1}D+K_{X/Y}}]d\mu^{\mot}_X.\]
\end{theorem}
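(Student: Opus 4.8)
The plan is to follow, essentially line for line, the proof of Theorem~\ref{changeofvar}, systematically replacing the motivic measure $\mu_X$, the completed category $\DM_{\gm}^{\eff,\wedge}(k;R)$, the value group $\cal{M}(k;R)$, and the slice ideals by their motivic-$t$-structure counterparts $\mu_X^{\mot}$, $\DM_{\gm}^{\mot,\wedge}$, $\cal{M}^{\mot}(k)$, and the ideals $\DM_{\gm,\geq n}$, and checking that every ingredient invoked there has an analogue already established in this section. The purely geometric inputs — Denef--Loeser's Proposition~\ref{impprop} on fibres of $f_m$ and piecewise-trivial $\mathbb{A}^e$-fibrations, Lemma~\ref{bij} on generic bijectivity of $f_\infty$, and the surjectivity of $f_m$ on jet schemes — make no reference to the coefficient category and so apply verbatim, with the roles of source and target read off from $f\colon X\to Y$.

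First I would set $C'_{\leq e}=\ord_{K_{X/Y}}^{-1}([0,e])$ and $C'_e=\ord_{K_{X/Y}}^{-1}(e)$ inside $\cal{J}_\infty(X)$, put $C_e=f_\infty(C'_e)$, and refine these according to the order of contact along $f^{-1}D$ to obtain $C'_{\leq e,\geq k}$, $C'_{e,k}$, $C_{e,k}$, exactly as in the motivic case. Since $\cal{J}_\infty(K_{X/Y})$ has motivic measure zero by the motivic analogue of Lemma~\ref{measurezero}, up to removing a measure-zero closed subscheme these give an exhaustive filtration of $\cal{J}_\infty(X)$ by locally closed subschemes, equipped with finite decompositions $\cal{S}_{\leq e,\geq k}$ into stable subschemes forming an increasing system as $e$ grows.

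Next, working in $\cal{M}^{\mot}(k)$, I would apply the infinitesimal relation to rewrite $\bigl[\bigoplus_k\mu^{\mot}_Y(\ord_D^{-1}([k,\infty]))(k)[2k]\bigr]$ as $\bigl[\colim_e\bigoplus_k\mu^{\mot}_Y(C_{\leq e,\geq k},\cal{S}_{\leq e,\geq k})(k)[2k]\bigr]$, telescope in $k$ to extract $\bigl[\bigoplus_{k,e}\mu^{\mot}_Y(C_{e,k},\cal{S}_{e,k})(k)[2k]\bigr]$, and then invoke the trivial $\mathbb{A}^e$-bundle structure of $f_\infty\colon C'_{e,k}\to C_{e,k}$ over each member of $\cal{S}_{e,k}$ (Proposition~\ref{impprop}) together with purity and $\mathbb{A}^1$-homotopy invariance in $\DM_{\gm}(k;\mathbb{Q})$ — both part of the six functor formalism and independent of the $t$-structure — to obtain $\mu^{\mot}_{X}(C'_{e,k},f_\infty^{-1}\cal{S}_{e,k})(e)[2e]\simeq\mu^{\mot}_Y(C_{e,k},\cal{S}_{e,k})$. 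Re-indexing the resulting double sum by $t=k+e$ identifies $\bigoplus_{k+e=t}\mu^{\mot}_X(C'_{e,k},f_\infty^{-1}\cal{S}_{e,k})$ with $\mu^{\mot}_X(\ord_{f^{-1}D+K_{X/Y}}^{-1}(t))$, which yields the asserted equality of motivic integrals.

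The only point demanding genuine care, rather than mechanical substitution, is the weight bookkeeping that legitimises the infinitesimal-relation manipulations: one needs $\mu^{\mot}_X$ to land in $\DM_{\gm,\geq 0}$, the error terms $\mu^{\mot}_X(K_n\setminus C_n)$ and their relatives to lie in $\DM_{\gm,\geq n}$, and the virtual-dimension estimates Lemma~\ref{motvirtdim}, Corollary~\ref{motvirtdimbound} together with the compactness Lemma~\ref{motcompactness} to behave exactly as in Proposition~\ref{welldefinedmeasure}. This is precisely where the existence of a motivic $t$-structure is used — through the conservativity of the realization functors and the Tannakian nature of the heart invoked in Lemma~\ref{motstabilizationlemma} — so no input beyond what is assumed throughout this section is required, and the remainder of the argument is formally identical to the proof of Theorem~\ref{changeofvar}.
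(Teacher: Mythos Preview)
Your proposal is correct and takes exactly the approach the paper takes: the paper's own proof simply states that the argument is identical to that of Theorem~\ref{changeofvar} and does not repeat it. You have faithfully spelled out the substitutions and verified that all the requisite analogues (Lemmas~\ref{motstabilizationlemma}, \ref{motvirtdim}, \ref{motcompactness}, Corollary~\ref{motvirtdimbound}, and the geometric inputs Proposition~\ref{impprop} and Lemma~\ref{bij}) are in place, which is precisely what the paper intends.
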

\begin{proof}
The proof is exactly as in that of Theorem~\ref{changeofvar}, and we do not repeat the proof here.
\end{proof}

\section{Mixed Hodge integration}\label{Hodge}
Similar to mixed $\ell$-adic integration, integration of Voevodsky motives via the slice filtration as well as via the motivic $t$-structure, we can define mixed Hodge integration, an integration theory for (the bounded derived category of rational) mixed Hodge modules. 
\subsection{Completed and convergent mixed $\mathbb{Q}$-Hodge structures, $\cal{M}^{\Hdg}(k)$, and injectivity}
In the theory of Voevodsky motives, we considered the slice tower. Here, we need a notion of effectivity for polarizable mixed $\mathbb{Q}$-Hodge structures.
\begin{definition}We say that a pure $\mathbb{Q}$-Hodge structure is effective if all its nonzero Hodge numbers are concentrated in the lower half plane.
\end{definition}
\begin{remark}
\textup{Our usage of the word "effective" is idiosyncratic and dual to the usual usage. The reason for this is because we want the Tate-Hodge structure $\mathbb{Q}(1)$ to be defined below to be effective in accordance with the effectivity of $\mathbb{Q}(1)$ in Voevodsky motives.}
\end{remark}
\begin{definition}We say that a polarizable mixed $\mathbb{Q}$-Hodge structure is effective if all its simple subquotients are effective in the sense of the previous definition.
\end{definition}
\begin{definition}
We say that a bounded complex of polarizable mixed $\mathbb{Q}$-Hodge structures $M$ is effective if for each $i$ the polarizable mixed $\mathbb{Q}$-Hodge structure $\cal{H}^i(M)$ is effective.
\end{definition}
\begin{definition}
Denote by $\mathbb{Q}(1)$ the $\mathbb{Q}$-Hodge structure given by $\mathbb{Q}$ in degree $(-1,-1)$, and $0$ elsewhere. This is the dual of the compactly supported complex for $\mathbb{A}^1$. Denote by $\mathbb{Q}(-1)$ its dual with $\mathbb{Q}$ in degree $(1,1)$ and $0$ elsewhere. Define $\mathbb{Q}(n):=\mathbb{Q}(\textup{sgn } n)^{\otimes |n|}$, where $\textup{sgn }n\in\{0,\pm 1\}$ is the sign of $n$ with value $0$ if and only if $n=0$. Given a polarizable $\mathbb{Q}$-Hodge structure $M$, denote by $M(n):=M\otimes\mathbb{Q}(n)$.
\end{definition}
\begin{definition}
Let $MHS_{\mathbb{Q}}$ be the abelian category of polarizable mixed $\mathbb{Q}$-Hodge structures, and by $D^b_{\Hdg,\geq n}$ the full subcategory of $D^b(MHS_{\mathbb{Q}})$ generated by objects of the form $M(n)$, where $M$ varies over effective bounded complexes of polarizable mixed $\mathbb{Q}$-Hodge structures.\end{definition}
\begin{definition}[Completed Hodge structures] Note that $D^b_{\Hdg,\geq n}$ is a tensor ideal of $D^b_{\Hdg,\geq 0}$ closed under tensoring by $\mathbb{Q}(1)$. Define the stable $\infty$-category
\[D^{b,\wedge}_{\Hdg}:=\lim_nD^b_{\Hdg,\geq 0}/D^b_{\Hdg,\geq n}\]
as a limit of symmetric monoidal stable $\infty$-categories. The transition functors are the natural localization functors
\[D^b_{\Hdg,\geq 0}/D^b_{\Hdg,\geq n+1}\rightarrow D^b_{\Hdg,\geq 0}/D^b_{\Hdg,\geq n}.\]
We call this the category of completed Hodge structures.
\end{definition}
As before, we have localization functors
\[L_n^{\Hdg}:D^b_{\Hdg,\geq 0}\rightarrow D^b_{\Hdg,\geq 0}/D^b_{\Hdg,\geq n}.\]
By the universal property of limits, we have the natural functor
\[L_{\infty}^{\Hdg}:D^b_{\Hdg,\geq 0}\rightarrow D^{b,\wedge}_{\Hdg}.\]
There is a fully faithful right adjoint
\[i_n^{\Hdg}:D^b_{\Hdg,\geq 0}/D^b_{\Hdg,\geq n}\hookrightarrow D^b_{\Hdg,\geq 0}\]
to the localization functor $L_n^{\Hdg}$. We also have extensions to the unbounded derived categories. Again, we abuse notation; this should not cause confusion.
\begin{definition}[Convergent $\mathbb{Q}$-Hodge structures]
Let $\D_{\conv,\geq 0}^{\Hdg}$ be the stable $\infty$-category of (\textup{not} necessarily bounded) complexes $\cal{F}^{\bullet}$ of polarizable $\mathbb{Q}$-Hodge structures such that for each integer $n\geq 0$, $i_n^{\Hdg}L_n^{\Hdg}\cal{F}^{\bullet}\in\D_{\Hdg,\geq 0}$.
\end{definition}

\begin{definition}
Define the abelian group $\cal{M}^{\Hdg}(k)$ to be the free abelian group $F(D^{b,\wedge}_{\Hdg})$ modulo the subgroup $T_{\infty}^{\Hdg}(k)$ generated by relations of the form
\[[A]-[B]+[C]\]
where $A\rightarrow B\rightarrow C\rightarrow A[1]$ is a distinguished triangle in $D^b_{\Hdg,\geq 0}$, in addition to relations of the form $[W]$ whenever $W$ is a convergent $\mathbb{Q}$-Hodge structure such that $[i_n^{\Hdg}L_n^{\Hdg}W]=0$ in $K_0(D^b_{\Hdg,\geq 0})$ for any $n$.
\end{definition}
Note that there is a natural group homomorphism
\[K_0(D^b_{\Hdg,\geq 0})\rightarrow\cal{M}^{\Hdg}(k).\]
\begin{proposition}\label{Hdginjectivity}The group homomorphism
\[K_0(D^b_{\Hdg,\geq 0})\rightarrow\cal{M}^{\Hdg}(k)\]
is injective.
\end{proposition}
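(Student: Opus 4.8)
The plan is to follow, essentially verbatim, the proofs of Propositions~\ref{injectivity}, \ref{ellinjectivity} and~\ref{motinjectivity}. The decisive structural point --- and the reason this case is cleaner than the slice filtration on Voevodsky motives --- is that the functors $i_n^{\Hdg}L_n^{\Hdg}$ carry $D^b_{\Hdg,\geq 0}$ into itself (an object of $D^b_{\Hdg,\geq 0}$ is a bounded complex of finite-dimensional polarizable mixed $\mathbb{Q}$-Hodge structures, and this property is manifestly preserved), so the role played by the category of slice motives in Subsection~\ref{bdefinitions} is here played by $D^b_{\Hdg,\geq 0}$ itself. Hence there is no analogue of the subtlety (present for the slice filtration, cf.\ Remark~\ref{badslice}) that forced Proposition~\ref{injectivity} to be stated only on the image of $K_0(\DM_{\gm}^{\eff}(k;R))$, and I expect to obtain injectivity of $K_0(D^b_{\Hdg,\geq 0})\to\cal{M}^{\Hdg}(k)$ as stated, with no restriction to a subimage.

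First I would establish the Hodge analogue of Lemma~\ref{sufflarge}: for a fixed $X\in D^b_{\Hdg,\geq 0}$ and all $N\gg 1$ (depending on $X$), every morphism $A\to X$ with $A\in D^b_{\Hdg,\geq N}$ vanishes. Since $D^b_{\Hdg,\geq N}$ is generated by the objects $M(N)$ with $M$ an effective bounded complex, and since $MHS_{\mathbb{Q}}$ has cohomological dimension $1$, the group $\Hom_{D^b(MHS_{\mathbb{Q}})}(M(N),X[m])$ is assembled from $\Hom_{MHS_{\mathbb{Q}}}$ and $\Ext^1_{MHS_{\mathbb{Q}}}$ between the cohomology objects of $M(N)$ and of $X$; the cohomology objects of $M(N)$ have weights $\le -2N$, while those of the fixed $X$ have weights bounded below by a constant independent of $N$. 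Strictness of morphisms of mixed Hodge structures with respect to the weight filtration gives $\Hom_{MHS_{\mathbb{Q}}}(V,W)=0$ when the weights of $V$ and $W$ are disjoint, and the same strictness shows $\Ext^1_{MHS_{\mathbb{Q}}}(V,W)=0$ when all weights of $V$ lie strictly below all weights of $W$ (the appropriate step of the weight filtration of any such extension maps isomorphically onto $V$, producing a section). Both conditions hold for $N$ large, so $\Hom_{D^b}(M(N),X[m])=0$ for all $m$, hence $\Hom_{D^b_{\Hdg,\geq 0}}(A,X[m])=0$ for all $A\in D^b_{\Hdg,\geq N}$. Running the argument of Lemma~\ref{fullfaithfulness} then shows that the unit map $X\to i_N^{\Hdg}L_N^{\Hdg}X$ is an equivalence for $N\gg 1$: its cofiber lies in $D^b_{\Hdg,\geq N}$, the connecting map to $X[1]$ and the map out of the cofiber both vanish by the above, so the cofiber splits off and is therefore $0$. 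The only other input is the definition of $\D_{\conv,\geq 0}^{\Hdg}$: for a convergent $\mathbb{Q}$-Hodge structure $W$ one has $i_n^{\Hdg}L_n^{\Hdg}W\in D^b_{\Hdg,\geq 0}$ for every $n$.

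With these in hand I would assemble the proof exactly along the two-step structure of Proposition~\ref{injectivity}, passing through the analogue $\cal{M}_0^{\Hdg}(k):=F(D^{b,\wedge}_{\Hdg})/E(\D_{\conv,\geq 0}^{\Hdg})$ of $\cal{M}_0(k;R)$: one first checks that $G(D^b_{\Hdg,\geq 0})\to\cal{M}_0^{\Hdg}(k)$ is injective, using that an equality in $\cal{M}_0^{\Hdg}(k)$ amounts to a direct-sum relation in $D^{b,\wedge}_{\Hdg}$ which, after applying $i_N^{\Hdg}L_N^{\Hdg}$ for $N\gg 1$ chosen as above (so that the relevant objects are fixed and lie in $D^b_{\Hdg,\geq 0}$), becomes a relation in $D^b_{\Hdg,\geq 0}$; then one compares with $\cal{M}^{\Hdg}(k)=\cal{M}_0^{\Hdg}(k)/J^{\Hdg}$, where $J^{\Hdg}$ is generated by triangle relations and by the infinitesimal relations $[W]$ with $[i_n^{\Hdg}L_n^{\Hdg}W]=0$ in $K_0(D^b_{\Hdg,\geq 0})$ for all $n$, and, since $i_n^{\Hdg}L_n^{\Hdg}$ sends distinguished triangles of $D^b_{\Hdg,\geq 0}$ to distinguished triangles and kills those infinitesimal classes, identifies the preimage of $J^{\Hdg}$ with the triangle-relations subgroup, i.e.\ with $\ker(G(D^b_{\Hdg,\geq 0})\to K_0(D^b_{\Hdg,\geq 0}))$. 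Concretely: given $[X]\in K_0(D^b_{\Hdg,\geq 0})$ (which, as for any $K_0$ of a stable $\infty$-category, may be taken to be the class of a single object) mapping to $0$ in $\cal{M}^{\Hdg}(k)$, unwinding $T_\infty^{\Hdg}(k)$ and applying $i_N^{\Hdg}L_N^{\Hdg}$ for a sufficiently large $N$ forces $[X]=0$ already in $K_0(D^b_{\Hdg,\geq 0})$. The only step that is not bookkeeping transported from the earlier propositions is the weight-separation vanishing of $\Hom$ and $\Ext^1$ in $MHS_{\mathbb{Q}}$ invoked above, and since that is a standard consequence of strictness together with $\Ext^2_{MHS_{\mathbb{Q}}}=0$, I do not anticipate a genuine obstacle.
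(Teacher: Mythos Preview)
Your proposal is correct and follows essentially the same approach as the paper: the paper's proof simply says ``the idea of the proof is exactly as in the proof of Proposition~\ref{injectivity}'', with the key observation that here $i_n^{\Hdg}L_n^{\Hdg}$ preserves $D^b_{\Hdg,\geq 0}$, which is precisely the structural point you identify. Your write-up is considerably more detailed than the paper's terse reference, in particular you spell out the Hodge analogue of Lemma~\ref{sufflarge} via strictness of the weight filtration and the cohomological dimension of $MHS_{\mathbb{Q}}$; the paper leaves this entirely implicit in the phrase ``exactly as in''.
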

\begin{proof}
The idea of the proof is exactly as in the proof of Proposition~\ref{injectivity}. The difference is that now the functors $i_n^{\Hdg}L_n^{\Hdg}X\in D^b_{\Hdg,\geq 0}$ for every $X\in D^b_{\Hdg,\geq 0}$; in the theory of integration based on the slice filtration, the functors $L_n$ did not have fully faithful right adjoints $i_n$ on the level of \textup{geometric} motives. 
\end{proof}
\subsection{Hodge measure and Hodge-measurable subsets}
Using the availability of the six functor formalism for (the bounded derived category of) mixed Hodge modules and the fact that the bounded derived category of mixed Hodge modules over $\Spec k$ correspond to the bounded derived category of polarizable mixed $\mathbb{Q}$-Hodge structures \cite{Saito}, we may define the analogue of the Hodge measure $\mu_X^{\Hdg}$. This will be the analogue of the motivic measure $\mu_X$ and the mixed $\ell$-adic measure $\mu_X^{\ell}$. Throughout this section, our six functors will be those of rational mixed Hodge modules.
\begin{lemma}\label{Hdgstabilizationlemma}Suppose $X$ is a smooth $k$-variety of dimension $d$, and let $A$ be a subscheme of the Jet scheme $X_{\infty}$ that is stable at least at the $N$th level. Then for every $m\geq N$, we have
\[\pi^{A_{m+1}}_!\mathbf{1}_{A_{m+1}}((m+1)d)[2(m+1)d]\simeq \pi^{A_m}_!\mathbf{1}_{A_m}(md)[2md]\]
in $D^b_{\Hdg,\geq 0}$.
\end{lemma}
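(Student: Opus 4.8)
The plan is to transcribe the proof of Lemma~\ref{stabilizationlemma} into the six-functor formalism of (the bounded derived category of) mixed Hodge modules, exactly as the $\ell$-adic version in Lemma~\ref{ladicstabilizationlemma} and the motivic $t$-structure version in Lemma~\ref{motstabilizationlemma} were obtained from it. The only geometric input is that, by the smoothness of $X$ together with the corollary asserting that $\cal{J}_{m+1}(X)\to\cal{J}_m(X)$ is an $\mathbb{A}^d$-bundle, the projection $\pi^{m+1}_m\colon A_{m+1}\to A_m$ is a Zariski-locally trivial $\mathbb{A}^d$-bundle for every $m\ge N$ (since $A$ is stable at level $N$, one has $A=\pi_N^{-1}(A_N)$, hence $A_{m+1}=(\pi^{m+1}_m)^{-1}(A_m)$).

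First I would establish the structural equivalence. Relative purity for the smooth morphism $\pi^{m+1}_m$ of relative dimension $d$ gives $\pi^{m+1}_{m,!}\mathbf{1}_{A_{m+1}}\simeq\pi^{m+1}_{m,\#}\mathbf{1}_{A_{m+1}}(-d)[-2d]$, and $\mathbb{A}^1$-homotopy invariance for mixed Hodge modules on $k$-varieties gives $\pi^{m+1}_{m,\#}\mathbf{1}_{A_{m+1}}\simeq\mathbf{1}_{A_m}$; composing with $\pi^{A_m}_!$ and using $\pi^{A_{m+1}}_!\simeq\pi^{A_m}_!\circ\pi^{m+1}_{m,!}$ one obtains $\pi^{A_{m+1}}_!\mathbf{1}_{A_{m+1}}\simeq\pi^{A_m}_!\mathbf{1}_{A_m}(-d)[-2d]$, and the asserted equivalence follows upon twisting both sides by $((m+1)d)[2(m+1)d]$. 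Then I would check that both terms lie in $D^b_{\Hdg,\ge 0}$: constructibility is automatic because $\pi^{A_m}_!$ applied to the constant Hodge module computes $R\Gamma_c(A_m;\mathbb{Q})$, a bounded complex of polarizable mixed $\mathbb{Q}$-Hodge structures; and for effectivity one argues as in Lemma~\ref{ladicstabilizationlemma}, using Saito's weight formalism \cite{Saito} in place of the Weil conjectures — the functor $\pi_!$ does not raise weights, so $\cal{H}^i(\pi^{A_m}_!\mathbf{1}_{A_m})=H^i_c(A_m;\mathbb{Q})$ has weight at most $\min\{i,2md\}\le 2md$ with Hodge numbers supported in the appropriate range, whence after the Tate twist by $(md)$ each cohomology object of $\pi^{A_m}_!\mathbf{1}_{A_m}(md)$ has weight $\le 0$ with simple subquotients effective in the (idiosyncratic) sense of this paper, i.e.\ lies in $D^b_{\Hdg,\ge 0}$; the same applies to the $(m+1)$-st term.

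I do not expect a genuine obstacle here: the argument is a verbatim copy of the motivic one, so all that must be verified is that the pieces of the formalism it uses — proper pushforward, the smooth left adjoint $\pi_\#$, relative purity, $\mathbb{A}^1$-invariance, and the weight-decreasing property of $\pi_!$ — are available for mixed Hodge modules over $k$-varieties, which they are by Saito's theory \cite{Saito}. The only mildly delicate point, exactly as in the $\ell$-adic and motivic versions, is the weight/Hodge-number bookkeeping that certifies the output lands in the ``effective'' subcategory $D^b_{\Hdg,\ge 0}$ rather than merely in $D^b(MHS_{\mathbb{Q}})$.
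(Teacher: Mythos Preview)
Your proposal is correct and follows essentially the same approach as the paper: the paper also says the proof is exactly that of Lemma~\ref{stabilizationlemma} transcribed into the Hodge setting, with constructibility coming from preservation under proper pushforward and the landing in $D^b_{\Hdg,\geq 0}$ coming from the weight bound $w(\cal{H}^i(\pi^{A_m}_!\mathbf{1}_{A_m}))\leq\min\{i,2md\}\leq 2md$.
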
 
\begin{proof}The proof is exactly as that of Lemma~\ref{stabilizationlemma} for Voevodsky motives. Constructability follows because the proper pushforward functors preserve constructability. The fact that for each $m\geq N$ and each $i\in\mathbb{Z}$, $H^i(\pi^{A_m}_!\mathbf{1}_{A_m}(md))$ has weight $\leq 0$ follows from the fact that $H^i(\pi^{A_m}_!\mathbf{1}_{A_m})$ has weight $\leq\min\{i,2md\}\leq 2md$.  
\end{proof}
Using Lemma~\ref{Hdgstabilizationlemma}, we define our Hodge measure on stable subschemes as follows.
\begin{definition}[Hodge-measurable functions]For $A$ a stable subscheme of $X_{\infty}$, we define its \textup{Hodge volume} by
\[\mu_X^{\Hdg}(A):=\pi^{A_m}_!\mathbf{1}_{A_m}((m+1)d)[2(m+1)d]\in D^b_{\Hdg,\geq 0}.\]
\end{definition}
The analogue of the virtual dimension of motives in this case is the following.
\begin{definition}[Hodge virtual dimension]\label{Hdgvirtdimdef} The \textup{Hodge virtual dimension} of a rational polarizable $\mathbb{Q}$-Hodge structure $M$ is defined as
\[\vdim^{\Hdg}M:=-\max\left\{\frac{1}{2}w(H^iM)|i\in\mathbb{Z}\right\},\]
where $w(-)$ denotes the weight for rational mixed Hodge structures.
\end{definition}
\begin{lemma}\label{Hdgvirtdim}
Suppose $X$ is a $k$-variety of dimension $d$. Then $\vdim^{\Hdg}\pi^X_!\mathbf{1}_X=-d$.
\end{lemma}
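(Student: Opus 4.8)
The plan is to adapt verbatim the proofs of Lemma~\ref{virtdim} and Lemma~\ref{ladicvirtdim}, replacing $\ell$-adic weights by Hodge-theoretic weights. Since $k$ has characteristic zero, Saito's theory \cite{Saito} identifies the bounded derived category of mixed Hodge modules over $\Spec k$ with $D^b(MHS_{\mathbb{Q}})$, and under this identification the object $\pi^X_!\mathbf{1}_X$ has cohomology objects $H^i(\pi^X_!\mathbf{1}_X)=H^i_c(X^{an};\mathbb{Q})$ endowed with Deligne's canonical mixed Hodge structure. By Deligne's theory these groups have weights $\leq i$ and vanish for $i>2d$, so that $w(H^i(\pi^X_!\mathbf{1}_X))\leq\min\{i,2d\}\leq 2d$ for every $i$; this already yields $\vdim^{\Hdg}\pi^X_!\mathbf{1}_X\geq -d$, and it remains only to produce a cohomology object of weight exactly $2d$.

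When $X$ is smooth of dimension $d$ this follows from Poincar\'e--Verdier duality in the Hodge module formalism: on the open-and-closed union of the $d$-dimensional components of $X$ one has $\pi^X_!\mathbf{1}_X\simeq(\pi^X_*\mathbf{1}_X)^{\vee}(-d)[-2d]$, so that $H^{2d}(\pi^X_!\mathbf{1}_X)$ is identified with $H^0(X^{an};\mathbb{Q})^{\vee}(-d)$ there, a nonzero Hodge structure of pure weight $2d$. For general $X$ I would argue exactly as in Lemma~\ref{virtdim}: put $Z:=\textup{Sing}\,X$, so that $\dim Z<d$ and $U:=X\setminus Z$ is smooth with a $d$-dimensional component, and use the localization triangle
\[\pi^U_!\mathbf{1}_U\rightarrow\pi^X_!\mathbf{1}_X\rightarrow\pi^Z_!\mathbf{1}_Z\rightarrow\pi^U_!\mathbf{1}_U[1]\]
in $D^b_{\Hdg,\geq 0}$, obtained just as in that lemma. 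Since $\dim Z<d$ we have $H^{2d-1}(\pi^Z_!\mathbf{1}_Z)=H^{2d}(\pi^Z_!\mathbf{1}_Z)=0$, so the long exact cohomology sequence gives an isomorphism $H^{2d}(\pi^X_!\mathbf{1}_X)\simeq H^{2d}(\pi^U_!\mathbf{1}_U)$, the latter nonzero of pure weight $2d$ by the smooth case. Hence $\max_i\tfrac12 w(H^i(\pi^X_!\mathbf{1}_X))=d$, i.e. $\vdim^{\Hdg}\pi^X_!\mathbf{1}_X=-d$. One may alternatively induct on $\dim X$ as in Lemma~\ref{virtdim}, invoking the inductive hypothesis (and strictness of morphisms of mixed Hodge structures for the weight filtration) to bound the weights of $\pi^Z_!\mathbf{1}_Z$; the outcome is the same.

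I do not anticipate a genuine obstacle here, since the argument is formally identical to those of Lemmas~\ref{virtdim} and~\ref{ladicvirtdim} and its substantive ingredients---Deligne's weight estimates on compactly supported cohomology, Poincar\'e--Verdier duality for mixed Hodge modules, and the vanishing of compactly supported cohomology above twice the dimension---are all classical. The only point needing a moment's care, exactly as for Lemma~\ref{virtdim}, is to check that every object in sight genuinely lies in the bounded derived category of polarizable mixed $\mathbb{Q}$-Hodge structures, so that $\vdim^{\Hdg}$ is even defined; this is guaranteed by Saito's six-functor formalism for mixed Hodge modules over a point.
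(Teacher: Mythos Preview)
Your proposal is correct and follows essentially the same approach as the paper, which simply states that the proof is similar to that of Lemma~\ref{ladicvirtdim} (i.e., Deligne's weight bounds on compactly supported cohomology, Poincar\'e duality for the smooth case, and the localization triangle plus induction on dimension for the general case). You have in fact supplied more detail than the paper does; your direct argument in the non-smooth case via the vanishing of $H^{2d-1}_c(Z)$ and $H^{2d}_c(Z)$ is a mild streamlining of the induction, but the underlying idea is the same.
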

\begin{proof}
The proof is similar to that of Lemma~\ref{ladicvirtdim}.
\end{proof}
\begin{corollary}\label{Hdgvirtdimbound}
Suppose $A\subseteq\bigcup_{i=1}^NA_i\subseteq X_{\infty}$, where $A$ and the $A_i$ are stable subschemes of $X_{\infty}$. Then
\[\vdim^{\Hdg}\mu^{\Hdg}_X(A_i)\geq\min_{1\leq i\leq N}\vdim^{\Hdg}\mu^{\Hdg}_X(A).\]
\end{corollary}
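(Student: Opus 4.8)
The plan is to prove this exactly as the motivic Corollary~\ref{virtdimbound}, with Lemma~\ref{virtdim} replaced by its Hodge counterpart Lemma~\ref{Hdgvirtdim} and $\vdim$ replaced by $\vdim^{\Hdg}$ throughout; as in the motivic and $\ell$-adic versions, the inequality to establish is the lower bound $\vdim^{\Hdg}\mu^{\Hdg}_X(A)\ge\min_{1\le i\le N}\vdim^{\Hdg}\mu^{\Hdg}_X(A_i)$.

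First I would fix a common level of stability: since $A$ and each $A_i$ are stable subschemes of $X_\infty$, there is an $m$ such that all of them are stable at level $m$, so that $A=\pi_m^{-1}(A_m)$ and $A_i=\pi_m^{-1}(A_{i,m})$, where $A_m=\pi_m(A)$ and $A_{i,m}=\pi_m(A_i)$ are locally closed subschemes of $\cal{J}_m(X)$. Since every point of $A_m$ is the image under $\pi_m$ of some $\gamma\in A\subseteq\bigcup_{i=1}^N A_i$, and such a $\gamma$ lies in some $A_j$, whence $\pi_m(\gamma)\in A_{j,m}$, we obtain the inclusion $A_m\subseteq\bigcup_{i=1}^N A_{i,m}$ inside $\cal{J}_m(X)$. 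The elementary fact that a scheme covered by finitely many subschemes has dimension at most the maximum of their dimensions then yields $\dim A_m\le\max_{1\le i\le N}\dim A_{i,m}$.

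Next I would read off the virtual dimensions. By the definition of the Hodge volume, $\mu^{\Hdg}_X(A)=\pi^{A_m}_!\mathbf{1}_{A_m}((m+1)d)[2(m+1)d]$, and Lemma~\ref{Hdgvirtdim}, applied to the (possibly singular) $k$-variety $A_m$, gives $\vdim^{\Hdg}\pi^{A_m}_!\mathbf{1}_{A_m}=-\dim A_m$. Since a Tate twist by $\mathbb{Q}(j)$ shifts all weights by $-2j$, hence raises $\vdim^{\Hdg}$ (Definition~\ref{Hdgvirtdimdef}) by $j$, while the shift $[2j]$ merely relabels cohomology objects without changing weights, this yields $\vdim^{\Hdg}\mu^{\Hdg}_X(A)=(m+1)d-\dim A_m$, and likewise $\vdim^{\Hdg}\mu^{\Hdg}_X(A_i)=(m+1)d-\dim A_{i,m}$ for every $i$.

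Finally, combining these computations with $\dim A_m\le\max_i\dim A_{i,m}$ gives $(m+1)d-\dim A_m\ge\min_i\bigl((m+1)d-\dim A_{i,m}\bigr)$, which is precisely the asserted inequality. I do not expect a real obstacle here; the only steps needing a moment's care are the passage from the inclusion of the stable subschemes to the inclusion of their level-$m$ images and the bookkeeping of the effect of Tate twists and cohomological shifts on weights, both of which are routine and already implicit in the proofs of Lemmas~\ref{virtdim} and~\ref{Hdgvirtdim}.
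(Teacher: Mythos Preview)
Your proposal is correct and follows essentially the same approach as the paper's proof, which is a one-sentence reference to Corollary~\ref{virtdimbound} and the fact that the largest weight appearing in the cohomology of a $d$-dimensional variety is $2d$. You have also correctly noted and resolved the evident typo in the displayed inequality, and your spelled-out argument---reducing to a common stable level $m$, comparing $\dim A_m$ to $\max_i\dim A_{i,m}$, and tracking the effect of the twist $(m+1)d$ on $\vdim^{\Hdg}$---is exactly the intended one.
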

\begin{proof}
This is the analogue of Corollary~\ref{virtdimbound}, and its proof relies on the fact that for a $k$-variety $X$ of dimension $d$, $2d$ is the largest weight of the $H^{i}(X;\mathbb{Q})$ as $i$ varies.
\end{proof}
We also have the following analogue of the compactness Lemma~\ref{compactness}.
\begin{lemma}[Hodge compactness lemma]\label{Hdgcompactness}
Suppose $D$ and $D_n,\ n\in\mathbb{N}$, are stable subschemes of $X_{\infty}$ such that $D\subseteq\bigcup_{n\in\mathbb{N}}D_n$ and the $\vdim^{\Hdg}\mu^{\Hdg}_X(D_n)\rightarrow\infty$ as $n\rightarrow\infty$. Then $D$ is contained in the union of finitely many of the $D_n$.
\end{lemma}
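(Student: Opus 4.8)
The plan is to run exactly the scheme-theoretic compactness argument used in the proofs of Lemmas~\ref{compactness} and~\ref{ladiccompactness} (itself a scheme-theoretic version of Lemma 2.3 of~\cite{Looijenga}); the only Hodge-specific ingredient is the behaviour of $\vdim^{\Hdg}$ with respect to jet schemes, and this is already recorded in Lemma~\ref{Hdgstabilizationlemma}, Lemma~\ref{Hdgvirtdim}, and Corollary~\ref{Hdgvirtdimbound}. The first step is to translate Hodge virtual dimension into a dimension count: if $A\subseteq X_{\infty}$ is a stable subscheme stabilizing at level $N$, then $\mu^{\Hdg}_X(A)\simeq\pi^{A_N}_!\mathbf{1}_{A_N}((N+1)d)[2(N+1)d]$, so Lemma~\ref{Hdgvirtdim} gives $\vdim^{\Hdg}\mu^{\Hdg}_X(A)=(N+1)d-\dim A_N$. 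Hence $\vdim^{\Hdg}\mu^{\Hdg}_X(A)$ being large forces $A_N$ to have large codimension in $\cal{J}_N(X)$, and in particular, once $\vdim^{\Hdg}\mu^{\Hdg}_X(A)$ exceeds the quantity controlled by the stabilization level of another stable subscheme $D'$, the intersection $\pi_m^{-1}(x)\cap D'_m$ has positive codimension in $D'_m$; this is a statement about dimensions of $k$-varieties and so holds over an arbitrary base field.

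Next I would argue by contradiction. Since $D$ is stable, fix $n$ with $D=\pi_n^{-1}\pi_n(D)$ for $\pi_n(D)$ locally closed in $\cal{J}_n(X)$, and suppose $D$ is covered by no finite subcollection of the $D_i$. Because $\vdim^{\Hdg}\mu^{\Hdg}_X(D_i)\to\infty$, choose $k$ so that $\vdim^{\Hdg}\mu^{\Hdg}_X(D_i)>(n+2)d$ for all $i>k$. Then $D\setminus\bigcup_{i\le k}D_i\neq\emptyset$, so one may pick $x_{n+1}\in\pi_{n+1}\!\left(D\setminus\bigcup_{i\le k}D_i\right)$; the scheme-theoretic fiber $\pi_{n+1}^{-1}(x_{n+1})\subset D$ is then not coverable by finitely many of the $D_i$, since it avoids the $D_i$ with $i\le k$ and meets each $D_i$ with $i>k$ in positive codimension by the dimension estimate above. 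Iterating this construction produces a compatible system $(x_m)_{m>n}$ with $x_{m+1}$ above $x_m$ and each $\pi_m^{-1}(x_m)$ not finitely coverable by the $D_i$; this system determines a point $x\in X_{\infty}$ with $\pi_n(x)\in\pi_n(D)$, whence $x\in D$ by stability of $D$ at level $n$.

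Finally, since the $D_i$ cover $D$, some $D_j$ contains $x$; as $D_j$ is stable, say at level $m>n$, we get $\pi_m^{-1}(x_m)\subseteq D_j$, contradicting the fact that $\pi_m^{-1}(x_m)$ is not coverable by finitely many of the $D_i$. I do not expect a genuine obstacle here: the only point requiring care — the ``positive codimension'' step — is where one must know that $\vdim^{\Hdg}$, defined through the weight filtration on polarizable mixed $\mathbb{Q}$-Hodge structures, really does measure codimension; but Lemma~\ref{Hdgvirtdim} identifies it with $-\dim$ on the relevant objects, so the argument reduces to the same purely geometric dimension counting already carried out in the motivic and $\ell$-adic cases, and there is nothing further to verify.
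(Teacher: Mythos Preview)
Your proposal is correct and follows exactly the approach the paper intends: the paper's own proof simply states that the argument is identical to those of Lemmas~\ref{compactness} and~\ref{ladiccompactness}, and you have faithfully reproduced that scheme-theoretic compactness argument, correctly noting that Lemma~\ref{Hdgvirtdim} supplies the one Hodge-specific input needed to translate $\vdim^{\Hdg}$ into a codimension count. There is nothing to add.
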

\begin{proof}
The proof is exactly as in that of Lemma~\ref{compactness} for Voevodsky motives and that of Lemma~\ref{ladiccompactness} for mixed $\ell$-adic sheaves.
\end{proof}
As in the other theories of integration, the subsets that will appear will be approximable by stable subschemes in the following sense but they will not necessarily be stable.
\begin{definition}\label{Hdgmeasurable}
A subset $C\subseteq X_{\infty}$ is said to be \textup{Hodge-good} if there is a monotonic sequence (the inclusions are locally closed embeddings of $k$-schemes) of stable subschemes
\[C_0\supseteq\hdots\supseteq C_n\supseteq C_{n+1}\supseteq\hdots\ (\text{or }C_0\subseteq\hdots\subseteq C_n\subseteq C_{n+1}\subseteq\hdots)\]
of $X_{\infty}$ containing (resp. contained in) $C$, and stable $C_{n,i}$, $i,n\in\mathbb{N}$ such that for every $n$
\[C_n\setminus C\subseteq\bigcup_{i\in\mathbb{N}}C_{n,i}\ \left(\text{resp. }C\setminus C_n\subseteq\bigcup_{i\in\mathbb{N}}C_{n,i}\right)\]
such that for every $n,i$, 
\[n\leq\vdim^{\Hdg}\mu^{\Hdg}_X(C_{n,i})\]
and
\[\vdim^{\Hdg}\mu^{\Hdg}_X(C_{n,i})\xrightarrow{i\rightarrow\infty}\infty.\]
We then define the \textup{Hodge volume} of $C$ as the object
\[\mu^{\Hdg}_X(C):=(\mu^{\Hdg}_X(C_n))_n\in D^{b,\wedge}_{\Hdg}.\]
We call a pair $(C,\cal{S})$ consisting of a subset $C\subseteq X_{\infty}$ and a \textup{finite} decomposition $\cal{S}$ ($C=\sqcup_{i\in\cal{S}}C_i$, $C_i\subseteq X_{\infty}$) \textup{Hodge measurable} if each $C_i$ is a Hodge-good subset. We then let
\[\mu^{\Hdg}_X(C,\cal{S}):=\bigoplus_{i\in\cal{S}}\mu^{\Hdg}_X(C_i).\]
We view a Hodge-good subset $C$ without a prescribed decomposition as a Hodge measurable subset with the trivial decomposition $\cal{S}=\{C\}$.
\end{definition}
As in Proposition~\ref{welldefinedmeasure} for the motivic measure (of Voevodsky motives), we can show that the Hodge measure defined in Definition~\ref{Hdgmeasurable} is well-defined. We do not give the proof here as all the steps are the same.
\subsection{Hodge measurable functions and mixed Hodge integration}
We now work toward constructing a theory of integration for rational motives that takes a smooth $k$-scheme $X$ along with an effective divisor $D\subset X$ to an element of $\cal{M}^{\Hdg}(k)$. For this, we make the following definition which is the analogue of Definition~\ref{measurablefunction}.
\begin{definition}[Hodge measurable functions]\label{Hdgmeasurablefunction}
A function $F:X_{\infty}\rightarrow\mathbb{N}_{\geq 0}\cup\left\{\infty\right\}$ is \textup{Hodge measurable} if for each $s\in\mathbb{N}_{\geq 0}$, $F^{-1}(s)$ is Hodge measurable, and $F^{-1}(\infty)$ has Hodge measure $0$.
\end{definition}
\begin{definition}[Mixed Hodge integration]
For Hodge measurable functions $F:X_{\infty}\rightarrow\mathbb{N}_{\geq 0}\cup\{\infty\}$, we define its Hodge integral as
\[\int_{X_{\infty}}\mathbb{Q}(F)[2F]d\mu^{\Hdg}_X:=\left[\oplus_{k=0}^{\infty}\mu^{\Hdg}_X(F^{-1}(k))(k)[2k]\right]\in\cal{M}^{\Hdg}(k).\]
\end{definition}
As before, we prove the following transformation rule for proper birational morphisms of smooth $k$-schemes.
\begin{theorem}[Hodge transformation rule]\label{Hdgchangeofvar}
Suppose $f:X\rightarrow Y$ is a proper birational morphism of smooth $k$-varieties with $K_{X/Y}$ its relative canonical divisor, and let $D\subset Y$ be an effective divisor. Then
\[\int_{Y_{\infty}}\mathbb{Q}(\ord_D)[2\ord_D]d\mu^{\Hdg}_Y=\int_{X_{\infty}}\mathbb{Q}(\ord_{f^{-1}D+K_{X/Y}})[2\ord_{f^{-1}D+K_{X/Y}}]d\mu^{\Hdg}_X.\]
\end{theorem}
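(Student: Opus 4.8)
The plan is to transcribe the proof of Theorem~\ref{changeofvar} into the mixed Hodge setting, since every auxiliary result used there has now been reproduced for $\mu^{\Hdg}_X$. The purely jet-theoretic inputs --- Proposition~\ref{impprop} (Denef--Loeser's analysis of the fibers of $f_m$ over $C_e$ and the piecewise trivial $\mathbb{A}^e$-fibration $f_m\colon\pi_m(C_e')\to\pi_m(C_e)$), Lemma~\ref{bij} (bijectivity of $f_\infty$ away from a measure-zero closed subscheme), the surjectivity of each $f_m$, and the vanishing of the Hodge measure of $\cal{J}_\infty(K_{X/Y})$ (the Hodge analogue of Lemma~\ref{measurezero}) --- depend only on jet schemes and are insensitive to the coefficient category, so they carry over word for word. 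What must be checked is that the manipulations taking place inside $D^{b,\wedge}_{\Hdg}$ and $\cal{M}^{\Hdg}(k)$ are legitimate: the existence of localization triangles, the affine-bundle formula, and the behaviour of the infinitesimal relations. These are supplied, respectively, by the six-functor formalism for rational mixed Hodge modules, by homotopy invariance (so that $\pi^V_!\mathbf{1}_V\simeq\pi^W_!\mathbf{1}_W(-e)[-2e]$ for a trivial $\mathbb{A}^e$-bundle $V\to W$), and by the definition of $\cal{M}^{\Hdg}(k)$ together with Lemma~\ref{Hdgstabilizationlemma}, Corollary~\ref{Hdgvirtdimbound}, and the compactness Lemma~\ref{Hdgcompactness}, which keep all intermediate objects in the effective, non-positive-weight subcategory $D^b_{\Hdg,\geq 0}$.

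Concretely, I would proceed as follows. Using the first fundamental exact sequence for K\"ahler differentials exactly as in the motivic case, realize $K_{X/Y}$ as the locally principal ideal cut out by $\det df$, so that $\ord_{K_{X/Y}}(\gamma)=e$ means $\det(\gamma^*(df))=(t^e)$; set $C_e':=\ord_{K_{X/Y}}^{-1}(e)\subseteq X_\infty$, $C_{e,k}':=C_e'\cap\ord_{f^{-1}D}^{-1}(k)$, and $C_{e,k}:=f_\infty(C_{e,k}')\subseteq Y_\infty$. Stratify $X_\infty$ (up to measure zero) by the $C_{\leq e}'$ and refine by order of contact along $f^{-1}D$; Corollary~\ref{Hdgvirtdimbound} and Lemma~\ref{Hdgcompactness} then permit discarding the tails of each stratum modulo the $\otimes$-ideals $D^b_{\Hdg,\geq n}$, while the infinitesimal relations in $\cal{M}^{\Hdg}(k)$ replace $\mu^{\Hdg}_Y(\ord_D^{-1}([k,\infty]))$ by $\colim_e\,\mu^{\Hdg}_Y(C_{\leq e,\geq k},\cal{S}_{\leq e,\geq k})$. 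Telescoping the filtration in $k$ gives
\[
\int_{Y_\infty}\mathbb{Q}(\ord_D)[2\ord_D]\,d\mu^{\Hdg}_Y=\left[\bigoplus_{k,e}\mu^{\Hdg}_Y(C_{e,k},\cal{S}_{e,k})(k)[2k]\right]
\]
in $\cal{M}^{\Hdg}(k)$. By Proposition~\ref{impprop}(c) the map $f_\infty\colon C_{e,k}'\to C_{e,k}$ is a piecewise trivial $\mathbb{A}^e$-fibration; refining $\cal{S}_{e,k}$ so that it is trivial over each piece and applying homotopy invariance on each piece yields $\mu^{\Hdg}_X(C_{e,k}',f_\infty^{-1}\cal{S}_{e,k})(e)[2e]\simeq\mu^{\Hdg}_Y(C_{e,k},\cal{S}_{e,k})$. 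Substituting, reindexing by $t=k+e$, and using $\ord_{f^{-1}D+K_{X/Y}}=\ord_{f^{-1}D}+\ord_{K_{X/Y}}$ (so that $\bigsqcup_{k+e=t}C_{e,k}'=\ord_{f^{-1}D+K_{X/Y}}^{-1}(t)$ up to measure zero) then identifies the right-hand side with $\int_{X_\infty}\mathbb{Q}(\ord_{f^{-1}D+K_{X/Y}})[2\ord_{f^{-1}D+K_{X/Y}}]\,d\mu^{\Hdg}_X$.

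I do not expect a genuine new obstacle: the argument is formally identical to that of Theorem~\ref{changeofvar}. The one point deserving care is the weight bookkeeping in the affine-bundle step, namely that twisting $\pi^V_!\mathbf{1}_V$ by $(e)[2e]$ keeps the object in the effective, weight-$\leq 0$ subcategory $D^b_{\Hdg,\geq 0}$; this is exactly the estimate that $H^i(\pi^{A_m}_!\mathbf{1}_{A_m})$ has weight $\leq\min\{i,2md\}$ recorded in Lemma~\ref{Hdgstabilizationlemma}. The other routine check is that the colimit over $e$ and the infinitesimal relations are compatible with the topology defining $D^{b,\wedge}_{\Hdg}$, which is handled precisely as in Proposition~\ref{welldefinedmeasure} and in the proof of Theorem~\ref{changeofvar}.
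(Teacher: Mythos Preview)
Your proposal is correct and follows exactly the same approach as the paper: the paper's proof consists of the single sentence ``The proof is exactly as in that of Theorem~\ref{changeofvar}, and we do not repeat the proof here.'' Your write-up is simply a more detailed account of what that transcription entails, citing the same Hodge-theoretic analogues (Lemmas~\ref{Hdgstabilizationlemma}, \ref{Hdgcompactness}, Corollary~\ref{Hdgvirtdimbound}, Proposition~\ref{impprop}) that the paper has set up for precisely this purpose.
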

\begin{proof}
The proof is exactly as in that of Theorem~\ref{changeofvar}, and we do not repeat the proof here.
\end{proof}
\section{Applications}\label{apps}
Now that we have constructed our theories of integration, we apply them to obtain concrete results regarding K-equivalent varieties. We answer a few questions related to the number theory and geometry of K-equivalent varieties that were inaccessible to classical motivic integration. Using the theory of integration based on the existence of a motivic $t$-structure, we can connect the existence of a motivic $t$-structure to some other conjectures related to $D$- and K-equivalent varieties.
\subsection{Consequences of integration via the slice filtration}
In this subsection, we prove the following theorem using our theory of motivic integration via the slice filtration.
\begin{theorem}\label{mainint}
Suppose $X$ and $Y$ K-equivalent smooth $k$-varieties. Then in $K_0(\DM_{\textup{sl}}^{\textup{eff}}(k;\mathbb{Z}[1/p]))$, $[M(X)]=[M(Y)]$. If the injectivity conjecture hold for the $\mathbb{Z}[1/p]$-algebra $R$, $[M(X)]=[M(Y)]$ in $K_0(\DM_{\gm}^{\textup{eff}}(k;R))$.
\end{theorem}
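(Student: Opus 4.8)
The plan is to mirror the classical motivic‑integration proof, using only the transformation rule (Theorem~\ref{changeofvar}) and the injectivity statement (Proposition~\ref{injectivity}) established above. Fix a commutative ring $R$ with $p\in R^\times$. By definition of $K$‑equivalence, choose a smooth $k$‑variety $Z$ together with proper birational morphisms $f\colon Z\to X$ and $g\colon Z\to Y$ such that $f^*\omega_X\simeq g^*\omega_Y$. The first step is to apply the transformation rule to $f$ with the empty effective divisor $D=\emptyset$: since $\ord_\emptyset\equiv 0$ and $f^{-1}\emptyset+K_{Z/X}=K_{Z/X}$, combining Theorem~\ref{changeofvar} with the computation of Example~\ref{emptyex} (which identifies the left integral below with $[M(X)]\in\cal{M}(k;R)$, i.e.\ with $c_R$ of the class $[M(X)]\in K_0(\DM_{\textup{sl}}^{\textup{eff}}(k;R))$) gives
\[
c_R([M(X)])\;=\;\int_{X_\infty}\mathbf{1}_k(\ord_\emptyset)[2\ord_\emptyset]\,d\mu_X\;=\;\int_{Z_\infty}\mathbf{1}_k(\ord_{K_{Z/X}})[2\ord_{K_{Z/X}}]\,d\mu_Z
\]
in $\cal{M}(k;R)$, and likewise $c_R([M(Y)])=\int_{Z_\infty}\mathbf{1}_k(\ord_{K_{Z/Y}})[2\ord_{K_{Z/Y}}]\,d\mu_Z$. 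So the problem reduces to identifying the two integrals over $Z_\infty$.

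The second step is the only genuinely new input: I would show that $\ord_{K_{Z/X}}=\ord_{K_{Z/Y}}$ as measurable functions $Z_\infty\to\mathbb{N}_{\geq 0}\cup\{\infty\}$. Recall from the discussion preceding the transformation rule that $K_{Z/X}$ is the Cartier divisor cut out locally by $\det(df)$, equivalently by the locally principal ideal $J_{Z/X}=f^*\omega_X\otimes\omega_Z^{-1}\hookrightarrow\mathcal{O}_Z$ (the inclusion induced by the canonical $f^*\omega_X\hookrightarrow\omega_Z$), and similarly $J_{Z/Y}=g^*\omega_Y\otimes\omega_Z^{-1}$. The $K$‑equivalence isomorphism $f^*\omega_X\simeq g^*\omega_Y$ — taken, as is natural in the birational setting, compatibly with the two canonical inclusions into $\omega_Z$, i.e.\ so that $K_{Z/X}$ and $K_{Z/Y}$ agree as Cartier divisors on $Z$ — induces an isomorphism $J_{Z/X}\simeq J_{Z/Y}$ of ideal sheaves. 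Since the order function depends only on the isomorphism class of its defining ideal sheaf, $\ord_{K_{Z/X}}=\ord_{K_{Z/Y}}$, hence the two $Z_\infty$‑integrals coincide and $c_R([M(X)])=c_R([M(Y)])$ in $\cal{M}(k;R)$.

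The final step is to descend this equality to the Grothendieck group. Since $M(X)$ and $M(Y)$ are geometric effective motives, their classes $[M(X)]$ and $[M(Y)]$ lie in the subgroup $\imK\subseteq K_0(\DM_{\textup{sl}}^{\textup{eff}}(k;R))$, and $c_R$ restricted to $\imK$ is injective by Proposition~\ref{injectivity}. Therefore $[M(X)]=[M(Y)]$ in $K_0(\DM_{\textup{sl}}^{\textup{eff}}(k;R))$; taking $R=\mathbb{Z}[1/p]$ gives the first assertion unconditionally. If moreover the injectivity conjecture $I(k;R)$ holds, then $K_0(\DM_{\gm}^{\textup{eff}}(k;R))\to K_0(\DM_{\textup{sl}}^{\textup{eff}}(k;R))$ is injective, so $\imK\cong K_0(\DM_{\gm}^{\textup{eff}}(k;R))$ and the equality descends to $K_0(\DM_{\gm}^{\textup{eff}}(k;R))$, giving the second assertion.

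The main obstacle — or rather the point needing the most care — is the second step: making precise what the hypothesis $f^*\omega_X\simeq g^*\omega_Y$ supplies, and checking it forces the \emph{equality} (not merely the linear equivalence) of $K_{Z/X}$ and $K_{Z/Y}$ as divisors on $Z$, so that the order functions genuinely agree. Linearly equivalent divisors can have different order functions, so one must invoke that the comparison of relative canonical divisors on a common resolution is canonical in the birational category. Everything else is routine bookkeeping: checking that Theorem~\ref{changeofvar} applies to the zero divisor, that $Z$ need only be smooth of finite type (not projective), and that the measure $\mu_Z$, the integral, and the transformation rule are all available over an arbitrary coefficient ring $R$ with $p$ invertible — all immediate from the constructions above.
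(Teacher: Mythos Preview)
Your proof is correct and follows essentially the same route as the paper's: apply the transformation rule (Theorem~\ref{changeofvar}) to $f$ and $g$ with $D=\emptyset$, use Example~\ref{emptyex} to identify the left-hand sides with $[M(X)]$ and $[M(Y)]$, equate the two $Z_\infty$-integrals via $K_{Z/X}=K_{Z/Y}$, and then invoke Proposition~\ref{injectivity} (and, for the second assertion, the injectivity conjecture). If anything you are more scrupulous than the paper at the one delicate point: the paper simply writes $K_{Z/X}=K_Z-f^*K_X=K_Z-g^*K_Y=K_{Z/Y}$, whereas you correctly flag that one needs equality of the effective divisors (equivalently of the ideal sheaves $J_{Z/X}$ and $J_{Z/Y}$ as subsheaves of $\cal{O}_Z$), not merely linear equivalence, for the order functions to agree.
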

\begin{proof}
Since $X$ and $Y$ are K-equivalent, there is a smooth $k$-variety $Z$ as well as proper birational morphisms $f:Z\rightarrow X$ and $g:Z\rightarrow Y$ such that $f^*\omega_X\simeq g^*\omega_Y$:
\[\xymatrix{& Z\ar@{->}_{f}[dl] \ar@{->}^{g}[dr] & \\ X \ar@{.>}[rr] && Y.}\]
Applying the transformation rule to each of $f$ and $g$, we obtain
\[[M(X)]=\int_{X_{\infty}}\mathbf{1}_k(\ord_{\emptyset})[2\ord_{\emptyset}]d\mu_X=\int_{Z_{\infty}}\mathbf{1}_k(\ord_{K_{Z/X}})[2\ord_{K_{Z/X}}]d\mu_Z\]
and
\[[M(Y)]=\int_{Y_{\infty}}\mathbf{1}_k(\ord_{\emptyset})[2\ord_{\emptyset}]d\mu_Y=\int_{Z_{\infty}}\mathbf{1}_k(\ord_{K_{Z/Y}})[2\ord_{K_{Z/Y}}]d\mu_Z.\]
Since $X$ and $Y$ are K-equivalent,
\[K_{Z/X}=K_Z-f^*K_X=K_Z-g^*K_Y=K_{Z/Y},\]
and so
\[[M(X)]=\int_{Z_{\infty}}\mathbf{1}_k(\ord_{K_{Z/X}})[2\ord_{K_{Z/X}}]d\mu_Z=\int_{Z_{\infty}}\mathbf{1}_k(\ord_{K_{Z/Y}})[2\ord_{K_{Z/Y}}]d\mu_Z=[M(Y)]\]
in $\cal{M}(k;\mathbb{Z}[1/p])$. Consequently, by Proposition~\ref{injectivity} $[M(X)]=[M(Y)]$ in $K_0(\DM_{\textup{sl}}^{\textup{eff}}(k;\mathbb{Z}[1/p]))$, as required.
\end{proof}
When $X$ and $Y$ are smooth projective $k$-varieties, we have the following stronger result.
\begin{theorem}\label{mainint2}
Suppose $X$ and $Y$ are K-equivalent smooth projective $k$-varieties. If the injectivity conjecture is true for the $\mathbb{Z}[1/p]$-algebra $R$, then there is an effective Chow motive $P\in\Chow^{\textup{eff}}(k;R)$ such that $M(X)\oplus P\simeq M(Y)\oplus P$ in $\Chow^{\textup{eff}}(k;R)^{op}\hookrightarrow\DM_{\textup{gm}}^{\textup{eff}}(k;R)$.
\end{theorem}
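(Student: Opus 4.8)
The plan is to bootstrap from the Grothendieck-group equality supplied by Theorem~\ref{mainint} to a genuine stable isomorphism of Chow motives, using Bondarko's Chow weight structure on geometric effective motives; the underlying principle is that a bounded weight structure rigidifies $K_0$, identifying $K_0$ of the ambient triangulated category with the \emph{split} Grothendieck group of the heart, and in a split Grothendieck group two classes agree precisely when the underlying objects become isomorphic after adding a common summand. Concretely, the first step is to apply Theorem~\ref{mainint}: since $X$ and $Y$ are in particular K-equivalent smooth $k$-varieties and the injectivity conjecture $I(k;R)$ is assumed for the $\mathbb{Z}[1/p]$-algebra $R$, we obtain $[M(X)] = [M(Y)]$ in $K_0(\DM_{\textup{gm}}^{\textup{eff}}(k;R))$.

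Next I would invoke Bondarko's weight-structure machinery \cite{Bondarko}, \cite{Bondarko2}. Because $p$ is invertible in $R$, the category $\DM_{\textup{gm}}^{\textup{eff}}(k;R)$ — which is then idempotent complete — carries a bounded weight structure, the Chow weight structure, whose heart $\mathcal{H}$ is equivalent to the pseudo-abelian category of effective Chow motives $\Chow^{\textup{eff}}(k;R)$, sitting inside via the contravariant embedding $\Chow^{\textup{eff}}(k;R)^{op}\hookrightarrow\DM_{\textup{gm}}^{\textup{eff}}(k;R)$ of Corollary 4.2.6 of \cite{Voevodsky00}. Since $X$ and $Y$ are smooth \emph{projective}, $M(X)$ and $M(Y)$ lie in this heart. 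A general feature of a bounded weight structure on a triangulated category $\mathcal{C}$ with heart $\mathcal{H}$ (again Bondarko) is that the natural homomorphism $K_0^{\oplus}(\mathcal{H})\to K_0(\mathcal{C})$ from the split Grothendieck group of the heart, i.e. from the group completion of the commutative monoid of isomorphism classes of objects of $\mathcal{H}$ under $\oplus$, is an isomorphism; an explicit inverse is induced by the weight-complex functor, which sends a bounded weight complex to the alternating sum of its terms in $K_0^{\oplus}(\mathcal{H})$. Transporting the equality of the first step across this isomorphism yields $[M(X)] = [M(Y)]$ already in $K_0^{\oplus}(\Chow^{\textup{eff}}(k;R))$.

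Finally I would use the elementary fact that in the group completion of a commutative monoid two elements coincide if and only if they become equal after adding a common element of the monoid. Applied to $K_0^{\oplus}(\Chow^{\textup{eff}}(k;R))$, the equality $[M(X)] = [M(Y)]$ produces an effective Chow motive $P\in\Chow^{\textup{eff}}(k;R)$ with $M(X)\oplus P\simeq M(Y)\oplus P$ in $\Chow^{\textup{eff}}(k;R)$; reading the direct sums through the fully faithful embedding gives exactly the asserted equivalence in $\Chow^{\textup{eff}}(k;R)^{op}\hookrightarrow\DM_{\textup{gm}}^{\textup{eff}}(k;R)$.

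The main obstacle — really the only input beyond Theorem~\ref{mainint} — is the identification $K_0(\DM_{\textup{gm}}^{\textup{eff}}(k;R))\cong K_0^{\oplus}(\Chow^{\textup{eff}}(k;R))$ via the Chow weight structure. One must check with care that Bondarko's Chow weight structure is available on effective geometric motives with $\mathbb{Z}[1/p]$-coefficients over the perfect field $k$ (invoking the six-functor formalism of \cite{CisDeg1} and the invertibility of $p$), that it is bounded, that its heart is exactly the pseudo-abelian category of effective Chow motives with $R$-coefficients rather than a strictly larger idempotent completion, and that $\DM_{\textup{gm}}^{\textup{eff}}(k;R)$ is itself idempotent complete so that weights are well behaved (this last point in fact follows formally from the existence of a bounded weight structure with pseudo-abelian heart). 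Once these foundational points are secured — all established or routine in Bondarko's framework — the remaining steps are purely formal.
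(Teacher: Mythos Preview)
Your proposal is correct and follows essentially the same route as the paper: apply Theorem~\ref{mainint} to get $[M(X)]=[M(Y)]$ in $K_0(\DM_{\textup{gm}}^{\textup{eff}}(k;R))$, then invoke Bondarko's result (Proposition 3.1.3 of \cite{Bondarko} in positive characteristic, Theorem 6.4.2 of \cite{Bondarko2} in characteristic zero) that the embedding $\Chow^{\textup{eff}}(k;R)^{op}\hookrightarrow\DM_{\textup{gm}}^{\textup{eff}}(k;R)$ induces an isomorphism on $K_0$, and conclude via the split Grothendieck group. Your write-up is more explicit about the mechanism (the bounded Chow weight structure and the identification with $K_0^{\oplus}$ of the heart), but the paper simply cites Bondarko for the $K_0$ isomorphism and draws the same conclusion.
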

\begin{proof}
By Proposition 3.1.3 of Bondarko's paper \cite{Bondarko} (characteristic $p>0$) and Theorem 6.4.2 of Bondarko's \cite{Bondarko2} (characteristic zero), we know that the inclusion 
\[\Chow^{\textup{eff}}(k;\mathbb{Z}[1/p])^{op}\hookrightarrow\DM_{\textup{gm}}^{\textup{eff}}(k;\mathbb{Z}[1/p])\]
induces an isomorphism after taking $K_0$. As a consequence of Theorem~\ref{mainint} and the fact that $X$ and $Y$ are smooth projective, $[M(X)]=[M(Y)]$ in $\Chow^{\textup{eff}}(k;R)$ at least if the injectivity conjecture is true. From this it follows that there is an effective Chow motive $P\in\Chow^{\textup{eff}}(k;\mathbb{Z}[1/p])$ such that $M(X)\oplus P\simeq M(Y)\oplus P$ in $\Chow^{\textup{eff}}(k;R)^{op}\hookrightarrow\DM_{\textup{gm}}^{\textup{eff}}(k;\mathbb{Z}[1/p])$.
\end{proof}
This gives a conditional partial answer to the following conjecture. Wang conjectured it for complex varieties, while we conjecture it in general \cite{Wang2}.
\begin{conjecture}\label{wang2}
If $X$ and $Y$ are K-equivalent smooth projective $k$-varieties, then $M(X)\simeq M(Y)$ (integrally), that is, they have equivalent integral Chow motives.
\end{conjecture}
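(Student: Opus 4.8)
The statement is Conjecture~\ref{wang2}, so it is genuinely open; what follows is the natural line of attack suggested by the machinery of this paper, together with the point at which it currently breaks down.

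\textbf{Overall strategy.} Theorem~\ref{mainint2} already delivers, conditionally on the injectivity conjecture $I(\mathbb{C};\mathbb{Z})$, only a \emph{stable} equivalence $M(X)\oplus P\simeq M(Y)\oplus P$ in $\Chow^{\textup{eff}}(\mathbb{C};\mathbb{Z})$; because integral Chow motives are not Krull--Schmidt (Example~32 of \cite{CherMerk}) there is no hope of cancelling $P$ by a formal argument. The plan is therefore to bypass the Grothendieck group entirely and construct a \emph{categorified} motivic integration whose transformation rule is an equivalence of \emph{objects} of a refinement of $\DM_{\gm}^{\textup{eff}}(\mathbb{C};\mathbb{Z})$ rather than an equality of classes in $\cal{M}(\mathbb{C};\mathbb{Z})$.

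\textbf{Steps.} (1) Replace the volume $\mu_X(A)$ of Definition~\ref{volumemeasure} by the \emph{object} $\widetilde{\mu}_X(A):=\pi^{A_m}_!\mathbf{1}_{A_m}((m+1)d)[2(m+1)d]$, with the stabilization equivalences of Lemma~\ref{stabilizationlemma} organized into a coherent diagram indexed by $m$, so that $\widetilde{\mu}_X(A)$ is well-defined in $\DM_{\gm}^{\textup{eff}}(\mathbb{C};\mathbb{Z})$ up to a contractible space of choices. (2) Upgrade Lemma~\ref{djunion} so that a finite decomposition of $A$ yields a functorial direct-sum decomposition of $\widetilde{\mu}_X(A)$, and extend $\widetilde{\mu}_X$ to good (and measurable) subsets by forming the homotopy limit $(\widetilde{\mu}_X(C_n))_n$ in $\DM_{\gm}^{\textup{eff},\wedge}(\mathbb{C};\mathbb{Z})$; well-definedness is the object-level refinement of Proposition~\ref{welldefinedmeasure}, in which the localization cofiber sequences used there must now be chosen functorially. (3) Define the categorified integral of $\mathbf{1}_k(F)[2F]$ to be the convergent object $\bigoplus_{s\geq 0}\widetilde{\mu}_X(F^{-1}(s))(s)[2s]\in\DM_{\gm}^{\textup{eff},\wedge}(\mathbb{C};\mathbb{Z})$, which lies in the completed category precisely because $\widetilde{\mu}_X(F^{-1}(s))(s)[2s]\in\DM^{\textup{eff}}(\mathbb{C};\mathbb{Z})(s)$. (4) Rerun the proof of Theorem~\ref{changeofvar} keeping track of equivalences: the key geometric input, Proposition~\ref{impprop}(c), already exhibits $f_\infty\colon C_{e,k}'\to C_{e,k}$ as a piecewise-trivial $\mathbb{A}^e$-fibration, so by purity and $\mathbb{A}^1$-invariance $\widetilde{\mu}_{X'}(C_{e,k}')(e)[2e]\simeq\widetilde{\mu}_X(C_{e,k})$ as objects, and the passage from the bigrading $(e,k)$ to the grading by $t=e+k$ is a reindexing of an infinite direct sum. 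This yields a categorified transformation rule: for $f\colon X'\to X$ proper birational there is an equivalence $\int_{X_\infty}\mathbf{1}_k(\ord_D)[2\ord_D]\,d\widetilde{\mu}_X\simeq\int_{X'_\infty}\mathbf{1}_k(\ord_{f^{-1}D+K_{X'/X}})[2\ord_{f^{-1}D+K_{X'/X}}]\,d\widetilde{\mu}_{X'}$ in $\DM_{\gm}^{\textup{eff},\wedge}(\mathbb{C};\mathbb{Z})$. (5) Apply this to a common resolution $f\colon Z\to X$, $g\colon Z\to Y$ with $f^*\omega_X\simeq g^*\omega_Y$, hence $K_{Z/X}=K_{Z/Y}$; since $M(X)\simeq\int_{X_\infty}\mathbf{1}_k(\ord_{\emptyset})[2\ord_{\emptyset}]\,d\widetilde{\mu}_X$ (the object-level version of Example~\ref{emptyex}), the two transformation rules give $M(X)\simeq M(Y)$ in $\DM_{\gm}^{\textup{eff},\wedge}(\mathbb{C};\mathbb{Z})$. (6) Conclude by Lemma~\ref{fullfaithfulness} (with $R=\mathbb{Z}$, $p=1$) that $M(X)\simeq M(Y)$ already in $\DM_{\gm}^{\textup{eff}}(\mathbb{C};\mathbb{Z})$, and then by the full faithfulness of effective Chow motives in effective geometric motives (Corollary~4.2.6 of \cite{Voevodsky00}) that $X$ and $Y$ have equivalent integral Chow motives.

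\textbf{Main obstacle.} Steps (1)--(2) are the crux and are genuinely unsolved: the objects $\pi^{A_m}_!\mathbf{1}_{A_m}$ stabilize only up to \emph{a priori} non-canonical equivalence, and to form $\widetilde{\mu}_X$ as an honest object one needs a functor out of a suitably organized category of stable subschemes (with their locally closed decompositions and the localization triangles relating them) into the completed motivic category, rather than merely a map on isomorphism classes. Making the volume coherently functorial --- in particular controlling the ``compact objects'' subtlety and the combinatorics of decompositions well enough to produce coherent cofiber sequences --- is exactly the ``better understanding of the geometric side of motivic integration'' flagged in the introduction as an ongoing project, and it is why Conjecture~\ref{wang2} is stated as a conjecture rather than proved. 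A secondary obstacle, if one instead stays at the level of $K_0$: even granting $I(\mathbb{C};\mathbb{Z})$, the failure of the Krull--Schmidt property for $\Chow^{\textup{eff}}(\mathbb{C};\mathbb{Z})$ blocks any cancellation of the auxiliary Chow motive $P$, so the integral statement cannot be reached by that route; likewise the motivic $t$-structure argument of Theorem~\ref{firstmot} only produces the \emph{rational} isomorphism $M(X)_{\mathbb{Q}}\simeq M(Y)_{\mathbb{Q}}$.
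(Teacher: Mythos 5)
This statement is precisely Conjecture~\ref{wang2}, which the paper leaves open and offers no proof of, and your proposal correctly treats it as such: the evidence you cite (the conditional stable equivalence of Theorem~\ref{mainint2}, the failure of Krull--Schmidt for $\Chow^{\textup{eff}}(k;\mathbb{Z})$ blocking cancellation of $P$, and the rational-only conclusion under a motivic $t$-structure) is exactly the evidence the paper assembles. Your sketched route via a categorified, object-level motivic integration, together with the identification of coherent functoriality of the volume as the unresolved crux, is the same strategy the introduction flags as an ongoing project, so your assessment agrees with the paper's own treatment.
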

Birational compact hyperk\"ahler manifolds have isomorphic Hodge structures. In particular, they have the same Hodge and Betti numbers. Furthermore, they have isomorphic integral singular cohomology \textit{rings}. However, the isomorphism between the integral singular cohomology \textit{rings} is not only the result of them being K-equivalent smooth projective complex varieties. It is not in general true that K-equivalent smooth projective complex varieties have isomorphic integral cohomology \textit{rings}. For example, Nam-Hoon Lee and Keiji Oguiso have jointly constructed birational Calabi-Yau complex threefolds with non-isomorphic integral singular cohomology \textit{rings} \cite{LeeOguiso}. It has been conjectured that if we take into account quantum corrections, then the (quantum) cohomology rings are isomorphic, that is, K-equivalent smooth projective complex varieties have isomorphic quantum cohomology rings in the extended K\"ahler moduli space. In the hyperk\"ahler case, the usual cup product coincides with its quantum-corrected cup product which gives the isomorphism of singular cohomology rings. This begs the question of how can one bring in the data of quantum corrections into our theory of motivic integration, if possible? Admittedly, this is a very open-ended question.\\
\\
In three dimensions, Reid and Mori's classification of three dimensional singularties has allowed Koll\'ar and Mori to completely understand the relation between birational minimal models. Via a complete classification of three dimensional flops and flips, it can be shown that three dimensional birational minimal models have isomorphic integral singular (and intersection) cohomology groups and mixed (and intersection pure) Hodge structures. These integral results point to the possibility that the above conjecture may be valid.\\
\\
As a small corollary to Theorem~\ref{mainint2}, let us mention the following which is a shadow of the proof underlying Batyrev's proof of the equality of Betti numbers of birational Calabi-Yau complex varieties.
\begin{corollary}\label{Cornumericalrational}
If $X$ and $Y$ are K-equivalent smooth projective $k$-varieties and the injectivity conjecture is true, then their rational numerical motives are isomorphic.
\end{corollary}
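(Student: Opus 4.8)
The plan is to reduce the statement to a cancellation property inside the category of rational numerical motives. First I would apply Theorem~\ref{mainint2} with $R=\mathbb{Q}$ (which is a $\mathbb{Z}[1/p]$-algebra, and for which the injectivity conjecture is being assumed): this produces an effective Chow motive $P\in\Chow^{\textup{eff}}(k;\mathbb{Q})$ together with an isomorphism
\[M(X)\oplus P\simeq M(Y)\oplus P\]
in $\Chow^{\textup{eff}}(k;\mathbb{Q})$, hence also in $\Chow(k;\mathbb{Q})$. Next I would push this isomorphism forward along the canonical $\mathbb{Q}$-linear full additive functor $\Chow(k;\mathbb{Q})\to\Num(k;\mathbb{Q})$ that quotients morphisms by those numerically equivalent to zero; since this functor is additive and sends $M(X)$ to the numerical motive $\overline{M}(X)$ of $X$, we obtain $\overline{M}(X)\oplus\overline{P}\simeq\overline{M}(Y)\oplus\overline{P}$ in $\Num(k;\mathbb{Q})$.

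The decisive input is then Jannsen's semisimplicity theorem, which asserts that $\Num(k;\mathbb{Q})$ is a semisimple $\mathbb{Q}$-linear abelian category. In such a category every object decomposes, uniquely up to permutation and isomorphism, as a finite direct sum of simple objects; in particular $\Num(k;\mathbb{Q})$ is a Krull--Schmidt category (Definition~\ref{KScategory}), so cancellation of direct summands is valid. Applying cancellation to $\overline{M}(X)\oplus\overline{P}\simeq\overline{M}(Y)\oplus\overline{P}$ yields $\overline{M}(X)\simeq\overline{M}(Y)$, which is exactly the assertion that $X$ and $Y$ have isomorphic rational numerical motives.

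I do not expect a serious obstacle here: the substantive content is already carried by the motivic integration argument behind Theorem~\ref{mainint2}. The only points requiring care are (i) checking that the passage from Chow to numerical motives respects direct sums and carries $M(X)$ to the numerical motive of $X$, which is immediate from the description of $\Num(k;\mathbb{Q})$ as an additive quotient of $\Chow(k;\mathbb{Q})$, and (ii) noting that semisimplicity — and therefore the cancellation step — genuinely relies on passing to \emph{numerical} equivalence with \emph{rational} coefficients, so the argument does not formally upgrade to finer adequate equivalence relations or to integral coefficients. This is why the corollary is stated only for rational numerical motives.
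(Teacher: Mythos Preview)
Your proof is correct and follows essentially the same approach as the paper: apply Theorem~\ref{mainint2} to get $M(X)\oplus P\simeq M(Y)\oplus P$ in Chow motives, pass to numerical motives, and invoke Jannsen's semisimplicity theorem to cancel $P$. The paper's proof is a one-line sketch of exactly this argument.
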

\begin{proof}
The proof is a simple consequence of Theorem~\ref{mainint2} and the fact that rational numerical motives form a semisimple abelian category, a theorem due to Jannsen \cite{Jannsen}.
\end{proof}
\begin{remark}\textup{We remark that if $k$ admits resolution of singularities, then birational smooth projective Calabi-Yau $k$-varieties are K-equivalent. Consequently, conditional on the injectivity conjecture, the above are true for such birational varieties as well. The same corollary is true more generally for birational smooth projective $k$-varieties with nef canonical bundles.}
\end{remark}
An advantage of obtaining motivic results is that through realization functors we can deduce results of interest to geometers and number theorists. See the beginning of Section~\ref{rationalintegration} for a description of some of the realization functors. As another corollary of Theorem~\ref{mainint2}, we have the following result.
\begin{theorem}\label{singulartheorem}
Suppose the injectivity conjecture $I(k;\mathbb{Z})$ is true for $k$ of characteristic zero. If $X$ and $Y$ are K-equivalent smooth projective $k$-varieties with $\sigma:k\hookrightarrow\mathbb{C}$ any embedding, then $H^*(X^{an};\mathbb{Z})\simeq H^*(Y^{an};\mathbb{Z})$ as graded abelian \textup{groups}.
\end{theorem}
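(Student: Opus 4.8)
The plan is to deduce this from Theorem~\ref{mainint2} together with the integral Betti realization functor. Since $k$ has characteristic zero, its characteristic exponent is $p=1$, so $\mathbb{Z}[1/p]=\mathbb{Z}$ and the injectivity conjecture $I(k;\mathbb{Z})$ is precisely the hypothesis needed to apply Theorem~\ref{mainint2} with $R=\mathbb{Z}$. This produces an effective Chow motive $P\in\Chow^{\textup{eff}}(k;\mathbb{Z})$ with
\[M(X)\oplus P\simeq M(Y)\oplus P\]
in $\DM_{\textup{gm}}^{\textup{eff}}(k;\mathbb{Z})$, where we view effective Chow motives inside geometric effective motives via Voevodsky's full embedding.

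Next I would apply the integral Betti realization $B_{\sigma}:\DM_{\textup{gm}}^{\textup{eff}}(k;\mathbb{Z})^{op}\rightarrow D^b(\mathbb{Z})$ attached to $\sigma$, which sends $M(Z)$ to $H^*(Z^{an};\mathbb{Z})$ for any $k$-variety $Z$. As an exact, hence additive, functor it carries the displayed equivalence to an equivalence
\[H^*(X^{an};\mathbb{Z})\oplus B_{\sigma}(P)\simeq H^*(Y^{an};\mathbb{Z})\oplus B_{\sigma}(P)\]
in $D^b(\mathbb{Z})$ (note that a finite biproduct in $\DM_{\textup{gm}}^{\textup{eff}}(k;\mathbb{Z})$ is also one in the opposite category, so contravariance causes no problem). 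Here $H^*(X^{an};\mathbb{Z})$ denotes the graded cohomology group regarded as a complex with zero differential; since $X$ and $Y$ are smooth projective these groups are finitely generated, and $B_{\sigma}(P)$ has finitely generated cohomology because $P$ is a direct summand of the motive of a smooth projective variety. Because $\mathbb{Z}$ has global dimension one, every object of $D^b(\mathbb{Z})$ is (non-canonically) the direct sum of its shifted cohomology objects, so passing to cohomology in each degree $i$ yields isomorphisms of finitely generated abelian groups
\[H^i(X^{an};\mathbb{Z})\oplus N^i\;\cong\;H^i(Y^{an};\mathbb{Z})\oplus N^i,\qquad N^i:=H^i\bigl(B_{\sigma}(P)\bigr).\]

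Finally I would cancel the common summand $N^i$ degree by degree. This is legitimate because finitely generated modules over the principal ideal domain $\mathbb{Z}$ enjoy the cancellation property: from $A\oplus N\cong B\oplus N$ with $A$, $B$, $N$ finitely generated one gets $A\cong B$, for instance by comparing free ranks and torsion invariants via the structure theorem. Hence $H^i(X^{an};\mathbb{Z})\cong H^i(Y^{an};\mathbb{Z})$ for all $i$, which is the claim. The one genuine subtlety is the finiteness bookkeeping: cancellation of direct summands fails for arbitrary abelian groups, so one must record that the error term $B_{\sigma}(P)$ has finitely generated cohomology before cancelling — this is where it matters that $P$ is an \emph{effective Chow motive} (a summand of a smooth projective motive) and not merely an object of $\DM_{\textup{gm}}^{\textup{eff}}(k;\mathbb{Z})$. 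Everything else is a formal consequence of Theorem~\ref{mainint2} and the existence of the integral Betti realization.
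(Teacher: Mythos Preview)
Your proof is correct and follows essentially the same route as the paper: apply Theorem~\ref{mainint2} with $R=\mathbb{Z}$, push through the integral Betti realization $B_\sigma$, and cancel the common finitely generated summand using the structure theorem. Your version is in fact slightly more careful than the paper's in two places: you explicitly justify why $B_\sigma(P)$ has finitely generated cohomology (because $P$ is a summand of the motive of a smooth projective variety), and you obtain the graded isomorphism directly by comparing $H^i$ degree by degree, whereas the paper first cancels to an isomorphism of total cohomology and then invokes weights to recover the grading.
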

\begin{proof}
Indeed, there is the Betti realization functor $B_{\sigma}:\DM_{\textup{gm}}^{\textup{eff}}(k;\mathbb{Z})^{op}\rightarrow D^b(\mathbb{Z})$ into the bounded derived category of finitely generated abelian groups. This is given by sending $M(X)$ to the graded abelian group $\oplus_nH^n(X^{an};\mathbb{Z})[-n]\in D^b(\mathbb{Z})$. Applying Theorem~\ref{mainint2}, we deduce that, conditional on the injectivity conjecture,
\[H^*(X^{an};\mathbb{Z})\oplus H^*(B_{\sigma}(P))\simeq H^*(Y^{an};\mathbb{Z})\oplus H^*(B_{\sigma}(P))\] 
as finitely generated \textit{graded} abelian groups. Since we are landing in finitely generated abelian groups, the classification of finitely generated abelian groups allows us to cancel $H^*(B_{\sigma}(P))$ from both sides to obtain $H^*(X^{an};\mathbb{Z})\simeq H^*(Y^{an};\mathbb{Z})$ as abelian groups. Using weights, we obtain the isomorphism as graded abelian groups.
\end{proof}
\begin{remark}
\textup{In an unpublished note, Chenyang Xu and Qizheng Yin have proved the unconditional version of this theorem for K-equivalent complex smooth projective varieties using classical motivic integration and a motivic measure found in Section 3.3 of Gillet-Soul\'e \cite{GilSoul}. For birational smooth projective Calabi-Yau complex manifolds $X$ and $Y$, Mark McLean has claimed a proof of the unconditional version of the above theorem by identifying the quantum cohomologies of $X$ and $Y$ up to tensoring with a suitable Novikov ring \cite{Mclean}. His proof uses Hamiltonian Floer cohomology.}
\end{remark}
\begin{remark}
\textup{The cohomology rings need not be isomorphic. In fact, Nam-Hoon Lee and Keiji Oguiso have jointly constructed birational Calabi-Yau complex threefolds with non-isomorphic integral singular cohomology \textup{rings} \cite{LeeOguiso}.} 
\end{remark}
From Theorem~\ref{mainint}, if the injectivity conjecture $I(k;\mathbb{Z}[1/p])$ is true, then $X$ and $Y$ that are K-equivalent smooth $k$-varieties satisfy $[M(X)]=[M(Y)]$ in $K_0(\DM_{\textup{gm}}(k;\mathbb{Z}[1/p]))$. In particular, if $k=\mathbb{F}_q$, then taking traces of the Frobenius we would obtain that that they have the same zeta functions: $\zeta_X(t)=\zeta_Y(t)$. In fact, we show the following.
\begin{theorem}\label{ladic}
Suppose the injectivity conjecture $I(k;\mathbb{Z}[1/p])$ is true. If $X$ and $Y$ are K-equivalent smooth projective $k$-varieties, then they have the same $\ell$-adic Galois representations (up to semi-simplification). In the case $k=\mathbb{F}_q$, two K-equivalent smooth $\mathbb{F}_q$-varieties have the same zeta functions.
\end{theorem}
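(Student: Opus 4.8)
The plan is to push the motivic identity of Theorem~\ref{mainint2} through an $\ell$-adic realization functor and then to cancel the auxiliary Chow motive in the Grothendieck group of Galois representations, where cancellation is legitimate because that group is free on simple objects. Assuming the injectivity conjecture $I(k;\mathbb{Z}[1/p])$, Theorem~\ref{mainint2} furnishes an effective Chow motive $P$ with $M(X)\oplus P\simeq M(Y)\oplus P$ in $\DM_{\gm}^{\eff}(k;\mathbb{Z}[1/p])$. Since $\ell$ is invertible in $\mathbb{Z}[1/p]$, the integral $\ell$-adic realization functor recalled at the start of Section~\ref{rationalintegration},
\[\DM_{\gm}^{\eff}(k;\mathbb{Z}[1/p])\longrightarrow\widehat{\textup{D}}(k_{\et};\mathbb{Z}_{\ell})\simeq\textup{D}^b(\textup{Rep}_{\textup{cnt}}(G_k;\mathbb{Z}_{\ell})),\]
is an exact symmetric monoidal functor carrying $M(Z)$, for $Z$ smooth projective, to (the dual of) the complex $\bigoplus_i H^i_{\et}(Z_{\overline{k}};\mathbb{Z}_{\ell})[-i]$ with its continuous $G_k$-action.

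First I would apply this realization to the displayed equivalence and then take $i$-th cohomology objects, an additive operation from $\textup{D}^b$ to $\textup{Rep}_{\textup{cnt}}(G_k;\mathbb{Z}_{\ell})$; writing $P_i$ for the $i$-th cohomology of the realization of $P$, this yields $H^i_{\et}(X_{\overline{k}};\mathbb{Z}_{\ell})\oplus P_i\simeq H^i_{\et}(Y_{\overline{k}};\mathbb{Z}_{\ell})\oplus P_i$ as continuous $G_k$-representations for every $i$. To remove $P_i$ I would pass to $K_0$: after $\otimes\,\overline{\mathbb{Q}}_{\ell}$, the Grothendieck group of continuous finite-dimensional $\overline{\mathbb{Q}}_{\ell}$-representations of $G_k$ is the free abelian group on the isomorphism classes of irreducible representations (Jordan--H\"older), so the relation above forces $[H^i_{\et}(X_{\overline{k}})\otimes\overline{\mathbb{Q}}_{\ell}]=[H^i_{\et}(Y_{\overline{k}})\otimes\overline{\mathbb{Q}}_{\ell}]$ there, which is precisely the statement that $H^i_{\et}(X_{\overline{k}})$ and $H^i_{\et}(Y_{\overline{k}})$ have isomorphic semisimplifications. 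Tracking also the $\mathbb{F}_{\ell}$-representations coming from torsion gives the integral statement up to semisimplification, and assembling over $i$ gives the graded version, proving the first assertion.

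For the zeta-function assertion with $k=\mathbb{F}_q$ and $X,Y$ smooth projective, one has $\zeta_X(t)=\prod_i\det(1-t\cdot\textup{Frob}\mid H^i_{\et}(X_{\overline{k}};\overline{\mathbb{Q}}_{\ell}))^{(-1)^{i+1}}$, and each factor depends only on the semisimplification of the corresponding $H^i$; hence $\zeta_X=\zeta_Y$ by the previous paragraph. To drop projectivity one starts instead from $[M(X)]=[M(Y)]$ in $K_0(\DM_{\gm}(k;\mathbb{Z}[1/p]))$ (Theorem~\ref{mainint}); as K-equivalent smooth varieties share a common dimension $d$, the duality $M^c(Z)\simeq M(Z)^{\vee}(d)[2d]$ upgrades this to $[M^c(X)]=[M^c(Y)]$, and realizing and applying the Grothendieck--Lefschetz trace formula $\#Z(\mathbb{F}_{q^n})=\sum_i(-1)^i\,\textup{tr}(\textup{Frob}^n\mid H^i_{c,\et}(Z_{\overline{k}};\overline{\mathbb{Q}}_{\ell}))$ term by term gives $\#X(\mathbb{F}_{q^n})=\#Y(\mathbb{F}_{q^n})$ for all $n\geq 1$, hence $\zeta_X=\zeta_Y$.

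The main obstacle, and the reason the conclusion only holds up to semisimplification, is exactly the cancellation of $P$: the category of $\ell$-adic Galois representations is not Krull--Schmidt, so the common summand cannot be cancelled on the nose, and one must descend to the Grothendieck group (equivalently, to semisimplifications). The remaining points needing care are bookkeeping ones: pinning down the realization convention (ordinary versus compactly supported cohomology, and the passage to duals, none of which affects a statement about semisimplifications) and handling $\ell$-torsion in integral \'etale cohomology, which is why it is cleanest to carry out the cancellation after rationalizing.
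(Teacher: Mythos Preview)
Your proposal is correct and follows the same overall strategy as the paper: start from the motivic identity supplied by Theorems~\ref{mainint}/\ref{mainint2}, push it through the $\ell$-adic realization, and conclude in the Grothendieck group of Galois representations (which, by Jordan--H\"older, detects semisimplifications), then read off zeta functions via the trace formula.

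The only difference is a small detour. The paper argues directly from the $K_0$-equality $[M(X)]=[M(Y)]$ in $K_0(\DM_{\gm}^{\eff}(k;\mathbb{Z}[1/p]))$ of Theorem~\ref{mainint}: applying the exact realization functor yields $[H^*_{\et}(X_{\overline k})]=[H^*_{\et}(Y_{\overline k})]$ in $K_0$ of $\ell$-adic representations, hence equal semisimplifications, and for $k=\mathbb{F}_q$ one simply takes Frobenius traces. You instead pass through Theorem~\ref{mainint2} to obtain an honest isomorphism $M(X)\oplus P\simeq M(Y)\oplus P$, realize, and then cancel $P$ in $K_0$. Since you end up in $K_0$ anyway, the intermediate upgrade to an isomorphism buys nothing here, but it does no harm; your route makes the degree-by-degree statement a bit more transparent. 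For the non-projective zeta-function claim, both you and the paper proceed from the $K_0$-equality of Theorem~\ref{mainint}; your passage to $M^c$ via duality and the Grothendieck--Lefschetz formula is exactly the ``taking traces of the Frobenius'' the paper invokes.
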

If $\mathbb{F}_q$ admits resolution of singularities, then all this is true for $X$ and $Y$ that are birationally equivalent smooth projective Calabi-Yau $\mathbb{F}_q$-varieties. Note that for the equality of zeta functions, we do not require the varieties to be projective.
\begin{remark}
\textup{Note that over finite fields and number fields, the slice filtration is expected to preserve geometricity, at least for $R=\mathbb{Q}$. Consequently, it is expected that $\DM_{\textup{gm}}^{\textup{eff}}(k;\mathbb{Q})=\DM_{\textup{sl}}^{\textup{eff}}(k;\mathbb{Q})$.}
\end{remark}
\begin{remark}
\textup{In the next section, we give an unconditional argument for the isomorphism of \textit{rational} $\ell$-adic Galois representations (up to semisimplification) of two K-equivalent smooth projective $\mathbb{F}_q$-varieties by using our mixed $\ell$-adic integration. Using mixed $\ell$-adic integration, we also unconditionally prove the equality of zeta functions of K-equivalent smooth $\mathbb{F}_q$-varieties, without the projectivity condition.}
\end{remark}
Using the Hodge realization functor, we deduce the following corollary of Theorem~\ref{mainint}.
\begin{corollary}\label{Hodge} Suppose the injectivity conjecture $I(\mathbb{C};\mathbb{Z})$ is true. If $X$ and $Y$ are K-equivalent complex varieties, then they have the same classes of \textup{integral} mixed Hodge structures in $K_0(\textup{MHS}_{\mathbb{Z}})$. Furthermore, two K-equivalent smooth projective complex varieties would have the same rational Hodge structures.
\end{corollary}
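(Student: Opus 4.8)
The plan is to transport the $K_0$-equality furnished by Theorem~\ref{mainint} along the Hodge realization functor. Since $k=\mathbb{C}$ has exponential characteristic $p=1$, the ring $\mathbb{Z}[1/p]$ is just $\mathbb{Z}$, so, assuming $I(\mathbb{C};\mathbb{Z})$, Theorem~\ref{mainint} gives $[M(X)]=[M(Y)]$ in $K_0(\DM_{\textup{gm}}^{\textup{eff}}(\mathbb{C};\mathbb{Z}))$ for any two K-equivalent smooth complex varieties $X$ and $Y$. The integral Hodge realization of Lecomte--Wach \cite{LecomteWach}, a contravariant exact functor $\DM_{\textup{gm}}(\mathbb{C};\mathbb{Z})^{op}\to D^b(\textup{MHS}_{\mathbb{Z}})$ sending $M(X)$ to $H^*(X^{an};\mathbb{Z})$ with its canonical mixed Hodge structure, induces on Grothendieck groups a homomorphism $K_0(\DM_{\textup{gm}}^{\textup{eff}}(\mathbb{C};\mathbb{Z}))\to K_0(D^b(\textup{MHS}_{\mathbb{Z}}))=K_0(\textup{MHS}_{\mathbb{Z}})$ carrying $[M(X)]$ to $\sum_n(-1)^n[H^n(X^{an};\mathbb{Z})]$; here one uses that $K_0$ is insensitive to the variance of an exact functor and that the Euler characteristic identifies $K_0$ of a bounded derived category with $K_0$ of the heart. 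Applying this homomorphism to $[M(X)]=[M(Y)]$ yields the first assertion. I would stress that one cannot hope to promote this to an isomorphism in general, since $\textup{MHS}_{\mathbb{Z}}$ is far from semisimple and $K_0$-classes do not determine objects there.

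For the second assertion, assume in addition that $X$ and $Y$ are smooth projective, and compose the above with the rationalization $K_0(\textup{MHS}_{\mathbb{Z}})\to K_0(\textup{MHS}_{\mathbb{Q}})$ (equivalently, apply the rational Hodge realization to the base change of $[M(X)]=[M(Y)]$ along $\DM_{\textup{gm}}^{\textup{eff}}(\mathbb{C};\mathbb{Z})\to\DM_{\textup{gm}}^{\textup{eff}}(\mathbb{C};\mathbb{Q})$). This gives $\sum_n(-1)^n[H^n(X^{an};\mathbb{Q})]=\sum_n(-1)^n[H^n(Y^{an};\mathbb{Q})]$ in the Grothendieck group of the abelian category of polarizable mixed $\mathbb{Q}$-Hodge structures. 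The weight filtration is exact and functorial, so this Grothendieck group splits as the direct sum over $w\in\mathbb{Z}$ of the Grothendieck groups of the categories of polarizable pure $\mathbb{Q}$-Hodge structures of weight $w$; since $X$ and $Y$ are smooth projective, the summand $H^n(X^{an};\mathbb{Q})$ is pure of weight $n$, and likewise for $Y$. Extracting the weight-$n$ component of the displayed identity therefore gives $[H^n(X^{an};\mathbb{Q})]=[H^n(Y^{an};\mathbb{Q})]$ in $K_0$ of the category of polarizable pure $\mathbb{Q}$-Hodge structures of weight $n$.

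The remaining step — the only one carrying genuine content rather than bookkeeping — is to upgrade this equality of classes to an isomorphism: the category of polarizable pure $\mathbb{Q}$-Hodge structures of a fixed weight is semisimple (a polarization splits off every sub-object as a direct summand), so its $K_0$ is the free abelian group on isomorphism classes of simple objects and hence detects isomorphism. Thus $H^n(X^{an};\mathbb{Q})\cong H^n(Y^{an};\mathbb{Q})$ for every $n$, and summing over $n$ produces the desired isomorphism of graded rational Hodge structures. The main thing to be careful about is precisely this dichotomy: the integral statement is intrinsically only a statement about $K_0$-classes (one could alternatively invoke Theorem~\ref{mainint2} and cancel a common Chow summand, but that route needs the rational injectivity conjecture $I(\mathbb{C};\mathbb{Q})$, which is not among the hypotheses), whereas the rational projective statement is rescued by semisimplicity of polarizable pure Hodge structures. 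Everything else reduces to the exactness of the Lecomte--Wach realization and the standard purity of the cohomology of smooth projective varieties.
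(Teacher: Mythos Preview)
Your proposal is correct and follows essentially the same route as the paper: deduce the $K_0$-equality from Theorem~\ref{mainint} and push it through the Hodge realization, then in the projective case use weights and semisimplicity to upgrade to an isomorphism. If anything, your argument is more careful than the paper's own one-line proof, which invokes ``semisimplicity of the abelian category of polarizable rational mixed Hodge structures'' --- a category that is of course not semisimple; your restriction to polarizable \emph{pure} structures of a fixed weight (available via the purity of $H^n$ for smooth projective $X$) is the precise statement that makes the cancellation work.
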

\begin{proof}
The first statement is a direct consequence of Theorem~\ref{mainint}, while the second part is a consequence of the semisimplicity of the abelian category of polarizable rational mixed Hodge structures.
\end{proof}
\begin{remark}
\textup{In Section~\ref{Hodge}, we give an unconditional proof of the last statement about rational Hodge structure using our mixed Hodge integration. This recovers the well-known result (Theorem~\ref{Konttheorem}) of Kontsevich \cite{Kontsevich} mentioned earlier (and that of Batyrev \cite{Batyrev} as a consequence of the decomposition theorem).}
\end{remark}
Finally, recall the Definition~\ref{KScategory} of Krull-Schmidt categories and the following discussion in the introduction. A direct consequence of Theorem~\ref{mainint2} is the following.
\begin{corollary}\label{Kimuramainint2}Suppose $\Chow^{\textup{eff}}(k;R)$ is a Krull-Schmidt category, $k$ and $R$ as in our conventions set at the beginning of this introduction, and that the injectivity conjecture $I(k;R)$ is true. Then for $X$ and $Y$ K-equivalent smooth projective $k$-varieties, the Chow motives of $X$ and $Y$ in $\Chow^{\textup{eff}}(k;R)$ are equivalent.
\end{corollary}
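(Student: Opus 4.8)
The plan is to combine Theorem~\ref{mainint2} with the cancellation property enjoyed by any Krull-Schmidt category, so the argument is essentially formal. First I would invoke Theorem~\ref{mainint2}: since $I(k;R)$ is assumed and $X,Y$ are K-equivalent smooth projective $k$-varieties, there is an effective Chow motive $P\in\Chow^{\textup{eff}}(k;R)$ with
\[M(X)\oplus P\simeq M(Y)\oplus P.\]
Although Theorem~\ref{mainint2} phrases this inside $\DM_{\textup{gm}}^{\textup{eff}}(k;R)$, the inclusion $\Chow^{\textup{eff}}(k;R)^{op}\hookrightarrow\DM_{\textup{gm}}^{\textup{eff}}(k;R)$ is fully faithful (Corollary 4.2.6 of \cite{Voevodsky00}), so this equivalence already takes place in $\Chow^{\textup{eff}}(k;R)$ itself; in particular the whole problem reduces to cancelling the common summand $P$ there.

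Next I would record the general fact that an $R$-linear additive Krull-Schmidt category $\cal{C}$ has the cancellation property: if $A\oplus C\simeq B\oplus C$ in $\cal{C}$, then $A\simeq B$. This is the Krull-Remak-Schmidt-Azumaya theorem. By Definition~\ref{KScategory}, every object of $\cal{C}$ is a finite direct sum of objects with local endomorphism rings, and the uniqueness half of the theorem (see \cite{AtiyahKS}) asserts that such a decomposition is unique up to permutation and isomorphism of the indecomposable factors. Decomposing both $M(X)\oplus P$ and $M(Y)\oplus P$ into indecomposables and matching factors, one removes the indecomposable summands of $P$ one matched pair at a time (an easy induction on the number of indecomposable factors of $P$), leaving a bijection between the indecomposable factors of $M(X)$ and those of $M(Y)$ that preserves isomorphism type; hence $M(X)\simeq M(Y)$.

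Applying this with $\cal{C}=\Chow^{\textup{eff}}(k;R)$, $A=M(X)$, $B=M(Y)$, $C=P$ yields $M(X)\simeq M(Y)$ in $\Chow^{\textup{eff}}(k;R)$, as desired. I do not expect a genuine obstacle here: the only input beyond Theorem~\ref{mainint2} is the classical cancellation property of Krull-Schmidt categories, and the single point requiring a little care is simply to observe that the isomorphism furnished by Theorem~\ref{mainint2} can be read off within $\Chow^{\textup{eff}}(k;R)$ (via the full faithfulness of the embedding into Voevodsky motives), so that cancellation may legitimately be performed in the category we are interested in. Everything else is bookkeeping with indecomposable decompositions.
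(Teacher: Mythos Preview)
Your proposal is correct and matches the paper's approach: the paper states this corollary as a direct consequence of Theorem~\ref{mainint2}, and the only missing step is precisely the Krull--Schmidt cancellation argument you spell out. Note that Theorem~\ref{mainint2} already records the isomorphism as holding in $\Chow^{\textup{eff}}(k;R)^{op}\hookrightarrow\DM_{\textup{gm}}^{\textup{eff}}(k;R)$, so your care about full faithfulness, while correct, is already absorbed into that statement.
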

\subsection{Consequences of motivic integration via the motivic $t$-structure}\label{mot}
In this subsection, we show how the existence of a motivic $t$-structure on $\DM_{\textup{gm}}(k;\mathbb{Q})$ with the expected properties can be used along with our motivic integration via the motivic $t$-structure to show the equivalence of rational Voevodsky motives of K-equivalent smooth projective $k$-varieties. First a proposition.
\begin{proposition}\label{propmain}
Suppose the motivic $t$-structure Conjecture~\ref{tstructureconjecture} is true for $\DM_{\textup{gm}}(k;\mathbb{Q})$. Then for $X$ and $Y$ smooth projective $k$-varieties such that $[M(X)_{\mathbb{Q}}]=[M(Y)_{\mathbb{Q}}]$ in $K_0(\DM_{\textup{gm}}(k;\mathbb{Q}))$, $M(X)_{\mathbb{Q}}\simeq M(Y)_{\mathbb{Q}}$.
\end{proposition}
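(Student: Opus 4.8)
The plan is to leverage the motivic $t$-structure to reduce the equality of classes in $K_0(\DM_{\textup{gm}}(k;\mathbb{Q}))$ to an actual isomorphism of motives, exploiting the fact that $M(X)_{\mathbb{Q}}$ and $M(Y)_{\mathbb{Q}}$ are \emph{pure} objects. First I would recall that for $X$ smooth projective, $M(X)_{\mathbb{Q}}$ is a direct sum of its motivic cohomology objects: the existence of a non-degenerate motivic $t$-structure compatible with the weight structure of Bondarko forces the Chow motive $M(X)_{\mathbb{Q}}$ to decompose as $\bigoplus_q {}^{\mu}H^q M(X)_{\mathbb{Q}}[-q]$, because a Chow motive lies in the heart of the weight structure and the two structures are transverse (this is the mechanism by which $\Chow(k;\mathbb{Q})$ becomes Krull--Schmidt, as mentioned in the introduction). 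Moreover, by the last conditions in Conjecture~\ref{tstructureconjecture}, each ${}^{\mu}H^q M(X)_{\mathbb{Q}}$ is a \emph{semisimple} object of the heart. So both $M(X)_{\mathbb{Q}}$ and $M(Y)_{\mathbb{Q}}$ are semisimple objects of $\DM_{\textup{gm}}(k;\mathbb{Q})$ (up to shifts), living in an abelian category — the heart — in which the Jordan--H\"older / Krull--Schmidt theorem applies.

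Next I would argue that for semisimple objects of an abelian category, equality of classes in the Grothendieck group implies isomorphism. Concretely, $K_0$ of the heart $\DM_{\textup{gm}}(k;\mathbb{Q})^{\heartsuit}$ is the free abelian group on isomorphism classes of simple objects, and the class of a semisimple object records exactly its multiplicities; since ${}^{\mu}H^q$ is exact and $K_0(\DM_{\textup{gm}}(k;\mathbb{Q})) \cong \bigoplus_q K_0(\DM_{\textup{gm}}(k;\mathbb{Q})^{\heartsuit})$ via $[M] \mapsto (\, [{}^{\mu}H^q M]\,)_q$ (using non-degeneracy and that the $t$-structure is bounded on geometric motives), the hypothesis $[M(X)_{\mathbb{Q}}] = [M(Y)_{\mathbb{Q}}]$ yields $[{}^{\mu}H^q M(X)_{\mathbb{Q}}] = [{}^{\mu}H^q M(Y)_{\mathbb{Q}}]$ in $K_0$ of the heart for every $q$. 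Because these are semisimple, equal classes force ${}^{\mu}H^q M(X)_{\mathbb{Q}} \simeq {}^{\mu}H^q M(Y)_{\mathbb{Q}}$ for all $q$. Summing over $q$ with the appropriate shifts and invoking the weight decomposition again gives $M(X)_{\mathbb{Q}} \simeq \bigoplus_q {}^{\mu}H^q M(X)_{\mathbb{Q}}[-q] \simeq \bigoplus_q {}^{\mu}H^q M(Y)_{\mathbb{Q}}[-q] \simeq M(Y)_{\mathbb{Q}}$.

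The main obstacle, and the step that needs the most care, is justifying the direct-sum decomposition $M(X)_{\mathbb{Q}} \simeq \bigoplus_q {}^{\mu}H^q M(X)_{\mathbb{Q}}[-q]$ for a Chow motive: a priori one only has the Postnikov filtration of the $t$-structure, whose graded pieces are the ${}^{\mu}H^q$, and the extensions could be non-trivial. The resolution is that $M(X)_{\mathbb{Q}}$ is pure of weight zero in Bondarko's weight structure, and a motivic $t$-structure (being compatible with realizations and with the weight structure) is bounded and "weight-adapted" in the sense that objects pure for the weight structure split into their $t$-cohomology — equivalently, $\Ext^1$ in the heart between the relevant pure objects of different weights vanishes, which follows from the weight filtration $W_\bullet$ postulated in Conjecture~\ref{tstructureconjecture} together with the orthogonality $\Hom({}^{\mu}H^i(a), {}^{\mu}H^j(b)) = 0$ for differing weights. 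I would cite Beilinson's \cite{Beilinson} (Propositions 1.5 and 1.7) for the characteristic zero incarnation of exactly this splitting and use the analogous consequence of the weight-structure compatibility in general. Once this purity-splitting is in hand, the rest of the argument is the elementary Grothendieck-group bookkeeping sketched above, and the semisimplicity hypothesis on the ${}^{\mu}H^q M(X)_{\mathbb{Q}}$ is precisely what upgrades "equal in $K_0$" to "isomorphic."
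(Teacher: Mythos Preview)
Your overall strategy matches the paper's closely: decompose $M(X)_{\mathbb{Q}}$ and $M(Y)_{\mathbb{Q}}$ as direct sums of their $t$-cohomologies (the paper cites Beilinson's Proposition~1.4 for exactly this), use semisimplicity of each ${}^{\mu}H^q$, and then argue that equality of $K_0$-classes forces the cohomologies to match degree by degree. The splitting step is fine and you justify it correctly.

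The gap is in your $K_0$ bookkeeping. The map
\[
[M]\longmapsto \bigl(\,[{}^{\mu}H^q M]\,\bigr)_q
\]
does \emph{not} descend to $K_0(\DM_{\gm}(k;\mathbb{Q}))$: a distinguished triangle $A\to B\to C\to A[1]$ only gives a long exact sequence in ${}^{\mu}H^{\bullet}$, not a degree-wise short exact sequence, so $[{}^{\mu}H^qB]\ne[{}^{\mu}H^qA]+[{}^{\mu}H^qC]$ in general. Concretely, for $M$ in the heart the shift relation $[M[1]]=-[M]$ is violated by your map. The correct statement (theorem of the heart, as the paper uses) is that $K_0(\DM_{\gm}(k;\mathbb{Q}))\simeq K_0(\DM_{\gm}(k;\mathbb{Q})^{\heartsuit})$ via the \emph{alternating} sum $[M]\mapsto\sum_a(-1)^a[{}^{\mu}H^aM]$, which only yields
\[
\Bigl[\bigoplus_{a\text{ even}}{}^{\mu}H^aM(X)\oplus\bigoplus_{a\text{ odd}}{}^{\mu}H^aM(Y)\Bigr]
=\Bigl[\bigoplus_{a\text{ odd}}{}^{\mu}H^aM(X)\oplus\bigoplus_{a\text{ even}}{}^{\mu}H^aM(Y)\Bigr]
\]
in $K_0$ of the heart. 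Semisimplicity (Jannsen, or the hypothesis in Conjecture~\ref{tstructureconjecture}) turns this into an isomorphism of objects, but you still need a further argument to separate by $a$. The paper invokes Beilinson's Proposition~1.7 here: the weight filtration postulated in Conjecture~\ref{tstructureconjecture} makes ${}^{\mu}H^aM(Z)$ pure of weight $a$ for smooth projective $Z$, so simple constituents of distinct weight cannot be isomorphic, and the equality above decomposes into ${}^{\mu}H^aM(X)_{\mathbb{Q}}\simeq{}^{\mu}H^aM(Y)_{\mathbb{Q}}$ for each $a$. You have all the ingredients for this fix (you already cite Propositions~1.5 and~1.7), but you deployed the weight argument only for the direct-sum splitting, not for the $K_0$ separation where it is genuinely needed.
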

\begin{proof}
Then the inclusions $\textup{Num}(k;\mathbb{Q})\hookrightarrow \DM_{\textup{gm}}(k;\mathbb{Q})^{\heartsuit}$ and $\DM_{\textup{gm}}(k;\mathbb{Q})^{\heartsuit}\hookrightarrow \DM_{\textup{gm}}(k;\mathbb{Q})$ induce isomorphisms
\[K_0(\textup{Num}(k;\mathbb{Q}))\rightarrow K_0(\DM_{\textup{gm}}(k;\mathbb{Q})^{\heartsuit})\] 
and
\[K_0(\DM_{\textup{gm}}(k;\mathbb{Q})^{\heartsuit})\rightarrow K_0(\DM_{\textup{gm}}(k;\mathbb{Q})).\]
The latter isomorphism follows from the theorem of the heart \cite{Barwick}. The second morphism has inverse given by $[M]\mapsto\sum_a(-1)^a[^{\mu}H^aM]$. Since $[M(X)_{\mathbb{Q}}]=[M(Y)_{\mathbb{Q}}]$ in $K_0(\DM_{\textup{gm}}(k;\mathbb{Q}))$, we obtain that
\begingroup
    \fontsize{9.5pt}{11pt}\selectfont

$$\left[\left(\oplus_{a\ \textup{even}}\ ^{\mu}H^aM(X)_{\mathbb{Q}}\right)\oplus\left(\oplus_{a\ \textup{odd}}\ ^{\mu}H^aM(Y)_{\mathbb{Q}}\right)\right]=\left[\left(\oplus_{a\ \textup{odd}}\ ^{\mu}H^aM(X)_{\mathbb{Q}}\right)\oplus\left(\oplus_{a\ \textup{even}}\ ^{\mu}H^aM(Y)_{\mathbb{Q}}\right)\right]$$
\endgroup
in $K_0(\DM_{\textup{gm}}(k;\mathbb{Q})^{\heartsuit})$. By assumption, for each smooth projective $k$-variety $Z$, $^{\mu}H^aM(Z)$ is semisimple, and so a numerical motive by the assumption that the semisimple part of the heart is the category of rational numerical motives. Therefore, the last equality holds in $K_0(\textup{Num}(k;\mathbb{Q}))$. By Jannsen's theorem \cite{Jannsen}, $\textup{Num}(k;\mathbb{Q})$ is a semisimple abelian category, and so
\begingroup
    \fontsize{9.5pt}{11pt}\selectfont
$$\left(\oplus_{a\ \textup{even}}\ ^{\mu}H^aM(X)_{\mathbb{Q}}\right)\oplus\left(\oplus_{a\ \textup{odd}}\ ^{\mu}H^aM(Y)_{\mathbb{Q}}\right)\simeq\left(\oplus_{a\ \textup{odd}}\ ^{\mu}H^aM(X)_{\mathbb{Q}}\right)\oplus\left(\oplus_{a\ \textup{even}}\ ^{\mu}H^aM(Y)_{\mathbb{Q}}\right)$$
\endgroup
as numerical motives, and so also in $\DM_{\textup{gm}}(k;\mathbb{Q})^{\heartsuit}$. By Proposition 1.7 of \cite{Beilinson}, this forces us to have $\ ^{\mu}H^aM(X)_{\mathbb{Q}}\simeq \ ^{\mu}H^aM(Y)_{\mathbb{Q}}$ for every $a$. By Proposition 1.4 of Beilinson's \cite{Beilinson}, for each smooth projective $k$-variety $Z$, $M(Z)_{\mathbb{Q}}\simeq\oplus_a\ ^{\mu}H^aM(Z)_{\mathbb{Q}}[-a]$. Consequently, $M(X)_{\mathbb{Q}}\simeq M(Y)_{\mathbb{Q}}$.
\end{proof}
\begin{remark}
\textup{We do not need to actually identify the semisimple part of the heart with rational numerical motives to make the above proof work. We just need to have that the motivic cohomologies of smooth projective varieties are semisimple. Therefore, there is no need to invoke Jannsen's theorem.}
\end{remark}
Using this proposition and motivic integration via the motivic $t$-structure, we can prove the following theorem.
\begin{theorem}\label{motivicmain}
Suppose the motivic $t$-structure conjecture is true for $\DM_{\textup{gm}}(k;\mathbb{Q})$. Then if $X$ and $Y$ are K-equivalent smooth projective $k$-varieties, $M(X)_{\mathbb{Q}}\simeq M(Y)_{\mathbb{Q}}$.
\end{theorem}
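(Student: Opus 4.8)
The plan is to mirror the proof of Theorem~\ref{mainint}, replacing the slice‑filtration measure $\mu_X$ by the motivic measure $\mu^{\mot}_X$ of the previous section and using the \emph{unconditional} injectivity Proposition~\ref{motinjectivity} in place of Proposition~\ref{injectivity}. First I would invoke K‑equivalence: there is a smooth $k$‑variety $Z$ together with proper birational morphisms $f\colon Z\to X$ and $g\colon Z\to Y$ satisfying $f^{*}\omega_{X}\simeq g^{*}\omega_{Y}$. Applying the rational motivic transformation rule (Theorem~\ref{motchangeofvar}) to $f$ and to $g$ with $D=\emptyset$, together with the motivic analogue of the computation in Example~\ref{emptyex} (so that $\int_{X_{\infty}}\mathbb{Q}(\ord_{\emptyset})[2\ord_{\emptyset}]\,d\mu^{\mot}_{X}=[M(X)_{\mathbb{Q}}]$, using $\mu^{\mot}_X(X_\infty)=\pi^X_!\mathbf{1}_X(d)[2d]\simeq M(X)_{\mathbb{Q}}$, and likewise for $Y$), yields
\[
[M(X)_{\mathbb{Q}}]=\int_{Z_{\infty}}\mathbb{Q}(\ord_{K_{Z/X}})[2\ord_{K_{Z/X}}]\,d\mu^{\mot}_{Z},\qquad [M(Y)_{\mathbb{Q}}]=\int_{Z_{\infty}}\mathbb{Q}(\ord_{K_{Z/Y}})[2\ord_{K_{Z/Y}}]\,d\mu^{\mot}_{Z}
\]
in $\cal{M}^{\mot}(k)$, where here I also take for granted that $\mu^{\mot}_Z$ is well defined (the analogue of Proposition~\ref{welldefinedmeasure}, which the paper asserts to hold).

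Next I would use that K‑equivalence forces $K_{Z/X}=K_{Z}-f^{*}K_{X}=K_{Z}-g^{*}K_{Y}=K_{Z/Y}$ as divisor classes, so the two integrands over $Z_{\infty}$ literally coincide (the order function depends only on the isomorphism class of the defining ideal sheaf), whence $[M(X)_{\mathbb{Q}}]=[M(Y)_{\mathbb{Q}}]$ in $\cal{M}^{\mot}(k)$. Since $X$ and $Y$ are smooth projective, purity together with the weight bound used in the proof of Lemma~\ref{motstabilizationlemma} identifies $M(X)_{\mathbb{Q}}$ and $M(Y)_{\mathbb{Q}}$ with objects of $\DM_{\gm,\geq 0}$, and the class $[M(X)_{\mathbb{Q}}]\in\cal{M}^{\mot}(k)$ is the image of $[M(X)_{\mathbb{Q}}]\in K_{0}(\DM_{\gm,\geq 0})$ under the natural homomorphism $K_{0}(\DM_{\gm,\geq 0})\to\cal{M}^{\mot}(k)$. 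By Proposition~\ref{motinjectivity} this homomorphism is injective, so $[M(X)_{\mathbb{Q}}]=[M(Y)_{\mathbb{Q}}]$ already in $K_{0}(\DM_{\gm,\geq 0})$; pushing forward along $K_{0}(\DM_{\gm,\geq 0})\to K_{0}(\DM_{\gm}(k;\mathbb{Q}))$ gives $[M(X)_{\mathbb{Q}}]=[M(Y)_{\mathbb{Q}}]$ in $K_{0}(\DM_{\gm}(k;\mathbb{Q}))$. Finally, Proposition~\ref{propmain} upgrades this equality of classes to an equivalence $M(X)_{\mathbb{Q}}\simeq M(Y)_{\mathbb{Q}}$, as required.

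As for where the difficulty sits: given the machinery already assembled, the argument above is essentially formal — the exact analogue of Theorem~\ref{mainint} followed by Proposition~\ref{propmain} — and I do not expect any genuinely new obstacle in writing it out. The real content has been absorbed into two earlier inputs that I would cite rather than reprove: (i) Proposition~\ref{motinjectivity}, whose point is that the $t$‑structure‑completed tower admits fully faithful sections $i^{\mot}_{n}$, so that the ``slice'' category is $\DM_{\gm,\geq 0}$ itself and no analogue of the injectivity conjecture is needed; and (ii) Proposition~\ref{propmain}, which rests on the theorem of the heart, Jannsen's semisimplicity of numerical motives, and Beilinson's results on the motivic $t$‑structure. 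The only place calling for a line of care is checking that $M(X)_{\mathbb{Q}}$ really lies in $\DM_{\gm,\geq 0}$ for $X$ smooth projective, which follows from purity and the Weil‑type weight bound already invoked in Lemma~\ref{motstabilizationlemma}; everything else is bookkeeping with the transformation rule.
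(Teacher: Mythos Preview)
Your proposal is correct and follows essentially the same approach as the paper's own proof: replace $\mu_X$ by $\mu_X^{\mot}$ in the argument of Theorem~\ref{mainint} to get equality in $\cal{M}^{\mot}(k)$, apply Proposition~\ref{motinjectivity} to lift to $K_0(\DM_{\gm,\geq 0})$, map to $K_0(\DM_{\gm}(k;\mathbb{Q}))$, and conclude via Proposition~\ref{propmain}. If anything, you spell out more of the intermediate checks (e.g.\ that $M(X)_{\mathbb{Q}}\in\DM_{\gm,\geq 0}$) than the paper does.
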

\begin{proof}
Replacing integration with respect to $\mu_X$ in the proof of Theorem~\ref{mainint} with $\mu_X^{\mot}$, we obtain that $[M(X)_{\mathbb{Q}}]=[M(Y)_{\mathbb{Q}}]$ in $\cal{M}^{\mot}(k)$. Using Proposition~\ref{motinjectivity}, we obtain that $[M(X)_{\mathbb{Q}}]=[M(Y)_{\mathbb{Q}}]$ in $K_0(\DM_{\gm,\geq 0})$. The injection $\DM_{\gm,\geq 0}\hookrightarrow\DM_{\gm}(k;\mathbb{Q})$ gives us a map on $K_0$, from which we obtain the equality $[M(X)_{\mathbb{Q}}]=[M(Y)_{\mathbb{Q}}]$ in $K_0(\DM_{\gm}(k;\mathbb{Q}))$. Proposition~\ref{propmain} gives us $M(X)_{\mathbb{Q}}\simeq M(Y)_{\mathbb{Q}}$, as required.
\end{proof}
All of these suggest the following conjecture, which is a weaker version of Conjecture~\ref{wang2} above.
\begin{conjecture}\label{wang}
If $X$ and $Y$ are K-equivalent smooth projective $k$-varieties, then $M(X)_{\mathbb{Q}}\simeq M(Y)_{\mathbb{Q}}$.
\end{conjecture}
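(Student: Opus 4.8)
The plan is to push the K-equivalence diagram through the integration machinery of Section~\ref{rationalintegration}, mirroring the proof of Theorem~\ref{mainint}, and then to upgrade the resulting equality of classes in $K_{0}$ to an honest equivalence of motives via Proposition~\ref{propmain}. Since $X$ and $Y$ are K-equivalent, fix a smooth $k$-variety $Z$ with proper birational morphisms $f\colon Z\to X$ and $g\colon Z\to Y$ satisfying $f^{*}\omega_{X}\simeq g^{*}\omega_{Y}$. First I would apply the rational motivic transformation rule (Theorem~\ref{motchangeofvar}) to $f$ and to $g$, taking the effective divisor to be $\emptyset$, and combine it with the motivic analogue of Example~\ref{emptyex}, which identifies $\int_{X_{\infty}}\mathbb{Q}(\ord_{\emptyset})[2\ord_{\emptyset}]\,d\mu^{\mot}_{X}$ with $[M(X)_{\mathbb{Q}}]$ and similarly for $Y$. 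This yields $[M(X)_{\mathbb{Q}}]=\int_{Z_{\infty}}\mathbb{Q}(\ord_{K_{Z/X}})[2\ord_{K_{Z/X}}]\,d\mu^{\mot}_{Z}$ and $[M(Y)_{\mathbb{Q}}]=\int_{Z_{\infty}}\mathbb{Q}(\ord_{K_{Z/Y}})[2\ord_{K_{Z/Y}}]\,d\mu^{\mot}_{Z}$ in $\cal{M}^{\mot}(k)$.

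Next I would use that K-equivalence forces $K_{Z/X}=K_{Z}-f^{*}K_{X}=K_{Z}-g^{*}K_{Y}=K_{Z/Y}$, so the two integrals over $Z_{\infty}$ are literally the same element of $\cal{M}^{\mot}(k)$, giving $[M(X)_{\mathbb{Q}}]=[M(Y)_{\mathbb{Q}}]$ there. Applying the injectivity of $K_{0}(\DM_{\gm,\geq 0})\to\cal{M}^{\mot}(k)$ from Proposition~\ref{motinjectivity} promotes this to an equality in $K_{0}(\DM_{\gm,\geq 0})$, and pushing forward along $\DM_{\gm,\geq 0}\hookrightarrow\DM_{\gm}(k;\mathbb{Q})$ lands it in $K_{0}(\DM_{\gm}(k;\mathbb{Q}))$. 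Finally I would invoke Proposition~\ref{propmain}, whose hypotheses are exactly the motivic $t$-structure conjecture, to conclude $M(X)_{\mathbb{Q}}\simeq M(Y)_{\mathbb{Q}}$ from the equality of classes; internally this runs through the theorem of the heart, the semisimplicity of the motivic cohomologies $^{\mu}H^{a}M(Z)_{\mathbb{Q}}$ of smooth projective $Z$ together with the semisimplicity of numerical motives, and Beilinson's reconstruction $M(Z)_{\mathbb{Q}}\simeq\bigoplus_{a}{}^{\mu}H^{a}M(Z)_{\mathbb{Q}}[-a]$.

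The conceptual obstacle is that essentially every step silently depends on the motivic $t$-structure conjecture: it is what makes the motivic measure $\mu^{\mot}_{X}$ well defined (through Lemma~\ref{motstabilizationlemma}, which uses the conservativity of the realization functors that follows from the heart being Tannakian), it is what underlies the weight bounds behind $\vdim^{\mot}$, and it is what powers Proposition~\ref{propmain}. The step I expect to be the real heart of the matter --- the one that genuinely goes beyond the slice-filtration argument of Theorem~\ref{mainint} --- is the last passage from a $K_{0}$-equality to an actual isomorphism: it requires not merely the existence of a motivic $t$-structure but the additional semisimplicity properties packaged into Conjecture~\ref{tstructureconjecture}, and it is precisely this extra input that lets one dispense with the auxiliary Chow summand $P$ that appears in the unconditional Theorem~\ref{mainint2}.
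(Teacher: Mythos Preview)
The statement you were asked to prove is Conjecture~\ref{wang}, and the paper does not prove it: it is explicitly stated as an open conjecture, with the surrounding text saying only that Theorem~\ref{motivicmain} ``suggests that it is very likely that this last conjecture is true.'' What you have written is, step for step, the paper's proof of Theorem~\ref{motivicmain}, which is the \emph{conditional} statement: assuming the motivic $t$-structure conjecture, K-equivalent smooth projective $k$-varieties have equivalent rational motives. Your argument is correct as a proof of that theorem and matches the paper's approach exactly (transformation rule with $D=\emptyset$, Example~\ref{emptyex}, equality of relative canonical divisors, Proposition~\ref{motinjectivity}, then Proposition~\ref{propmain}).

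You clearly see the gap yourself: in your final paragraph you note that every step ``silently depends on the motivic $t$-structure conjecture.'' That is precisely why the paper separates the two statements. Theorem~\ref{motivicmain} carries the hypothesis of Conjecture~\ref{tstructureconjecture}; Conjecture~\ref{wang} drops it, and no argument in the paper (nor yours) establishes the conclusion without it. So your proposal is not a proof of the target statement --- it is a correct reproduction of the proof of the weaker, conditional Theorem~\ref{motivicmain}, and the missing ingredient for the conjecture itself is an unconditional replacement for the motivic $t$-structure (or some entirely different mechanism), which remains open.
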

For rational coefficients, it is highly unexpected that the motivic $t$-structure conjecture above is false, and so Theorem~\ref{motivicmain} suggests that it is very likely that this last conjecture is true. The goal for the future is to refine the construction of motivic integration in order to unconditionally prove this last conjecture and possibly its more general integral version.\\
\\
The results in this subsection are also related to D-equivalence and Orlov's conjecture. Consider the following two important conjectures in noncommutative geometry due to Bondal-Orlov and Orlov.
\begin{conjecture}[Bondal-Orlov \cite{BO}]\label{BondalOrlov}
If $X$ and $Y$ are birationally equivalent smooth projective Calabi-Yau complex varieties, then $\dcat(X)\simeq\dcat(Y)$.
\end{conjecture}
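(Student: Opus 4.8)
Since this is a famous open problem, the honest answer is a description of the strategy by which one would attack it together with an indication of where it currently breaks down. The plan is to reduce the conjecture to a purely local statement about flopping contractions via the minimal model program, and then to attack that local statement by Fourier--Mukai methods.

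First I would use the structure theory of minimal models in the Calabi--Yau case. If $X$ and $Y$ are birationally equivalent smooth projective Calabi--Yau varieties over $\mathbb{C}$, then, since $K_X$ and $K_Y$ are numerically trivial, $X$ and $Y$ are both minimal models of a common birational class; by results of Kawamata and Koll\'ar any two such minimal models are connected by a finite chain of flops. It therefore suffices to treat a single flop: a diagram $\pi\colon X \to \overline{X} \leftarrow X^{+}\colon \pi^{+}$ of small (codimension $\ge 2$ exceptional locus) contractions onto a common, possibly singular, base, with $X$ and $X^{+}$ smooth projective, $K_X$ numerically $\pi$-trivial, and $X^{+}$ the corresponding flop, and to show $\dcat(X) \simeq \dcat(X^{+})$.

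Next I would produce a candidate equivalence as a Fourier--Mukai transform $\Phi_{\mathcal{P}}\colon \dcat(X) \to \dcat(X^{+})$ with kernel $\mathcal{P} := \mathcal{O}_{X \times_{\overline{X}} X^{+}}$ (or a suitable derived refinement). To prove $\Phi_{\mathcal{P}}$ is an equivalence I would verify Bridgeland's criterion: it is enough that $\{\Phi_{\mathcal{P}}(\mathcal{O}_x)\}_{x}$, as $x$ ranges over closed points of $X$, is a spanning class whose members satisfy $\Hom^{i}(\Phi_{\mathcal{P}}(\mathcal{O}_x), \Phi_{\mathcal{P}}(\mathcal{O}_y)) = 0$ for all $i$ whenever $x \neq y$, with $\Hom^{0}(\Phi_{\mathcal{P}}(\mathcal{O}_x), \Phi_{\mathcal{P}}(\mathcal{O}_x)) = \mathbb{C}$ and all $\Hom^{i}(\Phi_{\mathcal{P}}(\mathcal{O}_x), \Phi_{\mathcal{P}}(\mathcal{O}_x))$ supported in degrees $0 \le i \le \dim X$. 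Since $\pi$ and $\pi^{+}$ are isomorphisms over the complement of a locus of codimension $\ge 2$, the functor is the identity there, so the whole verification collapses to a local computation near the exceptional loci of $\pi$ and $\pi^{+}$.

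The hard part is precisely this local computation in dimension at least $4$. For threefolds it was carried out by Bridgeland, who exploited the complete classification of threefold flopping contractions (whose exceptional fibres are trees of rational curves); higher-dimensional standard flops and Mukai flops were handled by Kawamata and Namikawa with specially chosen kernels. But there is no structure theory for the fibres of a general flopping contraction in higher dimension, and it is not even clear that $\mathcal{O}_{X \times_{\overline{X}} X^{+}}$ is the correct kernel in general --- stratified Mukai flops already require a nontrivially twisted kernel --- so the genuine obstacle is the lack of control over the geometry of arbitrary crepant small contractions. As a consistency check one may note that, since $\mathbb{C}$ has characteristic zero, such $X$ and $Y$ are $K$-equivalent, so Theorem~\ref{mainint} forces $[M(X)] = [M(Y)]$ in $K_0(\DM_{\textup{sl}}^{\textup{eff}}(\mathbb{C};\mathbb{Z}[1/p]))$, and, under Conjecture~\ref{tstructureconjecture}, Theorem~\ref{motivicmain} gives $M(X)_{\mathbb{Q}} \simeq M(Y)_{\mathbb{Q}}$; this is exactly the motivic shadow that a derived equivalence would produce, and is thus at least compatible with the conjecture.
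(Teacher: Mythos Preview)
This statement is a \emph{conjecture} in the paper, not a theorem; the paper offers no proof and explicitly remarks immediately after stating it that ``this conjecture is vastly open,'' noting only that it is known for birational Calabi--Yau varieties of dimension at most $3$ and for birational symplectic $4$-folds. So there is no ``paper's own proof'' to compare your proposal against.

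Your proposal correctly identifies the statement as an open problem and does not pretend otherwise. The outline you give --- reduce to a single flop via Kawamata's theorem that Calabi--Yau minimal models are connected by flops, construct a Fourier--Mukai functor with kernel the structure sheaf of the fibre product, and verify Bridgeland's spanning-class criterion --- is an accurate summary of the standard line of attack, and your identification of the obstruction (no classification of higher-dimensional flopping contractions, and no guarantee that the naive kernel is correct) is also correct. The closing paragraph invoking Theorems~\ref{mainint} and~\ref{motivicmain} as a motivic consistency check is a nice touch and is faithful to how the paper situates the conjecture relative to its own results. In short: you have written a sound and honest discussion of an open problem, which is all that can be asked here; just be aware that the paper itself makes no attempt at a proof, so there is nothing further to match.
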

Though true for birational Calabi-Yau varieties of dimension at most 3 and birational symplectic 4-folds, this conjecture is vastly open. For more detail, see the preprint of J.Wierzba~\cite{Wierzba}.\\
\\
Note that there is a functor
\[\DM_{\textup{gm}}(k;\mathbb{Q})\rightarrow\textup{KMM}(k)_{\mathbb{Q}}\]
sending $M(X)_{\mathbb{Q}}$ to $NM(X)\in\textup{KMM}(k)_{\mathbb{Q}}$, where $\textup{KMM}(k)_{\mathbb{Q}}$ is Kontsevich's category of rational noncommutative motives. See \cite{Tabuada2} for details. We thus obtain the following corollary of Theorem~\ref{motivicmain}.
\begin{corollary}\label{noncommutativemotivicmain}
Suppose the motivic $t$-structure conjecture is true for $\DM_{\textup{gm}}(k;\mathbb{Q})$. Then if $X$ and $Y$ are K-equivalent smooth projective $k$-varieties, $NM(X)\simeq NM(Y)$ in $\textup{KMM(k)}_{\mathbb{Q}}$.
\end{corollary}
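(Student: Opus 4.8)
The plan is to deduce this immediately from Theorem~\ref{motivicmain} together with the functoriality of noncommutative motives. First I would invoke Theorem~\ref{motivicmain}: since the motivic $t$-structure conjecture is assumed to hold for $\DM_{\textup{gm}}(k;\mathbb{Q})$ and $X$ and $Y$ are K-equivalent smooth projective $k$-varieties, we obtain an equivalence $M(X)_{\mathbb{Q}}\simeq M(Y)_{\mathbb{Q}}$ in $\DM_{\textup{gm}}(k;\mathbb{Q})$. (Recall that the proof of Theorem~\ref{motivicmain} proceeds by applying the rational motivic transformation rule of Theorem~\ref{motchangeofvar} through a common smooth model $Z$, using the equality $K_{Z/X}=K_{Z/Y}$ of relative canonical divisors, then Proposition~\ref{motinjectivity} to descend the equality of classes to $K_0(\DM_{\gm,\geq 0})$ and finally $K_0(\DM_{\textup{gm}}(k;\mathbb{Q}))$, and then Proposition~\ref{propmain} to upgrade the equality of classes to an honest equivalence.)

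Next I would recall the functor $\Phi\colon\DM_{\textup{gm}}(k;\mathbb{Q})\rightarrow\textup{KMM}(k)_{\mathbb{Q}}$ used in the statement and constructed in \cite{Tabuada2}, which on commutative motives of smooth projective varieties sends $M(X)_{\mathbb{Q}}$ to the rational noncommutative motive $NM(X)$ associated to the dg-category of perfect complexes on $X$. Since $\Phi$ is a functor of (stable $\infty$-, equivalently triangulated) categories, it carries equivalences to equivalences; applying $\Phi$ to the equivalence $M(X)_{\mathbb{Q}}\simeq M(Y)_{\mathbb{Q}}$ produced in the previous step yields $NM(X)\simeq NM(Y)$ in $\textup{KMM}(k)_{\mathbb{Q}}$, as required.

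Thus the corollary is a formal consequence of Theorem~\ref{motivicmain} and introduces no new difficulty: the entire weight of the argument, and in particular all of its conditionality on the motivic $t$-structure conjecture, has already been discharged in the proof of Theorem~\ref{motivicmain}. I do not expect any genuine obstacle here. The only points requiring mild care are bookkeeping ones: citing \cite{Tabuada2} precisely for the existence of $\Phi$ and for the identification $\Phi(M(X)_{\mathbb{Q}})\simeq NM(X)$, and noting that features such as the collapse of Tate twists under $\Phi$ or the switch between homological and cohomological conventions are harmless, since we only need to transport an abstract equivalence rather than track any grading or twist.
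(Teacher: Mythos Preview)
Your proposal is correct and matches the paper's approach exactly: the paper simply notes the existence of the functor $\DM_{\textup{gm}}(k;\mathbb{Q})\rightarrow\textup{KMM}(k)_{\mathbb{Q}}$ sending $M(X)_{\mathbb{Q}}$ to $NM(X)$ (citing \cite{Tabuada2}) and states the corollary as an immediate consequence of Theorem~\ref{motivicmain}. Your added parenthetical recap of how Theorem~\ref{motivicmain} is proved and your remarks on twists and conventions are harmless elaboration, not a different route.
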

We know this to be true unconditionally if we replace K-equivalence with D-equivalence (combine Proposition 1 of \cite{BO} with the functor in \cite{Tabuada} or \cite{Tabuada2}). In light of Conjecture~\ref{BondalOrlov}, it is still open if K-equivalence, at least in the setting of birational smooth projective complex varieties, implies D-equivalence. What this last corollary states is that if the expected motivic $t$-structure on rational geometric Voevodsky motives exists, then K-equivalence implies the equivalence of noncommutative motives in the sense of Kontsevich. Consequently, many of the noncommutative cohomology theories should agree for K-equivalent smooth projective varieties.\\
\\
The second conjecture concerns the relation between the bounded derived category of coherent sheaves on smooth projective complex varieties and their rational motives.
\begin{conjecture}[Orlov \cite{Orlov}]\label{Orlov}
If $X$ and $Y$ are smooth projective complex varieties such that $\dcat(X)\simeq\dcat(Y)$ as $\mathbb{C}$-linear triangulated categories, then $M(X)_{\mathbb{Q}}\simeq M(Y)_{\mathbb{Q}}$ in $\DM_{\textup{gm}}(\mathbb{C},\mathbb{Q})$.
\end{conjecture}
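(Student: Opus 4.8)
The plan is not to attack Conjecture~\ref{Orlov} in full — that is out of reach with the present tools — but to establish it in the two cases where a derived equivalence is known to have birational-geometric consequences, and to deduce those cases directly from the integration machinery. Fix smooth projective complex varieties $X$ and $Y$ with $\dcat(X)\simeq\dcat(Y)$ as $\mathbb{C}$-linear triangulated categories, and suppose in addition that either $\kappa(X)=\dim X$ or $\kappa(X,-K_X)=\dim X$. The first and only genuinely external input is a theorem of Kawamata \cite{Kawamata}: under either bigness hypothesis on $\pm K_X$, a $\mathbb{C}$-linear derived equivalence between $X$ and $Y$ forces them to be K-equivalent, i.e.\ there is a smooth projective $Z$ with proper birational $f\colon Z\to X$ and $g\colon Z\to Y$ and $f^*\omega_X\simeq g^*\omega_Y$. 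Informally, the Serre functor of $\dcat$ remembers $\omega_X$, and bigness of $\pm K_X$ upgrades the abstract equivalence to a birational correspondence compatible with the (anti)canonical rings, which is exactly the K-equivalence datum.

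With $X$ and $Y$ now K-equivalent, the remainder is a citation of results already proved. Route one: assuming the injectivity conjecture $I(\mathbb{C};R)$, Theorem~\ref{mainint2} applied to the pair $(X,Y)$ produces an effective Chow motive $P\in\Chow^{\textup{eff}}(\mathbb{C};R)$ with $M(X)\oplus P\simeq M(Y)\oplus P$ in $\DM_{\textup{gm}}^{\textup{eff}}(\mathbb{C};R)$; this is Orlov's conclusion up to a common summand, and it becomes an honest isomorphism whenever $\Chow^{\textup{eff}}(\mathbb{C};R)$ is Krull--Schmidt. Route two: with $R=\mathbb{Q}$ and assuming the motivic $t$-structure conjecture, Theorem~\ref{motivicmain} applied to $(X,Y)$ gives $M(X)_{\mathbb{Q}}\simeq M(Y)_{\mathbb{Q}}$ in $\DM_{\textup{gm}}(\mathbb{C};\mathbb{Q})$ with nothing to cancel — so the existence of a motivic $t$-structure implies Orlov's conjecture for Fano and general type complex varieties outright. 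In both routes the engine is the transformation rule (Theorem~\ref{changeofvar}, resp.\ Theorem~\ref{motchangeofvar}), invoked exactly as in the proof of Theorem~\ref{mainint} via the diagram $X\xleftarrow{f}Z\xrightarrow{g}Y$ together with $K_{Z/X}=K_Z-f^*K_X=K_Z-g^*K_Y=K_{Z/Y}$. What I would actually record is therefore the conditional statement of Theorem~\ref{motivicOrlov}, not an unconditional proof of Conjecture~\ref{Orlov}.

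The hard part is precisely the gap that keeps this conditional and restricted. First, Kawamata's bridge from D-equivalence to K-equivalence is available only under bigness of $K_X$ or $-K_X$; for varieties of intermediate Kodaira dimension there is no known mechanism forcing a derived equivalence to respect any birational structure, and flops — which leave $\dcat$ unchanged but are detected only delicately at the level of motives — show that any such mechanism must be subtle. Second, the motivic conclusion remains hostage to either $I(\mathbb{C};R)$ or the motivic $t$-structure conjecture; removing this dependence would require the categorified refinement of motivic integration alluded to in the introduction, which is not available here. So the realistic target is the conditional corollary, and the genuine difficulty — extending the implication D-equivalence $\Rightarrow$ K-equivalence beyond the (anti)big case — lies outside the scope of the integration formalism itself.
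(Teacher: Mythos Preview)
Your proposal is correct and matches the paper's own treatment: the statement is a conjecture, not a theorem, and the paper does not prove it in full but instead records exactly the conditional partial results you describe (Theorems~\ref{motivicOrlov1} and~\ref{motivicOrlov}), obtained by composing Kawamata's theorem with Theorem~\ref{mainint2} (under $I(\mathbb{C};R)$) and Theorem~\ref{motivicmain} (under the motivic $t$-structure). Your identification of the remaining gaps --- the restriction to big $\pm K_X$ and the dependence on either the injectivity conjecture or the motivic $t$-structure --- also coincides with the paper's discussion.
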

Equivalences of such bounded derived categories are given by Fourier-Mukai transforms, and under certain conditions on the kernel of this transform, the latter conjecture is true \cite{Orlov}. However, the unconditional conjecture is still vastly open.
\begin{remark}\textup{Conjecture~\ref{Orlov} is false if we require the equivalence of \textit{integral} motives in the conclusion. Indeed, there are D-equivalent Calabi-Yau threefolds with non-isomorphic third integral singular cohomology groups \cite{Addington}. Furthermore, the converse is also false. Indeed, take $X$ to be $\mathbb{P}_k^2$ blown-up at a point and $Y$ to be $\mathbb{P}_k^1\times_k\mathbb{P}^1_k$. Both have motives $\mathbf{1}_k\oplus\mathbf{1}_k(1)[2]^{\oplus 2}\oplus\mathbf{1}_k(2)[4]$ and are both Fano. If they were D-equivalent, then they would be isomorphic (use result of Bondal and Orlov \cite{BO02}), which is not true.}
\end{remark}
\begin{remark}
\textup{It is a result of Balmer \cite{Balmer} in $\otimes$-triangular geometry that quasi-compact and quasi-separated schemes $X$ can be recovered from their $\otimes$-triangulated categories of perfect complexes $D_{\textup{perf}}(X)$. This is not true if we forget the $\otimes$-structure, as shown by Mukai that there are abelian varieties $A$ such that $A\not\simeq\widehat{A}:=\textup{Pic}^0(A)$ but are D-equivalent. Therefore, Orlov's conjecture states that if we forget the $\otimes$-structure on $D_{\textup{perf}}(X)\simeq \dcat(X)$, for $X$ any smooth projective complex variety, we can at least recover the rational (Chow) motive of $X$. From another perspective, we have Gabriel's theorem stating that if $\textup{Coh}(X)\simeq\textup{Coh}(Y)$ as abelian categories, then $X$ and $Y$ are isomorphic \cite{Gabriel}. Therefore, another perhaps more enlightening interpretation of Orlov's conjecture is that considering derived equivalence on the left hand side forces us to consider equivalence in Voevodsky's category of rational geometric motives, conjecturally a bounded derived category of mixed motives.}
\end{remark}
In any case, the combination of the above two conjectures suggests that if $X$ and $Y$ are birational smooth projective Calabi-Yau complex varieties, then $M(X)_{\mathbb{Q}}\simeq M(Y)_{\mathbb{Q}}$, giving more evidence for the validity of the rational version of the conjecture of Wang.\\
\\
A priori, there is no obvious connection between the existence of a motivic $t$-structure on $\DM_{\textup{gm}}(\mathbb{C};\mathbb{Q})$ and conjecture~\ref{Orlov} of Orlov. However, a theorem of Kawamata states the following.
\begin{theorem}[Part (2) of Theorem 2.3 of \cite{Kawamata}] Suppose $X$ and $Y$ are D-equivalent smooth projective complex varieties such that $X$ is of general type ($\kappa(X)=\dim X$, i.e. maximal Kodaira dimension) or $\kappa(X,-K_X)=\dim X$. Then they are K-equivalent.\end{theorem}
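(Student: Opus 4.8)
The plan is to recover Kawamata's argument, which passes through Fourier--Mukai kernels. First I would invoke Orlov's representability theorem: the $\mathbb{C}$-linear exact equivalence $\dcat(X)\simeq\dcat(Y)$ is realized by a Fourier--Mukai transform $\Phi_{\cal{E}}$ with kernel $\cal{E}\in\dcat(X\times Y)$, and I write $p\colon X\times Y\to X$ and $q\colon X\times Y\to Y$ for the projections. Since the dimension of a smooth projective variety is recovered from its bounded derived category of coherent sheaves (for instance via Hochschild homology, or via the numerical behaviour of the Serre functor), one automatically has $\dim X=\dim Y=:n$.

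Next I would exploit compatibility with Serre functors. The Serre functor of $\dcat(X)$ is $-\otimes\omega_X[n]$, an equivalence commutes with Serre functors, and the kernel of the Serre functor of $X$ is $\mathcal{O}_{\Delta_X}\otimes\omega_X[n]$; translating this commutation into the convolution calculus of Fourier--Mukai kernels should produce an isomorphism
\[\cal{E}\otimes p^*\omega_X\;\simeq\;\cal{E}\otimes q^*\omega_Y\]
in $\dcat(X\times Y)$, the shifts cancelling because $\dim X=\dim Y$. Restricting to an irreducible component $Z$ of $\operatorname{Supp}(\cal{E})$ over which a cohomology sheaf of $\cal{E}$ has positive generic rank, this forces $(p^*\omega_X)|_Z$ and $(q^*\omega_Y)|_Z$ to agree up to torsion; after pulling back to a resolution of $Z$ they become honestly isomorphic line bundles.

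The heart of the matter, and where the numerical hypothesis is indispensable, is to produce a component $Z\subseteq\operatorname{Supp}(\cal{E})$ such that both $p|_Z\colon Z\to X$ and $q|_Z\colon Z\to Y$ are birational. Since $\Phi_{\cal{E}}$ is an equivalence, the supports of the images $\Phi_{\cal{E}}(\mathcal{O}_x)$ of skyscraper sheaves sweep out all of $Y$, which forces some component to dominate both factors; the hard part will be upgrading ``dominant'' to ``birational'', and this is exactly where $\kappa(X)=\dim X$ or $\kappa(X,-K_X)=\dim X$ enters --- the point being that the isomorphism $f^*\omega_X\simeq g^*\omega_Y$ on a common resolution, combined with maximality of the Iitaka dimension of $K_X$ (resp.\ of $-K_X$), is incompatible with $p|_Z$ having positive-dimensional fibres over a general point of $X$.

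Finally I would conclude: taking $W\to Z$ a resolution and $f\colon W\to X$, $g\colon W\to Y$ the induced proper birational morphisms, restricting the kernel identity to $Z$ and pulling back to $W$ yields $f^*\omega_X\simeq g^*\omega_Y$, which is precisely the definition of K-equivalence. I expect the third step to be the main obstacle; without a hypothesis of this kind the statement genuinely fails --- for instance an abelian variety and its dual are derived equivalent but in general not K-equivalent, an example of this type already being mentioned earlier in the introduction.
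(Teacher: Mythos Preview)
The paper does not prove this statement at all: it is quoted verbatim as Part~(2) of Theorem~2.3 of Kawamata's paper \cite{Kawamata} and used as a black box to deduce Theorems~\ref{motivicOrlov1} and~\ref{motivicOrlov}. So there is no ``paper's own proof'' to compare against; the author simply imports the result.

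That said, your sketch is a faithful outline of Kawamata's actual argument. The use of Orlov's representability, the Serre-functor identity yielding $\cal{E}\otimes p^*\omega_X\simeq\cal{E}\otimes q^*\omega_Y$, the passage to a component $Z$ of the support dominating both factors, and the identification of the hypothesis $\kappa(X,\pm K_X)=\dim X$ as what forces $p|_Z$ and $q|_Z$ to be generically finite (hence birational, after a degree count using that $\Phi_{\cal{E}}$ is an equivalence) --- all of this matches Kawamata. You are also right that step three is where the real content lies and that your write-up does not yet supply it: Kawamata's argument there compares Iitaka dimensions along $f$ and $g$ and uses that a big line bundle cannot pull back from something of smaller dimension; filling this in carefully is what would turn your strategy into a proof. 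Your closing example is apt as motivation, though note that abelian varieties have $\kappa=0$, so they illustrate the failure outside the hypothesis rather than a counterexample to the theorem itself.
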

In combination with Theorem~\ref{mainint2} and Theorem~\ref{motivicmain}, we obtain the following theorems.
\begin{theorem}\label{motivicOrlov1}
Suppose the injectivity conjecture $I(\mathbb{C};R)$ is true, and suppose $X$ and $Y$ are smooth projective complex varieties such that $\kappa(X)=\dim X$ or $\kappa(X,-K_X)=\dim X$. Then $\dcat(X)\simeq\dcat(Y)\implies M(X)\oplus P\simeq M(Y)\oplus P$ for some Chow motive $P$ (with coefficients in $R$).
\end{theorem}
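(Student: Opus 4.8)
The plan is to deduce this directly from Kawamata's theorem together with Theorem~\ref{mainint2}; in fact it is a formal consequence of the two, and no genuinely new argument is needed. The point is that under the stated hypotheses a derived equivalence already upgrades to a K-equivalence: if $X$ and $Y$ are smooth projective complex varieties with $\dcat(X)\simeq\dcat(Y)$ as $\mathbb{C}$-linear triangulated categories, and $X$ satisfies $\kappa(X)=\dim X$ or $\kappa(X,-K_X)=\dim X$, then Part (2) of Theorem 2.3 of \cite{Kawamata} (quoted above) gives a smooth projective $Z$ with proper birational $f\colon Z\to X$ and $g\colon Z\to Y$ and $f^*\omega_X\simeq g^*\omega_Y$, i.e. $X$ and $Y$ are K-equivalent. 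Here it matters that the Kodaira/anticanonical hypothesis is imposed on one of the two varieties; K-equivalence is symmetric, so it is irrelevant which one, but without such a hypothesis D-equivalence need not force K-equivalence (cf. the $\mathbb{P}^2$-blow-up versus $\mathbb{P}^1\times\mathbb{P}^1$ example recalled in the excerpt).

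With the K-equivalence in hand I would simply invoke Theorem~\ref{mainint2}. Its hypotheses are met: $\mathbb{C}$ is a perfect field of characteristic exponent $p=1$, $R$ is by convention a ring in which $p$ is invertible (so any ring qualifies), $X$ and $Y$ are smooth projective, and the injectivity conjecture $I(\mathbb{C};R)$ is precisely the standing assumption of the statement. Theorem~\ref{mainint2} then produces an effective Chow motive $P\in\Chow^{\textup{eff}}(\mathbb{C};R)$ with $M(X)\oplus P\simeq M(Y)\oplus P$ in $\Chow^{\textup{eff}}(\mathbb{C};R)^{op}\hookrightarrow\DM_{\textup{gm}}^{\textup{eff}}(\mathbb{C};R)$, which is the assertion to be proved (the passage through $K_0$ and Bondarko's isomorphism $K_0(\Chow^{\textup{eff}}(\mathbb{C};R)^{op})\xrightarrow{\sim}K_0(\DM_{\textup{gm}}^{\textup{eff}}(\mathbb{C};R))$ is already carried out inside the proof of Theorem~\ref{mainint2}, so $P$ is genuinely a Chow motive with $R$-coefficients).

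The only real content of the theorem is external to the integration machinery: it is Kawamata's theorem, which rests on the minimal model program and on the structure theory of Fourier--Mukai kernels, and which I would use as a black box. Given that input, the argument above is immediate. If one wants the sharper conclusion with $P=0$, one either assumes $\Chow^{\textup{eff}}(\mathbb{C};R)$ is Krull--Schmidt (Corollary~\ref{Kimuramainint2}) or, rationally, passes to the motivic $t$-structure theory via Theorem~\ref{motivicmain}; but for the present statement, up to a common Chow summand, nothing further is required.
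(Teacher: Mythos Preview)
Your proposal is correct and matches the paper's own argument: the paper simply combines Kawamata's theorem (D-equivalence implies K-equivalence under the stated Kodaira/anticanonical hypothesis) with Theorem~\ref{mainint2}, exactly as you do. No additional ingredients are used.
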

We also obtain the following consequence of the exitence of the motivic $t$-structure.
\begin{theorem}\label{motivicOrlov}
Suppose the motivic $t$-structure conjecture is true for $\DM_{\textup{gm}}(\mathbb{C};\mathbb{Q})$. Then for $X$ and $Y$ smooth projective complex varieties such that $\kappa(X)=\dim X$ or $\kappa(X,-K_X)=\dim X$, $\dcat(X)\simeq\dcat(Y)\implies M(X)_{\mathbb{Q}}\simeq M(Y)_{\mathbb{Q}}$.
\end{theorem}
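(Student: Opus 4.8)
The plan is to reduce the statement immediately to Theorem~\ref{motivicmain} by quoting Kawamata's comparison of $D$-equivalence with $K$-equivalence. First I would record the trivial setup check: $X$ and $Y$ are smooth projective over the perfect field $\mathbb{C}$, whose exponential characteristic is $1$ and hence invertible in $R=\mathbb{Q}$, so the running conventions of Section~\ref{rationalintegration} apply, and once we assume the motivic $t$-structure conjecture for $\DM_{\textup{gm}}(\mathbb{C};\mathbb{Q})$ all hypotheses of Theorem~\ref{motivicmain} are in force.

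Next, assuming $\dcat(X)\simeq\dcat(Y)$ as $\mathbb{C}$-linear triangulated categories, I would apply Part (2) of Theorem 2.3 of \cite{Kawamata}: since either $\kappa(X)=\dim X$ or $\kappa(X,-K_X)=\dim X$, the $D$-equivalence forces $X$ and $Y$ to be $K$-equivalent, i.e. there exist a smooth projective $\mathbb{C}$-variety $Z$ and proper birational morphisms $f\colon Z\to X$, $g\colon Z\to Y$ with $f^*\omega_X\simeq g^*\omega_Y$. I would also note in passing that a Fourier--Mukai equivalence preserves dimension and, by Orlov's analysis of the kernel, the Kodaira and anti-Kodaira dimensions, so the positivity hypothesis is symmetric in $X$ and $Y$ and the statement is well-posed regardless of which of the two varieties we impose it on.

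Finally I would feed this $K$-equivalence into Theorem~\ref{motivicmain}, which under the motivic $t$-structure conjecture yields $M(X)_{\mathbb{Q}}\simeq M(Y)_{\mathbb{Q}}$ in $\DM_{\textup{gm}}(\mathbb{C};\mathbb{Q})$, completing the proof. No new analytic or categorical input is needed: the transformation rule (Theorem~\ref{motchangeofvar}), the injectivity Proposition~\ref{motinjectivity}, the lifting Proposition~\ref{propmain}, the theorem of the heart, and Jannsen semisimplicity were all already used inside the proof of Theorem~\ref{motivicmain}, and Kawamata's theorem is invoked as a black box.

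The hard part will not be any step of this particular argument; it is genuinely a short chain of citations. The only points requiring care are bookkeeping: checking that the minimal-model and flop arguments underlying Kawamata's theorem really produce a $K$-equivalence in the precise sense used in this paper (a common smooth projective roof $Z$ together with the isomorphism $f^*\omega_X\simeq g^*\omega_Y$, not merely a birational correspondence), and confirming that the stated positivity hypothesis is attached to the correct variety. The genuinely difficult refinements lie elsewhere --- obtaining the integral conclusion, or improving Theorem~\ref{motivicOrlov1} so that $P=0$ without a Krull--Schmidt hypothesis --- but for the rational statement under the motivic $t$-structure conjecture the present theorem follows formally.
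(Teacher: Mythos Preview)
Your proposal is correct and matches the paper's own argument: the paper presents this theorem as an immediate consequence of Kawamata's Theorem 2.3(2) combined with Theorem~\ref{motivicmain}, without giving a separate proof. Your additional remarks on the symmetry of the positivity hypothesis and on what is packaged inside Theorem~\ref{motivicmain} are accurate elaborations but not required for the argument.
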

In other words, if the motivic $t$-structure conjecture is true, then Orlov's Conjecture~\ref{Orlov} is true for smooth projective complex varieties with $\kappa(X)=\dim X$ or $\kappa(X,-K_X)=\dim X$.
\begin{remark}\textup{We remark that when $X$ and $Y$ are smooth projective complex varieties such that $X$ has ample or anti-ample canonical bundle, then a $\mathbb{C}$-linear equivalence of bounded derived categories of coherent sheaves implies an isomorphism of $X$ and $Y$. This is a theorem due to Bondal and Orlov \cite{BO2}.}
\end{remark}
In a good sense, the vast majority of smooth projective complex varieties are of general type. For example, in the moduli space of curves, the space of curves of Kodaira dimension $-\infty$ (genus $0$) is a point, the space of curves of Kodaira dimension $0$ (genus $1$) is $1$-dimensional, while the space of curves of genus $g\geq 2$ (general type) has dimension $3g-3$. Also, a hypersurface of degree $d$ in $\mathbb{P}^n$ is of general type if and only if $d>n+1$, and so \textit{most} hypersurfaces are of general type. Consequently, conditional on the existence of the expected motivic $t$-structure on $\DM_{\textup{gm}}(\mathbb{C};\mathbb{Q})$, we have established Conjecture~\ref{Orlov} for the vast majority of smooth projective complex varieties. Most likely, any possible counterexample to Orlov's conjecture can only be found among Calabi-Yau varieties. If we develop a more refined theory of integration for Voevodsky motives, we may be able to unconditionally prove that K-equivalent smooth projective varieties have equivalent (Chow) motives, in which case we will make unconditional the theorems and corollaries above that are conditional on the existence of a motivic $t$-structure, the validity of the injectivity conjectures, or the Krull-Schmidt property. This technique of passing through K-equivalence to show that D-equivalence implies equivalence of rational motives will not work for all smooth projective varieties since D-equivalence does not in general imply K-equivalence. Uehara has provided an example of two birational smooth projective complex varieties that are D-equivalent but not K-equivalent \cite{Uehara}.
\subsection{Consequences of mixed Hodge and $\ell$-adic integration}\label{appladic}
In this subsection, we apply our theory of mixed $\ell$-adic integration to show that K-equivalent smooth $\mathbb{F}_q$-varieties have isomorphic $\ell$-adic Galois representations up to semisimplification. In particular, they have the same zeta functions. Through this would be a consequence of our previous results if we assume the appropriate injectivity conjecture or the existence of a motivic $t$-structure, mixed $\ell$-adic integration allows us to deduce this result unconditionally.
\begin{theorem}
Suppose $X$ and $Y$ are K-equivalent smooth projective $\mathbb{F}_q$-varieties. Then $H^*_{\et}(X_{\overline{\mathbb{F}}_q};\overline{\mathbb{Q}}_{\ell})^{ss}\simeq H^*_{\et}(Y_{\overline{\mathbb{F}}_q};\overline{\mathbb{Q}}_{\ell})^{ss}$ as \textup{graded} $\ell$-adic Galois representations.
\end{theorem}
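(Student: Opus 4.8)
The plan is to imitate the proof of Theorem~\ref{mainint} verbatim, with mixed $\ell$-adic integration replacing integration via the slice filtration, and then to upgrade the resulting equality of classes in a Grothendieck group to a graded isomorphism of Galois representations by invoking purity. Since $X$ and $Y$ are K-equivalent, fix a smooth $\mathbb{F}_q$-variety $Z$ together with proper birational morphisms $f:Z\to X$ and $g:Z\to Y$ such that $f^*\omega_X\simeq g^*\omega_Y$. As in the proof of Theorem~\ref{mainint}, this isomorphism of line bundles identifies the locally principal ideals $J_{Z/X}$ and $J_{Z/Y}$ of $\cal{O}_Z$, hence forces the equality of relative canonical divisors $K_{Z/X}=K_{Z/Y}$. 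The first step is to record the mixed $\ell$-adic analogue of Example~\ref{emptyex}: the function $\ord_{\emptyset}$ is $\ell$-adically measurable with $\ord_{\emptyset}^{-1}(0)=X_{\infty}$ and $\ord_{\emptyset}^{-1}(s)=\emptyset$ for $s>0$, and evaluating the measure at the $0$-th jet level ($X_0=X$) gives $\mu^{\ell}_X(X_{\infty})=\pi^X_!\mathbf{1}_X(d)[2d]$ with $d=\dim X$, so that
\[\int_{X_{\infty}}\overline{\mathbb{Q}}_{\ell}(\ord_{\emptyset})[2\ord_{\emptyset}]\,d\mu^{\ell}_X=\bigl[\pi^X_!\mathbf{1}_X(d)[2d]\bigr]\in\cal{M}^{\ell}(\mathbb{F}_q),\]
and similarly for $Y$. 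Here $\pi^X_!\mathbf{1}_X(d)[2d]$ indeed lies in $\D_{m,\geq 0}$: its $i$-th cohomology sheaf is $H^{i+2d}_{c,\et}(X_{\overline{\mathbb{F}}_q};\overline{\mathbb{Q}}_{\ell})(d)$, which by the Weil conjectures has weight $\leq (i+2d)-2d=i$, and this vanishes unless $i\leq 0$ since $H^j_{c,\et}$ vanishes for $j>2d$; this is precisely the weight estimate used in Lemma~\ref{ladicstabilizationlemma}.

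Next I would apply the mixed $\ell$-adic transformation rule (Theorem~\ref{ladicchangeofvar}) to $f$ with $D=\emptyset$, and then to $g$ with $D=\emptyset$, obtaining
\[\int_{X_{\infty}}\overline{\mathbb{Q}}_{\ell}(\ord_{\emptyset})[2\ord_{\emptyset}]\,d\mu^{\ell}_X=\int_{Z_{\infty}}\overline{\mathbb{Q}}_{\ell}(\ord_{K_{Z/X}})[2\ord_{K_{Z/X}}]\,d\mu^{\ell}_Z\]
together with the analogous identity for $Y$ with $K_{Z/X}$ replaced by $K_{Z/Y}$. Since $K_{Z/X}=K_{Z/Y}$, the two right-hand sides agree, so $[\pi^X_!\mathbf{1}_X(d)[2d]]=[\pi^Y_!\mathbf{1}_Y(d)[2d]]$ in $\cal{M}^{\ell}(\mathbb{F}_q)$ (and $\dim X=\dim Y=d$ because K-equivalent varieties are birational). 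By Proposition~\ref{ellinjectivity} the natural homomorphism $K_0(\D_{m,\geq 0})\to\cal{M}^{\ell}(\mathbb{F}_q)$ is injective, so this equality already holds in $K_0(\D_{m,\geq 0})$, hence in $K_0(\D^b_c(\Spec\mathbb{F}_q;\overline{\mathbb{Q}}_{\ell}))$. Applying the autoequivalence $-\otimes\overline{\mathbb{Q}}_{\ell}(-d)[-2d]$ and using that $X$ and $Y$ are proper (so $\pi^X_!\simeq\pi^X_*$), this becomes
\[\sum_m(-1)^m\bigl[H^m_{\et}(X_{\overline{\mathbb{F}}_q};\overline{\mathbb{Q}}_{\ell})\bigr]=\sum_m(-1)^m\bigl[H^m_{\et}(Y_{\overline{\mathbb{F}}_q};\overline{\mathbb{Q}}_{\ell})\bigr]\]
in $K_0(\textup{Rep}_{\textup{cnt}}(G_{\mathbb{F}_q};\overline{\mathbb{Q}}_{\ell}))$, the Grothendieck group of the abelian category of finite-dimensional continuous $\overline{\mathbb{Q}}_{\ell}$-representations of $G_{\mathbb{F}_q}$ (the heart of $\D^b_c(\Spec\mathbb{F}_q;\overline{\mathbb{Q}}_{\ell})$, so its $K_0$ coincides with the $K_0$ of that category).

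To pass from the alternating-sum identity to a degree-by-degree statement I would invoke purity: since $X$ and $Y$ are smooth and projective, Deligne's theorem (Weil II) shows $H^m_{\et}(X_{\overline{\mathbb{F}}_q};\overline{\mathbb{Q}}_{\ell})$ and $H^m_{\et}(Y_{\overline{\mathbb{F}}_q};\overline{\mathbb{Q}}_{\ell})$ are pure of weight $m$. The canonical weight filtration on $\iota$-mixed representations yields a direct-sum decomposition $K_0(\textup{Rep}_{\textup{cnt}}(G_{\mathbb{F}_q};\overline{\mathbb{Q}}_{\ell}))=\bigoplus_w K_0(-)_w$ by weight (two simple objects of distinct weights share no constituents), and the class of a pure weight-$m$ object lies in the summand $w=m$. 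Comparing weight-$m$ components of the two alternating sums therefore gives $[H^m_{\et}(X_{\overline{\mathbb{F}}_q};\overline{\mathbb{Q}}_{\ell})]=[H^m_{\et}(Y_{\overline{\mathbb{F}}_q};\overline{\mathbb{Q}}_{\ell})]$ in $K_0$ for every $m$. Finally, every object of $\textup{Rep}_{\textup{cnt}}(G_{\mathbb{F}_q};\overline{\mathbb{Q}}_{\ell})$ is finite-dimensional, hence of finite length, so by Jordan--H\"older $K_0$ of this category is free abelian on the isomorphism classes of simple objects; consequently a representation and its semisimplification have the same class, and two semisimple representations with the same class are isomorphic. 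Applying this to $H^m_{\et}(X_{\overline{\mathbb{F}}_q};\overline{\mathbb{Q}}_{\ell})$ and $H^m_{\et}(Y_{\overline{\mathbb{F}}_q};\overline{\mathbb{Q}}_{\ell})$ for each $m$ and reassembling over all $m$ yields the graded isomorphism $H^*_{\et}(X_{\overline{\mathbb{F}}_q};\overline{\mathbb{Q}}_{\ell})^{ss}\simeq H^*_{\et}(Y_{\overline{\mathbb{F}}_q};\overline{\mathbb{Q}}_{\ell})^{ss}$.

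I expect the only genuine obstacles to be bookkeeping rather than anything deep: verifying carefully the mixed $\ell$-adic Example~\ref{emptyex} (that $\mu^{\ell}_X(X_{\infty})=\pi^X_!\mathbf{1}_X(d)[2d]$ as an object of $\D_{m,\geq 0}$, so that Proposition~\ref{ellinjectivity} applies on the nose), and making the weight-separation step precise (that purity of $H^m$ for smooth projective varieties lets one read off each graded piece of the class in $K_0$). Everything else is a faithful transcription of the proof of Theorem~\ref{mainint} into the mixed $\ell$-adic framework of Section~\ref{appladic}.
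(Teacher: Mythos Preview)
Your proposal is correct and follows essentially the same approach as the paper's proof: run the proof of Theorem~\ref{mainint} with $\mu_X^{\ell}$ in place of $\mu_X$, apply Proposition~\ref{ellinjectivity} to descend to $K_0(\D_{m,\geq 0})$, and then invoke the Weil conjectures (purity of $H^m$ for smooth projective varieties) to separate the alternating sum by weight and upgrade to a graded isomorphism of semisimplifications. You spell out the weight-decomposition and Jordan--H\"older steps more explicitly than the paper does, but the argument is the same.
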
 
\begin{proof}
Upon replacing integration with respect to $\mu_X$ with mixed $\ell$-adic integration in the proof of Theorem~\ref{mainint}, we can deduce in the same manner that $[H^*_{\et}(X_{\overline{\mathbb{F}}_q};\overline{\mathbb{Q}}_{\ell})]=[H^*_{\et}(Y_{\overline{\mathbb{F}}_q};\overline{\mathbb{Q}}_{\ell}]$ in $\cal{M}^{\ell}(k)$. Using Proposition~\ref{ellinjectivity}, we obtain that $[H^*_{\et}(X_{\overline{\mathbb{F}}_q};\overline{\mathbb{Q}}_{\ell})]=[H^*_{\et}(Y_{\overline{\mathbb{F}}_q};\overline{\mathbb{Q}}_{\ell}]$ in $K_0(D_{m,\geq 0})$, and so also in $K_0(D^b(\mathbb{F}_q;\overline{\mathbb{Q}}_{\ell}))$. We conclude that $H^*_{\et}(X_{\overline{\mathbb{F}}_q};\overline{\mathbb{Q}}_{\ell})^{ss}\simeq H^*_{\et}(Y_{\overline{\mathbb{F}}_q};\overline{\mathbb{Q}}_{\ell})^{ss}$ as $\ell$-adic Galois representations. Using the Weil conjectures, we know that the weights of $H^i_{\et}(X_{\overline{\mathbb{F}}_q};\overline{\mathbb{Q}}_{\ell})$ and $H^i_{\et}(Y_{\overline{\mathbb{F}}_q};\overline{\mathbb{Q}}_{\ell})$ are exactly $i$. Consequently, $H^*_{\et}(X_{\overline{\mathbb{F}}_q};\overline{\mathbb{Q}}_{\ell})^{ss}\simeq H^*_{\et}(Y_{\overline{\mathbb{F}}_q};\overline{\mathbb{Q}}_{\ell})^{ss}$ as \textit{graded} $\ell$-adic Galois representations, as required.
\end{proof}
As a corollary of the equality of classes of the $\ell$-adic Galois representations of K-equivalent smooth $\mathbb{F}_q$-varieties, we obtain the following result.
\begin{corollary}
If $X$ and $Y$ are K-equivalent smooth $\mathbb{F}_q$-varieties, then they have the same zeta functions. Equivalently, for each $n\in\mathbb{N}$, $|X(\mathbb{F}_{q^n})|=|Y(\mathbb{F}_{q^n})|$.
\end{corollary}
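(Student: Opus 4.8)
The plan is to transcribe the argument of the preceding theorem, but now with the \emph{compactly supported} cohomology in place of $\ell$-adic cohomology, and to extract the conclusion purely at the level of Grothendieck groups, so that the projectivity hypothesis never enters.

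First I would record the $\ell$-adic analogue of Example~\ref{emptyex}: for any separated $\mathbb{F}_q$-variety $X$ of dimension $d$, since $\ord_\emptyset^{-1}(0)=X_\infty$ and $\ord_\emptyset^{-1}(s)=\emptyset$ for $s>0$, and since $X_\infty$ already stabilises at level $0$ by Lemma~\ref{ladicstabilizationlemma}, one has
\[
\int_{X_\infty}\overline{\mathbb{Q}}_\ell(\ord_\emptyset)[2\ord_\emptyset]\,d\mu^\ell_X \;=\;[\mu^\ell_X(X_\infty)]\;=\;[\pi^X_!\mathbf{1}_X(d)[2d]]\;=\;[R\Gamma_c(X_{\overline{\mathbb{F}}_q};\overline{\mathbb{Q}}_\ell)(d)[2d]]
\]
in $\cal{M}^\ell(\mathbb{F}_q)$. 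This class genuinely lies in $D_{m,\geq 0}$: by Deligne's Weil~II the sheaf $\cal{H}^j(R\Gamma_c(X_{\overline{\mathbb{F}}_q};\overline{\mathbb{Q}}_\ell))$ has weight $\leq j$ with $0\leq j\leq 2d$, so after the Tate twist all cohomology sheaves have weight $\leq 0$. Smoothness is nowhere used here, which is exactly why the hypothesis can be relaxed.

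Next, given K-equivalent smooth $\mathbb{F}_q$-varieties $X$ and $Y$, I would choose a smooth $Z$ with proper birational morphisms $f\colon Z\to X$ and $g\colon Z\to Y$ such that $f^*\omega_X\simeq g^*\omega_Y$. Applying the mixed $\ell$-adic transformation rule (Theorem~\ref{ladicchangeofvar}) with $D=\emptyset$ to $f$ and to $g$, and using $K_{Z/X}=K_Z-f^*K_X=K_Z-g^*K_Y=K_{Z/Y}$, the two resulting integrals over $Z_\infty$ coincide, whence
\[
[R\Gamma_c(X_{\overline{\mathbb{F}}_q};\overline{\mathbb{Q}}_\ell)(d)[2d]]=[R\Gamma_c(Y_{\overline{\mathbb{F}}_q};\overline{\mathbb{Q}}_\ell)(d)[2d]]\qquad\text{in }\cal{M}^\ell(\mathbb{F}_q),
\]
with $d=\dim X=\dim Y$ since $X$ and $Y$ are birational. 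By Proposition~\ref{ellinjectivity} the natural map $K_0(D_{m,\geq 0})\to\cal{M}^\ell(\mathbb{F}_q)$ is injective, so this equality already holds in $K_0(D_{m,\geq 0})$; stripping the common twist $(-d)[-2d]$ and mapping to $K_0(D^b_c(\mathbb{F}_q;\overline{\mathbb{Q}}_\ell))$ yields $[R\Gamma_c(X_{\overline{\mathbb{F}}_q};\overline{\mathbb{Q}}_\ell)]=[R\Gamma_c(Y_{\overline{\mathbb{F}}_q};\overline{\mathbb{Q}}_\ell)]$.

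Finally I would pass from this $K_0$-identity to the arithmetic statement. The assignment $[\cal{F}^\bullet]\mapsto\sum_i(-1)^i\,\textup{Tr}(\textup{Frob}_q^n\mid\cal{H}^i(\cal{F}^\bullet))$ (equivalently, the $L$-factor $[\cal{F}^\bullet]\mapsto\prod_i\det(1-\textup{Frob}_q t\mid\cal{H}^i(\cal{F}^\bullet))^{(-1)^{i+1}}$) is additive on distinguished triangles and so descends to a homomorphism out of $K_0(D^b_c(\mathbb{F}_q;\overline{\mathbb{Q}}_\ell))$; applying it to the above equality and invoking the Grothendieck--Lefschetz trace formula $|X(\mathbb{F}_{q^n})|=\sum_i(-1)^i\,\textup{Tr}(\textup{Frob}_q^n\mid H^i_c(X_{\overline{\mathbb{F}}_q};\overline{\mathbb{Q}}_\ell))$ gives $|X(\mathbb{F}_{q^n})|=|Y(\mathbb{F}_{q^n})|$ for every $n\geq 1$, equivalently $\zeta_X(t)=\zeta_Y(t)$ since $\zeta_X(t)=\exp\!\big(\sum_{n\geq 1}|X(\mathbb{F}_{q^n})|t^n/n\big)$. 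The argument is a routine transcription of the proof of the preceding theorem, so I expect no serious obstacle; the one point deserving care — and precisely the point that lets projectivity be dropped — is that the conclusion is extracted entirely at the level of $K_0$ via traces and $L$-factors, so that no semisimplification or purity of the individual $H^i_c$ is needed, only the Weil~II weight bound that places $\mu^\ell_X(X_\infty)$ inside $D_{m,\geq 0}$ and thereby makes Proposition~\ref{ellinjectivity} applicable.
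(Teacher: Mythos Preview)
Your proposal is correct and follows essentially the same approach as the paper: the paper states this corollary as an immediate consequence of the fact that the proof of the preceding theorem already establishes equality of classes in $K_0$ (via the mixed $\ell$-adic transformation rule and Proposition~\ref{ellinjectivity}), and you have simply spelled out the details, correctly using $R\Gamma_c=\pi^X_!\mathbf{1}_X$ and the Grothendieck--Lefschetz trace formula to pass from the $K_0$-identity to point counts.

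One small slip of the pen: you write ``Smoothness is nowhere used here, which is exactly why the hypothesis can be relaxed,'' but the hypothesis being dropped relative to the preceding theorem is \emph{projectivity}, not smoothness --- the varieties are still assumed smooth (and must be, for K-equivalence and for the transformation rule to apply). Your argument is otherwise exactly on point: the Weil~II weight bound (not purity) is all that is needed to place $\mu^\ell_X(X_\infty)$ in $D_{m,\geq 0}$, and the zeta function is read off from the $K_0$-class alone.
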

Instead of using mixed $\ell$-adic integration, we can use mixed Hodge integration for K-equivalent smooth projective $k$-varieties, $k$ any field of characterstic zero. Doing so gives us the following theorem about $\mathbb{Q}$-Hodge structures.
\begin{theorem}
Suppose $k$ is a characteristic zero field, and $X$ and $Y$ are K-equivalent smooth projective $k$-varieties. Then they have the same $\mathbb{Q}$-Hodge structures.
\end{theorem}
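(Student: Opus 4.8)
The plan is to run the Hodge-theoretic analogue of the argument already carried out for $\ell$-adic Galois representations, and then to upgrade the resulting equality of $K_0$-classes to an honest isomorphism using weights and semisimplicity. Fix an embedding $\sigma\colon k\hookrightarrow\mathbb{C}$, so that $X^{an}$ and its mixed Hodge structure make sense. Since $X$ and $Y$ are K-equivalent, choose a smooth $k$-variety $Z$ with proper birational morphisms $f\colon Z\to X$ and $g\colon Z\to Y$ such that $f^*\omega_X\simeq g^*\omega_Y$. The Hodge analogue of Example~\ref{emptyex} gives $\int_{X_\infty}\mathbb{Q}(\ord_\emptyset)[2\ord_\emptyset]\,d\mu^{\Hdg}_X=[\pi^X_!\mathbf{1}_X]$ in $\cal{M}^{\Hdg}(k)$, and since $X$ is smooth projective, $\pi^X_!\mathbf{1}_X$ is the complex $\bigoplus_i H^i(X^{an};\mathbb{Q})[-i]$ with its pure Hodge structures; likewise for $Y$. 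Applying the Hodge transformation rule (Theorem~\ref{Hdgchangeofvar}) to $f$ and to $g$, each with effective divisor $\emptyset$, yields
\[[\pi^X_!\mathbf{1}_X]=\int_{Z_\infty}\mathbb{Q}(\ord_{K_{Z/X}})[2\ord_{K_{Z/X}}]\,d\mu^{\Hdg}_Z\]
and
\[[\pi^Y_!\mathbf{1}_Y]=\int_{Z_\infty}\mathbb{Q}(\ord_{K_{Z/Y}})[2\ord_{K_{Z/Y}}]\,d\mu^{\Hdg}_Z.\]
The K-equivalence hypothesis forces $K_{Z/X}=K_Z-f^*K_X=K_Z-g^*K_Y=K_{Z/Y}$, so the two integrals over $Z_\infty$ coincide, whence $[\pi^X_!\mathbf{1}_X]=[\pi^Y_!\mathbf{1}_Y]$ in $\cal{M}^{\Hdg}(k)$.

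Next I would invoke Proposition~\ref{Hdginjectivity}: the natural homomorphism $K_0(D^b_{\Hdg,\geq 0})\to\cal{M}^{\Hdg}(k)$ is injective, so the equality of classes already holds in $K_0(D^b_{\Hdg,\geq 0})$. Pushing forward along the evident exact functor to $D^b(MHS_{\mathbb{Q}})$ and using $K_0(D^b(MHS_{\mathbb{Q}}))\cong K_0(MHS_{\mathbb{Q}})$, I obtain $\sum_i(-1)^i[H^i(X^{an};\mathbb{Q})]=\sum_i(-1)^i[H^i(Y^{an};\mathbb{Q})]$ in $K_0(MHS_{\mathbb{Q}})$.

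Finally I would separate by weight. The weight filtration induces $K_0(MHS_{\mathbb{Q}})\cong\bigoplus_w K_0(\mathrm{HS}^{\mathrm{pol}}_w)$, where $\mathrm{HS}^{\mathrm{pol}}_w$ is the category of polarizable pure $\mathbb{Q}$-Hodge structures of weight $w$, which is semisimple (polarizability forces semisimplicity); hence classes in $K_0(\mathrm{HS}^{\mathrm{pol}}_w)$ determine objects up to isomorphism. Because $X$ and $Y$ are smooth projective, $H^i(X^{an};\mathbb{Q})$ and $H^i(Y^{an};\mathbb{Q})$ are pure of weight $i$, so comparing the weight-$w$ components of the previous identity gives $[H^w(X^{an};\mathbb{Q})]=[H^w(Y^{an};\mathbb{Q})]$ in $K_0(\mathrm{HS}^{\mathrm{pol}}_w)$, hence $H^w(X^{an};\mathbb{Q})\simeq H^w(Y^{an};\mathbb{Q})$ for every $w$. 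Assembling over $w$ yields $H^*(X^{an};\mathbb{Q})\simeq H^*(Y^{an};\mathbb{Q})$ as graded $\mathbb{Q}$-Hodge structures.

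Essentially all the genuine content sits in the first paragraph, and it is bookkeeping rather than a real obstacle: one must verify that the Hodge analogues of Example~\ref{emptyex} and of the proof of Theorem~\ref{changeofvar} go through verbatim — in particular that every stable-subscheme volume lands in $D^b_{\Hdg,\geq 0}$ with the weight bounds supplied by Lemma~\ref{Hdgstabilizationlemma} and Lemma~\ref{Hdgvirtdim}, and that $\pi^X_!\mathbf{1}_X$ really does compute the compactly supported cohomology with its canonical mixed Hodge structure (so that smooth projectivity renders it pure of the right weight in each degree). Once this is in place, the passage from $K_0$-equality to isomorphism via semisimplicity and weights, together with Proposition~\ref{Hdginjectivity}, is purely formal; indeed the entire mixed Hodge construction was arranged precisely so that the injectivity step needs no extra hypothesis, in contrast to the slice-filtration theory.
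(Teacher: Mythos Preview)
Your proof is correct and follows essentially the same route as the paper: apply the Hodge transformation rule along $f$ and $g$ to obtain equality in $\cal{M}^{\Hdg}(k)$, invoke Proposition~\ref{Hdginjectivity} to pull this back to $K_0$, and then use purity-by-weight together with semisimplicity of polarizable pure Hodge structures to upgrade the $K_0$-equality to a degreewise isomorphism. Your weight-separation step is in fact stated slightly more carefully than the paper's (which simply asserts that $MHS_{\mathbb{Q}}$ is semisimple), but the strategy is identical.
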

\begin{proof}
Using mixed Hodge integration instead of integration with respect to $\mu_X$ in the proof of Theorem~\ref{mainint}, we obtain that $[H^*(X;\mathbb{Q})]=[H^*(Y;\mathbb{Q})]$ in $\cal{M}^{\Hdg}(k)$. Applying Proposition~\ref{Hdginjectivity}, we obtain the same equality in $K_0(D^b(MHS_{\mathbb{Q}}))$, the Grothendieck group of the bounded derived category of polarizable mixed $\mathbb{Q}$-Hodge structures. Since $MHS_{\mathbb{Q}}$ is a semisimple abelian category, we obtain that $H^*(X;\mathbb{Q})\simeq H^*(Y;\mathbb{Q})$ as polarizable mixed $\mathbb{Q}$-Hodge structures. Note that the fact that the different degree cohomology groups have different weights implies that this isomorphism is even true degreewise. Consequently, $X$ and $Y$ have the same $\mathbb{Q}$-Hodge structures, as required.   
\end{proof}
Note that this recovers Kontsevich's Theorem~\ref{Konttheorem} (as well as Batyrev's theorem \cite{Batyrev}). Again, if we want equality of mixed Hodge numbers, equality of classes in the Grothendieck group of polarizable mixed rational Hodge structures is sufficient. The equality of the classes is even true without projectivity. Using the Deligne-Hodge polynomial, we obtain the following.
\begin{corollary}
Suppose $k$ is a characteristic zero field, and $X$ and $Y$ are K-equivalent smooth $k$-varieties. Then they have the same Deligne-Hodge polynomials.
\end{corollary}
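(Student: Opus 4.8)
The plan is to run the mixed Hodge integration argument of Theorem~\ref{mainint} word for word, with $\mu^{\Hdg}$ in place of $\mu$, and then post-compose with the Deligne--Hodge polynomial regarded as a group homomorphism out of $K_0(D^b(MHS_{\mathbb{Q}}))$. First I would record that the Deligne--Hodge polynomial is an additive invariant of $D^b(MHS_{\mathbb{Q}})$: for a bounded complex $M$ of polarizable mixed $\mathbb{Q}$-Hodge structures one sets $E(M;u,v):=\sum_i(-1)^i\sum_{p,q}h^{p,q}(\cal{H}^i(M))\,u^pv^q$, which is additive on distinguished triangles and hence descends to a homomorphism $\chi_E:K_0(D^b(MHS_{\mathbb{Q}}))\to\mathbb{Z}[u,v][(uv)^{-1}]$, landing in $\mathbb{Z}[u,v]$ on classes of varieties. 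For a smooth $k$-variety $X$ of dimension $d$ the jet scheme $X_\infty$ is stable at level $0$ (indeed $\pi_0(X_\infty)=\cal{J}_0(X)=X$), so the Hodge analogue of Definition~\ref{volumemeasure} gives $\mu_X^{\Hdg}(X_\infty)=\pi^X_!\mathbf{1}_X(d)[2d]$; by Saito's comparison \cite{Saito} the complex $\pi^X_!\mathbf{1}_X$ underlies the compactly supported cohomology $H^*_c(X;\mathbb{Q})$ with its mixed Hodge structure, so $\chi_E(\pi^X_!\mathbf{1}_X)=E(X;u,v)$ and $\chi_E(\mu_X^{\Hdg}(X_\infty))=(uv)^{-d}E(X;u,v)$.

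Next I would invoke K-equivalence: there is a smooth $k$-variety $Z$ with proper birational morphisms $f:Z\to X$ and $g:Z\to Y$ satisfying $f^*\omega_X\simeq g^*\omega_Y$, whence $K_{Z/X}=K_Z-f^*K_X=K_Z-g^*K_Y=K_{Z/Y}$ --- note that projectivity plays no role here. Applying the Hodge transformation rule (Theorem~\ref{Hdgchangeofvar}) to $f$ and to $g$, exactly as in the proof of Theorem~\ref{mainint}, yields
\[
[\mu_X^{\Hdg}(X_\infty)]=\left[\int_{Z_\infty}\mathbb{Q}(\ord_{K_{Z/X}})[2\ord_{K_{Z/X}}]\,d\mu_Z^{\Hdg}\right]=[\mu_Y^{\Hdg}(Y_\infty)]
\]
in $\cal{M}^{\Hdg}(k)$. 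Since $\vdim^{\Hdg}\mu_X^{\Hdg}(X_\infty)=0$ by Lemma~\ref{Hdgvirtdim}, both sides lie in $D^b_{\Hdg,\geq 0}$, so Proposition~\ref{Hdginjectivity} upgrades this to $[\mu_X^{\Hdg}(X_\infty)]=[\mu_Y^{\Hdg}(Y_\infty)]$ in $K_0(D^b_{\Hdg,\geq 0})$, hence in $K_0(D^b(MHS_{\mathbb{Q}}))$. As $\dim X=\dim Y=:d$ (being birational), the Tate twists match, so in fact $[\pi^X_!\mathbf{1}_X]=[\pi^Y_!\mathbf{1}_Y]$ in $K_0(D^b(MHS_{\mathbb{Q}}))$; applying $\chi_E$ gives $E(X;u,v)=E(Y;u,v)$.

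The argument is formal once the machinery of Section~6 is in hand, so there is no serious obstacle. The two points requiring care are: (i) the identification of $\pi^X_!\mathbf{1}_X$ in the category of mixed Hodge modules with the compactly supported mixed Hodge structure of $X$, so that $\chi_E$ of it is genuinely $E(X)$ --- this is Saito's theory, already used above; and (ii) the observation, worth emphasizing, that nothing in the chain of implications uses properness or projectivity of $X$ or $Y$, since both the Hodge transformation rule and the definition of K-equivalence require only smoothness. This is exactly what makes the statement strictly stronger than what one obtains by realizing Theorem~\ref{mainint2}, and it is consistent with the preceding remark that the equality of the underlying Hodge classes holds without projectivity.
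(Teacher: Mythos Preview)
Your proposal is correct and follows essentially the same route as the paper: the paper's argument is just the observation (made in the paragraph preceding the corollary) that the proof of the preceding theorem already establishes $[\pi^X_!\mathbf{1}_X]=[\pi^Y_!\mathbf{1}_Y]$ in $K_0(D^b(MHS_{\mathbb{Q}}))$ without using projectivity, and then one applies the Deligne--Hodge polynomial as an additive invariant. You have simply spelled this out in full detail, including the bookkeeping with the Tate twist and the equality $\dim X=\dim Y$.
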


\small{\textsc{Department of Mathematics, Princeton University, Princeton NJ, USA}}\\
\small{\textsc{Department of Mathematics, University of Regensburg, Regensburg, Germany}}\\
\textit{Email address 1:} \texttt{\small{masoud.zargar@ur.de}}\\
\textit{Email address 2:} \texttt{\small{mzargar1225@gmail.com}}
\end{document}